\documentclass[11pt,twoside]{amsart}
\usepackage{graphicx} 
\usepackage{mathrsfs} 
\usepackage{dsfont}

\usepackage{fvextra}
\input{Macros}

\DeclareMathOperator{\fm}{\mathfrak{m}}
\DeclareMathOperator{\fn}{\mathfrak{n}}

\DeclareMathOperator{\mst}{M_\text{st}}
\DeclareMathOperator{\est}{e_\text{st}}

\DeclareMathOperator{\Est}{E_\text{st}}

\def\Kzero{K_0(\mathrm{Var}_k)}
\def\Kzerouh{K^{\mathrm{uh}}_0(\mathrm{Var}_k)}
\def\Muh{\widehat{\mathcal{M}}^{\mathrm{uh}}_k}

\title{On linear $\alpha_p$-quotients}
\author{Quentin Posva and Takehiko Yasuda,
with an appendix by Linus Rösler}
\date{}

\address{Institut de Mathématiques, Université de Neuchâtel, Rue Emile-Argand 11, 2000 Neuchâtel, Switzerland} 
\email{quentin.posva@unine.ch}

\address{Department of Mathematics, Graduate School of Science, the University
of Osaka, Toyonaka, Osaka 560-0043, Japan}
\address{Kavli Institute for the Physics and Mathematics of the Universe, The University of Tokyo, 5-1-5 Kashiwanoha, Kashiwa, Chiba, 277-8583, Japan}
\email{yasuda.takehiko.sci@osaka-u.ac.jp}

\address{\'Ecole Polytechnique F\'ed\'erale de Lausanne, Chair of Algebraic Geometry \newline 
\indent MA C3 615 (Bâtiment MA), Station 8, CH-1015 Lausanne}
\email{linus.rosler@epfl.ch}

\begin{document}

\maketitle

\begin{quote}
\textsc{Abstract.} We study ``linear" $\alpha_p$-actions on affine spaces and the associated quotient singularities, using explicit stacky resolutions. We describe when the quotient singularities are log canonical, canonical or terminal, and we compute their stringy motivic invariants. The second author and Fabio Tonini conjectured that these invariants coincide with those of linear $\bZ/p$-quotients: our approach reduces this conjecture to an equality of explicit multi-sets, which we check for a large number of primes using a computer software. 
A general proof of the equality of multi-sets is given in the appendix written by Linus Rösler.
\end{quote}

\tableofcontents

\section{Introduction}
Let $k$ be an algebraically closed field of positive characteristic $p>0$. In this article, we study ``linear" $\alpha_p$-quotient singularities from the points of view of the MMP and invariants steaming from motivic integration, and compare their properties to those of linear $\bZ/p$-quotients.

Quotients by $\alpha_p$-actions tend to exhibit a surprising behaviour that is sensitive to the ratio between their dimension and the characteristic $p$: their study is instructive by itself and in order to draw the line between characteristic $0$ theorems and positive characteristic phenomenons (see for example \cite{Posva_Pathological_MMP_sing}).
The study of the linear $\alpha_p$-quotient singularities was initiated in \cite{Tonini_Yasuda_Motivic_McKay_cor_for_alpha_p}, which was the initial motivation of this paper. The approach taken there is that of motivic integration, inspired by the study of linear $\bZ/p$-quotients singularities in \cite{Yasuda_p-cyclic_McKay_correspondence, Yasuda_Discrepancies_of_p_cyclic_quotient}. Our approach in this paper is different, but has applications to the questions raised in \cite{Tonini_Yasuda_Motivic_McKay_cor_for_alpha_p}.

To define these singularities, fix a sequence of non-negative integers $\mathbf{d}=\{d_\lambda\}_{\lambda=1,\dots,l}$ such that $0\leq d_\lambda \leq p-1$ and let $d=\sum_\lambda (1+d_\lambda)$. For a function $\varphi\in H^0(\bA^d,\sO_{\bA^d})$, let us write
        \begin{equation*}
                J_{\varphi,\mathbf{d}}=
                \begin{pmatrix}
                J_{\varphi,d_1} \\
                & J_{\varphi,d_2} \\
                && \ddots \\
                &&& J_{\varphi,d_l}
                \end{pmatrix}
                \end{equation*}
where $J_{\varphi,d_\lambda}$ is the $(d_\lambda+1)\times (d_\lambda+1)$ lower triangular Jordan block with eigenvalue (i.e.\ diagonal entries) $\varphi$. Consider the $k$-linear derivation $\partial_{\mathbf{d}}$ on $\bA^d_k$ defined through its action on coordinate functions $x_1,\dots,x_d$ as follows:
        $$\partial_\mathbf{d}\colon              
        \big(x_1,\dots,x_d\big)^\intercal 
        \mapsto 
        J_{0,\mathbf{d}}\cdot \big(x_1,\dots,x_d\big)^\intercal .
        $$
Concretely, $\partial_\mathbf{d}$ acts as a shift of indices in each block defined by the partition $\{d_\lambda\}$. This derivation defines an $\alpha_p$-action on $\bA^d$. Let us write
        $$\bA/(\alpha_p,\mathbf{d})=\bA^d_k/\langle \partial_{\mathbf{d}}\rangle.$$
Such quotients (and actions) will be called \emph{linear} $\alpha_p$-quotients (and actions).

\medskip
In this article, we study $\bA/(\alpha_p,\mathbf{d})$ using an explicit partial resolution. In \autoref{section:partial_resolution} we construct a weighted blow-up $b\colon \sY\to \bA^d$
where $\sY$ is a regular tame Deligne--Mumford stack such that the $1$-foliation $\sF_\sY$ on $\sY$ induced by the $\alpha_p$-action on $\bA^d$, is regular. This implies that $\sW=\sY/\sF_\sY$ is a regular tame DM stack that resolves $\bA/(\alpha_p,\mathbf{d})$. The coarse moduli space of $\sW$ has only toroidal singularities, and thus is a schematic partial resolution of the linear $\alpha_p$-quotient. A nice feature is that it is possible to compute the discrepancies of $\bA/(\alpha_p,\mathbf{d})$ from the discrepancies of $\sY$ and $\sF_\sY$. This is similar to the analysis done in \cite[\S 3]{Posva_Pathological_MMP_sing}, with some extra twists due to the presence of stacks: the necessary tools are developed in \autoref{section:DM_stacks} and \autoref{section:foliations}. In this way we obtain a description of the MMP singularities of linear $\alpha_p$-quotients:

\begin{theorem}[\autoref{thm:MMP_sing_alpha_p_qt}]\label{thm_intro:MMP_sing}
    Suppose that $\bD_\mathbf{d}=\sum_\lambda \frac{d_\lambda(d_\lambda+1)}{2}\geq 2$.
    Then $\bA/(\alpha_p,\mathbf{d})$ is lc if and only if $\bD_\mathbf{d}\geq p-1$, is canonical if and only if $\bD_\mathbf{d}\geq p$, and is terminal if and only if $\bD_\mathbf{d}\geq p+1$.
\end{theorem}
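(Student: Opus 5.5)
The plan is to compute the discrepancies of $\bA/(\alpha_p,\mathbf{d})$ on the explicit partial resolution of \autoref{section:partial_resolution}. Write the coordinates in block $\lambda$ as $x^\lambda_0,\dots,x^\lambda_{d_\lambda}$, with $\partial_\mathbf{d}x^\lambda_i=x^\lambda_{i+1}$ and $\partial_\mathbf{d}x^\lambda_{d_\lambda}=0$. Choose weights $w(x^\lambda_i)=d_\lambda-i+1$ (or the equivalent normalization used to build $b\colon\sY\to\bA^d$). Then $\partial_\mathbf{d}$ is weighted-homogeneous of degree $-1$, and the sum of the weights is $\sum_\lambda (d_\lambda+1)(d_\lambda+2)/2=\bD_\mathbf{d}+d$.

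\textbf{Step 1: discrepancy on $\sY$.} On the tame DM stack $\sY$ the exceptional divisor $\sE$ satisfies
$$K_\sY=b^*K_{\bA^d}+(\bD_\mathbf{d}+d-1)\sE.$$
\textbf{Step 2: order of $\partial_\mathbf{d}$ along $\sE$.} In the orbifold charts of $\sY$, compute the order of vanishing (or pole) of $b^*\partial_\mathbf{d}$ along $\sE$. Since $\partial_\mathbf{d}$ has weighted degree $-1$, one expects $\sF_\sY$ to be generated by $t^{m}b^*\partial_\mathbf{d}$ for an explicit integer $m$, where $t$ is a local equation of $\sE$. This gives $K_{\sF_\sY}=b^*K_{\sF}+c\,\sE$ for explicit $c$, where $\sF$ is the foliation on $\bA^d$. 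The hypothesis $\bD_\mathbf{d}\ge 2$ is used here to ensure that $\sF$ is saturated with singular locus of codimension $\ge2$, so that $\bA^d\to\bA/(\alpha_p,\mathbf{d})$ has no divisorial ramification and $K_{\bA^d}=\pi^*K_{\bA/(\alpha_p,\mathbf{d})}+(p-1)K_\sF$.

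\textbf{Step 3: descend to the quotient.} Apply the formula from \autoref{section:foliations} comparing $K$ of a (stacky) space and of its quotient by a regular $1$-foliation, $\pi^*K_{\sY/\sF_\sY}=K_\sY-(p-1)K_{\sF_\sY}$. Take into account that $\sE$ is or is not $\sF_\sY$-invariant, which decides whether its image is $\sE$ or $p\sE'$ under pullback. Combining Steps 1 and 2 should produce a discrepancy of the form
$$a(\sE_\sW;\bA/(\alpha_p,\mathbf{d}))=\frac{\bD_\mathbf{d}-p}{N}$$
for some positive normalization $N$, possibly after a shift. I expect the shift to make $a\ge-1$, $a\ge0$ and $a>0$ correspond exactly to $\bD_\mathbf{d}\ge p-1$, $\bD_\mathbf{d}\ge p$ and $\bD_\mathbf{d}\ge p+1$, because $\bD_\mathbf{d}$ is an integer. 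This gives the ``only if'' directions.

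\textbf{Step 4: all other divisors.} For the ``if'' directions one must bound the discrepancies of every divisor over the quotient. The stack $\sW=\sY/\sF_\sY$ is regular and tame, and its coarse space $W$ is toroidal with only $\sE_\sW$ (and its image) exceptional. Write $K_W=\phi^*K_{\bA/(\alpha_p,\mathbf{d})}+a\,E_W$. Using the results of \autoref{section:DM_stacks}, which compare the discrepancies of a tame stack with those of its coarse space (ramification along stacky divisors counts as a boundary with coefficient $1-1/e$), the pair $(W,(1-a')E_W)$ is toroidal, hence lc. Any divisor over $W$ then has discrepancy at least the minimum of $a$ and the toroidal bound. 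For canonical and terminal, one should additionally check that divisors over the stacky/toroidal strata of $W$ have discrepancy $\ge$ (resp.\ $>$) $0$ once $a\ge0$ (resp.\ $a>0$). This can be done by the usual computation for toric or weighted blow-ups of the regular stack $\sW$: there all discrepancies are at least $\operatorname{codim}-1\ge1$ for centers of codimension $\ge2$.

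I expect Step 2, the precise computation of the vanishing order and invariance of $\sE$ for $\sF_\sY$ in every orbifold chart, to be the main obstacle. My first approach would be to treat it uniformly via the weighted-homogeneity of $\partial_\mathbf{d}$ in the $x^\lambda_0$-charts, which are the ones where $\partial_\mathbf{d}$ is nonvanishing.
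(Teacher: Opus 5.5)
\textbf{There is a genuine gap: the weighted blow-up you chose is the wrong one, and Step 4 misses a key comparison.}

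\textbf{The blow-up.} The weights $w(x^\lambda_i)=d_\lambda-i+1$ are not a renormalization of the paper's $b$. They give every $x^\lambda_0$ positive weight, so you are blowing up the origin. The paper instead blows up the fixed-point scheme $V(x_i^{(\sigma)}\mid i\geq 1)$, with weight $i$ on $x_i^{(\sigma)}$ and weight $0$ on $x_0^{(\sigma)}$. Your choice fails in three ways.
\begin{itemize}
\item Your $b$ is an isomorphism away from the origin, while the singular locus of $\sF$ is the $l$-dimensional fixed locus. So $\sF_\sY$ stays singular along the strict transform of that locus, and $\sW$ is not regular.
\item If some $d_\lambda=p-1$, the weight $p$ creates $\mu_p$-stabilizers, so $\sY$ is not even Deligne--Mumford.
\item $\partial_\mathbf{d}$ has weighted degree $-1$, so $b^*\partial_\mathbf{d}$ acquires a \emph{pole} along $\sE$, giving $a(\sE;\sF)=+1$.
\end{itemize}
Your adjunction formula also has the wrong sign. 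It should read $\pi^*K_{\bA/(\alpha_p,\mathbf{d})}=K_{\bA^d}+(p-1)K_\sF$; taking $\sF=T_X$ must recover the Frobenius formula $pK_X$.

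Done correctly, your $\sE$ is $\sF_\sY$-invariant with discrepancy $(\bD_\mathbf{d}+d-1)+(p-1)>0$ over the quotient, so $\bD_\mathbf{d}-p$ never appears. Your expected ``$(\bD_\mathbf{d}-p)/N$ after a shift'' is therefore a guess, and the only-if directions are not established.

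With the paper's weights, $\partial_\mathbf{d}$ has degree $+1$. On each chart $C_i^{(\sigma)}$ (indexed by $i\geq 1$; there are no $x_0$-charts) one gets $b^*\partial_\mathbf{d}=u_i^{(\sigma)}\psi_i$. The real work is the chart-by-chart check that $\psi_i$ is nowhere vanishing; for $1<i<d_\sigma$ this needs an induction on the coefficients. This gives $a(E;\bA^d)=\bD_\mathbf{d}-1$ and $a(E;\sF)=-1$, with $E$ invariant. Hence $a(\widetilde E;X)=\bD_\mathbf{d}-1-(p-1)=\bD_\mathbf{d}-p$ exactly, and the three thresholds follow at once since $\bD_\mathbf{d}\in\mathbf{Z}$.

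\textbf{Step 4.} Once the correct $W$ is in place, your lc argument works and is a valid alternative to the paper's stacky computation: $(W,\widetilde E)$ is toroidal, hence lc, and $-\gamma\leq 1$ where $\gamma=\bD_\mathbf{d}-p$.

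For canonical and terminal, however, the bound ``discrepancy $\geq\operatorname{codim}-1$'' holds over the regular stack $\sW$, not over the coarse space $W$. Take an exceptional divisor $\widetilde D$ over $W$ whose stacky model $\widetilde\sD$ (on the normalization of $\sW\times_W W'$) has generic stabilizer of order $r>1$. Then
\[
a(\widetilde D;W,-\gamma\widetilde E)+1=\bigl(a(\widetilde\sD;\sW,-\gamma\widetilde\sE)+1\bigr)/r .
\]
Toroidality alone cannot give $\geq 0$ here, since tame cyclic quotient singularities need not be canonical.

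The missing idea is to compare codimension with stabilizer order:
\begin{itemize}
\item The locus of $\sY$ with stabilizer $\mu_t$ has codimension $\geq t$, because on $C_i^{(\sigma)}$ the coordinates $u_1^{(\sigma)},\dots,u_{t-1}^{(\sigma)},u_i^{(\sigma)}$ all vanish there.
\item The foliation quotient preserves stabilizers, so the stabilizer orders on the stacky models over $\sY$ and over $\sW$ agree.
\item If $c$ is the codimension of the center of $\widetilde\sD$ on $\sW$, then $a(\widetilde\sD;\sW,-\gamma\widetilde\sE)\geq c-1+\gamma$.
\end{itemize}
These give $c\geq r$, hence $a(\widetilde D;W,-\gamma\widetilde E)\geq \gamma/r$. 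Your Step 4 contains no substitute for this comparison.
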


The singularity $\bA/(\alpha_p,\mathbf{d})$ is usually not Cohen--Macaulay (see \autoref{rmk:qt_not_CM}), so the above theorem gives new examples of canonical and terminal singularities in positive characteristic which are not Cohen--Macaulay, unlike what would happen in characteristic $0$. This adds to a growing list of examples
\cite{Yasuda_p-cyclic_McKay_correspondence, Gongyo-Nakamura-Tanaka, Totaro_Failure_Kodaira_vanishing, Yasuda_Discrepancies_of_p_cyclic_quotient, Totaro_Terminal_3folds_not_CM, Posva_Pathological_MMP_sing}. 

Our construction also gives a way to compute the motivic stringy invariant 
$\mst (\bA/(\alpha_p,\mathbf{d}))$---see \autoref{section:mst} for a short introduction to this invariant. 
In \autoref{section:singularities_partial_resolution} we give a stratification of the coarse moduli space of $\sW$ into equisingular loci, each of which has tame cyclic quotient singularities. A formula of Batyrev \cite{Batyrev_NA_integrals_and_stringy_Euler_numbers} allows us to compute the motivic stringy invariants of these strata. This leads to the following result:

\begin{theorem}[\autoref{thm:mst_alpha_p_qt}]
Assume that $\bD_\mathbf{d}>p-1$. Then
    $$\mst\big(\bA/(\alpha_p,\mathbf{d})\big)
=  \bL^d-\bL^l+  \frac{\bL^{l-1}}{1-\bL^{-1-\bD_\mathbf{d}+p}}\sum_{s/r\in F} (\bL^{N_r}-1) \bL^{\theta(s/r)}$$
where $F$ is the \emph{Farey sequence} of order $\max\mathbf{d}$, i.e.\ 
        $$F=\left( \bigcup_{j=1}^{\max\mathbf{d}}
        \frac{1}{j}\bZ \right)
        \cap (0;1],$$
and $\theta\colon (0;1]\to \bQ$ is the function defined by
        $$\theta(y)=1-\lfloor y\rfloor 
        +\lfloor py\rfloor
        -\sum_{(\sigma,i)}\lfloor iy\rfloor$$
where $(\sigma,i)$ runs through the elements of the set $\{(\sigma,i)\mid \sigma=1,\dots,l \text{ and } i=1,\dots,d_\sigma\}$.
\end{theorem}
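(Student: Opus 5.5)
The plan is to compute $\mst(\bA/(\alpha_p,\mathbf{d}))$ from the partial resolution of \autoref{section:partial_resolution}. Let $X=\bA/(\alpha_p,\mathbf{d})$ and let $W$ be the coarse moduli space of $\sW=\sY/\sF_\sY$, with induced proper birational morphism $\pi\colon W\to X$. Since $\mst$ is defined by motivic integration of the relative canonical divisor, we have
\[
\mst(X)=\sum_{S}\mst\big(W, -\Delta_W\big)_S ,
\]
summing over the strata $S$ of $W$ from \autoref{section:singularities_partial_resolution}. Here $\Delta_W$ is the boundary with $K_W+\Delta_W=\pi^*K_X$, and it is supported on the exceptional divisor $E_W$, the image of the exceptional divisor of the weighted blow-up $b$. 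The hypothesis $\bD_\mathbf{d}>p-1$ ensures, via the discrepancy computation behind \autoref{thm:MMP_sing_alpha_p_qt}, that the discrepancy $a=\bD_\mathbf{d}-p$ of $E_W$ satisfies $a>-1$. Consequently every local integral converges.

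The first step is the open part. Off $E_W$ the map $\pi$ is an isomorphism onto $X$ minus the image of the fixed locus. The fixed locus of $\partial_\mathbf{d}$ is the linear subspace cut out by the images of $J_{0,\mathbf{d}}$, namely the first coordinate of each block. It has dimension $l$, and its image in $X$ is homeomorphic to $\bA^l$. This piece contributes $\bL^d-\bL^l$, using that $X$ is homeomorphic to $\bA^d$ via Frobenius.

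The second step, and the main work, is the contribution over $E_W$, computed stratum by stratum. The exceptional divisor of the weighted blow-up fibres over the fixed locus $\bA^l$ as a weighted projective stack. After quotienting by the regular foliation, its points have stabilizers $\mu_r$. I expect that $r$ ranges over $1,\dots,\max\mathbf{d}$ and that the strata are indexed by the reduced fractions $s/r\in F$. On each stratum, $W$ is étale locally a product of a smooth part with a tame cyclic quotient $\bA^{N}/\mu_r$, where $N_r$ denotes the dimension appearing in the stratification. Batyrev's formula then gives the contribution as a sum over the elements $\zeta^s$ of $\mu_r$ of $\bL^{\text{age}}$ terms. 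The geometric series along $E_W$ with discrepancy $a$ produces the factor $1/(1-\bL^{-1-a})=1/(1-\bL^{-1-\bD_\mathbf{d}+p})$, and the base $\bA^l$ together with the $\bG_m$-fibre structure produces $\bL^{l-1}(\bL^{N_r}-1)$.

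The third step is to show that the exponent equals $\theta(s/r)$. The weights of the blow-up are the indices $i$ within the blocks, and the pulled-back log discrepancy involves $p$ through the foliation quotient, which contributes $\lfloor py\rfloor$. The ages of the $\mu_r$-action give the fractional parts $\{iy\}$. Combining them, the exponent should take the form $1-\lfloor y\rfloor+\lfloor py\rfloor-\sum\lfloor iy\rfloor$.

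I expect this bookkeeping to be the main obstacle: matching the ages, the discrepancy coefficient, and the foliation correction exactly, while tracking the stack-versus-coarse-space twist (the $\mu_r$ gerbes). I would handle it by working on $\sY$ first, using the change-of-variables formula for tame DM stacks from \autoref{section:DM_stacks} and the discrepancy comparison for foliation quotients from \autoref{section:foliations}, and only then passing to $\sW$ and to its coarse space.
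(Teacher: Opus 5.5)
The outline matches the paper: the crepant pair is $(W,-\gamma\widetilde E)$ with $\gamma=\mathbb{D}_\mathbf{d}-p$, the smooth part contributes $\mathbb{L}^d-\mathbb{L}^l$, Batyrev's formula is applied locally, and the coefficient of $\widetilde E$ produces $1/(1-\mathbb{L}^{-1-\gamma})$. However, your stratification is wrong, and so is your account of the factor $\mathbb{L}^{l-1}(\mathbb{L}^{N_r}-1)$.

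The strata are the images $Q^\circ_{\mathscr S}$ of the torus-orbit strata of $E\cong\mathbb{P}(\mathbf{d})\times\mathbb{A}^l$, indexed by non-empty $\mathscr S\subseteq\mathbf{I}^*$. Along $Q^\circ_{\mathscr S}$ the inertia is constantly $\mu_g$ with $g=\gcd(\mathscr S)$, and $[Q^\circ_{\mathscr S}]=\mathbb{L}^l(\mathbb{L}-1)^{|\mathscr S|-1}$. The reduced fractions $s/r$ are not strata. They are the lattice points $y\in\frac1g\mathbb{Z}\cap(0;1]$ in Batyrev's sum at a point $w\in Q^\circ_{\mathscr S}$, and a given $s/r$ occurs on every stratum with $r\mid g$. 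Likewise $N_r=|\Lambda_r|$ counts the weights divisible by $r$; it is not the dimension of a cyclic-quotient factor.

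The correct mechanism is as follows. The local contribution $\frac{(\mathbb{L}-1)\mathbb{L}^{-1}}{1-\mathbb{L}^{-1-\gamma}}\sum_{y\in\frac1g\mathbb{Z}\cap(0;1]}\mathbb{L}^{\theta(y)}$ is constant along $Q^\circ_{\mathscr S}$ by the strict toroidal structure. You multiply it by $[Q^\circ_{\mathscr S}]$, sum over $\mathscr S$, and swap the two sums. Since $s/r\in\frac{1}{g_{\mathscr S}}\mathbb{Z}$ iff $\mathscr S\subseteq\Lambda_r$, this gives $\sum_{\emptyset\neq\mathscr S\subseteq\Lambda_r}(\mathbb{L}-1)^{|\mathscr S|}=\mathbb{L}^{N_r}-1$. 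Equivalently, $\mathbb{L}^l(\mathbb{L}^{N_r}-1)/(\mathbb{L}-1)$ is the class of the closed locus of $\widetilde E$ where the inertia contains $\mu_r$. No single stratum with a ``$\mathbb{G}_m$-fibre'' produces this factor.

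More seriously, the step you defer as bookkeeping is the real content, and its key input is missing: the exact type of $(W,\widetilde E)$ at $w$, given in \autoref{prop:sing_partial_resol}. Choose $(\sigma,i)\in\mathscr S$. The type is $\frac1g(v_j^{(\pi)})$ with:
\begin{itemize}
\item weight $1$ on $u_i^{(\sigma)}$, which cuts out $\widetilde E$;
\item weight $p$ in the slot $(\sigma,i-1)$;
\item weight $-j$ on the remaining coordinates.
\end{itemize}
The weight $p$ arises because $\psi_i^{(\sigma)}(u_{i-1}^{(\sigma)})$ is a unit, so $u_{i-1}^{(\sigma),p}$ replaces $u_{i-1}^{(\sigma)}$ (of weight $-(i-1)\equiv 1$) as a parameter on the foliation quotient. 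With these weights, and using $g\mid i$ and $p\nmid g$, Batyrev's linear form at the lattice point indexed by $y$ equals $d-\theta(y)$. The term $\lfloor py\rfloor$ comes from the age $\{py\}$ of that special coordinate combined with the $-py$ inside $\gamma y$, not from the discrepancy alone.

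Your suggested route cannot supply these weights. There is no change-of-variables formula along the height-one quotient $\sY\to\sW$: locally it is $(x_1,x_2,\dots)\mapsto(x_1^p,x_2,\dots)$, whose image in the arc space has measure zero. Also, \autoref{thm:discrep_infinit_qt_stack} only compares discrepancies of divisors. The paper instead computes the tame cyclic quotient singularities of $W$ directly from the charts and applies Batyrev's toric formula on $W$. (Minor: the crepant pair is $(W,\Delta_W)$, not $(W,-\Delta_W)$.)
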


There is a bijective correspondence between linear $\alpha_p$-actions and linear $\bZ/p$-actions. Indeed, any linear $\bZ/p$-action on $\bA^d$ is generated by an automorphism
        $$(x_1,\dots,x_d)^\intercal 
        \mapsto J_{1,\mathbf{d}}\cdot (x_1,\dots,x_d)^\intercal$$
for some $\mathbf{d}$ and some adequate choice of coordinates $x_1,\dots,x_d$. Let us write
                $$\bA/(\bZ/p,\mathbf{d})=\bA^d_k/\langle J_{1,\mathbf{d}}\rangle.$$
This description hints at possible similarities between 
$\bA/(\bZ/p,\mathbf{d})$ and $\bA/(\alpha_p,\mathbf{d})$---and this was the starting point of the article \cite{Tonini_Yasuda_Motivic_McKay_cor_for_alpha_p}. In fact, we explain in \autoref{section:degeneration} that there is an action of a Tate--Oort group on $\bA^d\times \bA^1_s$ over the base $\bA^1_s$, whose fiber over every $s\in k^\times$ is isomorphic to the linear $\bZ/p$-action given by $\mathbf{d}$, and whose fiber over $0$ is the linear $\alpha_p$-action given by $\mathbf{d}$. So linear $\alpha_p$-actions are degenerations of linear $\bZ/p$-actions, which gives another reason to expect that the respective quotient singularities share some properties and invariants.


There is one remarkable instance where this expectation is true. In \cite{Yasuda_p-cyclic_McKay_correspondence, Yasuda_Discrepancies_of_p_cyclic_quotient}, the second author of the present work described when $\bA/(\bZ/p,\mathbf{d})$ is lc, canonical or terminal: the conditions he found are exactly the ones we give in \autoref{thm_intro:MMP_sing} (\footnote{
    The reader will notice that our conventions regarding the bijection between arrays $\mathbf{d}$ and linear actions of $\bZ/p$ differ from those of \cite{Yasuda_p-cyclic_McKay_correspondence, Yasuda_Discrepancies_of_p_cyclic_quotient, Tonini_Yasuda_Motivic_McKay_cor_for_alpha_p}. We chose to depart from the conventions used in \emph{op.cit.} to facilitate computations with $\alpha_p$-actions. We explain how to navigate between the two possible conventions in \autoref{section:comparison}.
})!

Guided by the correspondence between linear $\alpha_p$ and $\bZ/p$-actions, the second author and Fabio Tonini conjectured in \cite[Conjecture 4.3]{Tonini_Yasuda_Motivic_McKay_cor_for_alpha_p} that $\bA/(\bZ/p,\mathbf{d})$ and $\bA/(\alpha_p,\mathbf{d})$ have the same stringy motivic invariant. We have closed formulas for both expressions (see the beginning of \autoref{section:comparison} for $\mst(\bA/(\bZ/p,\mathbf{d}))$, and using them we can reformulate the conjecture of Tonini--Yasuda as a conjecture of combinatorial nature, \autoref{conj:equality_of_multisets}. In the first version of this work, the first two authors obtained supporting evidence of this conjecture: equality of stringy Euler numbers (\autoref{prop:comparison_Euler_numbers}), and a proof-assisted verification in the conjecture for many cases up to $p\leq 173$ (\autoref{prop:comparison_mst}).
After the first version of this work appeared on the arXiv, Linus Rösler contacted the authors with an unconditional proof of \autoref{conj:equality_of_multisets}, which we added in \autoref{section:appendix}. So the conjecture of Tonini--Yasuda is fully verified:

\begin{theorem}[\autoref{section:comparison} and \autoref{section:appendix}]\label{conj:Tonini_Yasuda}
For all $\mathbf{d}$ we have:
        $$\mst\big(\bA/(\bZ/p,\mathbf{d})\big)
        = \mst\big(\bA/(\alpha_p,\mathbf{d})\big).$$
\end{theorem}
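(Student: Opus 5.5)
The proof will be a comparison of two closed formulas. On the $\alpha_p$ side, \autoref{thm:mst_alpha_p_qt} gives $\mst(\bA/(\alpha_p,\mathbf{d}))$ as a sum over the Farey sequence $F$ of order $\max\mathbf{d}$, but only when $\bD_\mathbf{d}>p-1$. On the $\bZ/p$ side, I will use the formula of \cite{Yasuda_p-cyclic_McKay_correspondence, Tonini_Yasuda_Motivic_McKay_cor_for_alpha_p}. There $\mst(\bA/(\bZ/p,\mathbf{d}))$ is a motivic integral over the moduli of $\bZ/p$-torsors of $k((t))$, stratified by ramification jump $j\ge 1$ with $p\nmid j$. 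It has the shape
$$\bL^d+\frac{\bL^{l-1}(\bL-1)}{\ldots}\sum_{j}\bL^{\,\cdots}\cdot\bL^{-\mathrm{sht}(j)},$$
where the shift $\mathrm{sht}(j)=\sum_{(\sigma,i)}\lceil ij/p\rceil$ up to the normalisation fixed in \autoref{section:comparison}.

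First I will rewrite this $\bZ/p$-sum using the conventions of \autoref{section:comparison}. I write $j=pm+c$ with $1\le c\le p-1$ and sum the geometric series in $m$. Because $\mathrm{sht}(j+p)=\mathrm{sht}(j)+\bD_\mathbf{d}$, each residue class produces the same factor $1/(1-\bL^{-1-\bD_\mathbf{d}+p})$ that appears on the $\alpha_p$ side. After this the identity reduces to an equality of finite sums of monomials in $\bL$. Matching the remaining terms ($\bL^d-\bL^l$ against the $j$-independent part) is a routine check.

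The core step is to show that the multiset of exponents $\{\!\{\text{exponent attached to } c\}\!\}_{c=1}^{p-1}$ on the $\bZ/p$ side equals the multiset $\{\!\{\theta(s/r)+\text{shifts}\}\!\}$ on the $\alpha_p$ side, where each $s/r\in F$ carries multiplicity coming from $\bL^{N_r}-1$. This is \autoref{conj:equality_of_multisets}. My plan is to partition $\{1,\dots,p-1\}$ according to which Farey interval contains $c/p$. Since $p$ is prime and $p>\max\mathbf{d}$ (as $d_\lambda\le p-1$), no $c/p$ with $c<p$ lies in $F$. Within an open Farey interval $(a/b,a'/b')$ the functions $\lfloor i c/p\rfloor$ are constant for all $i\le\max\mathbf{d}$, so $\mathrm{sht}$ is affine in a controlled way. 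I then need to count the $c$ in each interval and compare the resulting sum with the telescoping $(\bL^{N_r}-1)\bL^{\theta(s/r)}$ contributions at the endpoints. I expect this counting and matching to be the main obstacle, since it is exactly where the appendix is needed. To do it I would use the unimodularity $a'b-ab'=1$ of Farey neighbours to pair each interval with its right endpoint $s/r$.

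Finally, the hypothesis $\bD_\mathbf{d}>p-1$ must be removed. When $\bD_\mathbf{d}\le p-1$, both sides diverge, i.e.\ equal $\infty$ in the completed ring. On the $\alpha_p$ side this follows from \autoref{thm_intro:MMP_sing}, because the quotient is then not klt. On the $\bZ/p$ side it follows from Yasuda's corresponding criterion. So in this range the equality holds by the convention $\mst=\infty$, and in the range $\bD_\mathbf{d}>p-1$ the multiset equality gives it.
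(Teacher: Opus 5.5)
Your architecture is the paper's: compare Yasuda's closed formula with \autoref{thm:mst_alpha_p_qt} and reduce to \autoref{conj:equality_of_multisets}. Re-deriving Yasuda's formula by summing over $j=pm+c$, using $\mathrm{sht}(j+p)=\mathrm{sht}(j)+\bD_\mathbf{d}$, is fine. But the reduction contains two errors.

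First, the remaining terms do not match ``routinely''. Outside the sum, the $\bZ/p$ side has only $\bL^d$, so $-\bL^l$ has to be moved across. After dividing by $\bL^{l-1}(\bL-1)/(1-\bL^{-1-\gamma})$ it becomes $1+\bL^{-1}+\dots+\bL^{-\gamma}$. That is exactly the block $\llbracket -\gamma,0\rrbracket$, which must be adjoined to $\{\{c-\mathrm{sht}(c)\}\}_{c=1}^{p-1}$. Without it your two multisets have $p-1$ and $\sum_{s/r\in F}N_r=\bD_\mathbf{d}$ elements respectively, so they can never agree.

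Second, for $\bD_\mathbf{d}\le 1$ nothing diverges. Both quotients are regular and $\mst=\bL^d$ on both sides; note also that \autoref{thm:MMP_sing_alpha_p_qt} assumes $\bD_\mathbf{d}\ge 2$. The non-klt range, where $\mst$ is undefined on both sides, is $2\le\bD_\mathbf{d}\le p-1$.

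The real gap is that the multiset identity, which is the entire content of the appendix, is only announced. Your partition does work. It is dual to the appendix, which instead splits $F$ along the intervals $[j/p,(j+1)/p)$. However, unimodularity of Farey neighbours is not the relevant input.

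Here is how to finish it. Write $F=\{f_1<\dots<f_K=1\}$, $f_0=0$, $f_k=s_k/r_k$, $t(y)=\sum_{(\sigma,i)\in\mathbf{I}^*}\lfloor iy\rfloor$ and $t_k=t(f_k)$. Set $P_0=0$, $P_k=\lfloor pf_k\rfloor$ for $0<k<K$, and $P_K=p-1$. The facts you need are:
\begin{itemize}
\item $t$ is constant on $[f_{k-1},f_k)$ and jumps by exactly $N_{r_k}$ at $f_k$, because $\lfloor iy\rfloor$ jumps at $s/r$ if and only if $r\mid i$.
\item Hence $\mathrm{sht}(c)=t(c/p)=t_{k-1}$ for $c\in\llbracket P_{k-1}+1,P_k\rrbracket$. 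As you observed, no $c/p$ lies in $F$, so these blocks partition $\llbracket 1,p-1\rrbracket$.
\item $\theta(f_k)=1+P_k-t_k$ for every $k$, including $k=K$.
\end{itemize}
Put $x_k=P_k-t_k$ and $U_k=P_k-t_{k-1}$, so that $x_0=0$ and $x_K=-\gamma-1$. Then:
\begin{itemize}
\item the $c$'s with $c/p\in(f_{k-1},f_k)$ contribute $\llbracket x_{k-1}+1,U_k\rrbracket$ to the left side;
\item the fraction $f_k$ contributes $\llbracket x_k+1,U_k\rrbracket$ to the right side;
\item the extra block is $\llbracket -\gamma,0\rrbracket=\llbracket x_K+1,x_0\rrbracket$.
\end{itemize}
We have $x_{k-1}\le U_k$, $x_k<U_k$ and $x_K<x_0$. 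So you may write $\mathds{1}_{\llbracket a+1,b\rrbracket}=\mathds{1}_{\le b}-\mathds{1}_{\le a}$, where $\mathds{1}_{\le b}$ is the indicator of $\{n\in\bZ\mid n\le b\}$. Both sides then telescope to $\sum_{k=1}^K(\mathds{1}_{\le U_k}-\mathds{1}_{\le x_k})$.

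This route is shorter than the appendix's: it needs no reflection formula for $\mathrm{sht}$, no runs $a_m\le j\le b_m$, and no monotonicity lemma. But none of it is in your proposal, so as written the proof stops exactly where the real work begins.
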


The verification of this conjecture raises other questions regarding the degeneration constructed in \autoref{section:degeneration}, see \autoref{question:simultaneous_resolution}.

\subsection{Acknowledgments}
This work starting during the conference-workshop ``P-adic and Characteristic p methods in Algebraic Geometry", held at the Bernoulli Center on EPFL campus, Switzerland, on 2-13 June 2025. We thank the organizers of that event for the opportunity of our meeting and the excellent working conditions.
The second author would like to thank Yudai Yamamoto for helpful conversations.
During this work, 
the second author 
was supported by JSPS KAKENHI Grant Numbers JP21H04994, JP23K25767 and JP24K00519.

\section{Preliminaries}

\subsection{Notations and conventions}
We work over an algebraically closed field $k$ of characteristic $p>0$. We follow the usual terminology of birational geometry, as in \cite{Kollar_Singularities_of_the_minimal_model_program}. We refer, for example, to \cite{Yasuda_Motivic_integration_over_DM_stacks} for the terminology used when working with DM stacks.


We use the following notation for tame cyclic quotients:
\begin{definition}\label{def:cyclic_qt}
Let $(\Spec(\sO),\sum_{i=1}^t \beta_iB_i)$ be a $\bQ$-factorial normal pair where $\sO$ is a local Noetherian $k$-algebra of dimension $t$, and where the $\beta_i$ are rational number. Let $n$ be a positive integer, let $a_1,\dots,a_t$ be finitely many, non-necessarily distinct elements of $\bZ$ . We say that $(\Spec(\sO),\sum_{i=1}^t \beta_iB_i)$ is of type
        $$\frac{1}{n}(a_1,\dots,a_t)$$
if there exists a $k$-linear $\mu_n$-action on $R=k\llbracket x_1,\dots, x_t\rrbracket$ such that:
    \begin{itemize}
        \item the maximal ideal of $R$ belongs to the fixed locus of the action;
        \item $x_i$ is an eigenfunction of $\mu_n$ of weight $[a_i]_n\in \bZ/n$;
        \item there is a $k$-linear isomorphism $R^{\mu_n}\cong \widehat{\sO}$;
        \item under the above isomorphism, the ideal of the divisor $B_i\otimes \Spec(\widehat{\sO})$ is equal to $(x_{i}R)\cap R^{\mu_n}$ for each $i$.
    \end{itemize}
\end{definition}

\subsection{Deligne--Mumford stacks}\label{section:DM_stacks}
We collect a few definitions and facts about DM stacks. Let $\sX$ be a DM stack that is reduced separated of finite type over $k$, with coarse moduli space $c\colon \sX\to X$.

\begin{definition}
A morphism $f\colon \sX\to \sY$ of DM stacks that is \emph{stabilizer-preserving} if: for every geometric point $\bar{x}\colon \Spec(\Omega)\to \sX$, the induced morphism of groups
        $$f\colon \Aut_\sX(\bar{x})\to \Aut_\sY(f\circ \bar{x})$$
is an isomorphism.
\end{definition}

Let us recall the following well-known \'{e}tale-local description of $\sX$ (see for example \cite[Theorem 2.12]{Olsson_Hom_stacks}). If $\bar{x}$ is a geometric point of $\sX$, then there exists a cartesian diagram
        \begin{equation}\label{eqn:local_presentation_DM_stacks}
            \begin{tikzcd}
            {[}\Spec(A)/G{]} \arrow[r, "\varpi"] \arrow[d] &
            \sX \arrow[d, "c"] \\
            \Spec(A^G)\arrow[r, "\pi"] & X
            \end{tikzcd}
        \end{equation}
where:
    \begin{itemize}
        \item $\pi$ and $\varpi$ are \'{e}tale and affine;
        \item $G=\Aut_\sX(\bar{x})$ acts on the local ring $(A,\fm)$;
        \item the induced $\Spec(A/\fm)\to \sX$ is a representative of $\bar{x}$, and $\varpi$ is stabilizer-preserving (in particular, $\Stab_G(\fm)=G$).
    \end{itemize}

For the notions of $\bQ$-Weil divisors of $\sX$ and (in case $\sX$ is normal) of the canonical divisor $K_\sX$, we refer to \cite[\S 4.1]{Yasuda_Motivic_integration_over_DM_stacks}. 
\begin{notation}
Let $\sD$ be a prime divisor on $\sX$, with generic point $\xi$. We write $r_\sX(\sD)=|\Aut_\sX(\xi)|$ the order of the residual gerbe at $\xi$.
\end{notation}

\begin{proposition}\label{prop:pullback_div_from_coarse_mod}
Suppose that $\sX$ is normal and \emph{tame}. Then:
    \begin{enumerate}
        \item $c^*K_X=K_\sX - \sum_{\mathcal{D}} (r_\sX(\mathcal{D})-1)\cdot\mathcal{D}$, where $\mathcal{D}$ runs through the prime divisors of $\sX$.
        \item If $E$ is a prime divisor of $X$, and $\mathcal{E}$ is the unique prime divisor of $\sX$ above $E$, we have $c^*E=r_\sX(\mathcal{E})\cdot \mathcal{E}$. 
    \end{enumerate}
\end{proposition}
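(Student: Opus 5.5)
Both statements are codimension-one assertions about Weil divisors on normal schemes and stacks, so I will reduce to the generic point of each prime divisor, working étale locally. Let $\sD$ be a prime divisor of $\sX$ with generic point $\xi$ and $D=c(\sD)$ its image, a prime divisor of $X$; conversely every prime divisor $E$ of $X$ has a unique prime divisor $\mathcal{E}$ above it, since $c$ is a universal homeomorphism. Both sides of (1) and (2) are determined by their coefficients along each $\sD$, and these coefficients may be computed after étale base change. So I will use the local presentation \eqref{eqn:local_presentation_DM_stacks} at a geometric point over $\xi$, then localize at height one primes. This reduces to the following situation: $A$ is a DVR (essentially of finite type over $k$, after strict henselization) with an action of $G=\Aut_\sX(\xi)$ fixing the closed point, $|G|=r:=r_\sX(\sD)$ prime to $p$ by tameness, $\sX=[\Spec A/G]$, and $X=\Spec A^G$. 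The map $[\Spec A/G]\to \sX$ is étale and stabilizer-preserving, so it does not change $r$, $K$, or pullbacks.

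\textbf{Structure of the local model.} $G$ acts faithfully on $\Spec A$, since $\sX$ is an algebraic space generically (reduced DM stack, and I take the generic stabilizer to be trivial, as implicit in the paper's conventions; otherwise replace $r$ by the order relative to the generic stabilizer). It fixes the closed point and stabilizes the generic point $\xi$. Hence $A^G\subset A$ is an extension of DVRs of degree $r$. Because $k$ is algebraically closed and we are at a geometric point, the residue field extension is trivial, and the inertia is the whole of $G$. Tameness gives tame ramification, so the ramification index is $e=r$ and $G\cong\mu_r$ is cyclic. After completion we can choose a uniformizer $t$ of $A$ with $g\cdot t=\zeta_g t$, and $s=t^r$ is a uniformizer of $A^G$.

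\textbf{Part (2).} $c^*E$ is computed by pulling back the local equation $s$ of $E$ to the atlas $\Spec A$. There $s=t^r$ vanishes to order $r$ along $\mathcal{E}=V(t)$. Hence $c^*E=r\,\mathcal{E}$.

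\textbf{Part (1).} By \cite[\S4.1]{Yasuda_Motivic_integration_over_DM_stacks}, $K_\sX$ is represented on the atlas by $K_{\Spec A}$, via $G$-invariant differentials on $A$. The generator of $\omega$ for $A^G$ is $ds\wedge\omega'$, and $ds=r t^{r-1}dt$ with $r$ invertible in $k$ by tameness. So the pullback of $K_X$ equals $K_{\Spec A}-(r-1)\mathcal D$ locally, which is the tame Riemann–Hurwitz formula. The equality is one of divisor classes computed with compatible rational top forms, and gluing over all divisors gives (1).

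The \textbf{main obstacle} is making the conventions precise: checking that Yasuda's $K_\sX$ and $c^*$ really agree with the atlas computation, that the lifting to $\Spec A$ commutes with the étale localization, and that residual gerbe order equals ramification index, which relies on trivial generic stabilizers. I would handle the last point first, since everything else is the standard tame DVR computation.
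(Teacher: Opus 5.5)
Your proposal is correct and follows the paper's route. The paper likewise reduces, \'{e}tale-locally over $X$ via the local presentation, to $\sX=[\Spec(A)/G]$ with $|G|$ invertible in $k$, and then calls the two formulas well known; your computation at the generic point of each divisor ($s=t^r$, $ds=rt^{r-1}dt$ with $r$ invertible) is exactly that standard tame Riemann--Hurwitz check.

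Your caveat that the generic stabilizer must be trivial is right, and the paper assumes it tacitly. The trivial residue field extension is better justified by the stabilizer-preserving property of the presentation, which makes $G$ act trivially on the residue field, than by $k$ being algebraically closed: the residue field at the generic point of a divisor is not algebraically closed.
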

\begin{proof}
This can be checked \'{e}tale-locally over $X$. So by \autoref{eqn:local_presentation_DM_stacks} we reduce to the case $\sX=[\Spec(A)/G]$ where $G$ is a finite abstract group of order invertible in $k$, in which case the two formulas are well-known.
\end{proof}

Finally, let us extend some standard vocabulary from birational geometry to DM stacks. Suppose that $\sX$ is normal and let $\mathcal{B}$ be a $\bQ$-Weil divisor on $\sX$ such that $K_\sX+\mathcal{B}$ is $\bQ$-Cartier. Let $f\colon \sY\to \sX$ be a birational proper \emph{representable} morphism from a normal DM stack. Then $f$ is an isomorphism over a big open subset of $\sX$ \cite[Proposition 4.2]{Kresch_Tschinkel_Birat_geom_DM_stacks} so the function $f^{-1}_*$ is well-defined on $\bQ$-Weil divisors of $\sX$. We can write
        \begin{equation}\label{eqn:discrep_for_DM_stacks}
        K_\sY+f^{-1}_*\mathcal{B}=f^*(K_\sX+\mathcal{B})
        +\sum_\mathcal{D}a(\mathcal{D};\sX, \mathcal{B})\cdot \mathcal{D},
        \quad a(\mathcal{D};\sX, \mathcal{B})\in \bQ,
        \end{equation}
where $\mathcal{D}$ runs through the $f$-exceptional divisors. The rational number $a(\mathcal{D};\sX,\mathcal{B})$ is called the \emph{discrepancy} of $\mathcal{D}$ with respect to $(\sX,\mathcal{B})$. The basic properties of discrepancies of schematic pairs also hold in this setting. If $U\to \sX$ is any \'{e}tale morphism from a scheme, then the formula \autoref{eqn:discrep_for_DM_stacks} pullbacks to the usual discrepancy formula for $(U,\mathcal{B}_U)$ along $f_U\colon \sY\times_\sX U \to U$: so if $D$ is an $f_U$-exceptional divisor, with image $\mathcal{D}$ in $\sY$, we have
        \begin{equation}\label{eqn:discrep_divisor_stacks}
            a(D;U,\mathcal{B}_U)=a(\mathcal{D};\sX,\mathcal{B}).
        \end{equation}

One way to construct such models of $\sX$ is as follows.
Let $g\colon Y\to X$ be a proper birational morphism from an irreducible normal variety $Y$. Let $\sY$ be the normalization of the main component of $\sX\times_XY$, with induced morphisms
    $$\begin{tikzcd}
        \sY\arrow[r, "c'"] \arrow[d, "f"] & Y \arrow[d, "g"] \\
        \sX\arrow[r, "c"] & X.
    \end{tikzcd}$$
Let $B$ be the $\bQ$-Weil divisor supported on $c(\mathcal{B})$ determined by the condition $c^*(K_X+B)=K_\sX+\mathcal{B}$.
Since $K_\sX+\mathcal{B}$ is $\bQ$-Cartier, so is $K_X+B$. In many cases we can compare the discrepancies of $(X,B)$ and $(\sX,\mathcal{B})$ along $g$ and $f$ respectively:

\begin{proposition}\label{prop:birat_model_of_stack}
In the above situation, $f$ is birational, proper and representable, and $c'$ is the coarse moduli morphism of $\sY$. Assume that $\sX$ is tame. Then $\sY$ is also tame and for every $f$-exceptional prime divisor $\mathcal{D}$ with image $D=c'(\mathcal{D})$, we have
        \begin{equation}\label{eqn:discrep_of_stacks_and_mod}
            a(\mathcal{D};\sX,\mathcal{B})+1= 
            r_\sY(\mathcal{D})\cdot
            [a(D;X,B)+1].
        \end{equation}
\end{proposition}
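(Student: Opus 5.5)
We first settle the formal properties, then reduce the discrepancy identity to a computation at the generic point of $\mathcal{D}$.

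\textbf{Formal properties.} Since $g$ is an isomorphism over a dense open $V\subset X$, the fibre product $\sX\times_XY$ is isomorphic to $\sX$ over $c^{-1}(V)$. Hence its main component, and its normalization $\sY$, is birational to $\sX$.

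Properness of $f$ follows because $\sX\times_XY\to\sX$ is the base change of the proper map $g$, and normalization of a closed substack is finite.

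Representability holds because $\sX\times_XY\to\sX$ is a base change of a morphism of schemes, hence representable, and closed immersions and normalization are representable.

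For the coarse space, coarse moduli formation commutes with flat base change and, for tame stacks, with arbitrary base change. Working \'etale-locally via \autoref{eqn:local_presentation_DM_stacks}, we reduce to $\sX=[\Spec(A)/G]$ with $G$ of order invertible in $k$. Then $\sX\times_XY=[\Spec(A)\times_{\Spec A^G}Y/G]$. Taking the main component and normalizing gives $[\widetilde{Z}/G]$, where $\widetilde{Z}$ is the normalization of the main component $Z$ of $\Spec(A)\times_{A^G}Y$. Its coarse space is $\widetilde{Z}/G$, which is normal and finite birational over $Y$. Since $Y$ is normal, Zariski's main theorem gives $\widetilde{Z}/G\cong Y$, so $c'$ is the coarse moduli map.

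\textbf{Tameness.} Stabilizers of $\sY$ inject into those of $\sX$ because $f$ is representable. So their orders are invertible in $k$, and $\sY$ is tame.

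\textbf{Discrepancy formula.} The plan is to compare canonical divisors using \autoref{prop:pullback_div_from_coarse_mod}(1) on both $\sX$ and $\sY$. Let $B_Y$ denote the strict transform $g^{-1}_*B$. On $Y$ we have
$$K_Y+B_Y=g^*(K_X+B)+\sum_D a(D;X,B)\,D.$$
Pull this back along $c'$:
\begin{itemize}
\item $c'^*g^* = f^*c^*$, so $c'^*g^*(K_X+B)=f^*(K_\sX+\mathcal{B})$.
\item By \autoref{prop:pullback_div_from_coarse_mod}(1), $c'^*K_Y=K_\sY-\sum(r_\sY-1)\mathcal{D}$.
\item By \autoref{prop:pullback_div_from_coarse_mod}(2), $c'^*D=r_\sY(\mathcal{D})\,\mathcal{D}$.
\end{itemize}

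Now compare coefficients at an $f$-exceptional $\mathcal{D}$. Its coefficient in $f^{-1}_*\mathcal{B}$ is zero, and its coefficient in $c'^*B_Y$ is also zero. The stacky defect terms at non-exceptional divisors match on both sides, since $c^*(K_X+B)=K_\sX+\mathcal{B}$ and $f$ is an isomorphism there.

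At $\mathcal{D}$, the coefficient of $K_\sY-f^*(K_\sX+\mathcal{B})$ is $a(\mathcal{D};\sX,\mathcal{B})$. The computation above shows it also equals
$$(r_\sY(\mathcal{D})-1)+r_\sY(\mathcal{D})\,a(D;X,B).$$
Rearranging gives \autoref{eqn:discrep_of_stacks_and_mod}.

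\textbf{Main obstacle.} The main difficulty is justifying the identification of $c'$ with the coarse moduli map. In particular, one must check that normalization commutes with passing to coarse spaces in the tame case. We plan to handle this by the \'etale-local quotient presentation together with the exactness of taking $G$-invariants when $|G|$ is invertible in $k$. Invariants of a normal domain under a finite group are normal, which is the input needed for the Zariski main theorem step.
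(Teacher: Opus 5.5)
Your proposal is correct and follows essentially the paper's proof: the discrepancy identity is obtained exactly as there, by comparing the coefficients of $\mathcal{D}$ in $f^*(K_\sX+\mathcal{B})=c'^*g^*(K_X+B)$ using \autoref{prop:pullback_div_from_coarse_mod}. The only difference is how you identify $c'$ as the coarse moduli map. The paper argues globally: the coarse space $Y'$ of $\sY$ is normal, and $Y'\to Y$ is proper, quasi-finite and birational, hence finite and an isomorphism. This avoids your \'{e}tale-local reduction and does not use tameness, which matters because in the statement that claim comes before the tameness hypothesis. Your local argument also works without tameness, since invariants of a normal domain under any finite group are normal.
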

\begin{proof}
The morphism $c$ is a universal homeomorphism, and so is its base-change $\sX\times_XY\to Y$. In particular $\sX\times_YX$ is irreducible and $\sY$ is its normalization. The projection $\sX\times_XY\to \sX$ and the normalization morphism are  birational, proper and representable, so $f$ has the same properties. 

Let $\sY\to Y'$ be the coarse moduli of $\sY$: then $Y'$ is normal and we have a commutative diagram
        $$\begin{tikzcd}
        \sY\arrow[r] \arrow[d] & \sX\times_XY \arrow[d] \\
        Y' \arrow[r, "\gamma"] & Y
        \end{tikzcd}$$
where $\gamma$ is proper and quasi-finite, hence finite, and also birational. As $Y$ is normal, $\gamma$ is an isomorphism.

Assume that $\sX$ is tame: as $f$ is representable, $\sY$ is also tame. Assume furthermore that $K_\sX+\mathcal{B}$ is $\bQ$-Cartier. Then $K_X+B$ is $\bQ$-Cartier. We have $c^*(K_X+B)=K_\sX+\mathcal{B}$ and so 
        $$f^*(K_\sX+\mathcal{B})=c'^*g^*(K_X+B).$$
Around the generic point of an $f$-exceptional divisor $\mathcal{D}$ we have
        $$f^*(K_\sX+\mathcal{B})=K_\sY-a(\mathcal{D};\sX,\mathcal{B})\cdot \mathcal{D}$$
and
    \begin{eqnarray*}
        c'^*g^*(K_X+B) &=& c'^*\left(
        K_Y+g^{-1}_*B-a(D;X,B)\cdot D
        \right) \\
        &=& K_\sY-(r_\sY(\mathcal{D})-1)\cdot \mathcal{D}
        -a(D;X,B)\ r_\sY(\mathcal{D}) \cdot \mathcal{D}
    \end{eqnarray*}
where for the last equality we have used \autoref{prop:pullback_div_from_coarse_mod} and the fact that $\mathcal{D}$ is not contained in the support of $c'^*(g^{-1}_*B)$. Formula \autoref{eqn:discrep_of_stacks_and_mod} follows by comparing the two expressions.
\end{proof}

\subsection{1-foliations}\label{section:foliations}
We refer to \cite{Posva_Singularities_quotients_by_1-foliations, Posva_Resolution_1-foliations} and the references therein for the basic theory of $1$-foliations on schemes and Deligne--Mumford stacks. We will need some further observations. In the following, we let $\sX$ be a normal DM stack with affine diagonal that is seprated of finite type over $k$, we let $\sF$ be a $1$-foliation on $\sX$, and we let $q\colon \sX\to \sX/\sF$ denote the quotient morphism. Recall that $q$ is a representable, finite universal homeomorphism of height one.

\begin{proposition}\label{prop:qt_is_stab_preserving}
The quotient morphism $q\colon \sX\to \sX/\sF$ is stabilizer-preserving.
\end{proposition}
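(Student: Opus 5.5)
We need to show that for every geometric point $\bar{x}\colon \Spec(\Omega)\to\sX$ the map $\Aut_\sX(\bar x)\to \Aut_{\sX/\sF}(q\circ\bar x)$ is an isomorphism. Since $q$ is representable, this map is already injective: representability of a morphism of DM stacks is equivalent to injectivity on automorphism groups of geometric points. It remains to prove surjectivity.

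For surjectivity, consider the fiber of $q$ over $q\circ\bar x$. Set $\bar y=q\circ\bar x$ and $H=\Aut_{\sX/\sF}(\bar y)$. Choose a representative $\Spec(\Omega)\to\sX/\sF$ and form the fiber product $P=\sX\times_{\sX/\sF}\Spec(\Omega)$. This is an algebraic space because $q$ is representable, and it carries an action of $H$ through the second factor. The standard description of stabilizers then applies: the $\Omega$-points of $P$ lying over $\bar x$ form one $H$-orbit, and the stabilizer of such a point is exactly the image of $\Aut_\sX(\bar x)$ in $H$. Now $q$ is a finite universal homeomorphism, so $P$ is finite over $\Omega$ with a single point. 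Its reduction $P_{\mathrm{red}}$ is therefore $\Spec(\Omega)$, because $\Omega$ is algebraically closed and a purely inseparable finite extension of it is trivial. Hence $P(\Omega)$ is a single point. That point is then fixed by all of $H$, so the image of $\Aut_\sX(\bar x)$ is all of $H$, which gives surjectivity.

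The main care point is the orbit–stabilizer description of $P(\Omega)$: that $\Omega$-points of $P$ correspond to pairs $(\bar x',\phi)$ with $\phi\colon q\bar x'\cong\bar y$, modulo $\Aut(\bar x')$. I would verify this directly from the $2$-fiber product definition. Equivalently, one can use \autoref{eqn:local_presentation_DM_stacks} for $\sX/\sF$ at $\bar y$ to reduce to $\sX/\sF=[\Spec(B)/H]$ with $H$ fixing the closed point. Then $\sX=[\Spec(A)/H]$ with $\Spec(A)\to\Spec(B)$ a finite $H$-equivariant universal homeomorphism. The unique closed point of $\Spec(A)$ over the fixed point must itself be $H$-fixed, and its stabilizer $H$ is $\Aut_\sX(\bar x)$. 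This local version is the argument I would actually write down, since it avoids the general fiber-product bookkeeping.
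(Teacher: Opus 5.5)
Your proof is correct, but it takes a different route from the paper.

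\textbf{The paper's route.} It starts from a local presentation $[\Spec(A)/G]\to\sX$ at $\bar x$, with $G=\Aut_\sX(\bar x)$. The explicit construction of $\sX/\sF$ from \'etale charts then gives an \'etale chart $[\Spec(B)/G]\to\sX/\sF$, where $B=A^{\sF_A}$ and the groupoid is the action groupoid $G\times\Spec(B)$. The statement thus reduces to showing that $G$ fixes the closed point of $\Spec(B)$ and acts trivially on its residue field $\kappa'$. This follows from the $G$-equivariant inclusion $\kappa'\hookrightarrow\kappa$.

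\textbf{Your route.} Your global argument proves a general fact: a representable, universally injective morphism of DM stacks is stabilizer-preserving. Injectivity comes from representability. Surjectivity comes from the fibre $P$: its $\Omega$-points over $\bar x$ form the coset space $H/\mathrm{im}\,\Aut_\sX(\bar x)$, while universal injectivity forces $P(\Omega)$ to be a singleton.

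\textbf{Comparison.} Your argument uses only the two properties of $q$ recalled just before the statement, and needs nothing about how $\sX/\sF$ is built. The paper's argument is more hands-on, but it yields the equivariant local picture of $q$ (essentially \autoref{prop:qt_is_stab_preserving_II}), which is used later in the chart computations.

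\textbf{If you write the local version instead.} Two points need to be stated.
\begin{itemize}
\item Saying that the unique point $a$ of $\Spec(A)$ over the closed point $b$ of $\Spec(B)$ is $H$-fixed is not quite enough. Automorphisms of a geometric point of $[\Spec(A)/H]$ are the elements acting trivially on $\kappa(a)$. This does hold, because $H$ acts trivially on $\kappa(b)$ and $\kappa(a)/\kappa(b)$ is purely inseparable, but it must be said.
\item You also need the easy fact that the base change $[\Spec(A)/H]\to\sX$ of a stabilizer-preserving chart is again stabilizer-preserving.
\end{itemize}
Your global version avoids both points because it works with $\Omega$-points directly.
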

\begin{proof}
Let $\bar{x}$ be a geometric point of $\sX$. Let $G=\Aut_\sX(\bar{x})$, and $[\Spec(A)/G]\to \sX$ be a local presentation around $\bar{x}$ as in \autoref{eqn:local_presentation_DM_stacks}. If $\kappa$ is the residue field of $A$, then $G$ acts trivially on $\kappa$. Let $\sF_A$ be the pullback of $\sF$ to $\Spec(A)$: it is a $1$-foliation which is $G$-stable. Then the $G$-action descends to the local ring $(B=A^{\sF_A}, \mathfrak{n}=\fm\cap B)$. By construction of $\sX/\sF$, we have a cartesian diagram with \'{e}tale vertical morphisms
        $$\begin{tikzcd}
        \Spec(A) \arrow[r]\arrow[d] & \Spec(B) \arrow[d] \\
        \sX\arrow[r, "q"] & \sX/\sF
        \end{tikzcd}$$
and the induced groupoid $\Spec(B)\times_{\sX/\sF}\Spec(B)\rightrightarrows \Spec(B)$
is isomorphic over $\Spec(B)$ to the group action $G\times \Spec(B) \rightrightarrows \Spec(B)$ (see \cite[\S 3.2]{Posva_Resolution_1-foliations}, in particular Example 3.2.7 therein).
So if $\kappa'$ is the residue field of $B$, then $\Spec(\kappa')\to \sX/\sF$ is a representative of $f\circ\bar{x}$ and we have to show that the stabilizer of the closed point of $\Spec(B)$ inside $G$ is the whole group. This amounts to show that the induced action of $G$ on $\kappa'$ is trivial. The inclusion $B\hookrightarrow A$ is $G$-equivariant and $\fm\cap B=\fn$: so we get a $G$-equivariant inclusion $\kappa'\hookrightarrow \kappa$. As $G$ acts trivially on $\kappa$, it acts trivially on the sub-extension $\kappa'$.
\end{proof}

Another way to express the above result is the following:
\begin{proposition}\label{prop:qt_is_stab_preserving_II}
Let $G$ be an abstract group acting on a normal $k$-algebra $A$, and let $\mathscr{G}$ be a $G$-stable $1$-foliation on $U=\Spec(A)$. Then the quotient morphism $q\colon U\to U/\mathscr{G}$ is $G$-equivariant, and $\Stab_G(u)=\Stab_G(q(u))$ for every point $u\in U$.
\hfill \qedsymbol
\end{proposition}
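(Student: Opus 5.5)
The statement is \autoref{prop:qt_is_stab_preserving_II}: $G$ acts on a normal $k$-algebra $A$, $\mathscr{G}$ is a $G$-stable $1$-foliation on $U=\Spec(A)$, and $q\colon U\to U/\mathscr{G}$ is the quotient. I would argue directly with rings rather than through the stacky statement \autoref{prop:qt_is_stab_preserving}.

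First I would check equivariance. Write $B=A^{\mathscr{G}}$ for the ring of functions annihilated by all local sections of $\mathscr{G}$, so that $U/\mathscr{G}=\Spec(B)$. Take $g\in G$, $b\in B$ and a derivation $D\in\mathscr{G}$. Since $\mathscr{G}$ is $G$-stable, the conjugate $g^{-1}\circ D\circ g$ again lies in $\mathscr{G}$. Hence
$$D(g\cdot b)=g\cdot\big((g^{-1}Dg)(b)\big)=0.$$
So $B$ is $G$-stable, $G$ acts on $B$, and the inclusion $B\hookrightarrow A$ is $G$-equivariant. It follows that $q$ is $G$-equivariant.

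Next I would compare stabilizers. The inclusion $\Stab_G(u)\subseteq\Stab_G(q(u))$ follows at once from equivariance. For the reverse inclusion, let $g$ fix $q(u)$, and put $u'=g\cdot u$. Equivariance gives $q(u')=g\cdot q(u)=q(u)$. Now $q$ is a universal homeomorphism: concretely, $A^p\subseteq B$, so every prime of $B$ has exactly one prime of $A$ above it, namely its radical in $A$. In ring terms, the prime $\mathfrak{p}\subset A$ of $u$ is recovered from $\mathfrak{q}=\mathfrak{p}\cap B$ as $\mathfrak{p}=\{a\in A: a^p\in\mathfrak{q}\}$. Since $q$ is injective, $u'=u$, so $g\in\Stab_G(u)$.

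The only point needing care is the conjugation-invariance step in the first paragraph, which is exactly what "$G$-stable" means. I should also make sure $U/\mathscr{G}$ is the spectrum of the invariant ring $B$; this holds because $U$ is affine. With those two points in place the argument is elementary, and I expect no real obstacle. If one also wants the statement at the level of residue fields (that $\Stab_G(u)$ acts trivially on $\kappa(u)$ exactly when it acts trivially on $\kappa(q(u))$), I would note that $\kappa(u)^p\subseteq\kappa(q(u))\subseteq\kappa(u)$ and that raising to the $p$-th power commutes with the action, which mirrors the residue-field argument in \autoref{prop:qt_is_stab_preserving}.
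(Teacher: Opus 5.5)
Your proof is correct. Its skeleton matches the paper's proof of \autoref{prop:qt_is_stab_preserving}, of which this statement is presented as a reformulation: descend the $G$-action to $B=A^{\mathscr{G}}$ using $G$-stability of $\mathscr{G}$, then compare stabilizers through the $G$-equivariant inclusion of residue fields $\kappa(q(u))\hookrightarrow\kappa(u)$.

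The difference is that the paper only spells out one inclusion. An element acting trivially on $\kappa(u)$ acts trivially on the subfield $\kappa(q(u))$, which gives $\Stab_G(u)\subseteq\Stab_G(q(u))$. In the stacky setting this suffices, because there $G$ is already the full automorphism group of $\bar{x}$. For an arbitrary point $u$, as in the present statement, the reverse inclusion is also needed. Your use of $A^p\subseteq B$ supplies it cleanly: it gives injectivity of $q$ on points, and it gives $\kappa(u)^p\subseteq\kappa(q(u))$. Combined with injectivity of Frobenius on the field $\kappa(u)$, the relation $g(a)^p=a^p$ forces $g(a)=a$.

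One adjustment. In the paper, $\Stab_G$ means the stabilizer of a geometric point, i.e.\ the elements fixing $u$ \emph{and} acting trivially on $\kappa(u)$. This is visible in the sentence ``this amounts to show that the induced action of $G$ on $\kappa'$ is trivial'' in the proof of \autoref{prop:qt_is_stab_preserving}. So your final residue-field paragraph is not an optional add-on but the actual content of the statement. Move it into the main argument, and phrase it elementwise for $g$ in the common set-theoretic stabilizer that you established first.
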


\begin{remark}
Assume that $\sX$ and $\sF$ are regular and let $x\in \sX$ be a point with $\Stab_\sX(x)=\mu_n$. Then $X$ and the coarse moduli space of $\sX/\sF$ both have singularities of type $\frac{1}{n}(\dots)$ at the image of $x$, but the weights are usually different. For example, take $\sX=[\bA^2_{xy}/\mu_n]$ with weights $(m_x,m_y)$ and $\sF_{\bA^2}$ be the $1$-foliation on $\bA^2$ defined by $\partial_x$. Then $\sF$ descends to a regular $1$-foliation $\sF$ on $\sX$, and the coarse moduli space of $\sX/\sF$ has a singularity of type $\frac{1}{n}(m_x^p,m_y)$ at the image of the origin. More examples will be studied in \autoref{prop:sing_partial_resol}.
\end{remark}

In what follows we extend the discrepancy formulas of \cite[(4.2.6.f)]{Posva_Singularities_quotients_by_1-foliations} to the setting of stacks. This will be useful for the proof of \autoref{thm:MMP_sing_alpha_p_qt}.

We define the Weil divisor class $K_\sF$ on $\sX$ as follows: for every \'{e}tale morphism $f\colon U\to \sX$, with $U$ a scheme, we let $f^*K_\sF=K_{f^*\sF}$. 

Let $\mathcal{D}$ be a prime divisor on $\sX$. We say that $\mathcal{D}$ is $\sF$-invariant if: for every \'{e}tale morphism $f\colon U\to \sX$ with $U$ a scheme, every (equivalently, some) prime divisor $D$ of $U$ mapping to $\mathcal{D}$ is $f^*\sF$-invariant. We let
        $$\epsilon_\sF(\mathcal{D})=
        \begin{cases}
            0 & \text{if }\mathcal{D}\text{ is }\sF\text{-invariant,}\\
            1 & \text{otherwise.}
        \end{cases}$$
Given a $\bQ$-Weil divisor $\Delta=\sum_i a_i\Delta_i$ on $\sX$, we let
        \begin{equation}\label{eqn:natural_divisor_on_qt}
        \Delta_{\sX/\sF}=\sum_i a_i p^{-\epsilon_\sF(\Delta_i)} q(\Delta_i).
        \end{equation}
\begin{proposition}[Log adjunction formula]\label{prop:log_adj_formula}
With the notations as above, we have
    $$q^*(K_{\sX/\sF}+\Delta_{\sX/\sF})=K_\sX+\Delta+(p-1)K_\sF.$$
\end{proposition}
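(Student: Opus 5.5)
The identity is local on $\sX/\sF$ in the étale topology, so the plan is to reduce it to the known formula for schemes \cite[(4.2.6.f)]{Posva_Singularities_quotients_by_1-foliations}. Both sides are $\bQ$-Weil divisors (or classes) on $\sX$, and such divisors are determined by their pullbacks along étale charts from schemes. Moreover, $q$ is finite, so $q^*$ of a $\bQ$-Weil divisor is defined by pulling back on the big regular locus and taking closure. It therefore suffices to show that the two sides agree after pullback along a suitable étale cover.

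First, choose an étale surjection $V\to \sX/\sF$ from a scheme; locally one can use the presentations $\Spec(B)\to \sX/\sF$ from the proof of \autoref{prop:qt_is_stab_preserving}. Set $U=\sX\times_{\sX/\sF}V$. Since $q$ is representable, $U$ is a scheme, and $U\to \sX$ is étale. Quotients by $1$-foliations commute with flat base change, so $U\to V$ is the quotient of $U$ by the pulled-back foliation $\sF_U$, that is, $V=U/\sF_U$.

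Next, verify three compatibilities. By definition $K_\sF$ pulls back to $K_{\sF_U}$. The canonical divisors $K_\sX$ and $K_{\sX/\sF}$ pull back to $K_U$ and $K_V$, since $K$ is compatible with étale maps. For the boundary, a prime component $\Delta_i$ pulls back to a reduced sum of prime divisors $D_{ij}$ on $U$, because the map is étale and hence unramified. By definition of $\sF$-invariance, each $D_{ij}$ is $\sF_U$-invariant if and only if $\Delta_i$ is $\sF$-invariant, so $\epsilon_{\sF_U}(D_{ij})=\epsilon_\sF(\Delta_i)$. The quotient $q_U$ is a universal homeomorphism, so $q_U(D_{ij})$ is exactly the pullback of $q(\Delta_i)$ to $V$, with multiplicity one since $V\to\sX/\sF$ is étale. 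Together these give that $(\Delta_{\sX/\sF})|_V=(\Delta|_U)_{U/\sF_U}$ as defined in \autoref{eqn:natural_divisor_on_qt}.

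Finally, the scheme case of the formula gives $q_U^*(K_V+\Delta_V)=K_U+\Delta_U+(p-1)K_{\sF_U}$. Pullback commutes with étale base change, so this is the pullback to $U$ of the claimed identity, which therefore holds on $\sX$.

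The main obstacle is a technical one. Pullback $q^*$ of Weil divisors along the non-flat finite morphism $q$ must be defined on stacks and shown to commute with étale base change. I would handle this by working over the regular locus of $\sX/\sF$, whose complement has codimension at least two, where everything is Cartier. The scheme formula also needs normality of $U$, which holds because normality is preserved under étale base change.
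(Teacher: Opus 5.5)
Your proposal is correct and follows the paper's proof: check the identity \'{e}tale-locally on $\sX$ and apply the schematic log adjunction formula. Two small corrections. First, the scheme-level formula is \cite[Proposition 4.2.3]{Posva_Singularities_quotients_by_1-foliations}; the reference (4.2.6.f) you cite is the discrepancy formula, which the paper uses later for \autoref{thm:discrep_infinit_qt_stack}. Second, since $K_\sX$ and $K_\sF$ are only defined as classes, ``checking \'{e}tale-locally'' should mean that the scheme-level identification is canonical and compatible with \'{e}tale base change, so that it descends to $\sX$. Divisor \emph{classes} are not in general determined by their pullbacks to \'{e}tale charts. The paper's one-line proof also leaves this point implicit.
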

\begin{proof}
This can be checked \'{e}tale-locally on $\sX$, and so the formula follows from the usual schematic one \cite[Proposition 4.2.3]{Posva_Singularities_quotients_by_1-foliations}.
\end{proof}

Now let $f\colon \sY\to \sX$ be a birational and \emph{representable} morphism where $\sY$ is a normal DM stack. Pick an \'{e}tale presentation $U\to \sX$ where $U$ is a scheme, and let $V=U\times_\sX \sY$. We have a commutative diagram
        \begin{equation}\label{eqn:pullback_foliation_stack}
        \begin{tikzcd}
        V\times_\sY V \arrow[r, shift left=1, "s" above]\arrow[r, shift right=1, "t" below] \arrow[d, "f_1"]
        & V \arrow[r, "\pi"] \arrow[d, "f_0"] 
        & \sY \arrow[d, "f"] \\
        U\times_\sX U \arrow[r, shift left=1]\arrow[r, shift right=1] & U \arrow[r] & \sX,
        \end{tikzcd}
        \end{equation}
where $f_1$ is induced by the two morphisms $f_0\circ s,f_0\circ t\colon V\times_\sY V\to U$.
The morphism $f_0$ is representable and $\sY$ is normal: in particular $V$ and $V\times_\sY V$ are normal schemes. The projection $\pi\colon V\to \sY$ is \'{e}tale and surjective, so it is a presentation. Notice that $f_1$ is birational. Let $\sF_U$ and $\sF_{U\times_\sX U}$ be the $1$-foliation pullbacks of $\sF$ to $U$ and $U\times_\sX U$ respectively. Since $f_0$ and $f_1$ are birational, we can define the $1$-foliations $\sF_V=f_0^*\sF_U$ and $\sF_{V\times_\sY V}=f_1^*\sF_{U\times_\sX U}$. We have
        $$s^*\sF_V=\sF_{V\times_\sY V}=t^*\sF_V$$
generically on $V\times_\sY V$, hence globally as $1$-foliations (cf.\ \cite[Remark 2.4.2]{Posva_Singularities_quotients_by_1-foliations}). So $\sF_V$ descends to a $1$-foliation $f^*\sF$ on $\sY$.

Assume furthermore that $K_\sF$ is $\bQ$-Cartier. Then we can write
        $$K_{f^*\sF}=f^*K_\sF
        +\sum_{\mathcal{D}} a(\mathcal{D};\sF)\cdot \mathcal{D}, 
        \quad 
        a(\mathcal{D};\sF)\in \bQ,$$
where $\mathcal{D}$ runs through the $f$-exceptional divisors. Pulling back this formula along $\pi$, we obtain
        $$K_{f_0^*\sF_U}=f_0^*K_{\sF_U}+
        \sum_{\mathcal{D}} a(\mathcal{D};\sF)\cdot \pi^*\mathcal{D}.$$
As $\pi^*\mathcal{D}$ is the sum of the prime divisors of $V$ mapping to $\mathcal{D}$ (with coefficients one), we find
        \begin{equation}\label{eqn:discrep_foliation_on_stack}
        a(D;\sF_U)=a(\mathcal{D};\sF)
        \end{equation}
for every $f_0$-exceptional divisor $D$ with image $\mathcal{D}$ in $\sY$.

\begin{proposition}\label{prop:reg_fol_is_can}
Assume that $\sX$ and $\sF$ are regular. Then $a(\mathcal{D};\sF)\geq 0$ for every $\mathcal{D}$.
\end{proposition}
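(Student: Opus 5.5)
By \autoref{eqn:discrep_foliation_on_stack}, the discrepancy $a(\mathcal{D};\sF)$ equals $a(D;\sF_U)$ for any $f_0$-exceptional prime divisor $D$ of $V=U\times_\sX\sY$ mapping onto $\mathcal{D}$, where $U\to\sX$ is an \'etale presentation. Since regularity of $\sX$ and of $\sF$ is étale-local, $U$ is a regular scheme and $\sF_U$ is a regular $1$-foliation on it. Also $f_0\colon V\to U$ is a proper birational morphism of normal schemes, being the base change of the representable $f$. So the statement reduces to the schematic claim: if $U$ is a regular variety and $\sF_U$ is a regular $1$-foliation on $U$, then $a(D;\sF_U)\ge 0$ for every exceptional divisor $D$ over $U$. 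I would first check that the discrepancy $a(D;\sF_U)$ depends only on the valuation of $D$, so that we may shrink $U$ around the generic point of the centre of $D$.

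For the schematic claim, I would use the local structure of regular $1$-foliations on regular schemes. Locally around a point, $U$ has coordinates $x_1,\dots,x_n$ such that $\sF_U$ is generated by $\partial_{x_1},\dots,\partial_{x_r}$. Equivalently, $U\to U/\sF_U$ is locally the relative Frobenius-type quotient with $U/\sF_U$ regular, having coordinates $x_1^p,\dots,x_r^p,x_{r+1},\dots,x_n$. I would then apply the log adjunction formula \autoref{prop:log_adj_formula} with $\Delta=0$, in both directions. On $U$ it gives
$$q^*K_{U/\sF_U}=K_U+(p-1)K_{\sF_U},$$
and on $V$ it gives the analogous formula with the pulled-back foliation $f_0^*\sF_U$. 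The quotient $V/f_0^*\sF_U$ is birational over $U/\sF_U$. Comparing the coefficients of $D$ yields a relation
$$(p-1)\,a(D;\sF_U)=p^{\epsilon}\,a(D';U/\sF_U)-a(D;U),$$
where $D'=q(D)$ and $\epsilon=\epsilon(D)\in\{0,1\}$ records invariance. This would have to be verified carefully, since $q^*D'=p^{\epsilon}D$.

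The main obstacle is that this identity alone gives no sign, so I would instead compute directly, and this is what I would try first. Choose a divisorial valuation $v$ with centre in $U$. The sheaf $K_{\sF_U}$ is dual to $\det\sF_U$, locally generated by $(\partial_{x_1}\wedge\dots\wedge\partial_{x_r})^{-1}$. On $V$, the saturation of $f_0^*\sF_U\subset T_V$ contains the rational vector fields $\partial_{x_i}$ only after possibly multiplying by a function vanishing along $D$. The claim $a\ge0$ is then the statement that $\det$ of the saturated foliation on $V$ is contained in $f_0^*\det\sF_U$ near $D$, i.e.\ that $\partial_{x_1}\wedge\cdots\wedge\partial_{x_r}$ is regular along $D$ as a section of $\wedge^r T_V$ restricted to the foliation.

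This regularity follows because each $\partial_{x_i}$ is a derivation of $\sO_U$, and it extends to a derivation of $\sO_{V,D}$ with at most... In fact, I would argue by duality with forms. Since $\sF_U$ is regular, $\Omega_{\sF_U}=\sF_U^\vee$ is a quotient of $\Omega_U$, locally free, generated by $dx_1,\dots,dx_r$. Hence $\omega=dx_1\wedge\dots\wedge dx_r$ generates $\det\Omega_{\sF_U}=\sO(K_{\sF_U})$. Its pullback $f_0^*\omega$ is a regular section of $\det\Omega_{f_0^*\sF_U}$ modulo torsion, because pullback of Kähler forms is regular on $V$ and restricts to the foliation. Thus $K_{f_0^*\sF}-f_0^*K_{\sF_U}$ is effective, which gives the desired nonnegativity $a(D;\sF_U)\ge 0$, and therefore $a(\mathcal{D};\sF)\ge 0$.
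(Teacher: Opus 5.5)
Your proof is correct. Its first half is exactly the paper's argument: étale-locality of regularity, together with \autoref{eqn:discrep_foliation_on_stack}, reduces the claim to $a(D;\sF_U)\geq 0$ for the $f_0$-exceptional divisors of $f_0\colon V\to U$. Properness of $f_0$ is neither given nor needed.

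The two routes differ only in the schematic step. The paper simply cites \cite[Lemma 3.0.3]{Posva_Singularities_quotients_by_1-foliations}, whereas you prove it directly with differential forms:
\begin{itemize}
\item Since $\sF_U$ is a subbundle of $T_U$, the map $\Omega_U\to\sF_U^\vee$ is surjective. So near the centre of $D$ there is an $r$-form $\omega$ whose restriction generates $\sO(K_{\sF_U})$.
\item At the generic point of $D$ the scheme $V$ is regular, hence smooth over $k$. So $\Omega_V$ is free there, and $f_0^*\sF_U\subseteq T_V$ is a saturated, hence free, submodule. This gives a restriction map $\Omega_V\to (f_0^*\sF_U)^\vee$.
\item The image of the regular form $f_0^*\omega$ is then a regular section of $\sO(K_{f_0^*\sF_U})$ that agrees generically with the pullback of the generator. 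Hence $a(D;\sF_U)$ is its order of vanishing along $D$, which is non-negative.
\end{itemize}
This makes the proof self-contained, and it shows that only the subbundle property of $\sF_U$ is used, not $p$-closedness. The citation only buys brevity.

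You should delete the intermediate paragraph on vector fields, because the criterion there is stated backwards. Non-negativity of $a(D;\sF_U)$ amounts to the inclusion $\det(f_0^*\sF_U)\subseteq f_0^*\det\sF_U$ near $D$. Requiring $\partial_{x_1}\wedge\cdots\wedge\partial_{x_r}$ to be a regular section of $\det(f_0^*\sF_U)$ is the reverse inclusion, i.e.\ $a(D;\sF_U)\leq 0$, and it usually fails. For example, take $\sF=\sO\cdot\partial_y$ on $\bA^2$ blown up at the origin. In the chart $x=x'$, $y=x'y'$ one has $\partial_y=x'^{-1}\partial_{y'}$, and $a(E;\sF)=1$.

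The log adjunction detour can also go, since, as you observed yourself, it gives no sign.
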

\begin{proof}
By assumption $U$ and $\sF_U$ are regular (cf.\ \cite[Definition 3.1.5]{Posva_Resolution_1-foliations}). Writing
        $$K_{f_0^*\sF_U}=f_0^*K_{\sF_U}+\sum_D a(D;\sF_U)\cdot D$$
where $D$ runs through the $f_0$-exceptional divisors, we have $a(D;\sF_U)\geq 0$ for every $D$ by \cite[Lemma 3.0.3]{Posva_Singularities_quotients_by_1-foliations}. The divisor $\pi^*\mathcal{D}$ is supported on $f_0$-exceptional divisors, so the statement follows from \autoref{eqn:discrep_foliation_on_stack}.
\end{proof} 

Finally, we have a commutative diagram
                \begin{equation}\label{eqn:comm_square_of_qt}
                \begin{tikzcd}
                    \sY\arrow[r, "q'"] \arrow[d, "f"] 
                    & \sY/f^*\sF \arrow[d, "g"] \\
                    \sX \arrow[r, "q"] & \sX/\sF
                \end{tikzcd}
                \end{equation}
where the horizontal arrows are the quotient morphisms, and where $g$ is birational and representable. This follows from the diagram \autoref{eqn:pullback_foliation_stack} and the construction of infinitesimal quotients for DM stacks \cite[\S 3.2]{Posva_Resolution_1-foliations}.
 
\begin{theorem}\label{thm:discrep_infinit_qt_stack}
Let $\Delta$ be a $\bQ$-Weil divisor on $\sX$ such that $K_\sX+\Delta$ is $\bQ$-Cartier. Let $\mathcal{D}$ be an $f$-exceptional divisor, with image $\mathcal{E}$ in $\sY/f^*\sF$. Then:
        $$a(\mathcal{E}; \sX/\sF, \Delta_{\sX/\sF})
        =
        p^{-\epsilon_{\sY}(\mathcal{D})}\cdot \big[  a(\mathcal{D};\sX,\Delta)
        +(p-1)\cdot a(\mathcal{D};\sF)
        \big].$$
\end{theorem}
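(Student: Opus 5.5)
We reduce the statement to the schematic discrepancy formula of \cite[(4.2.6.f)]{Posva_Singularities_quotients_by_1-foliations}. The stacky ingredients are already in place: \eqref{eqn:discrep_divisor_stacks} identifies stacky discrepancies of $(\sX,\Delta)$ with schematic ones on an \'etale chart, \eqref{eqn:discrep_foliation_on_stack} does the same for foliation discrepancies, and the square \eqref{eqn:comm_square_of_qt} is compatible with \'etale base change. Here $\epsilon_{\sY}(\mathcal{D})$ means $\epsilon_{f^*\sF}(\mathcal{D})$.

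\emph{Direct argument on $\sY$.} First, $K_{\sX/\sF}+\Delta_{\sX/\sF}$ is $\bQ$-Cartier, because $q$ is a finite universal homeomorphism and $q^*(K_{\sX/\sF}+\Delta_{\sX/\sF})=K_\sX+\Delta+(p-1)K_\sF$ by \autoref{prop:log_adj_formula}. (If necessary we assume, as the statement implicitly does, that $K_\sF$ is $\bQ$-Cartier.) Since $g$ is birational and representable, the formula \eqref{eqn:discrep_for_DM_stacks} makes sense on $\sY/f^*\sF$. We then compute $q'^*g^*(K_{\sX/\sF}+\Delta_{\sX/\sF})=f^*q^*(\cdots)$ in two ways:
\begin{enumerate}
\item Using \autoref{prop:log_adj_formula} on $\sX$, it equals $f^*(K_\sX+\Delta)+(p-1)f^*K_\sF$. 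Near the generic point of $\mathcal{D}$, this is $K_\sY+(p-1)K_{f^*\sF}-[a(\mathcal{D};\sX,\Delta)+(p-1)a(\mathcal{D};\sF)]\mathcal{D}$.
\item Near the generic point of $\mathcal{E}$, we have $g^*(K_{\sX/\sF}+\Delta_{\sX/\sF})=K_{\sY/f^*\sF}+g^{-1}_*\Delta_{\sX/\sF}-a(\mathcal{E})\mathcal{E}$. Applying $q'^*$ and \autoref{prop:log_adj_formula} on $\sY$ with boundary $\Gamma=-p^{\epsilon}a(\mathcal{E})\mathcal{D}+(\text{strict transforms})$, so that $\Gamma_{\sY/f^*\sF}$ has coefficient $-a(\mathcal{E})$ along $\mathcal{E}$ by \eqref{eqn:natural_divisor_on_qt}, gives $K_\sY+(p-1)K_{f^*\sF}-p^{\epsilon}a(\mathcal{E})\mathcal{D}$ near $\mathcal{D}$.
\end{enumerate}
Comparing coefficients of $\mathcal{D}$ gives the claim.

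\emph{Technical points.} Several details need checking, each of which can be settled \'etale-locally using the schematic statements.
\begin{itemize}
\item $q'$ maps $\mathcal{D}$ onto a prime divisor $\mathcal{E}$ that is $g$-exceptional. This holds because $q'$ and $q$ are homeomorphisms and $f$ contracts $\mathcal{D}$.
\item The formula \eqref{eqn:natural_divisor_on_qt}, i.e.\ $q'^*\mathcal{E}=p^{\epsilon}\mathcal{D}$, is compatible with the definition of $\epsilon$ via \'etale charts.
\item \autoref{prop:log_adj_formula} applies on the normal stack $\sY$ with the descended foliation $f^*\sF$.
\end{itemize}

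\emph{Main obstacle.} The only delicate point is the pullback $q'^*\mathcal{E}=p^{\epsilon_{f^*\sF}(\mathcal{D})}\mathcal{D}$ at the level of stacks. To handle it, take the chart $V\to\sY$ from \eqref{eqn:pullback_foliation_stack}. By the argument of \autoref{prop:qt_is_stab_preserving}, $V/\sF_V\to\sY/f^*\sF$ is an \'etale chart with $V\to V/\sF_V$ the base change of $q'$. The identity is then the schematic fact that a non-invariant divisor is inseparably covered with ramification $p$ and an invariant one with ramification $1$. \'Etale pullback preserves coefficients, so the identity descends.
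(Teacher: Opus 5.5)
Your proposal is correct and follows the paper's proof. The paper's first route is exactly ``the argument of \cite[Theorem 4.2.5]{Posva_Singularities_quotients_by_1-foliations}, using the square \autoref{eqn:comm_square_of_qt} and \autoref{prop:log_adj_formula}'', which is your two-way computation of $q'^*g^*(K_{\sX/\sF}+\Delta_{\sX/\sF})$. Its alternative route reduces to the schematic formula on the chart $V\to U$ via \autoref{eqn:discrep_divisor_stacks} and \autoref{eqn:discrep_foliation_on_stack}, which is the \'etale reduction you invoke for your technical points. Your ``main obstacle'' $q'^*\mathcal{E}=p^{\epsilon}\mathcal{D}$ needs no separate treatment: it is already contained in \autoref{prop:log_adj_formula} applied on $\sY$, and that proposition is itself proved \'etale-locally.
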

\begin{proof}
The same argument as in the proof of \cite[Theorem 4.2.5]{Posva_Singularities_quotients_by_1-foliations} applies, using the commutative square \autoref{eqn:comm_square_of_qt} and \autoref{prop:log_adj_formula}. Alternatively, we deduce the statement from \cite[(4.2.6.f)]{Posva_Singularities_quotients_by_1-foliations} applied to the commutative square
        $$\begin{tikzcd}
        V\arrow[d, "f_0"] \arrow[r] & V/\sF_V \arrow[d] \\
        U \arrow[r] & U/\sF_U
        \end{tikzcd}$$
and from the formulas \autoref{eqn:discrep_divisor_stacks} and \autoref{eqn:discrep_foliation_on_stack}.
\end{proof}

\subsection{Stringy motivic invariant}\label{section:mst}
We provide a short introduction to the stringy motivic invariants, and refer to \cite{Yasuda_p-cyclic_McKay_correspondence} for more details.
The Grothendieck ring of varieties, denoted by $\Kzero$, is defined as the quotient of the free abelian group generated by isomorphism classes of varieties modulo the scissor relation. We consider the following two variants of this ring. Firstly, we define $\Kzerouh$ to be the quotient of $\Kzero$ by the extra relation that for each universal homeomorphism $Y\to X$, $[Y]=[X]$ (see \cite[p.115]{Chambert-Loir_Nicaise_Sebag_Motivic_integration}). Let $\bL =[\bA^1]$ be the class of an affine line. We then define a version of the complete Grothendieck ring of varieties by
localizing $\Kzerouh$ with $\bL$ and then taking the dimensional completion; we denote this complete ring by $\Muh$. We need this variant of the complete Grothedieck ring not for establishing a general theory of motivic integration in positive characteristic, but for giving a combinatorial formula for stringy motivic invariants of quotients $\bA^d/\alpha_p$.

Let $(X,B)$ be a klt pair such that $K_X+B$ is Cartier. The divisor $B$ can have only non-positive coefficients. Since $\omega_X(B)$ is invertible, we can define an ideal sheaf $I$ by
$$
Im \left(\bigwedge^d \Omega_X\to \omega_X(B)\right) = I\cdot \omega_X (B).
$$
The stringy motivic invariant $\mst(X,B)$ is defined as 
$$
\mst(X,B) = \int_{J_\infty X} \bL^{\ord I} \,d\mu _X \in \Muh. 
$$
More generally, for a locally closed subset $C$, we define the stringy motivic invariant along $C$ by
$$
\mst(X,B)_C = \int_{(J_\infty X)_C} \bL^{\ord I} \,d\mu _X \in \Muh. 
$$
Here $(J_\infty X)_C$ denotes the space of arcs that intersect with $C$. 

In fact, the invariant $\mst(X,B)_C$ depends only on singularities of the pair $(X,B)$ along $C$. More precisely, if $C$ is a closed subset in an open subset $U\subset X$ then $\mst(X,B)_C$ depends only on the isomorphism class of the pair $(\widehat{U}, B|_{\widehat{U}})$ obtained by taking the formal completion of $U$ along $C$ and restricting $B$ to $\widehat{U}$. This is because the space $(J_\infty X)_C$ together with the motivic measure and the function $\ord I$ depends only on the isomorphism class of $(\widehat{U}, B|_{\widehat{U}})$. 
In particular, when $C=\{x\}$ with $x$ a closed point, then $\mst(X,B)_x$ is determined by $(\Spec \widehat{\mathcal{O}}_{X,x},B|_{\Spec \widehat{\mathcal{O}}_{X,x}})$. Thus, we sometimes write this as $\mst(\Spec \widehat{\mathcal{O}}_{X,x},B|_{\Spec \widehat{\mathcal{O}}_{X,x}})_x$. 
The following are some more basic properties of stringy motivic invariants:

\begin{proposition}
Let $(X,B)$ be a klt pair as above. 
\begin{enumerate}
    \item If a locally closed subset $C\subset X$ is the disjoint union of finitely many locally closed subsets $C_1,\dots,C_l$, then $\mst(X,B)_C=\sum_{i=1}^l \mst(X,B)_{C_i}$.
    \item If $f\colon (Y,B')\to (X,B)$ is a proper birational crepant morphism of klt pairs (so that $K_Y+B'$ is also Cartier) and if $C\subset X$ is a locally closed subset, then $\mst(Y,B')_{f^{-1}(C)}=\mst(X,B)_C$. 
\end{enumerate}
\end{proposition}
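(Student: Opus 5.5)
We prove the two items separately, using only the definition of $\mst(X,B)_C$ as a motivic integral over $(J_\infty X)_C$ together with the standard properties of the motivic measure $\mu_X$ and the change of variables formula.

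For (1), the plan is to observe that, since $C=\bigsqcup_i C_i$ set-theoretically, the arc space decomposes as a disjoint union $(J_\infty X)_C=\bigsqcup_{i=1}^l (J_\infty X)_{C_i}$. Indeed, an arc meets $C$ exactly when its closed point (the image of the special point of $\Spec k\llbracket t\rrbracket$) lies in $C$, and that point lies in exactly one $C_i$. Each $(J_\infty X)_{C_i}=\pi_0^{-1}(C_i)$ is the preimage of a constructible subset under the truncation $\pi_0\colon J_\infty X\to X$. Hence each piece is a cylinder, in particular measurable. The function $\ord I$ is measurable on each piece. Additivity of the motivic integral over finite disjoint unions of measurable sets, which holds in $\Muh$ as it does in the usual completed ring, then gives the formula. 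The only point needing a word of care is that $\bL^{\ord I}$ is integrable on each piece. This follows from the klt hypothesis, since the integral over the whole of $(J_\infty X)_C$ converges and the integrand is nonnegative in the sense of dimension filtration.

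For (2), the plan is to apply the change of variables formula for the proper birational morphism $f$. We compare $\ord I_Y$ pulled back with $\ord I_X$ composed with $f_\infty$, using the Jacobian ideal of $f$. Write $I_X$ for the ideal attached to $(X,B)$ and $I_Y$ for the one attached to $(Y,B')$. Crepancy means $f^*(K_X+B)=K_Y+B'$, so $f^*\omega_X(B)=\omega_Y(B')$ as invertible sheaves. The image of $f^*\bigwedge^d\Omega_X\to \bigwedge^d\Omega_Y\to\omega_Y(B')$ is then $f^{-1}I_X\cdot\omega_Y(B')$. Factoring through $\bigwedge^d\Omega_Y$, we get the ideal identity
$$f^{-1}I_X\cdot \omega_Y(B') = \mathrm{Jac}_f\cdot I_Y\cdot \omega_Y(B'),$$
where $\mathrm{Jac}_f$ is the Jacobian ideal, that is, the Fitting-type ideal measuring the image of $f^*\bigwedge^d\Omega_X\to\bigwedge^d\Omega_Y$ modulo torsion. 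Taking orders along an arc $\gamma$ of $Y$ gives
$$\ord_\gamma f^{-1}I_X=\ord_\gamma \mathrm{Jac}_f+\ord_\gamma I_Y.$$

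We then invoke the change of variables formula for singular varieties (Denef–Loeser, or Sebag in positive characteristic; see Chambert-Loir–Nicaise–Sebag):
$$\int_{A}\bL^{\ord I_X}d\mu_X=\int_{f_\infty^{-1}(A)}\bL^{\ord (f^{-1}I_X)-\ord\mathrm{Jac}_f}d\mu_Y,$$
applied with $A=(J_\infty X)_C$. Since $f_\infty$ is a bijection outside measure-zero sets of arcs contained in the exceptional locus, and $f_\infty^{-1}((J_\infty X)_C)=(J_\infty Y)_{f^{-1}(C)}$ by properness, we obtain $\mst(X,B)_C=\mst(Y,B')_{f^{-1}(C)}$.

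The main obstacle is the validity of change of variables in the singular, positive characteristic setting. I would handle it by citing the version in Chambert-Loir–Nicaise–Sebag, whose hypotheses allow singular sources and targets and arbitrary $k$. That version uses the Jacobian order defined through $\bigwedge^d\Omega$ modulo torsion, which matches the identity above. The passage to $\Kzerouh$ only weakens the ring, so the formula still holds there.
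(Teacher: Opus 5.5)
The paper gives no proof of this proposition. It records (1) and (2) as standard properties of $\int\bL^{\ord I}\,d\mu_X$ and defers details to Yasuda's work on the $p$-cyclic McKay correspondence. Your argument is the standard one behind these facts and is correct in outline: additivity of the integral over the cylinders $\pi_0^{-1}(C_i)$ for (1), and for (2) the change of variables formula combined with $\ord f^{-1}I_X=\ord \mathrm{Jac}_f+\ord I_Y$. Reading $(J_\infty X)_C$ as the arcs whose closed point lies in $C$ is also the right choice. It is the reading the paper uses later, and (1) would fail if the generic point of an arc were allowed to land in $C$.

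A few steps should be tightened.

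\emph{The sheaf-level identity.} For singular $Y$, the identity $f^{-1}I_X\cdot\omega_Y(B')=\mathrm{Jac}_f\cdot I_Y\cdot\omega_Y(B')$ is not justified as written. Since $\bigwedge^d\Omega_Y$ is not invertible, there is no ideal $\mathrm{Jac}_f$ for which this factorization is automatic. What you actually use is the arc-wise identity $\ord_\gamma f^{-1}I_X=\ord_\gamma\mathrm{Jac}_f+\ord_\gamma I_Y$, and that identity is true in the following setting.
\begin{itemize}
\item Take $U\subseteq X$ dense open, smooth, disjoint from $\mathrm{Supp}(B)$, and such that $f$ is an isomorphism over $U$.
\item Restrict to arcs $\gamma$ whose generic point lies in $f^{-1}(U)$; the remaining arcs form a null set.
\item Pull back along $\gamma$ and kill torsion. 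Then $\gamma^*f^*\bigwedge^d\Omega_X$ and $\gamma^*\bigwedge^d\Omega_Y$ become free of rank one over $k[[t]]$. They are identified with $t^{\ord_\gamma f^{-1}I_X}\cdot\gamma^*\omega_Y(B')$ and $t^{\ord_\gamma I_Y}\cdot\gamma^*\omega_Y(B')$ respectively.
\item The Jacobian order, i.e.\ the length of the cokernel of the first module into the second, is therefore the difference of the two exponents.
\end{itemize}
This is exactly the definition of the Jacobian order used in the change of variables formula you cite, so nothing else in your argument changes.

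\emph{The role of properness.} The equality $f_\infty^{-1}\big((J_\infty X)_C\big)=(J_\infty Y)_{f^{-1}(C)}$ is tautological and does not need properness. What properness gives, via the valuative criterion, is that $f_\infty$ maps this set bijectively onto $(J_\infty X)_C$ outside a null set. That is the hypothesis the change of variables formula actually requires.

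\emph{Unboundedness.} Since $\ord\mathrm{Jac}_f$ is unbounded, apply the formula on the pieces where $\ord\mathrm{Jac}_f$ and $\ord I_Y$ are constant, then sum. Convergence is guaranteed by the klt hypothesis. The same monotonicity of dimension also justifies your integrability remark in (1).
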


\begin{remark}
When $K_X+B$ is only $\bQ$-Cartier, then the stringy motivic invariant is defined as an element of a version of the complete Grothendieck ring with some fractional power of $\bL$ adjoined. 
\end{remark}

\section{Degeneration of linear $\bZ/p$-actions into $\alpha_p$-actions}\label{section:degeneration}
We keep the notations of the Introduction. In this section we explain how the $\bZ/p$-action on $\bA^d$ given by $J_{1,\mathbf{d}}$ can be degenerated over the affine line $\bA^1_s$ into the $\alpha_p$-action given by $J_{0,\mathbf{d}}$. Informally, if we replace the upper diagonal of $J_{1,\mathbf{d}}$ by the parameter $s$, then for $s\neq 0$ we obtain a family of $\bZ/p$-actions whose first-order deviation from the identity is $s\cdot J_{0,\mathbf{d}}$.

To make this precise, we start by introducing a version of the \emph{Tate--Oort group scheme} \cite{Reid_Tate-Oort_group}. The base of the group scheme will be $B=\{st=0\}\subset \bA^2_{st}$. The group is given by
        $$\sG=\Spec_B \left(
        \frac{\sO_B[x, (1+tx)^{-1}]}{(x^p-s^{p-1}x)}\right)$$
with group law over $B$ given by the co-multiplication
        $$\sO(\sG)\to \sO(\sG)\otimes \sO(\sG), \quad 
        x\mapsto x\otimes 1+ 1\otimes x +t x\otimes x.$$

\begin{proposition}[cf.\ {\cite[Proposition 3.1]{Reid_Tate-Oort_group}}]
With the notations as above,
    \begin{enumerate}
        \item $\sG$ is a finite flat group scheme of length $p$ over $B$,
        \item $\sG\otimes_B \{s\neq 0\}$ is isomorphic to the constant group $\bZ/p$;
        \item $\sG\otimes_B \{t\neq 0\}\cong \mu_{p, \{t\neq 0\}}$;
        \item $\sG\otimes_B\{s=t=0\}\cong \alpha_p$.
    \end{enumerate}
\end{proposition}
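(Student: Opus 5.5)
The plan is to work directly with the Hopf algebra $\sO(\sG)=\sO_B[x,(1+tx)^{-1}]/(x^p-s^{p-1}x)$ and check each claim after base change to the relevant locus of $B$. First I check that the comultiplication $x\mapsto x\otimes 1+1\otimes x+tx\otimes x$ is a well-defined algebra map. Two things must be verified. The first is that $1+t\Delta(x)=(1+tx)\otimes(1+tx)$, so $1+tx$ stays invertible. The second is that the relation is preserved: $\Delta(x)^p-s^{p-1}\Delta(x)$ must lie in the ideal generated by $x^p-s^{p-1}x$ in both tensor factors. For the second point I use $st=0$ on $B$ and expand $(x\otimes1+1\otimes x+tx\otimes x)^p$ in characteristic $p$. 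This gives $x^p\otimes1+1\otimes x^p+t^px^p\otimes x^p$. Subtracting $s^{p-1}\Delta(x)$ and substituting $x^p=s^{p-1}x$ leaves the difference $t^ps^{2p-2}x\otimes x-s^{p-1}tx\otimes x$, which vanishes because $st=0$. The counit is $x\mapsto 0$. The antipode is $x\mapsto -x(1+tx)^{-1}$, and I check it directly.

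For (1), I note that $(1+tx)$ is already invertible modulo $x^p-s^{p-1}x$. Indeed, $t\cdot x$ is nilpotent where $s=0$: there $x^p=0$, so $(tx)^p=0$. Where $s\ne 0$ we have $t=0$. To make this global, I observe that on $B$ one has $(tx)^p=t^px^p=t^ps^{p-1}x=0$ as soon as $p\ge 2$, using $st=0$. Hence $1+tx$ is a unit with inverse $\sum_{i<p}(-tx)^i$. The ring is therefore $\sO_B[x]/(x^p-s^{p-1}x)$, which is free over $\sO_B$ with basis $1,\dots,x^{p-1}$. This gives finite flatness of length $p$.

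For (2), on $\{s\ne0\}$ we have $t=0$, so the group law is additive. The polynomial $x^p-s^{p-1}x=\prod_{i\in\bF_p}(x-is)$ is separable, so the scheme is étale. Rescaling $y=x/s$ gives the subgroup $\{y^p=y\}=\bF_p\subset\mathbb{G}_a$, which is the constant group $\bZ/p$. For (3), on $\{t\ne0\}$ we have $s=0$, so $x^p=0$. Setting $u=1+tx$ gives $\Delta(u)=u\otimes u$ and $u^p=1+t^px^p=1$. This identifies $\sG$ with $\mu_p=\Spec \sO[u]/(u^p-1)$; the change of variables is invertible because $t$ is a unit. For (4), at $s=t=0$ we get $k[x]/(x^p)$ with additive comultiplication, which is $\alpha_p$.

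The only delicate point I expect is the verification that the relation ideal is a Hopf ideal over the non-reduced-free base $B$. This is exactly where $st=0$ is used, and it is also why one works over $B$ rather than over $\bA^2$. Once that is settled, everything else is an explicit change of variables.
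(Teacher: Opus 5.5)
Your proposal is correct, and every step checks out:
\begin{itemize}
\item $\Delta$ preserves the ideal $(x^p-s^{p-1}x)$ because $s^{p-1}t=0$ and $t^px^p\equiv t^ps^{p-1}x=0$ on $B$.
\item The antipode $x\mapsto -x(1+tx)^{-1}$ is well defined.
\item $(tx)^p=t^px^p=t^ps^{p-1}x=0$ in the quotient. So $1+tx$ is already a unit, inverting it changes nothing, and the algebra is free on $1,x,\dots,x^{p-1}$.
\item The substitutions $y=x/s$ on $\{s\neq 0\}$ (where $t=0$) and $u=1+tx$ on $\{t\neq 0\}$ (where $s=0$) give $\bZ/p$ and $\mu_p$ respectively.
\end{itemize}

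The paper gives no argument of its own; it only refers to Reid's Proposition 3.1. Your direct Hopf-algebra verification is the standard self-contained version of that reference. The two axioms you leave implicit, coassociativity and the counit property, are polynomial identities for the law $x+y+txy$ and hold over any base.
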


Let $\sG_a=\sG\otimes_B\{t=0\}$: it is a finite flat group scheme over $\bA^1_s$ that is the constant group $\bZ/p$ over the open set $\{s\neq 0\}$, and whose fiber above $s=0$ is $\alpha_p$. (Here $a$ stands for ``additive", as $\sG_a$ embeds inside the additive group over $\bA^1_s$. In \cite[Chapter 7]{Miyanishi_Ito_Algebraic_surfaces_in_pos_char}, $\sG_a$ is called the \emph{unified $p$-group scheme}.)

Consider the following matrices: we let
        $$\widetilde{J}_{s,d_\lambda}=
        \begin{pmatrix}
        1 &  \\
        s & 1 & \\
        & \ddots & \ddots \\
        && s & 1 &  \\
        &&& s & 1
        \end{pmatrix}
        \in M_{(d_\lambda+1)\times (d_\lambda+1)}(k[s])$$
and
        \begin{equation*}
        \widetilde{J}_{s,\mathbf{d}}=
        \begin{pmatrix}
            \widetilde{J}_{s,d_1} \\
            & \ddots \\
            && \widetilde{J}_{s,d_l}
        \end{pmatrix}.
        \end{equation*}
Observe that for $n\in \bZ$,
        $$(\widetilde{J}_{s,d_\lambda})^n=
        \begin{pmatrix}
        1 \\
        \binom{n}{1}s & 1 \\
        \binom{n}{2}s^2 & \binom{n}{1}s & 1 \\
        \vdots & \vdots & \ddots & \ddots  \\
        &&& \binom{n}{1}s & 1
        \end{pmatrix}
        \in M_{(d_\lambda+1)\times (d_\lambda+1)}(k[s])$$
and that 
        \begin{equation*}
        \widetilde{J}_{s,\mathbf{d}}^n=
        \begin{pmatrix}
            \widetilde{J}_{s,d_1}^n \\
            & \ddots \\
            && \widetilde{J}_{s,d_l}^n
        \end{pmatrix},
        \end{equation*}
and that all these matrices have order $p$.
If $f\colon T\to \bA^1_s$ is any $\bA^1_s$-scheme, we have the matrices
$f^*(\widetilde{J}_{s,\mathbf{d}}^n)\in \GL_{T}(\bA^d\times T)$
and it is easy to check that 
$f^*(\widetilde{J}_{s,\mathbf{d}}^n)=(f^*\widetilde{J}_{s,\mathbf{d}})^n$.
Therefore we can define a morphism of group functors
        $$\mathfrak{J}\colon \underline{\bZ/p}_{\bA^1_s}\longrightarrow \underline{\GL}_{\bA^1_s}(\bA^d\times \bA^1_s)$$
as follows: if $T$ is connected we let
        $$\mathfrak{J}_T(n)=f^*\widetilde{J}_{s,\mathbf{d}}^n \quad
        \text{for }n\in \underline{\bZ/p}_{\bA^1_s}(T)=\bZ/p,$$
and the value on disconnected schemes is given by the product over the values of connected components. So $\mathfrak{J}$ is a morphism of group schemes, whose image $\mathbf{G}$ is a closed sub-group scheme of $\GL_{\bA^1_s}(\bA^d\times\bA^1_s)$. We think of $\mathbf{G}$ as the sub-group generated by $\widetilde{J}_{s,\mathbf{d}}$ inside the matrix group $\GL(\bA^d\times\bA^1_s)$. It is clear that $\mathbf{G}$ is reduced, and finite flat of length $p$ over $\bA^1_s$.

\begin{proposition}\label{prop:degen_of_Z/p_to_alpha_p}
We have an isomorphism of $\bA^1_s$-group schemes $\mathbf{G}\cong \sG_a$. The fiber of the action of $\mathbf{G}$ on $\bA^d\times\bA^1_s$ over $s=1$ is the $\bZ/p$-action given by the matrix $J_{1,\mathbf{d}}$, and the fiber over $s=0$ is the $\alpha_p$-action given by the derivation $\partial_{\mathbf{d}}$.
\end{proposition}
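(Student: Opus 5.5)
We will write down the coordinate ring of $\mathbf{G}$ and an explicit comparison map. Since $\mathbf{G}$ is the image of the morphism of group schemes $\mathfrak{J}$ from the constant group $\underline{\bZ/p}_{\bA^1_s}$, it is the closed subscheme of $\GL(\bA^d\times \bA^1_s)$ cut out by the ideal of functions vanishing on the $p$ sections $\widetilde{J}^n_{s,\mathbf{d}}$, $n=0,\dots,p-1$. The formula for $\widetilde{J}_{s,d_\lambda}^n$ shows that these sections are polynomial in the single parameter $u=ns$: the $(i,j)$-entry below the diagonal is $\binom{n}{i-j}s^{i-j}$, a polynomial in $u$ of degree $i-j\le p-1$ (here $d_\lambda\le p-1$ matters, so the denominators $(i-j)!$ are invertible in $k$). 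Concretely $\widetilde{J}^n_{s,d_\lambda}=P(ns,s)$, where $P(u,s)$ has $(i,j)$-entry $u(u-s)\cdots(u-(i-j-1)s)/(i-j)!$.

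So we define a morphism $\Phi\colon \sG_a=\Spec k[s,x]/(x^p-s^{p-1}x)\to \GL(\bA^d\times\bA^1_s)$ by $x\mapsto P(x,s)$, blockwise. The key steps, in order, are the following.

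First, $\Phi$ is a homomorphism. Since $P(x,s)$ is a polynomial matrix identity, it suffices to check $P(x+y,s)=P(x,s)P(y,s)$ modulo $(x^p-s^{p-1}x,\,y^p-s^{p-1}y)$. This is the Vandermonde identity for falling factorials with step $s$, namely $(x+y)^{\underline{m}}_s=\sum\binom{m}{a}x^{\underline{a}}_s y^{\underline{m-a}}_s$, which holds even in $k[s,x,y]$ for $m\le p-1$; dividing by $m!$ gives the matrix product formula. Actually $P(x,s)$ is upper-level "$\exp$-like" with entries $\binom{x/s}{m}s^m$, and the identity is polynomial, so no reduction is needed.

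Second, $\Phi$ is a closed immersion with image $\mathbf{G}$. Over $s\ne0$ the points of $\sG_a$ are $x=ns$ with $n\in\bF_p$, and $\Phi(ns)=\widetilde{J}^n_{s,\mathbf{d}}$. Hence $\Phi$ agrees with $\mathfrak{J}$ there, and since $\sG_a$ is flat over $\bA^1_s$ with reduced generic fiber, the scheme-theoretic image equals the closure of that open part, which is $\mathbf{G}$ (flat, of length $p$). For the immersion, as soon as some $d_\lambda\ge1$ the entry $x$ appears as a matrix coordinate, so $\Phi^*$ is surjective onto $k[s,x]/(x^p-s^{p-1}x)$. Comparing ranks of two finite flat algebras of rank $p$ then gives $\mathbf{G}\cong\sG_a$. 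If all $d_\lambda=0$ the action is trivial and the statement degenerates; we will treat that case separately, or exclude it.

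Third, we identify the fibers. At $s=1$ the generator $n=1$ gives $\widetilde{J}_{1,\mathbf{d}}=J_{1,\mathbf{d}}$. At $s=0$ the fiber is $\alpha_p=\Spec k[x]/(x^p)$ acting via $P(x,0)=\exp(xJ_{0,\mathbf{d}})$, a truncated exponential since $J_{0,\mathbf{d}}^p=0$. The corresponding coaction on coordinates is $v\mapsto \sum_m x^m\partial_\mathbf{d}^m(v)/m!$, which is exactly the $\alpha_p$-action associated with the derivation $\partial_{\mathbf{d}}$.

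The main obstacle is the second step: making the image and closure argument rigorous, i.e.\ checking that the ideal defining $\mathbf{G}$ is generated exactly by the pullback of $x^p-s^{p-1}x$. We would handle this via the flatness and rank-$p$ comparison described above.
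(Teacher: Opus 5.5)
Your proof is correct, but it runs in the opposite direction from the paper's.

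The paper works on $\mathbf{G}$ itself. It takes the subdiagonal entry $z$ as a function on $\mathbf{G}$ and notes that $z^p-s^{p-1}z$ vanishes on the $p$ sections, hence on $\mathbf{G}$. It then asserts that a point $M$ of $\mathbf{G}$ is determined by $z(M)$, and concludes $\mathbf{G}\cong\Spec k[s,z]/(z^p-s^{p-1}z)$ by comparing lengths. The group law comes from the fact that the subdiagonal entry is additive under multiplication of unipotent lower triangular matrices. The derivation at $s=0$ is read off from the $k[\varepsilon]$-point $z=\varepsilon$.

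You instead build the inverse map $\Phi\colon\sG_a\to\GL_{\bA^1_s}(\bA^d\times\bA^1_s)$ from divided step-$s$ falling factorials. You prove it is a homomorphism via the Vandermonde identity, and identify its image with $\mathbf{G}$ by schematic density of $\{s\neq 0\}$. Equally well, you could note that each section $s\mapsto P(ns,s)$ factors through $\Phi(\sG_a)$ and compare ranks.

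What each route buys:
\begin{itemize}
\item The paper's route is shorter: additivity of $z$ makes your Vandermonde computation unnecessary.
\item Its step ``$M$ is uniquely determined by $z(M)$'' tacitly needs exactly your polynomial relations $M=P(z(M),s)$ on $\mathbf{G}$. You make these explicit, together with the point where $d_\lambda\le p-1$ enters, namely the invertibility of $m!$.
\item Your version also gives more. It shows $\mathbf{G}$ is the restriction to $\{x^p=s^{p-1}x\}$ of a one-parameter subgroup $\bG_a\times\bA^1_s\to\GL_{\bA^1_s}(\bA^d\times\bA^1_s)$. It also shows the special fiber acts by the full truncated exponential $\exp(xJ_{0,\mathbf{d}})$, not merely with the right tangent vector.
\end{itemize}

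Your caveat about the degenerate case is also right. If all $d_\lambda=0$, then $\mathbf{G}$ is the trivial group and the statement fails. The paper's assertion that $\mathbf{G}$ has length $p$ silently assumes some $d_\lambda\ge 1$.
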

\begin{proof}
Let $z\in \Gamma(\mathbf{G},\sO_\mathbf{G})$ be the function that corresponds to the entry on the lower diagonal of the Jordan blocks: generically over $\bA^1_s$ it assumes the values $0,s,\dots,(p-1)s$ and thus verifies the equation $z^p-s^{p-1}z=0$ globally. Since any $M\in \mathbf{G}(\Spec(A))$ is uniquely determined by the value $z(M)\in A$, and since $\mathbf{G}$ has length $p$ over $\bA^1_s$, we deduce that
        $$\mathbf{G}\cong \Spec_{\bA^1_s} \frac{k[s,z]}{(z^p-s^{p-1}z)} \quad \text{as }\bA^1_s\text{-schemes}.$$
If $M,M'\in \mathbf{G}(\Spec(A))$ are two $A$-valued points, then $z(M\cdot M')=z(M)+z(M')$. So the co-multiplication of $\Gamma(\mathbf{G},\sO_\mathbf{G})$ under the above isomorphism is given by $z\mapsto z\otimes 1 + 1\otimes z$. Hence $\mathbf{G}\cong \sG_a$ as group schemes over $\bA^1_s$.

The description of the fiber of the action of $\sG_a$ on $\bA^d\times\bA^1_s$ above $s=1$ is immediate. For $s=0$, we know that we have an $\alpha_p$-action. To find out the corresponding derivation $\partial$, we look at $k[\varepsilon]$-points of $\mathbf{G}$ above $s=0$, where $k[\varepsilon]$ is the ring of dual numbers: by letting $z=\varepsilon$ we find the infinitesimal automorphism
        $$(x_1,\dots,x_d)^\intercal 
        \mapsto \widetilde{J}_{\varepsilon , \mathbf{d}}\cdot (x_1,\dots,x_n)^\intercal
        =
        ((\id+\varepsilon\partial)(x_1),\dots,(\id+\varepsilon\partial)(x_n))^\intercal.$$
This shows that $\partial=\partial_{\mathbf{d}}$, as desired.
\end{proof}

Consider the geometric quotient $\sX=(\bA^d\times\bA^1_s)/\mathbf{G}$: it is an affine normal flat $\bA^1_s$-scheme of finite type. For each $\alpha\in \bA^1_s(k)$ we have a finite comparison morphism
        $$\varphi_\alpha\colon \bA^d/\mathbf{G}_\alpha 
        \longrightarrow \sX_\alpha$$
which is an isomorphism on the locus where $\mathbf{G}_\alpha$ acts freely. If $\bD_\mathbf{d}\geq 2$ then the ramification locus of the $\mathbf{G}$-action has relative codimension $\geq 2$ over $\bA^1_s$, and it follows that $\varphi_\alpha$ is an isomorphism if and only if $\sX_\alpha$ is $S_2$. This is the case for a general value of $\alpha$: for the set of points $x$ of $\sX$ such that $x$ is an $S_2$ point of its fiber over $\bA^1_s$, is open \cite[12.1.6]{EGA_IV.3} and contains the generic fiber. The image in $\bA^1_s$ of its complement is constructible and does not contain the generic point: so it must be a finite union of closed points. Furthermore, we have the $\bG_m$-action by scaling on $\bA^1_s$: it lifts to an action on $\bA^d\times\bA^1_s$ (acting trivially on the first factor) that commutes with the $\mathbf{G}$-action. So it descends to a $\bG_m$-action on $\sX$ that is transitive on the set of fibers $\sX_\alpha$ with $\alpha\neq 0$. So all these fibers are $S_2$, and it follows that $\varphi_\alpha$ is an isomorphism when $\alpha\neq 0$. Hence the family $\sX\otimes_{\bA^1_s}\{s\neq 0\} \to \{s\neq 0\}$ is fiberwise trivial. Indeed, we have
    $$\sX_\alpha\cong \bA^d/\mathbf{G}_\alpha\cong 
    \bA/(\bZ/p,\mathbf{d})$$ 
when $\alpha\neq 0$: for linear $\bZ/p$-actions on $\bA^d$ are uniquely determined, up to linear isomorphisms, by the dimensions and the multiplicities of the irreducible sub-spaces. 

The morphism $\varphi_0$ is an isomorphism if and only if the $\mathbf{G}$-cohomology group $H^1(\mathbf{G};\sO_{\bA^d\times \bA^1_s})$, seen as a $k[s]$-module, has only trivial $s$-torsion (see \cite[Exposé I, \S 5]{SGA3_I} for group-scheme cohomology). If one knows explicit generators of $H^0(\bA^d/\mathbf{G}_0,\sO)$, this can also be addressed by explicit computations: see \cite[\S 9.2]{Liedtke_Martin_Matsumoto_Lrq_sing} for concrete examples in a similar set-up. We will not study this question in more details here; nonetheless, the results that we will obtain in \autoref{section:comparison} suggest that $\varphi_0$ is an isomorphism.

In any case, as $\mathbf{G}$ is geometrically reductive over $\bA^1_s$, it follows that $\varphi_0$ is a finite universal homeomorphism. So $\bA/(\alpha,\mathbf{d})$ is a one-parameter deformation of $\bA/(\bZ/p,\mathbf{d})$ up to universal homeomorphism.

\begin{remark} 
One can ask whether the linear $\alpha_p$-action can be obtained as a degeneration in the other direction, i.e.\ as the limit of $\mu_p$-actions inside an action of 
    $$\sH=\sG\otimes_B \{s= 0\}=\Spec_{\bA^1_t}\frac{k[t,x,(1+tx)^{-1}]}{(x^p)}.$$ 
We indicate one way to do so.

The global section $x\in \Gamma(\sH,\sO_\sH)$ induces a group morphism $\sH\to \bG_a$, and the global section $1+tx$ induces a group morphism $\sH\to \bG_m$. These two morphisms yield a closed embedding of $\bA^1_t$-group schemes
        $$\sH\cong \{(y, 1+ty)\mid y^p=0\}\hookrightarrow \bG_a\times_{\bA^1_t} \bG_m.$$
Using this description, we can write down the following action of $\sH$ on $\bA^d\times\bA^1_t$. Let $\mathfrak{S}=\{d_1+1,d_1+d_2+2,\dots,d\}$ and define:
        $$(y,1+ty)\colon\quad  x_i\mapsto \begin{cases}
            x_i & \text{if }i\in \mathfrak{S}, \\
            (1+ty)x_i+yx_{i+1} & \text{if }i\notin \mathfrak{S}.
        \end{cases}$$
When $t=0$ and $y=\varepsilon$ this is the infinitesimal automorphism $\id+\varepsilon \partial_\mathbf{d}$, as desired.

Let $\sY=(\bA^d\times\bA^1_t)/\sH$. As before, we have finite comparison morphisms
        $$\psi_\beta\colon \bA^d/\sH_\beta\longrightarrow \sY_\beta, \quad 
        \beta\in \bA^1_t.$$
Since $\sH\otimes \{t\neq 0\}\cong \mu_p$ is linearly reductive, $\psi_\beta$ is an isomorphism for $\beta\neq 0$. The morphism $\psi_0$ is a finite universal homeomorphism, but in general we do not know whether it is an isomorphism.

Unlike the singularities of $\bA/(\bZ/p,\mathbf{d})$ and $\bA/(\alpha_p,\mathbf{d})$, those of $\bA^d/\sH_\beta$ for $\beta\neq 0$ are well-behaved for any $\mathbf{d}$: there are always toroidal, so in particular Cohen--Macaulay, rational and klt.
\end{remark}

\section{Partial resolutions of linear $\alpha_p$-quotients}\label{section:partial_resolution}
In this section we describe partial resolutions of linear $\alpha_p$-quotients, and derive some consequences. 

\subsection{Notations}\label{section:notations}
We start by fixing our notations. Let $\mathbf{d}=\{d_\sigma\}_{\sigma=1,\dots,l}$ with $0\leq d_\sigma \leq p-1$ and $d=\sum_{\sigma=1}^l(1+d_\sigma)$. 
We write $\bD_\mathbf{d}=\sum_{\sigma=1}^l \frac{d_\sigma(d_\sigma+1)}{2}$.

We consider the affine space $\bA^d$ with coordinates $\{\mathbf{x}^{(\sigma)}\}_\sigma$ where $\sigma=1,\dots,l$ and $\mathbf{x}^{(\sigma)}$ denotes the set of variables $x_0^{(\sigma)},\dots,x_{d_\sigma}^{(\sigma)}$. We define the derivation $\partial=\partial_{\mathbf{d}}$ by
        \begin{equation}\label{eqn:derivation}
            \partial =\sum_{\sigma=1}^l\partial^{(\sigma)} 
            =\sum_\sigma \left(\sum_{i=1}^{d_\sigma} x_{i}^{(\sigma)}\partial_{x_{i-1}^{(\sigma)}}\right)
        \end{equation}
where $\partial_{x_i^{(\sigma)}}$ denotes the derivative with respect to the variable $x_i$. (If $d_\sigma=0$, it is understood that $\partial^{(\sigma)}=0$.)

We let $\mathbf{I}=\{(\sigma,i)\mid \sigma=1,\dots,l \text{ and }i=1,\dots,d_\sigma\}$ and 
$\mathbf{I}^*=\{(\sigma,i)\in \mathbf{I}\mid i\neq 0\}$.

To fix the ideas: if $l=1$ and $d=3$, then $\partial=x_2\partial_{x_1}+x_1\partial_{x_0}$, corresponding to the matrix
        $$\begin{pmatrix}
        0 \\
        1& 0 \\
        & 1 & 0
        \end{pmatrix}$$
in the ordered basis $(x_0,x_1,x_2)$. 

It is easily seen that the $\partial^{(\sigma)}$s commute with each other, and that $\partial^p=\sum_\sigma(\partial^{(\sigma)})^p=0$. So $\partial$ defines an action of $\alpha_p$ on $\bA^d$, and $\sO_{\bA^d}\cdot \partial=\sF$ is a $1$-foliation on $\bA^d$. 

\begin{lemma}\label{lemma:sing_locus_of_qt}
The fixed-point scheme of the $\alpha_p$-action given by \autoref{eqn:derivation} on $\bA^d$ (or, equivalently, the locus where $\sF$ is not a regular $1$-foliation) is the closed sub-scheme defined by the ideal $\left(x_i^{(\sigma)}\mid (\sigma,i)\in\mathbf{I}^* \right)$.
\end{lemma}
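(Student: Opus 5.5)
The plan is to use the standard description of the fixed-point scheme of an $\alpha_p$-action given by a derivation $\partial$ on an affine scheme $\Spec(R)$. It is the closed subscheme cut out by the ideal $\partial(R)\cdot R$ generated by all $\partial(f)$. Equivalently, $\sF=\sO\cdot\partial$ fails to be a regular $1$-foliation exactly where $\partial$ vanishes as a section of $T_{\bA^d}$. Here the right notion of vanishing is the scheme-theoretic zero locus of the vector field, i.e.\ the ideal generated by its coefficients in the basis $\{\partial_{x_i^{(\sigma)}}\}$. First I will recall or justify this equivalence. An $\alpha_p$-action is the same as a coaction $R\to R[\varepsilon]/(\varepsilon^p)$ given by $f\mapsto\sum_j \partial^j(f)\varepsilon^j/j!$. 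The fixed scheme is the largest closed subscheme on which this coaction becomes trivial, so it is defined by the ideal generated by all $\partial^j(f)$ with $j\ge1$. This ideal equals the ideal generated by the $\partial(f)$, since $\partial^j(f)=\partial(\partial^{j-1}f)$.

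Next I compute this ideal. By the Leibniz rule, $\partial(R)R$ is generated by $\partial(x_i^{(\sigma)})$ for all coordinates, since $\partial(fg)=f\partial g+g\partial f$ lies in the ideal generated by the images of generators. From \autoref{eqn:derivation} we have $\partial(x_{i-1}^{(\sigma)})=x_i^{(\sigma)}$ for $1\le i\le d_\sigma$, and $\partial(x_{d_\sigma}^{(\sigma)})=0$. So the ideal is exactly $(x_i^{(\sigma)}\mid (\sigma,i)\in\mathbf{I}^*)$, the indices with $i\ge1$. These are also exactly the coefficients of $\partial$ in the coordinate frame, which handles the "equivalently" formulation at the same time.

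For the foliation side, I use the regularity criterion in the spirit of \cite[Definition 3.1.5]{Posva_Resolution_1-foliations}. A rank-one $1$-foliation $\sO\cdot\partial$ on a regular scheme is regular at a point $x$ iff $\partial$ does not vanish at $x$. Concretely, this means some $\partial(f)$ is a unit at $x$, so that locally $\partial$ can be written as $\partial_{y}$ in suitable coordinates and the quotient is regular. Thus the non-regular locus is the support of $V(\partial(R)R)$, consistent with the computation above.

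The only delicate point is matching scheme structures: the fixed-point \emph{scheme} versus the non-regular \emph{locus}. I expect this to be a matter of citing the standard fact that the fixed scheme is $V(\partial(R))$. If that is not available verbatim, I will argue directly that a morphism $T\to\bA^d$ is $\alpha_p$-invariant iff all $\partial(f)$ pull back to zero on $T$.
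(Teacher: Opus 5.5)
Your proposal is correct and is essentially the paper's argument. The paper simply cites \cite[Lemma 2.3.10]{Posva_Singularities_quotients_by_1-foliations} for the identification of the fixed-point scheme (and of the non-regular locus of $\sF$) with $V(\partial(\sO_{\bA^d}))$; you instead re-derive this from the coaction $f\mapsto\sum_j\partial^j(f)\varepsilon^j/j!$, and the Leibniz computation on the coordinates is then immediate.

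One thing to make explicit: your criterion ``$\sO\cdot\partial$ is regular at $x$ iff $\partial$ does not vanish at $x$'' requires $\sO\cdot\partial$ to be saturated. This holds here because for $\bD_\mathbf{d}\geq 2$ the coefficients of $\partial$ include at least two distinct variables. The ``equivalently'' clause genuinely needs this: for $\mathbf{d}=(1)$, where $\partial=x_1\partial_{x_0}$, the generated $1$-foliation is $\sO\cdot\partial_{x_0}$, which is regular everywhere, while the fixed-point scheme is the divisor $V(x_1)$.
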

\begin{proof}
This follows from \cite[Lemma 2.3.10]{Posva_Singularities_quotients_by_1-foliations}.
\end{proof}

\begin{lemma}
Assume that $\bD_\mathbf{d}\leq 1$. Then $\bA/(\alpha_p,\mathbf{d})$ is regular.
\end{lemma}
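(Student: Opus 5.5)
The plan is to reduce to the two possible shapes of $\mathbf{d}$ and compute the invariant ring directly. Since each $d_\sigma\ge 0$ contributes $d_\sigma(d_\sigma+1)/2$ to $\bD_\mathbf{d}$, the hypothesis $\bD_\mathbf{d}\leq 1$ leaves only two possibilities: either all $d_\sigma=0$, or exactly one block has $d_\sigma=1$ and all other blocks have $d_\tau=0$. In the first case $\partial=0$. Then $\alpha_p$ acts trivially and the quotient is $\bA^d$ itself (the quotient by the zero $1$-foliation is the identity), so it is regular.

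In the second case, after reordering, the only nontrivial part of the derivation is $\partial=x_1\partial_{x_0}$ on the block with coordinates $x_0,x_1$. It acts trivially on all remaining variables $y_1,\dots,y_{d-2}$. I will show that
$$k[x_0,x_1,\mathbf{y}]^{\partial}=k[x_0^p,x_1,\mathbf{y}],$$
which is a polynomial ring, so $\bA/(\alpha_p,\mathbf{d})\cong\bA^d$ is regular.

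The inclusion $\supseteq$ is clear. For the reverse inclusion, write $f=\sum_{j=0}^{p-1}a_jx_0^j$ with $a_j\in k[x_0^p,x_1,\mathbf{y}]$. Then
$$\partial f=x_1\sum_{j}ja_jx_0^{j-1}.$$
Since $x_1$ is a nonzerodivisor and $1,x_0,\dots,x_0^{p-1}$ form a basis over $k[x_0^p,x_1,\mathbf{y}]$, the condition $\partial f=0$ forces $ja_j=0$ for $1\le j\le p-1$, hence $a_j=0$ for those $j$, and $f=a_0$.

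Alternatively, I can argue degree-theoretically. The ring $A=k[x_0^p,x_1,\mathbf{y}]$ is regular, lies between $k[x_0,x_1,\mathbf{y}]^p[\dots]$ and the invariants, and $k[x_0,x_1,\mathbf{y}]$ is free of rank $p$ over it. The invariant ring of a nonzero $1$-foliation of rank one has index $p$, so $A$ equals the invariant ring because $A$ is normal. Either route works; the direct coefficient computation is the cleanest.

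The only mild obstacle is justifying the basis argument in the second case, and this is elementary. Note that in the second case the fixed locus $\{x_1=0\}$ given by \autoref{lemma:sing_locus_of_qt} is a divisor, so the quotient can still be regular despite the action not being free.
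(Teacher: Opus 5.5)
Your proof is correct and follows the paper's argument: the same case split forced by $\bD_\mathbf{d}\leq 1$, with the trivial action when all $d_\sigma=0$, and the polynomial invariant ring $k[x_0^p,x_1,\mathbf{y}]$ when exactly one $d_\sigma$ equals $1$. The only difference is that you verify this kernel computation via the basis $1,x_0,\dots,x_0^{p-1}$ over $k[x_0^p,x_1,\mathbf{y}]$, whereas the paper simply asserts it.
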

\begin{proof}
By definition of $\bD_\mathbf{d}$ it follows that $d_\sigma\leq 1$ with equality for at most one index. The action is trivial if $d_\sigma=0$ for all $\sigma$, in which case the quotient morphism is the identity. If $d_{\sigma_1}=1$, then the kernel of $\partial=x_1^{(\sigma_1)}\partial_{x_0^{(\sigma_1)}}$ is the sub-ring
    $k[x_0^{(\sigma_1),p},x_1^{(\sigma_1)},
    x_0^{(\sigma_l)}\mid l\geq 2]$
which is regular.
\end{proof}

So for the rest of the article, we make the following hypothesis:
\begin{assumption}
We have $\bD_\mathbf{d}\geq 2$. 
\end{assumption}

\subsection{Weighted blow-up of the ramification locus}\label{section:weighted_blow-up}
We perform a weighted blow-up along the fixed-point scheme of the $\alpha_p$-action, and show that the induced action on the weighted blow-up (understood as a stack) is free.

Let $b\colon \sY=\Bl_\sI\bA^d\to \bA^d$ be the blow-up of the Rees algebra
        $$\mathcal{I}=\sum_{(\sigma,i)\in\mathbf{I}^*}
        \left(x_i^{(\sigma)},i\right).$$
Then $\sY$ is a regular tame Deligne--Mumford stack that is schematic in codimension one. See \cite{Quek_Rydh_Weighted_blowups} for the general theory of weighted blow-ups: for our purpose, what is recalled in \cite[\S 2.4]{Posva_Resolution_1-foliations} will be sufficient. Recall that we assume $\mathbb{D}_\mathbf{d}\geq 2$, so $\mathcal{I}$ is not principal and the blow-up $b$ is non-trivial.

\subsubsection{Irreducible case}
We start with the case of an irreducible representation, i.e.\ $l=1$. For the sake of readability, we momentarily drop the $\sigma$ from our notations, and write $d_1=\delta$. We have $\mathcal{I}=\sum_{i=1}^\delta (x_i,i)$.

We write down equivariant charts for $\sY$. For each $i=1\dots,\delta$, let $C_i=\Spec k[u_0,\dots,u_\delta]$ with a morphism $b_i\colon C_i\to \bA^d$ given by
        $$x_i=u_i^i, \quad 
        x_j=u_i^ju_j \ \ (j\neq 0,i), \quad 
        x_0=u_0,$$
and a $\mu_i$-action given by the diagonal automorphism
            \begin{equation*}
            \chi_i\colon \quad  
            u_i\mapsto \xi_i u_i, \quad 
            u_j\mapsto \xi_i^{-j}u_j \ (j\neq i,0), \quad 
            u_0\mapsto u_0
            \end{equation*}
where $\xi_i$ is a primitive $i$-th root of unity. (We abuse notations by not differentiating coordinates on the different charts: but we will never compare them directly, so this shall not cause any confusion.) 
By definition of weighted blow-ups, we have:

\begin{lemma}
With the above notations, $\bigsqcup_i[C_i/\mu_i]\twoheadrightarrow \sY$ is a Zariski open covering of the stack $\sY$, and $b_i\colon C_i\to \bA^d$ factors as 
        $$C_i\to [C_i/\mu_i] \hookrightarrow \sY \overset{b}{\to} \bA^d$$
where the first morphism is the quotient.
\end{lemma}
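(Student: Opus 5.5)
Since the lemma is asserted ``by definition of weighted blow-ups'', the plan is to unwind the standard description of a weighted blow-up as a stack quotient and match it with the charts $C_i$. By \cite[\S 2.4]{Posva_Resolution_1-foliations} (or \cite{Quek_Rydh_Weighted_blowups}), $\sY=\Bl_\sI\bA^d$ is the quotient stack $[(\Spec_{\bA^d}\mathcal{R}\setminus V(\mathcal{R}_+))/\bG_m]$, where $\mathcal{R}=\bigoplus_n \sI_n t^{-n}$ is the extended Rees algebra. With $\sI=\sum_{i=1}^\delta (x_i,i)$ this algebra is generated over $k[x_0,\dots,x_\delta]$ by $s=t^{-1}$ and by $y_i=x_it^{-i}$, with the relations $x_i=s^{i}y_i$. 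So $\Spec\mathcal{R}=\Spec k[s,x_0,y_1,\dots,y_\delta]$ is an affine space, and $\bG_m$ acts with weight $-1$ on $s$ and weight $i$ on $y_i$ (sign conventions as in \emph{op.cit.}). The irrelevant locus $V(\mathcal{R}_+)$ is $\{y_1=\dots=y_\delta=0\}$.

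First, the open sets $D_+(y_i)=\{y_i\neq0\}$, for $i=1,\dots,\delta$, cover the complement of $V(\mathcal{R}_+)$. They are $\bG_m$-stable, so the $[D_+(y_i)/\bG_m]$ form a Zariski open cover of $\sY$.

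Second, I identify each chart with $[C_i/\mu_i]$ via the slice $\{y_i=1\}$. Because $\bG_m$ acts on $y_i$ with weight $i$, the inclusion of the closed subscheme $\{y_i=1\}$, which is stable under $\mu_i\subset\bG_m$, induces an isomorphism
\[
[\{y_i=1\}/\mu_i]\xrightarrow{\ \sim\ }[D_+(y_i)/\bG_m].
\]
This is the standard slice argument: $D_+(y_i)\cong \{y_i=1\}\times^{\mu_i}\bG_m$ via $(z,\lambda)\mapsto\lambda\cdot z$. The slice has coordinates $s,x_0,y_j$ for $j\neq i$. I rename them $u_i:=s$, $u_0:=x_0$ and $u_j:=y_j$.

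Then the map to $\bA^d$ is read off directly. We get $x_i=s^iy_i=u_i^i$, $x_j=s^jy_j=u_i^ju_j$ and $x_0=u_0$, which is exactly $b_i$. The residual $\mu_i$-action on the slice is the restriction of the $\bG_m$-weights. It acts by $\xi^{\mp1}$ on $u_i=s$, by $\xi^{\pm j}$ on $u_j$, and trivially on $u_0$. After replacing $\xi$ by $\xi^{-1}$ if needed, this is $\chi_i$.

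Finally, the composite $C_i\to[C_i/\mu_i]\hookrightarrow\sY\to\bA^d$ equals $b_i$ because the structure map of $\sY$ restricted to the slice is given by these formulas.

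The only delicate point is matching sign and weight conventions between the Rees algebra grading and the stated $\chi_i$. This is harmless, since it only amounts to choosing which primitive root is called $\xi_i$.
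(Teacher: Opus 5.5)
Your proposal is correct and follows the paper's approach: the paper gives no separate proof, only ``by definition of weighted blow-ups,'' and you unwind exactly that definition. The extended Rees algebra $k[s,x_0,y_1,\dots,y_\delta]$, the cover by the $D_+(y_i)$, and the slice $\{y_i=1\}$ with its residual $\mu_i$-action recover $b_i$, and they recover $\chi_i$ after replacing $\xi_i$ by $\xi_i^{-1}$. The one slip is notational: with $\mathcal{R}=\bigoplus_n\mathcal{I}_nt^{-n}$ and $y_i=x_it^{-i}$ you need $s=t$ (or use $\mathcal{R}=\bigoplus_n\mathcal{I}_nt^{n}$, $s=t^{-1}$, $y_i=x_it^{i}$), since as written $s^iy_i\neq x_i$.
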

 
Now we compute the pullback of $\sF$ to $C_i$. 

\begin{proposition}\label{prop:pullback_derivation_irred_case}
The $1$-foliation $b_i^*\sF$ is regular for every $i$.
\end{proposition}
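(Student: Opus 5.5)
Because $b_i$ is birational and the coordinates $u_0,\dots,u_\delta$ on $C_i$ are explicit, I plan to compute the rational vector field $b_i^*\partial$ directly. I will then saturate it: divide by the largest power of $u_i$ dividing all of its coefficients. Call the result $D_i$, so that $b_i^*\sF=\sO_{C_i}\cdot D_i$. The claim reduces to showing that $D_i$ is regular, meaning that at every point of $C_i$ either some coefficient of $D_i$ is a unit, or the point lies where $D_i$ is the $p$-closed derivation $\partial_{u}$-type generator of a regular $1$-foliation. In practice I expect to show that $D_i$ does not vanish anywhere on $C_i$. A rank-one $1$-foliation generated by a nowhere-vanishing $p$-closed vector field is regular by the criterion recalled via \cite[Lemma 2.3.10]{Posva_Singularities_quotients_by_1-foliations}: its singular locus is the zero locus of the generator.

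The computation goes as follows. The derivation acts by $\partial(x_0)=x_1$, $\partial(x_j)=x_{j+1}$ for $1\le j<\delta$, and $\partial(x_\delta)=0$. I write $D=b_i^*\partial=\sum_j c_j\partial_{u_j}$ and solve for the $c_j$ by applying $D$ to the relations $x_0=u_0$, $x_i=u_i^i$ and $x_j=u_i^ju_j$. This gives the following.
\begin{itemize}
\item $c_0=x_1=u_i u_1$ if $i\neq 1$, and $c_0=u_1$ if $i=1$.
\item From $iu_i^{i-1}c_i=x_{i+1}=u_i^{i+1}u_{i+1}$ we get $c_i=\tfrac{1}{i}u_i^2u_{i+1}$, which vanishes when $i=\delta$. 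This uses $i<p$, since $i\le\delta\le p-1$.
\item From $ju_i^{j-1}u_jc_i+u_i^jc_j=u_i^{j+1}u_{j+1}$ we get $c_j=u_i(u_{j+1}-\tfrac{j}{i}u_ju_{i+1})$ for $j\neq 0,i$. Here $u_{j+1}$ means $u_i$ when $j+1=i$ (the substitution $x_i=u_i^i$), and $u_{\delta+1}=0$.
\end{itemize}
Each coefficient is divisible by $u_i$ except, in the case $i=1$, the coefficient $c_0$. I expect the saturation to be $D_i=u_i^{-1}D$ in general, with a small adjustment when $i=1$.

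The key step is checking where $D_i$ vanishes. The coefficient of $\partial_{u_{i-1}}$ in $D_i$ is $1-\tfrac{i-1}{i}u_{i-1}u_{i+1}$; in particular it equals $1$ when $i=\delta$ or $u_{i+1}=0$. Its zero locus is cut out by $u_{i-1}u_{i+1}=i/(i-1)$. On that locus I would bring in the other coefficients, namely $u_0u_1$ and the chain $u_{j+1}-\tfrac{j}{i}u_ju_{i+1}$ for $j>i$. Suppose all of them vanish. Working downward from $j=\delta$, the condition $u_{\delta}\tfrac{\delta}{i}u_{i+1}=0$, together with the rest of the chain, forces $u_{i+1}=0$ or all higher $u_j=0$. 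Either outcome contradicts $c_{i-1}=0$ or $c_i=0$. I expect this triangular bookkeeping to be the main obstacle, in particular the edge cases $i=1$, $i=\delta$ and $\delta<p$.

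If vanishing cannot be excluded outright, I will use a fallback. The singular locus of $b_i^*\sF$ is $\mu_i$-stable and maps into the fixed locus of \autoref{lemma:sing_locus_of_qt}, that is, the exceptional divisor $\{u_i=0\}$. So it suffices to check nonvanishing of $D_i$ along $u_i=0$. There $D_i$ reduces to $\sum_{j\neq 0,i}(u_{j+1}-\tfrac{j}{i}u_ju_{i+1})\partial_{u_j}$, with the coefficient of $\partial_{u_{i-1}}$ equal to the nonzero constant $1$ coming from $u_{j+1}=u_i\cdot u_i^{-1}$. This settles the claim. Finally, $p$-closedness of $D_i$ is automatic, because $D_i^p$ is a rational multiple of $D_i$ since $\partial^p=0$.
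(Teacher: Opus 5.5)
Your main argument is correct and is essentially the paper's proof. On each chart you write $b_i^*\partial=u_i\cdot D_i$ (the paper's $\psi_i$) and show that the coefficients of $D_i$ generate the unit ideal. The only difference is in how the case $1<i<\delta$ is closed.
\begin{itemize}
\item Your descending chain works uniformly for all such $i$. Where all coefficients vanish, $u_{i+1}$ is a unit. So $\frac{\delta}{i}u_\delta u_{i+1}=0$ forces $u_\delta=0$. Then the relations $u_{j+1}=\frac{j}{i}u_ju_{i+1}$ for $j=\delta-1,\dots,i+1$ force $u_{i+1}=0$, a contradiction. This uses $j\not\equiv 0 \bmod p$, which holds because $\delta\le p-1$.
\item The paper instead climbs the lower chain, starting from the $\partial_{u_0}$-coefficient $u_1$, when $i>2$. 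It uses an ascending version of your chain only for $i=2$.
\end{itemize}

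You should, however, drop your fallback, because it is wrong. The $\partial_{u_{i-1}}$-coefficient of $D_i$ is $1-\frac{i-1}{i}u_{i-1}u_{i+1}$, which does not involve $u_i$. So it does not become $1$ on the exceptional divisor $\{u_i=0\}$. It still vanishes on the nonempty locus $\{u_i=0,\ u_{i-1}u_{i+1}=\frac{i}{i-1}\}$, and that is exactly where the chain argument is indispensable. It is true that the singular locus lies in $\{u_i=0\}$, since $b_i$ is \'{e}tale off $\{u_i=0\}$, but this does not exclude vanishing there.

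Also fix three slips in the computation.
\begin{itemize}
\item When $j+1=i$, the symbol $u_{j+1}$ must be read as $1$ (because $x_i=u_i^i$), not as $u_i$. You do use the correct value $1$ later.
\item For $i=1$ one has $c_0=u_1=u_i$, so no adjustment is needed: $D_i=u_i^{-1}b_i^*\partial$ for every $i$.
\item The $\partial_{u_0}$-coefficient of $D_i$ is $u_1$, not $u_0u_1$.
\end{itemize}
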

\begin{proof}
We distinguish three cases.
\begin{enumerate}
    \item On the chart $C_\delta$ we have
                $$b_\delta^*\partial = 
                u_\delta\cdot \underbrace{\left[ 
                u_1\partial_{u_0}+ \sum_{j=2}^{\delta-1}u_j\partial_{u_{j-1}} 
                +\partial_{u_{\delta-1}}
                \right]}_{\psi_\delta}.$$
        The $1$-foliation $b_\delta^*\sF$ is generated by $\psi_\delta$ and therefore it is regular.

    \item On the chart $C_1$ we compute
                \begin{eqnarray*}
                b_1^*\partial &=& 
                b_1^*\left(\sum_{j=2}^{\delta-1} x_{j+1}\partial_{x_{j}}
                +x_2\partial_{x_1}+x_1\partial_{x_0}
                \right)\\
                &=& 
                \sum_{j=2}^{\delta-1} \frac{u_1^{j+1}u_{j+1}}{u_1^j}\partial_{u_j} 
                +\frac{u_1^2u_2}{u_1}\left[ 
                u_1\partial_{u_1} -
                \sum_{j=2}^\delta ju_j\partial_{u_j}
                \right] 
                +u_1\partial_{u_0} \\
                &=&
                u_1 \cdot \psi_1
                \end{eqnarray*}
            where
                $$\psi_1 =
                \partial_{u_0}+u_1u_2\partial_{u_1} 
                +\sum_{j=2}^{\delta-1} (u_{j+1}-ju_2u_j)\partial_{u_j}
                - \delta u_2u_\delta\partial_{u_\delta}
                $$
        which defines a regular $1$-foliation on $C_1$.

    \item For $1<i<\delta$, we compute on the charts $C_i$:
                \begin{eqnarray*}
                    b_i^*\partial &=& 
                    b_i^*\left(\sum_{\substack{j\in \{2,\dots, \delta\} \\ j\neq i, i+1}}
                    x_j \partial_{x_{j-1}}
                    + x_{i+1}\partial_{x_i} +x_i\partial_{x_{i-1}}+x_1\partial_{x_0}
                    \right)\\
                    &=&
                    \sum_{\substack{j\in \{2,\dots, \delta\} \\ j\neq i, i+1}} \frac{u_i^ju_j}{u_i^{j-1}}
                    \partial_{u_{j-1}} 
                    + \frac{u_i^{i+1}u_{i+1}}{i u_i^i}
                    \left[ 
                    u_i\partial_{u_i}-
                    \sum_{\substack{j\in \{1,\dots, \delta\} \\ j\neq i}}
                    ju_j\partial_{u_j}
                    \right] \\
                    && \quad \quad +\frac{u_i^i}{u_i^{i-1}}\partial_{u_{i-1}}
                    +u_1u_i\partial_{u_0} \\
                    &=&
                    \sum_{\substack{j\in \{1,\dots, \delta-1\} \\ j\neq i-1, i}}
                    u_iu_{j+1}\partial_{u_j} 
                    + \frac{1}{i}u_i^2u_{i+1}\partial_{u_i}
                    - \sum_{\substack{j\in \{1,\dots, \delta\} \\ j\neq i}}
                    \frac{j}{i}u_iu_{i+1}u_j\partial_{u_j} \\
                    && \quad \quad +u_i\partial_{u_{i-1}} +u_1u_i\partial_{u_0} \\
                    &=& 
                    u_i \cdot \psi_i
                \end{eqnarray*}
            where
                \begin{eqnarray*}
                \psi_i &=&
                    \sum_{\substack{j\in \{1,\dots, \delta-1\} \\ j\neq i-1, i}}
                    \left( u_{j+1}-\frac{j}{i}u_{i+1}u_j\right)
                    \partial_{u_j}
                    + u_1\partial_{u_0} 
                    -\frac{\delta}{i}u_{i+1}u_\delta\partial_{u_\delta} \\ 
                    &&\quad\quad +  \left(1-\frac{i-1}{i}u_{i+1}u_{i-1}\right)
                    \partial_{u_{i-1}}
                    +\frac{1}{i}u_iu_{i+1}\partial_{u_i}.
                \end{eqnarray*}
            We claim that $\psi_i$ defines a regular $1$-foliation on $C_i$. Indeed, its singular locus $Z$ is defined by the radical of the ideal generated by the coefficients of all the $\partial_{u_j}$ in the above expression. That ideal contains 
            $1-\frac{i-1}{i}u_{i+1}u_{i-1}$, so the functions $u_{i+1}$ and $u_{i-1}$ must be invertible along $Z$. By looking at the coefficient of $\partial_{u_0}$ and of $\partial_{u_\delta}$ we also get $u_1=u_\delta=0$ along $Z$. Now:
                \begin{itemize}
                    \item Suppose $i>2$. Then consider the sum over $j$: since $u_1=0$ along $Z$, by looking at the coefficient of $\partial_{u_1}$ we must have $u_2=0$ along $Z$. Continuing by induction on $j=1,\dots,i-2$, we obtain that $u_{i-1}=0$ along $Z$. So $Z$ must be empty.
                    
                    \item Suppose $i=2$. 
                        \begin{enumerate}
                            \item If $\delta>3$, by looking at the $j=3$ summand in the sum over $j$ we get that $u_4$ is invertible along $Z$. Continuing by induction on $j=3,\dots,\delta-1$, we obtain that $u_\delta$ is invertible $Z$. So $Z$ must be empty.
                            \item If $\delta=3$ then
                                    $$\psi_2=
                                    u_1\partial_{u_0} 
                                    -\frac{3}{2}u_3^2\partial_{u_3}
                                    +(1-\frac{1}{2}u_1u_3)\partial_{u_1}
                                    +\frac{1}{2}u_2u_3\partial_{u_2}
                                    $$
                                and we immediately see that $\psi_2$ defines a regular $1$-foliation.
                        \end{enumerate}
                \end{itemize}
\end{enumerate}
Thus in every case, the pullback $1$-foliation $b_i^*\sF$ is regular.
\end{proof}

\begin{corollary}\label{cor:pullback_foliation_is_regular}
The pullback $1$-foliation $b^*\sF$ is regular on $\sY$. 
\end{corollary}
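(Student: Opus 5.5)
Regularity of a $1$-foliation on a DM stack is étale-local, so I will check it on an étale (here even smooth) atlas of $\sY$. Concretely, $\sY$ has a Zariski cover by the charts $[C/\mu]$ of the weighted blow-up, and $b^*\sF$ is regular on $[C/\mu]$ exactly when its pullback to $C$ is regular. Since $C\to[C/\mu]$ is étale (the group is tame) and $b^*\sF$ was built by pulling back along representable birational maps, its pullback to $C$ agrees with the foliation pulled back along the composite $C\to\bA^d$. In the irreducible case this composite is $b_i$, so the conclusion follows directly from \autoref{prop:pullback_derivation_irred_case}. The only point to justify is the compatibility of the construction of $f^*\sF$ in \autoref{section:foliations} with restriction to the open substacks $[C_i/\mu_i]$ and with the presentation $C_i\to[C_i/\mu_i]$. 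Both foliations are saturated and agree generically, so they coincide by \cite[Remark 2.4.2]{Posva_Singularities_quotients_by_1-foliations}.

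For general $l$, I will write down the analogous charts. For each $(\sigma,i)\in\mathbf{I}^*$ there is a chart $C_{\sigma,i}$ with $\mu_i$-action, given by
$$x_i^{(\sigma)}=u_i^{(\sigma),i},\qquad x_j^{(\tau)}=u_i^{(\sigma),j}u_j^{(\tau)} \text{ for } (\tau,j)\neq(\sigma,i),\ j\ge1,\qquad x_0^{(\tau)}=u_0^{(\tau)}.$$
Set $t=u_i^{(\sigma)}$. The blocks decouple: on the $\sigma$-block, $\partial^{(\sigma)}$ pulls back as in the irreducible computation, except that the Euler-type correction term $\frac{u_{i+1}}{i}[\,\cdots]$ coming from $\partial_{x_i^{(\sigma)}}$ now also rescales the variables of the other blocks. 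On a block $\tau\neq\sigma$, a term $x_{j+1}^{(\tau)}\partial_{x_j^{(\tau)}}$ pulls back to $t\,u_{j+1}^{(\tau)}\partial_{u_j^{(\tau)}}$, and $x_1^{(\tau)}\partial_{x_0^{(\tau)}}$ pulls back to $t\,u_1^{(\tau)}\partial_{u_0^{(\tau)}}$. Hence $b^*\partial=t\cdot\psi$, where $\psi$ is the field $\psi_i$ computed in \autoref{prop:pullback_derivation_irred_case} for the $\sigma$-block (for the parameter $\delta=d_\sigma$), plus additional terms $-\frac{j}{i}u^{(\sigma)}_{i+1}u^{(\tau)}_j\partial_{u^{(\tau)}_j}$ and $u^{(\tau)}_{j+1}\partial_{u^{(\tau)}_j}$ on the other blocks. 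The field $\psi$ generates $b^*\sF$ on this chart, since the foliation is saturated and $t$ is a single factor.

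The key step is to show that $\psi$ has no zeros. The coefficients along the $\sigma$-block are exactly those of $\psi_i$, independent of the other blocks' contributions: the extra terms only involve $\partial_{u^{(\tau)}}$ for $\tau\ne\sigma$. The case analysis of \autoref{prop:pullback_derivation_irred_case} shows that these coefficients have no common zero, so $\psi$ is nowhere vanishing and $b^*\sF$ is regular on $C_{\sigma,i}$. I expect the main obstacle to be the degenerate small cases of the $\sigma$-block, where the irreducible computation used specific features. For $i=1$ the coefficient $1$ of $\partial_{u_0}$ settles it immediately. For $i=\delta$ the coefficient $1$ of $\partial_{u_{\delta-1}}$ settles it, including when $\delta=1$, where the chart formula still yields $\partial_{u_0^{(\sigma)}}$. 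The case $\delta=2$, $i=2$ falls under $i=\delta$. I will recheck these boundary cases, making sure that the Euler term does not cancel the unit coefficient. This holds because the Euler term only affects $\partial_{u_j}$ with $j\ge1$, $j\ne i$.
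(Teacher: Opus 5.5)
Your proposal is correct and follows the paper's argument. By definition, regularity on $\sY$ is checked on the atlas $\bigsqcup C_i\to\sY$, where it reduces to \autoref{prop:pullback_derivation_irred_case}. Your several-block computation is the content of \autoref{prop:pullback_foliations_are_reg_gen_case}, and there you are more careful than the paper: you note that the Euler term also contributes $-\frac{j}{i}u^{(\sigma)}_{i+1}u^{(\tau)}_j\partial_{u^{(\tau)}_j}$, which leaves the $\sigma$-block coefficients untouched.

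One small slip: $b\colon\sY\to\bA^d$ is not representable, because points with stabilizer $\mu_i$, $i\geq 2$, map to a scheme. So the identification of the pullback to $C_i$ with $b_i^*\sF$ should rest only on your generic-agreement-plus-saturation argument, which suffices on its own.
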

\begin{proof}
By definition \cite[Definition 3.1.5]{Posva_Resolution_1-foliations}, this follows from the regularity of the $b_i^*\sF$.
\end{proof}

\subsubsection{General case}
We go back to the general case. For each $(\sigma,i)\in\mathbf{I}^*$ we have equivariant charts
        $$C_i^{(\sigma)} =\Spec k\left[ 
        u_j^{(\pi)} \mid 
        (\pi,j)\in\mathbf{I}
        \right]$$
with morphism $b_i^{(\sigma)}\colon C_i^{(\sigma)}\to \bA^d$ given by
        \begin{equation}\label{eqn:structure_map_equiv_charts}
        x_j^{(\sigma)}=\begin{cases}
            u_0^{(\sigma)} & \text{if } j=0,\\
            (u_i^{(\sigma)})^i & \text{if }j=i,\\
            (u_i^{(\sigma)})^ju_j^{(\sigma)} & \text{if }j\notin \{0,i\},
        \end{cases}
        \quad 
        \text{and for }\pi\neq \sigma: \ 
        x_j^{(\pi)}=\begin{cases}
            u_0^{(\pi)} & \text{if }j=0, \\
            (u_i^{(\sigma)})^ju_j^{(\pi)} & \text{if }j\neq 0.
        \end{cases}
        \end{equation}
There is also a $\mu_i$-action on $C_i^{(\sigma)}$ given by
        \begin{equation}\label{eqn:action_on_equiv_chart}
        \chi_i^{(\sigma)}\colon \quad 
        u_j^{(\pi)}\mapsto \begin{cases}
            \xi_i u_i^{(\sigma)} & \text{if }\pi=\sigma \text{ and } j=i,\\
            \xi_i^{-j}u_j^{(\pi)} & \text{otherwise.}
        \end{cases}
        \end{equation}
where $\xi_i$ is a primitive $i$-th root of unity.

\begin{lemma}
With the above notations, $\bigsqcup_{(\sigma,i)\in\mathbf{I}^*}[C_i^{(\sigma)}/\mu_i]\twoheadrightarrow \sY$ is a Zariski open covering of the stack $\sY$, and $b_i^{(\sigma)}\colon C_i^{(\sigma)}\to \bA^d$ factors as 
        $$C_i^{(\sigma)}\to [C_i^{(\sigma)}/\mu_i] \hookrightarrow \sY \overset{b}{\to} \bA^d$$
where the first morphism is the quotient.
\hfill \qedsymbol
\end{lemma}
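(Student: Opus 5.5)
The statement is the general-case analogue of the lemma in the irreducible case, so the plan is to reduce it to the definition of the weighted blow-up of the Rees algebra $\mathcal{I}=\sum_{(\sigma,i)\in\mathbf{I}^*}(x_i^{(\sigma)},i)$, exactly as there. By \cite{Quek_Rydh_Weighted_blowups} (as recalled in \cite[\S 2.4]{Posva_Resolution_1-foliations}), $\sY$ is the stack quotient $[(\Spec_{\bA^d}\bigoplus_n \mathcal{I}_n t^{-n}\setminus V(\mathcal{I}_+))/\bG_m]$. Here $\mathcal{I}_+$ is the irrelevant ideal, and the generators of positive degree are $x_i^{(\sigma)}t^{-i}$ for $(\sigma,i)\in\mathbf{I}^*$. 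The complement of $V(\mathcal{I}_+)$ is covered by the open loci $D_+(x_i^{(\sigma)}t^{-i})$ where one such generator is invertible, so $\sY$ is covered by the open substacks $\sY_i^{(\sigma)}=[D_+(x_i^{(\sigma)}t^{-i})/\bG_m]$. It therefore suffices to identify each $\sY_i^{(\sigma)}$ with $[C_i^{(\sigma)}/\mu_i]$ compatibly with $b_i^{(\sigma)}$.

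For this, on $D_+(x_i^{(\sigma)}t^{-i})$ we slice the $\bG_m$-action by imposing $x_i^{(\sigma)}t^{-i}=1$. The stabilizer of this slice is $\mu_i$, so $\sY_i^{(\sigma)}\cong[S/\mu_i]$, where $S$ is the spectrum of the Rees algebra modulo $(x_i^{(\sigma)}t^{-i}-1)$. We then write coordinates on $S$. Set $u_i^{(\sigma)}=t^{-1}$, which has weight $1$; then $(u_i^{(\sigma)})^i=x_i^{(\sigma)}$ on the slice. For every other $(\pi,j)\in\mathbf{I}^*$, set $u_j^{(\pi)}=x_j^{(\pi)}t^{j}$, which has degree $0$ in the Rees grading and so lands in $\mathcal{O}_S$ with $\mu_i$-weight $-j$. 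The invariant coordinates $x_0^{(\pi)}$ give $u_0^{(\pi)}$ of weight $0$. One then checks that $\mathcal{O}_S$ is the polynomial ring in these variables and that the relation $x_j^{(\pi)}=(u_i^{(\sigma)})^ju_j^{(\pi)}$ holds. Since the $x_j^{(\pi)}t^{-j}$ generate the Rees algebra, the map is surjective. A dimension count, $d$ variables against $\dim S=d$, together with integrality of $S$, then gives injectivity. This recovers \eqref{eqn:structure_map_equiv_charts} and the action \eqref{eqn:action_on_equiv_chart}, up to replacing the generator $\xi_i$ by its inverse. The composite $C_i^{(\sigma)}\to[C_i^{(\sigma)}/\mu_i]\hookrightarrow\sY\to\bA^d$ is induced by the inclusion $\sO_{\bA^d}\subset$ Rees algebra, which is precisely $b_i^{(\sigma)}$.

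The only step needing care is the polynomiality of the slice ring. This is the same verification as in the irreducible case, since the presence of several blocks only adds more variables of the same shape. The cleanest approach is to exhibit the inverse map $k[u]\to\mathcal{O}_S$ and check the relations directly, rather than appeal to a dimension argument.
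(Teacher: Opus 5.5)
Your approach is correct and is what the paper means by ``by definition of weighted blow-ups'' (the lemma is stated there without proof). You cover $\sY$ by the $\bG_m$-quotients of the loci where one generator $x_i^{(\sigma)}t^{-i}$ is invertible, and slice each one to a $\mu_i$-quotient. Just fix the convention slip in your coordinates: with $\bigoplus_n\mathcal{I}_nt^{-n}$ and the slice $x_i^{(\sigma)}t^{-i}=1$ you must take $u_i^{(\sigma)}=t$ and $u_j^{(\pi)}=x_j^{(\pi)}t^{-j}$, because $t^{-1}$ is not in the Rees algebra and $x_j^{(\pi)}t^{j}$ has degree $-j$, not $0$. This gives $x_i^{(\sigma)}=(u_i^{(\sigma)})^i$, $x_j^{(\pi)}=(u_i^{(\sigma)})^ju_j^{(\pi)}$ and weights $-1$ and $j$, i.e.\ \eqref{eqn:action_on_equiv_chart} after inverting $\xi_i$.
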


\begin{lemma}\label{lemma:schematic_in_codim_one}
The stack $\sY$ is schematic in codimension one. 
\end{lemma}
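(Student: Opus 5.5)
We need to show that the locus of $\sY$ with nontrivial stabilizers has codimension at least two. We work chart by chart in the open cover $\bigsqcup_{(\sigma,i)\in\mathbf{I}^*}[C_i^{(\sigma)}/\mu_i]\to\sY$. On $[C_i^{(\sigma)}/\mu_i]$ the non-schematic locus is the image of the set of points of $C_i^{(\sigma)}$ with nontrivial stabilizer in $\mu_i$. Being schematic in codimension one is local, so it suffices to show that this set has codimension $\geq 2$ in $C_i^{(\sigma)}$ for each chart. Charts with $i=1$ carry the trivial group and need no argument.

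Fix $(\sigma,i)$ with $i\geq 2$. A point has nontrivial stabilizer if and only if it is fixed by some $\mu_m\subset\mu_i$ with $1<m\mid i$. By \autoref{eqn:action_on_equiv_chart}, the generator of $\mu_m$ acts on $u_i^{(\sigma)}$ with weight $i/m\cdot 1$, which is nonzero mod $i$. It acts on $u_j^{(\pi)}$ with weight $-j\cdot(i/m)$ for $(\pi,j)\neq(\sigma,i)$. Hence the $\mu_m$-fixed locus is contained in
$$\{u_i^{(\sigma)}=0\}\cap\{u_j^{(\pi)}=0 \mid m\nmid j\}.$$
In particular every such point satisfies $u_i^{(\sigma)}=0$, together with $u_j^{(\pi)}=0$ for every $(\pi,j)$ with $j\not\equiv 0 \bmod m$.

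It remains to find one further coordinate $u_j^{(\pi)}$ with $(\pi,j)\neq(\sigma,i)$ and $m\nmid j$. Since $i\leq d_\sigma$ and $i\geq 2\geq 1$, we may take $j=1$ in the block $\sigma$: the variable $u_1^{(\sigma)}$ exists, differs from $u_i^{(\sigma)}$ because $i\neq1$, and has weight $-(i/m)\not\equiv 0 \bmod i$. It therefore vanishes on the $\mu_m$-fixed locus. So the locus with nontrivial stabilizer lies in the finite union over $m$ of the sets $\{u_i^{(\sigma)}=u_1^{(\sigma)}=0\}$, which has codimension $\geq 2$ in $C_i^{(\sigma)}$.

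The only delicate point is bookkeeping of the weights, namely checking that the element $\xi_i^{i/m}$ really moves $u_1^{(\sigma)}$. This holds because $1\cdot i/m$ is not divisible by $i$ when $m>1$. Passing to the stack, the image of a codimension-$\geq2$ invariant closed subset in $[C_i^{(\sigma)}/\mu_i]$ has codimension $\geq2$, which concludes the proof.
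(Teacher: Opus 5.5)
Your proof is correct and follows essentially the same route as the paper's. You restrict to the charts $[C_i^{(\sigma)}/\mu_i]$ with $i>1$ and write the ramification locus as the union of the fixed loci of the nontrivial subgroups $\mu_m\subseteq\mu_i$; the paper indexes these as $\mu_{i/s}$ for proper divisors $s$ of $i$. You then read off from the weights that each fixed locus lies in $V(u_1^{(\sigma)},u_i^{(\sigma)})$, which has codimension two.
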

\begin{proof}
It suffices to show that each $[C_i^{(\sigma)}/\mu_i]$ is schematic in codimension one: for this we can assume that $i>1$. Since its non-schematic locus is equal to the image of the ramification locus $R$ of the $\mu_i$-action on $C_i^{(\sigma)}$, it suffices to show that $R$ has codimension at least $2$. We have $R=\bigcup_{s}\Fix(\mu_{i/s})$ where $s$ runs through the positive divisors of $i$ not equal to $i$, and $\Fix(\mu_{i/s})$ is the fixed locus of the action of the non-trivial sub-group $\mu_{i/s}$ of $\mu_i$. The sub-group $\mu_{i/s}$ acts by 
        \begin{equation}\label{eqn:action_subgroup}
        u_j^{(\pi)}\mapsto \begin{cases}
            \xi_i^s u_i^{(\sigma)} & \text{if }\pi=\sigma \text{ and } j=i,\\
            \xi_i^{-js}u_j^{(\pi)} & \text{otherwise.}
        \end{cases}
        \end{equation}
So in every case we have $\Fix(\mu_i)\subseteq V(u_1^{(\sigma)}, u_i^{(\sigma)})$, which shows that $\Fix(\mu_{i/s})$ has codimension $\geq 2$ in $C_i$. This implies that $\codim_{C_i}R\geq 2$ as desired.
\end{proof}

\begin{proposition}\label{prop:pullback_foliations_are_reg_gen_case}
The pullback foliations $b^{(\sigma),*}_i\sF$ and $b^*\sF$ are regular.
\end{proposition}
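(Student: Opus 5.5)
Following the irreducible case, we compute $b_i^{(\sigma),*}\partial$ explicitly on each chart $C_i^{(\sigma)}$ and exhibit it as $u_i^{(\sigma)}\cdot\Psi$ with $\Psi$ a vector field without zeros. Then $b_i^{(\sigma),*}\sF$ is generated by $\Psi$ and is regular. Regularity of $b^*\sF$ follows exactly as in \autoref{cor:pullback_foliation_is_regular}: by \cite[Definition 3.1.5]{Posva_Resolution_1-foliations}, regularity is checked on the equivariant charts, which Zariski-cover $\sY$.

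\textbf{Splitting the derivation.} Fix $(\sigma,i)\in\mathbf{I}^*$ and write $\partial=\partial^{(\sigma)}+\sum_{\pi\neq\sigma}\partial^{(\pi)}$. The structure map \autoref{eqn:structure_map_equiv_charts}, restricted to the $\sigma$-block, is exactly the map $b_i$ of the irreducible case with $\delta=d_\sigma$. The only difference is that $\partial_{x_i^{(\sigma)}}$ now also acts on the $u^{(\pi)}_j$, since those coordinates are divided by powers of $u_i^{(\sigma)}$.

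\textbf{The pullback formula.} Inverting the substitution, we have
$$u_i^{(\sigma)}=(x_i^{(\sigma)})^{1/i},\qquad u_j^{(\pi)}=x_j^{(\pi)}/(u_i^{(\sigma)})^j\ \ (j\geq1),$$
so
$$b^*\partial_{x_i^{(\sigma)}}=\frac{1}{i(u_i^{(\sigma)})^{i}}\Bigl[u_i^{(\sigma)}\partial_{u_i^{(\sigma)}}-\sum_{(\pi,j)\neq(\sigma,i),\,j\geq1} j\,u_j^{(\pi)}\partial_{u_j^{(\pi)}}\Bigr],$$
which is the same Euler-type correction as before, now summed over all blocks. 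Next, for $\pi\neq\sigma$, each term $x^{(\pi)}_j\partial_{x^{(\pi)}_{j-1}}$ pulls back to:
\begin{itemize}
\item $u_i^{(\sigma)}u_j^{(\pi)}\partial_{u^{(\pi)}_{j-1}}$ when $j\geq2$;
\item $u_i^{(\sigma)}u_1^{(\pi)}\partial_{u_0^{(\pi)}}$ when $j=1$.
\end{itemize}
In both cases the term is divisible by $u_i^{(\sigma)}$. Together with the irreducible computation, this gives
$$b_i^{(\sigma),*}\partial=u_i^{(\sigma)}\Psi,\qquad \Psi=\psi_i^{(\sigma)}+\sum_{\pi\neq\sigma}\Bigl(\sum_{j\geq1}u^{(\pi)}_{j}\partial_{u^{(\pi)}_{j-1}}-c\,u_{i+1}^{(\sigma)}\sum_{j\geq1}j\,u^{(\pi)}_j\partial_{u^{(\pi)}_j}\Bigr).$$
Here $\psi_i^{(\sigma)}$ is the vector field $\psi_i$ from \autoref{prop:pullback_derivation_irred_case}, written in the $\sigma$-variables. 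The constant is $c=1/i$, and the correction term is absent when $i=d_\sigma$, because then $x_{i+1}^{(\sigma)}$ does not exist.

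\textbf{Regularity.} The singular locus of $\Psi$ is cut out by all of its coefficients. In particular it is contained in the common zero locus of the coefficients of the $\partial_{u^{(\sigma)}_j}$. Those coefficients coincide with the coefficients of $\psi_i$ from the irreducible case, except that the $\partial_{u_i^{(\sigma)}}$-coefficient is unchanged since $u_i^{(\sigma)}\partial_{u_i^{(\sigma)}}$ gets the same factor. In the irreducible case we showed these coefficients have no common zero, so $\Psi$ is nowhere vanishing on $C_i^{(\sigma)}$.

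\textbf{Expected obstacle.} The main difficulty is bookkeeping: verifying that the extra Euler terms only add $\partial_{u^{(\pi)}}$ components and do not alter the $\sigma$-block coefficients. A second point is the degenerate case $d_\sigma=1$, which the irreducible case excludes. There $i=1=d_\sigma$, and $\Psi$ contains $\partial_{u_0^{(\sigma)}}$ with coefficient $1$, since $x_1^{(\sigma)}\partial_{x_0^{(\sigma)}}=u_1^{(\sigma)}\partial_{u_0^{(\sigma)}}$. So $\Psi$ is again regular in this case.
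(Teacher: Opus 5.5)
Your proof is correct and follows the paper's argument. Both write $b_i^{(\sigma),*}\partial=u_i^{(\sigma)}\Psi$ and note that the $\partial_{u_j^{(\sigma)}}$-coefficients of $\Psi$ are exactly those of $\psi_i$ from the irreducible case, which already generate the unit ideal. Your explicit tracking of the Euler cross-terms $-\tfrac1i u_{i+1}^{(\sigma)}\sum_{j} j\,u_j^{(\pi)}\partial_{u_j^{(\pi)}}$ and of the case $d_\sigma=1$ is more careful than the paper's displayed decomposition $\sum_\pi\psi_i^{(\pi)}$, which leaves these (harmless) terms implicit.
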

\begin{proof}
It suffices to show that each $b^{(\sigma),*}_i\sF$ is regular. We check easily that for $\pi\neq \sigma$,
    $$b_i^{(\sigma),*}\partial^{(\pi)} = 
    u_i^{(\sigma)}\cdot 
    \underbrace{\sum_{i=1}^{d_\pi} u_i^{(\pi)}
    \partial_{u_{i-1}^{(\pi)}}}_{\psi^{(\pi)}_i}.$$
Write 
$b_i^{(\sigma),*}\partial^{(\sigma)}=u_i^{(\sigma)}\cdot \psi_i^{(\sigma)}$ where $\psi_i^{(\sigma)}$ is defined as in the proof of \autoref{prop:pullback_derivation_irred_case}. Then
    $$b_i^{(\sigma),*}\partial =u_i^{(\sigma)}\cdot 
    \sum_{\pi=1}^l\psi_i^{(\pi)}.$$
As the coefficients of $\psi_i^{(\sigma)}$ generated the unit ideal, so do the coefficients of $\sum_{\pi=1}^l\psi_i^{(\pi)}$. Hence $b^{(\sigma),*}_i\sF$ is indeed regular.
\end{proof}

\begin{lemma}
The $1$-foliation $b_i^{(\sigma),*}\sF$ is $\mu_i$-equivariant. 
\end{lemma}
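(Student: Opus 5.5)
The plan is to make the $\mu_i$-action act on derivations by conjugation and to check that it sends $\partial' := b_i^{(\sigma),*}\partial$ to a unit multiple of itself. Here $\partial'$ is the rational derivation on $C_i^{(\sigma)}$ obtained by transporting $\partial$ along the birational map $b_i^{(\sigma)}$. For $g \in \mu_i$ acting through $\chi := \chi_i^{(\sigma)}$, the transformed derivation is $\chi^{-1}\circ \partial' \circ \chi$. The claim is that $b_i^{(\sigma)}$ is $\mu_i$-invariant: $b_i^{(\sigma)}\circ \chi = b_i^{(\sigma)}$. Indeed, by \autoref{eqn:structure_map_equiv_charts} and \autoref{eqn:action_on_equiv_chart} each $x_j^{(\pi)}$ pulls back to a $\mu_i$-invariant function:
\begin{itemize}
\item $(u_i^{(\sigma)})^i \mapsto \xi_i^i (u_i^{(\sigma)})^i = (u_i^{(\sigma)})^i$;
\item $(u_i^{(\sigma)})^j u_j^{(\pi)} \mapsto \xi_i^{j}\xi_i^{-j}(u_i^{(\sigma)})^j u_j^{(\pi)}$, which is unchanged;
\item $u_0^{(\pi)}$ is fixed, since $\xi_i^{0}=1$.
\end{itemize}
This step is just bookkeeping of weights.

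From the invariance I deduce that $\partial'$ commutes with $\chi$. Both $\chi^{-1}\partial'\chi$ and $\partial'$ are rational derivations of $k(C_i^{(\sigma)})$ that restrict to $\partial$ on the subfield $k(\bA^d)$, because $\chi$ is the identity on that subfield. The extension $k(C_i^{(\sigma)})/k(\bA^d)$ is finite separable: its degree is $i$, and $i\le p-1$ is prime to $p$ (it is the Galois extension of the $\mu_i$-quotient). A derivation of the base field extends uniquely to a finite separable extension, so $\chi^{-1}\partial'\chi=\partial'$.

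It then remains to pass from $\partial'$ to the foliation. The foliation $b_i^{(\sigma),*}\sF$ is the saturation $\sO\cdot\psi$ with $\psi=\sum_\pi \psi_i^{(\pi)}$ and $\partial' = u_i^{(\sigma)}\psi$. Since $\chi$ scales $u_i^{(\sigma)}$ by $\xi_i$, conjugating gives $\chi^{-1}\psi\chi = \xi_i^{\pm1}\psi$. The sign depends on conventions, but in either case it is a unit multiple, so $\sO\cdot\psi$ is $\mu_i$-stable.

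An equivalent, more intrinsic phrasing: $b_i^{(\sigma),*}\sF$ is the saturation in $T_{C_i^{(\sigma)}}$ of the rational foliation $k(C_i^{(\sigma)})\cdot\partial'$. That rational foliation is invariant, and saturation is canonical, so the saturation is invariant as well.

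The only point needing care is the uniqueness of extension of derivations, which uses the tameness $p\nmid i$. Everything else is direct verification, so I do not expect any step to be a real obstacle.
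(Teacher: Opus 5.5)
Your proof is correct and follows the same route as the paper. In both, the $\mu_i$-invariance of $b_i^{(\sigma)}$ makes the lifted derivation $b_i^{(\sigma),*}\partial = u_i^{(\sigma)}\cdot\sum_\pi\psi_i^{(\pi)}$ invariant, and because $u_i^{(\sigma)}$ is a $\mu_i$-eigenfunction, the saturated generator $\sum_\pi\psi_i^{(\pi)}$ is an eigenvector; your uniqueness-of-extension argument along the separable extension $k(C_i^{(\sigma)})/k(\bA^d)$ simply justifies explicitly the step the paper treats as automatic, namely that the pulled-back module $\mathscr{F}_i$ is equivariant.
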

\begin{proof}
Let $\mathscr{F}_i$ be the pullback of $\sF$ \emph{as an $\sO_{\bA^d}$-module} along $b_i^{(\sigma)}$. Then $\mathscr{F}_i$ is automatically a $\mu_i$-equivariant module, since $b_i^{(\sigma)}$ is $\mu_i$-invariant. As we saw in the proof of \autoref{prop:pullback_foliations_are_reg_gen_case}, the module $\mathscr{F}_i$ is generated by $b_i^{(\sigma),*}\partial =u_i^{(\sigma)}\cdot \sum_{\pi}\psi_i^{(\pi)}$, while the $1$-foliation $b_i^{(\sigma),*}\sF$ is generated by $\sum_{\pi}\psi_i^{(\pi)}$. Since $u_i^{(\sigma)}$ is a $\mu_i$-eigenfunction, it follows that $b_i^{(\sigma),*}\sF$ is preserved by the action of $\mu_i$.
\end{proof}


\begin{corollary}\label{cor:action_descends_inf_qt}
The $\mu_i$-action on $C_i^{(\sigma)}$ descends to $Z_i=C_i^{(\sigma)}/b_i^{(\sigma),*}\sF$, and the quotient morphism
$q_i^{(\sigma)}\colon C_i^{(\sigma)}\to Z_i^{(\sigma)}$ is $\mu_i$-equivariant.
\end{corollary}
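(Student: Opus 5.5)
The plan is to reduce to \autoref{prop:qt_is_stab_preserving_II}, which is stated for an abstract group acting on a normal $k$-algebra. Since $k$ is algebraically closed and $p\nmid i$ (as $i\leq d_\sigma\leq p-1$), the group scheme $\mu_i$ is isomorphic to the constant group $\bZ/i$, generated by the automorphism $\chi_i^{(\sigma)}$ of \autoref{eqn:action_on_equiv_chart}. So it suffices to treat the action of the finite abstract group $G=\langle \chi_i^{(\sigma)}\rangle$ on the polynomial ring $A=\sO(C_i^{(\sigma)})$, which is regular and hence normal.

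By the preceding lemma, the $1$-foliation $\mathscr{G}=b_i^{(\sigma),*}\sF$ is $\mu_i$-equivariant, i.e.\ $G$-stable: for every $g\in G$ we have $g\cdot\mathscr{G}\cdot g^{-1}=\mathscr{G}$ inside $\mathrm{Der}_k(A)$. I would verify directly that the ring of invariants $A^{\mathscr{G}}=\{a\in A\mid D(a)=0 \ \forall D\in\mathscr{G}\}$ is $G$-stable. Given $a\in A^\mathscr{G}$, $g\in G$ and $D\in\mathscr{G}$, we have $D(g(a))=g\big((g^{-1}Dg)(a)\big)=0$, because $g^{-1}Dg\in\mathscr{G}$. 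Hence $G$ acts on $B=A^{\mathscr{G}}$, and thus on $Z_i^{(\sigma)}=\Spec(B)$. The inclusion $B\hookrightarrow A$ is $G$-equivariant by construction, which gives the equivariance of $q_i^{(\sigma)}$. Alternatively, one can invoke \autoref{prop:qt_is_stab_preserving_II} directly, which already asserts this.

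The only point needing care, and the one I expect to be the main subtlety, is translating the $\mu_i$-equivariance of the previous lemma into stability under conjugation by each automorphism $\chi_i^{(\sigma),n}$. This translation uses that $\mu_i$ is constant. The concrete check is that the generator $\sum_\pi\psi_i^{(\pi)}$ of $\mathscr{G}$ is sent by conjugation to a unit multiple (in fact a root of unity multiple) of itself. This follows because $u_i^{(\sigma)}\cdot\sum_\pi\psi_i^{(\pi)}$ is $G$-invariant (it is the pullback of the $G$-invariant derivation $\partial$), while $u_i^{(\sigma)}$ is a $G$-eigenfunction with weight $\xi_i$. Once this is in place, the rest is formal.
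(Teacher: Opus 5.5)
Your proposal is correct and follows essentially the same route as the paper: the coordinate ring of $Z_i^{(\sigma)}$ is the kernel of $\sum_\pi\psi_i^{(\pi)}$, and the $\mu_i$-stability of $b_i^{(\sigma),*}\sF$ given by the preceding lemma shows that $\chi_i^{(\sigma)}$ preserves this kernel. Your explicit computation that conjugation by $\chi_i^{(\sigma)}$ rescales $\sum_\pi\psi_i^{(\pi)}$ by $\xi_i^{-1}$ spells out what the paper leaves implicit in that lemma; it uses that $u_i^{(\sigma)}\sum_\pi\psi_i^{(\pi)}=b_i^{(\sigma),*}\partial$ commutes with the action, which holds because $b_i^{(\sigma)}$ is $\mu_i$-invariant and generically étale, as $p\nmid i$.
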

\begin{proof}
The elements of $\Gamma\left(Z_i^{(\sigma)}, \sO_{Z_i^{(\sigma)}}\right)$ are the elements of $\Gamma\left(C_i^{(\sigma)}, \sO_{C_i^{(\sigma)}}\right)$ which are annihilated by $\sum_\pi \psi_i^{(\pi)}$, and 
the previous lemma shows that this property is preserved by applying $\chi_i^{(\sigma)}$. The statement follows.
\end{proof}

At this point we introduce some additional notations.
\begin{notation}\label{notation:overall_situation}
We let:
    \begin{itemize}
    \item $Z_i^{(\sigma)}=C_i^{(\sigma)}/b_i^{(\sigma),*}\sF$ with quotient morphism $q_i^{(\sigma)}\colon C_i^{(\sigma)}\to Z_i^{(\sigma)}$ (as already introduced),
    \item $W_i^{(\sigma)}=Z_i^{(\sigma)}/\mu_i$ with quotient morphism $t_i^{(\sigma)}\colon Z_i^{(\sigma)}\to W_i^{(\sigma)}$,
    \item $\sW=\sY/b^*\sF$ in the sense of \cite[\S 3.2]{Posva_Resolution_1-foliations}, and
    \item $c_\sY\colon \sY\to Y$ and $c_\sW\colon \sW\to W$ are the coarse moduli spaces. In fact, $\sY$ is tame by construction and $\sW$ is tame by \cite[Lemma 3.3.2]{Posva_Resolution_1-foliations}; so the coarse moduli spaces exist, and are good moduli spaces.
\end{itemize}
\end{notation}

The following lemma spells out the natural maps between these objects and introduces more notations.

\begin{lemma}\label{lemma:overall_notations}
With the notations as above:
    \begin{enumerate}
        \item each $Z_i^{(\sigma)}$ is a regular affine variety;
        \item $\sW$ is a regular tame Deligne--Mumford stack that is schematic in codimension one;
        \item there is a commutative diagram
                $$\begin{tikzcd}
                    && \bigsqcup_{(\sigma,i)\in \mathbf{I}^*} Z_i^{(\sigma)} \arrow[rr, "\bigsqcup t_i^{(\sigma)}"] \arrow[dd, two heads, "\alpha"] && 
                    \bigsqcup_{(\sigma,i)\in \mathbf{I}^*} W_i^{(\sigma)} \arrow[dd, two heads, "\beta"] \\
                    \bigsqcup_{(\sigma,i)\in \mathbf{I}^*} C_i^{(\sigma)} \arrow[urr, "\bigsqcup q_i^{(\sigma)}"] \arrow[dd, two heads] \\
                    && \sW \arrow[rr, "c_\sW"] && W \arrow[d, "g"] \\
                    \sY \arrow[rr, "c_\sY"] \arrow[urr] \arrow[rrd, bend right, "b" below left] && Y \arrow[urr, "q" below right] \arrow[d, "f"]
                    && X\\
                    && \bA^d \arrow[rru] \\
                \end{tikzcd}$$
            where $\alpha$ is an \'{e}tale covering, and $\beta$ is a Zariski open covering;
        \item $Y$ is normal, and $f$ is birational with a unique exceptional divisor $E$;
        \item $W$ is normal with only tame toroidal singularities, $q$ is the quotient morphism $Y\to Y/f^*\sF=W$, and $g$ is birational with a unique exceptional divisor $\widetilde{E}=q(E)$.
        \item We denote by $\mathcal{E}$, resp.\ by $\widetilde{\mathcal{E}}$, the unique prime divisor of $\sY$, resp.\ of $\sW$, which maps to $E$, resp.\ to $\widetilde{E}$.
    \end{enumerate}
\end{lemma}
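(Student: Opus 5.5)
We verify the six items in turn, mostly by combining the local results proved above with general facts about infinitesimal quotients and coarse moduli spaces.

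For (1), $C_i^{(\sigma)}$ is an affine space and $b_i^{(\sigma),*}\sF$ is a regular $1$-foliation by \autoref{prop:pullback_foliations_are_reg_gen_case}. The quotient of a regular variety by a regular $1$-foliation is regular: locally the foliation is generated by a derivation $\partial_{v}$ in suitable formal coordinates, and the quotient has coordinates $v^p$ and the remaining ones. Affineness of $Z_i^{(\sigma)}$ is clear, since it is the spectrum of the ring of invariants of a finitely generated algebra over which the whole ring is finite.

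For (2) and (3), we use the construction of $\sW=\sY/b^*\sF$ in \cite[\S 3.2]{Posva_Resolution_1-foliations}. The open cover $[C_i^{(\sigma)}/\mu_i]$ of $\sY$ induces the open cover $[Z_i^{(\sigma)}/\mu_i]$ of $\sW$, using \autoref{cor:action_descends_inf_qt} and the compatibility of infinitesimal quotients with étale base change. This gives the étale covering $\alpha$ and shows that $\sW$ is regular by (1). It is tame by \cite[Lemma 3.3.2]{Posva_Resolution_1-foliations}. By \autoref{prop:qt_is_stab_preserving}, or its variant \autoref{prop:qt_is_stab_preserving_II}, the ramification locus of $\mu_i$ on $Z_i^{(\sigma)}$ is the homeomorphic image of that on $C_i^{(\sigma)}$. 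The latter has codimension $\ge 2$ by the proof of \autoref{lemma:schematic_in_codim_one}, so $\sW$ is schematic in codimension one. Since $\sW$ is tame, coarse moduli spaces commute with flat base change; hence the coarse space of $[Z_i^{(\sigma)}/\mu_i]$ is $W_i^{(\sigma)}$, and these form the Zariski cover $\beta$ of $W$. The maps $Y\to W$ and $W\to X$ come from the universal property of coarse spaces applied to $\sY\to\sW$ and $\sW\to \bA^d\to X$, with $X=\bA/(\alpha_p,\mathbf{d})$. Commutativity of the diagram follows from these universal properties.

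For (4), $Y$ is the coarse space of a normal tame stack, so it is normal. The map $f$ is the ordinary weighted blow-up, which is birational. Its exceptional locus is the image of the exceptional divisor of $b$, namely $V(u_i^{(\sigma)})$ on each chart. These glue to the single irreducible $\mathcal{E}$, the weighted projectivized normal bundle, which is irreducible.

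For (5), $W$ is normal for the same reason as $Y$. Each $W_i^{(\sigma)}$ is a quotient of a regular variety by $\mu_i$ with a fixed point. Where the fixed point lies, formal-locally this is $\mathbb{A}^n/\mu_i$ linearized, so it is a tame cyclic quotient and hence toroidal. This step needs linearizability of tame actions at fixed points, which is standard.

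To identify $q$ with $Y\to Y/f^*\sF$, we compare rings of invariants chart by chart. We use that $\mu_i$-invariants and $\psi$-invariants commute, since the $\mu_i$-action and the derivation commute up to the eigenfunction $u_i^{(\sigma)}$. Thus $(\sO(C)^{\psi})^{\mu_i}=(\sO(C)^{\mu_i})^{\psi}$, and $\psi$ descends to $C/\mu_i$ and generates $f^*\sF$ there.

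Birationality of $g$ comes from the fact that $\sW\to \bA^d/\sF$ is birational by diagram \autoref{eqn:comm_square_of_qt}. The divisor $\widetilde{E}=q(E)$ is the unique exceptional divisor because $q$ is a universal homeomorphism. Item (6) is then just notation, with uniqueness of $\mathcal{E}$ and $\widetilde{\mathcal{E}}$ following from the fact that coarse moduli maps are homeomorphisms.

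The main obstacle is this identification of the coarse space of $\sW$ with $Y/f^*\sF$, that is, the commutation of the two quotient operations.
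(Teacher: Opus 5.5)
Your proposal is correct in substance and follows the paper's proof:
\begin{itemize}
\item regularity of the $Z_i^{(\sigma)}$ comes from the regularity of $b_i^{(\sigma),*}\sF$;
\item the charts $[Z_i^{(\sigma)}/\mu_i]$ of $\sW$ come from the construction of infinitesimal quotients of stacks;
\item schematicity in codimension one comes from \autoref{prop:qt_is_stab_preserving} combined with \autoref{lemma:schematic_in_codim_one};
\item the remaining maps come from universal properties.
\end{itemize}
The only real difference is that you check $W=Y/f^*\sF$ by comparing invariant rings on the charts, where the paper cites the proof of \cite[Lemma 3.3.2]{Posva_Resolution_1-foliations}. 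That is a legitimate substitute, but two statements in your write-up need fixing.

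First, there is no morphism $\sW\to\bA^d$ through which $b$ factors. Such a morphism would force $b^*x_0^{(\sigma)}=u_0^{(\sigma)}$ to be killed by $\psi$, and it is not. So $W\to X$ cannot come from a composite $\sW\to\bA^d\to X$. Instead, observe that $b^*$ sends $\sF$-constants to functions annihilated by $b^*\partial=u_i^{(\sigma)}\psi$. Hence $\sY\to\bA^d\to X$ factors through $\sY\to\sW$; equivalently, as in the paper, $\sO_{\bA^d}^{\sF}\to f_*\sO_Y$ lands in $f_*\sO_Y^{f^*\sF}$. Birationality of $g$ is then immediate, since $b$ is an isomorphism away from the center. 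Do not rely on the square \autoref{eqn:comm_square_of_qt} here: it was set up for representable morphisms, and $b$ is in general not representable.

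Second, $\psi=\sum_\pi\psi_i^{(\pi)}$ does not descend to $C_i^{(\sigma)}/\mu_i$. Since $b^*\partial=u_i^{(\sigma)}\psi$ is $\mu_i$-invariant and $u_i^{(\sigma)}$ has weight $\xi_i$, the derivation $\psi$ is semi-invariant of non-trivial weight when $i>1$. What descends is $u_i^{(\sigma)}\psi$. Because $u_i^{(\sigma)}$ is a non-zero-divisor, the $f^*\sF$-constants on $C_i^{(\sigma)}/\mu_i$ are exactly
\[
\{h\in\sO(C_i^{(\sigma)})^{\mu_i}\mid \psi(h)=0\}=\sO(Z_i^{(\sigma)})^{\mu_i}=\sO(W_i^{(\sigma)}),
\]
which is the identification you need.
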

\begin{proof}
That $\sW$ is a tame stack was mentioned above. Combining \autoref{lemma:schematic_in_codim_one} and \autoref{prop:qt_is_stab_preserving}, we find that $\sW$ is schematic in codimension one. Since the $C_i^{(\sigma)}$s provide affine open charts for $\sY$, the $Z_i^{(\sigma)}$s provide affine open charts for $\sW$ (see \cite[\S 3.2]{Posva_Resolution_1-foliations}). The latter charts are regular by \autoref{prop:pullback_foliations_are_reg_gen_case} and \cite[Lemma 2.5.10]{Posva_Singularities_quotients_by_1-foliations}. As $\sW$ has an open covering by the $[Z_i^{(\sigma)}/\mu_i]$, it follows that $\sW$ is regular and Deligne--Mumford. It follows that $W$ has only normal tame toroidal singularities.

By the universal property of the coarse moduli, the morphism $b\colon \sY\to \bA^d$ factors through a morphism $f\colon Y\to X$. Since $b$ is birational, so is $f$. In fact, $f$ is the \emph{schematic} weighted blow-up of $\sI$: since its center is irreducible and smooth, it has a unique exceptional divisor $\widetilde{E}$.

The existence of $q$ also follows from the universal property of $c_\sY$. That $W=Y/f^*\sF$ and $q$ is the quotient morphism follows from \cite[Proof of Lemma 3.3.2]{Posva_Resolution_1-foliations}. The morphism $g\colon W\to X$ is constructed as follows: since $Y\to W$ and $\bA^d\to X$ are homeomorphisms, it suffices to show that the composition $\sO_{\bA^d}^\sF\hookrightarrow \sO_{\bA^d}\to f_*\sO_Y$ factors through $f_*\sO_{Y}^{f^*\sF}=f_*\sO_W$, which is clear. Then $g$ is birational, with unique exceptional divisor $q(E)$. 
\end{proof}

\begin{corollary}
    The variety $X$ admits a resolution of singularities.
\end{corollary}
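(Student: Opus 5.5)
By \autoref{lemma:overall_notations}, $g\colon W\to X$ is a proper birational morphism. Here $X=\bA/(\alpha_p,\mathbf{d})$, and $W$ is the coarse moduli space of the regular tame Deligne--Mumford stack $\sW$, so $W$ has only tame toroidal singularities. It therefore suffices to resolve $W$ and compose the result with $g$. The plan has three steps.

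\textbf{Properness of $g$.} The weighted blow-up $b\colon \sY\to\bA^d$ is proper, so its coarse factorization $f\colon Y\to\bA^d$ is proper. The quotient $q\colon Y\to W$ is a finite universal homeomorphism. The quotient $\bA^d\to X$ is finite and surjective. From $g\circ q=(\bA^d\to X)\circ f$, the composite $g\circ q$ is proper. Since $q$ is surjective and $W$ is separated of finite type over $X$, it follows that $g$ is proper (by the standard criterion: if $g\circ q$ is proper and $q$ is surjective, then $g$ is universally closed). Birationality of $g$ was already recorded in \autoref{lemma:overall_notations}.

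\textbf{Resolving $W$.} Étale-locally, $W$ is covered by the quotients $W_i^{(\sigma)}=Z_i^{(\sigma)}/\mu_i$, where each $Z_i^{(\sigma)}$ is regular and $\mu_i$ is tame. We would show that each such singularity is toroidal, i.e.\ étale-locally of the form $\frac{1}{n}(a_1,\dots,a_t)\times\bA^{m}$. Toroidal varieties admit resolutions by toric methods: one subdivides the conical polyhedral complex of the toroidal structure regularly, which works in any characteristic. Concretely, take the toroidal structure on $W$ to be the image of the branch divisors of $\sW\to W$; these are the images of the coordinate hyperplanes carrying nontrivial weights in the charts. A regular subdivision of its fan then gives a projective birational morphism $\widetilde W\to W$ with $\widetilde W$ regular.

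\textbf{Conclusion.} The composite $\widetilde W\to W\to X$ is proper and birational with regular source. It is therefore a resolution of singularities of $X$.

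\textbf{Main obstacle.} The delicate point is to justify that $W$ is \emph{globally} toroidal, that is, that the local toric charts glue to a toroidal embedding $(W, \text{boundary})$ without self-intersection issues, so that a single regular subdivision exists. Two routes are available. The first is to use the coordinate divisors $u_j^{(\pi)}=0$ and check their compatibility across charts. The second is to sidestep gluing entirely by using functorial destackification, or canonical resolution of tame quotient singularities, applied to the regular tame stack $\sW$. Bergh's destackification theorem for tame stacks produces a proper birational map from a smooth scheme to the coarse space $W$, and this seems the cleanest option to try first.
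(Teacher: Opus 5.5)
Your main toroidal route has a genuine gap. The boundary you propose is empty. By \autoref{lemma:schematic_in_codim_one} and \autoref{lemma:overall_notations}, $\sW$ is schematic in codimension one, so $\sW\to W$ has no branch divisors. In particular, a coordinate hyperplane carrying a nontrivial weight has \emph{trivial} generic stabilizer, because $u_i^{(\sigma)}$ has weight $1$ and the $\mu_i$-action is free in codimension one. With empty boundary, $W$ would be a toroidal embedding only if it were already regular.

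More generally, \autoref{prop:sing_partial_resol} only provides \emph{formal-local} toric models. Their toric boundaries are hyperplanes of eigen-coordinates chosen point by point after the foliation quotient; for instance, $u_{i-1}^{(\sigma)}$ gets replaced by its $p$-th power. Nothing shows that these local boundaries glue to a global divisor, so there is no conical polyhedral complex to subdivide.

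Your route (1), through the hyperplanes $u_j^{(\pi)}=0$, does not repair this. Several of these hyperplanes are not $b_i^{(\sigma),*}\sF$-invariant: for example, $\psi_i(u_{i-1}^{(\sigma)})$ is a unit near $\{u_{i-1}^{(\sigma)}=0\}$. Their images in $W$ therefore carry no evident toric structure.

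The fallback you name at the end is exactly the paper's proof, and it closes the gap. Apply Bergh's destackification to $\sW$ to obtain a projective birational morphism $\widetilde{\sW}\to\sW$ whose coarse space $\widetilde W$ is regular, and then take $\widetilde W\to W\to X$. You should make this the argument and verify its hypotheses. $\sW$ is smooth and tame. Its stabilizers are abelian because $\sW$ is covered by the $[Z_i^{(\sigma)}/\mu_i]$; equivalently, by \autoref{prop:qt_is_stab_preserving} they coincide with the stabilizers of $\sY$, which are subgroups of $\mu_i$.

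Your verification that $g$ is proper is correct. It uses $g\circ q=(\bA^d\to X)\circ f$ together with the surjectivity of $q$, and it fills in a point the paper leaves implicit.
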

\begin{proof}
We apply Bergh's destackification \cite{Bergh_Functorial_destackification} to $\sW$ (which has only abelian stabilizers, by \autoref{prop:qt_is_stab_preserving} and the fact that $\sY$ has only abelian stabilizers) to obtain a projective birational morphism of DM stacks $\widetilde{\sW}\to \sW$ such that the coarse moduli space $\widetilde{W}$ of $\widetilde{\sW}$ is regular: then $\widetilde{W}$ is a resolution of $X$.
\end{proof}

We regard $\sW\to X$ as a stacky resolution of singularities, and $W\to X$ as a partial resolution of singularities. We will take advantage of the simple local structure of $\sW$ and $W$ to study the singularities of $X$.

\subsection{MMP singularities of linear $\alpha_p$-quotients}
In this sub-section we use $\sW$ and $W$ to discuss the MMP singularities of the quotient $X=\bA^d/(\alpha_p,\mathbf{d})$. To begin with, we compute some discrepancies along the morphisms $f$ and $g$.

\begin{lemma}\label{lemma:discrep_along_weighted_blow_up}
The $f$-exceptional divisor $E$ is $f^*\sF$-invariant, $a(E;\sF)=-1$ and $a(E;\bA^d)=\bD_\mathbf{d}-1$.
\end{lemma}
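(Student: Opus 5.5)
The plan is to compute everything on the single chart $C_1^{(\sigma)}$ for some $\sigma$ with $d_\sigma\geq 1$; such a $\sigma$ exists since $\bD_\mathbf{d}\geq 2$. On this chart the group is $\mu_1$, which is trivial. So $C_1^{(\sigma)}\to \sY$ is a Zariski open immersion, and it also maps isomorphically onto an open subset of the coarse space $Y$. This is consistent with \autoref{lemma:schematic_in_codim_one}, and it gives $r_\sY(\mathcal{E})=1$. The chart meets the exceptional locus: by \autoref{eqn:structure_map_equiv_charts}, the exceptional divisor $E$ (equivalently $\mathcal{E}$) restricts to the prime divisor $\{u_1^{(\sigma)}=0\}$. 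Discrepancies and invariance can be tested at the generic point of $E$, so all three claims reduce to explicit computations on $C_1^{(\sigma)}\cong \bA^d$ with $f=b_1^{(\sigma)}$.

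\emph{Discrepancy of $\bA^d$.} I compute the Jacobian determinant of $b_1^{(\sigma)}$ using \autoref{eqn:structure_map_equiv_charts} with $i=1$. The coordinates $x_0^{(\pi)}=u_0^{(\pi)}$ contribute $1$. The coordinate $x_1^{(\sigma)}=u_1^{(\sigma)}$ contributes $1$. Each $x_j^{(\pi)}=(u_1^{(\sigma)})^j u_j^{(\pi)}$ with $(\pi,j)\in\mathbf{I}^*\setminus\{(\sigma,1)\}$ contributes $(u_1^{(\sigma)})^j$ on the diagonal. Ordering the variables so that $u_1^{(\sigma)}$ comes last, the matrix becomes triangular: every other new variable $u_j^{(\pi)}$ appears only in $x_j^{(\pi)}$. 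Hence
$$\det = (u_1^{(\sigma)})^{\sum_{(\pi,j)\in\mathbf{I}^*} j\; -1} = (u_1^{(\sigma)})^{\bD_\mathbf{d}-1}.$$
This gives $K_Y=f^*K_{\bA^d}+(\bD_\mathbf{d}-1)E$, that is, $a(E;\bA^d)=\bD_\mathbf{d}-1$.

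\emph{Foliation discrepancy.} From the proofs of \autoref{prop:pullback_derivation_irred_case} and \autoref{prop:pullback_foliations_are_reg_gen_case}, we have $b_1^{(\sigma),*}\partial=u_1^{(\sigma)}\cdot\psi$, where $\psi=\sum_\pi\psi_1^{(\pi)}$ is a nowhere-vanishing vector field generating $f^*\sF$. Since $\sF=\sO\cdot\partial$ is free with $K_\sF=-c_1(\sF)$, the saturation $f^*\sF=\sO\cdot\psi$ equals the module pullback of $\sF$ twisted by $\sO(E)$. Therefore $K_{f^*\sF}=f^*K_\sF-E$, i.e.\ $a(E;\sF)=-1$. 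This is the step that needs the most care: the sign convention must match the one used in \autoref{thm:discrep_infinit_qt_stack}. I would check it against the schematic convention of \cite{Posva_Singularities_quotients_by_1-foliations}, where dividing the generator by a local equation of the divisor lowers $K$ by that divisor.

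\emph{Invariance.} $E=\{u_1^{(\sigma)}=0\}$ is $f^*\sF$-invariant exactly when $\psi(u_1^{(\sigma)})\in(u_1^{(\sigma)})$. From the formula for $\psi_1$, the only term involving $\partial_{u_1^{(\sigma)}}$ is $u_1^{(\sigma)}u_2^{(\sigma)}\partial_{u_1^{(\sigma)}}$, and this term is absent when $d_\sigma=1$. The summands $\psi_1^{(\pi)}$ with $\pi\neq\sigma$ involve only the variables $u^{(\pi)}$. Hence $\psi(u_1^{(\sigma)})=u_1^{(\sigma)}u_2^{(\sigma)}$ (or $0$), which lies in $(u_1^{(\sigma)})$, so $E$ is invariant.
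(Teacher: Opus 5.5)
Your proposal is correct and follows the paper's proof: you work on the chart $C_1^{(\sigma)}$, which is a genuine open subscheme since $\mu_1$ is trivial. From there you read off $a(E;\sF)=-1$ from $b_1^{(\sigma),*}\partial=u_1^{(\sigma)}\cdot\sum_\pi\psi_1^{(\pi)}$, get invariance from $\psi(u_1^{(\sigma)})=u_1^{(\sigma)}u_2^{(\sigma)}$, and get $a(E;\bA^d)=\bD_\mathbf{d}-1$ from the Jacobian exponent. Your extra care with the sign convention for $K_{f^*\sF}$ and with the edge case $d_\sigma=1$ (where $\psi(u_1^{(\sigma)})=0$) only makes explicit what the paper leaves implicit.
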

\begin{proof}
Everything can be checked generically on $E$. Let us look on the chart $C_1^{(\sigma)}$ of $\sY$ (for any $\sigma$ with $d_\sigma>0$), which is an open sub-scheme of $Y$. The divisor $E\cap C_1^{(\sigma)}$ is defined by the function $u_1^{(\sigma)}$, and
        $$b_1^{(\sigma),*}\partial=u_1^{(\sigma)}\cdot \sum_\pi \psi_1^{(\pi)}.$$
As $\sum_\pi \psi_1^{(\pi)}$ generates $b_1^{(\sigma),*}\sF$, this shows $a(E;\sF)=-1$. As 
        $$\left(\sum_\pi \psi_1^{(\pi)}\right)(u_1^{\sigma)})=
        \psi_1^{(\sigma)}(u_1^{(\sigma)})
        =u_1^{(\sigma)}u_2^{(\sigma)}$$
we see that $E$ is $f^*\sF$-invariant. Finally, using \autoref{eqn:structure_map_equiv_charts} we have
        $$f^*\left(
        \bigwedge_\pi \bigwedge_{i=0}^{d_\pi} dx_i^{(\pi)}
        \right)
        =\left(u_1^{(\sigma)}\right)^{-1+\sum_\pi \sum_{i=1}^{d_\pi}i}\cdot 
        \bigwedge_\pi \bigwedge_{i=0}^{d_\pi} du_i^{(\pi)}$$
(observe that $f^*x_1^{(\pi)}=u_1^{(\sigma)}u_1^{(\pi)}$, except when $\pi=\sigma$). So we find
        $$a(E;\bA^d)=-1+\sum_\pi \sum_{i=1}^{d_\pi}i 
        = -1+\sum_\pi \frac{d_\pi(d_\pi+1)}{2}
        =-1+\bD_\mathbf{d}$$
as claimed.
\end{proof} 

\begin{corollary}
We have $a(\widetilde{E};X)=\bD_\mathbf{d}-p$.
\end{corollary}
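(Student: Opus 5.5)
The plan is to apply the discrepancy formula for infinitesimal quotients, \autoref{thm:discrep_infinit_qt_stack}, or its schematic version \cite[(4.2.6.f)]{Posva_Singularities_quotients_by_1-foliations}, to the commutative square formed by $f\colon Y\to \bA^d$, $g\colon W\to X$ and the quotient morphisms $Y\to W=Y/f^*\sF$ and $\bA^d\to X=\bA^d/\sF$. Here we take $\Delta=0$ on $\bA^d$, so $\Delta_{X}=0$ as well, and $K_{\bA^d}$ is Cartier. By \autoref{lemma:overall_notations}, $g$ has the unique exceptional divisor $\widetilde{E}=q(E)$. Hence
$$a(\widetilde{E};X)=p^{-\epsilon(E)}\big[a(E;\bA^d)+(p-1)\,a(E;\sF)\big],$$
where $\epsilon(E)=\epsilon_{f^*\sF}(E)$.

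To make this legitimate we can work schematically. The chart $C_1^{(\sigma)}$ is an open subscheme of $Y$ meeting $E$, since it carries the trivial $\mu_1$-action. Discrepancies are computed at the generic point of $E$, so it suffices to use the schematic formula over this open set. Alternatively, one can pass through the stacks $\sY\to\sW$ and use \autoref{prop:birat_model_of_stack}, noting that $r_\sY(\mathcal{E})=r_\sW(\widetilde{\mathcal{E}})=1$ because $\sY$ is schematic in codimension one by \autoref{lemma:schematic_in_codim_one} and $q$ is stabilizer-preserving by \autoref{prop:qt_is_stab_preserving}. In this way the stacky and coarse discrepancies agree. The one point to check is that $K_\sF$ is $\bQ$-Cartier on $\bA^d$. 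This holds because $\sF=\sO\cdot\partial$ is free of rank one, so $K_\sF$ is in fact trivial.

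Next we substitute the values from \autoref{lemma:discrep_along_weighted_blow_up}. Since $E$ is $f^*\sF$-invariant, $\epsilon(E)=0$. We also have $a(E;\sF)=-1$ and $a(E;\bA^d)=\bD_\mathbf{d}-1$. Therefore
$$a(\widetilde{E};X)=\bD_\mathbf{d}-1-(p-1)=\bD_\mathbf{d}-p.$$

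The main obstacle is not the arithmetic but bookkeeping. We must confirm that the sign and normalization conventions of the cited formula match those used here: $a(\mathcal{D};\sF)$ is defined via $K_{f^*\sF}=f^*K_\sF+\sum a\mathcal{D}$, and invariant divisors carry the factor $p^0$. We must also confirm that the image of $E$ in $W$ is indeed the $g$-exceptional divisor $\widetilde{E}$, which is exactly \autoref{lemma:overall_notations}(5).
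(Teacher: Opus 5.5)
Your proposal is correct and follows the paper's own route. The paper also proves the corollary by substituting $\epsilon(E)=0$, $a(E;\sF)=-1$ and $a(E;\bA^d)=\bD_\mathbf{d}-1$ from \autoref{lemma:discrep_along_weighted_blow_up} into the scheme-theoretic case of \autoref{thm:discrep_infinit_qt_stack}, namely \cite[(4.2.6.f)]{Posva_Singularities_quotients_by_1-foliations}; your extra remarks (working on the schematic chart $C_1^{(\sigma)}$, and $K_\sF$ being trivial because $\sF=\sO\cdot\partial$ is free) are valid justifications that the paper leaves implicit.
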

\begin{proof}
This follows from \autoref{lemma:discrep_along_weighted_blow_up} and the scheme-theoretic case of \autoref{thm:discrep_infinit_qt_stack} (see \cite[(4.2.6.f)]{Posva_Singularities_quotients_by_1-foliations}).
\end{proof}

We can now prove the main result of this sub-section.

\begin{theorem}\label{thm:MMP_sing_alpha_p_qt}
Assume that $\mathbb{D}_\mathbf{d}\geq 2$. Then the variety $X$ is lc (resp.\ canonical, resp.\ terminal) if and only if
$\bD_\mathbf{d}\geq p-1$ 
(resp.\ $\bD_\mathbf{d}\geq p$, resp.\ $\bD_\mathbf{d}>p$).
\end{theorem}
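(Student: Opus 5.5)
The plan is to reduce everything to discrepancies over the stacky resolution $\sW\to X$, where they can be controlled through $\sY$ and the regular foliation $b^*\sF$. The exceptional divisor $\widetilde E$ has $a(\widetilde{E};X)=\bD_\mathbf{d}-p$ by the preceding corollary. This value is $\geq -1$, resp.\ $\geq 0$, resp.\ $>0$ exactly when $\bD_\mathbf{d}\geq p-1$, resp.\ $\geq p$, resp.\ $>p$. That settles the ``only if'' directions at once, since $\widetilde E$ is an exceptional divisor over $X$. (Note that $X$ is normal and $K_X$ is $\bQ$-Cartier: $\bA^d/\alpha_p$ has $K_X$ pulling back to $(p-1)K_\sF$, which is Cartier, and $q$ is a purely inseparable finite map. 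The discrepancy is therefore defined.)

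For the ``if'' directions I have to bound $a(F;X)$ for every divisor $F$ over $X$. Fix $F$ with center on a model $W'$ dominating $W$, and realise it on the stack as follows. Apply \autoref{prop:birat_model_of_stack} to $c_\sW\colon\sW\to W$ and a normal $W'\to W$ extracting $F$. This gives a representable birational $\sW'\to\sW$ with a divisor $\mathcal{F}$ over $F$, and
$$a(\mathcal{F};\sW,\widetilde{\mathcal{B}})+1=r(\mathcal{F})[a(F;W,B_W)+1],$$
where $K_W+B_W=g^*K_X$, so $B_W=-(\bD_\mathbf{d}-p)\widetilde E$. Here $\widetilde{\mathcal{B}}$ is the corresponding boundary on $\sW$, namely $c_\sW^*(K_W+B_W)=K_\sW+\widetilde{\mathcal B}$. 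By \autoref{lemma:overall_notations}(2), $\sW$ is schematic in codimension one, so $r_\sW(\widetilde{\mathcal E})=1$ and \autoref{prop:pullback_div_from_coarse_mod} gives $\widetilde{\mathcal{B}}=-(\bD_\mathbf{d}-p)\widetilde{\mathcal{E}}$. Since $r(\mathcal F)\geq1$, the sign of $a(F;W,B_W)+1$ equals that of $a(\mathcal F;\sW,\widetilde{\mathcal B})+1$. It also suffices to have $a(\mathcal F;\sW,\widetilde{\mathcal B})\geq a$ for $a\in\{-1,0\}$, because $r\,(a_W+1)\geq a+1$ forces $a_W\geq a$ when $a+1\geq0$ and $r\ge 1$. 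For strict positivity I will need a bit more care, but the same formula works: if $a_{\sW}>0$ then $r(a_W+1)>1$, which does not suffice when $r>1$. So I will handle the terminal case by bounding through $\sY$ directly.

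To bound discrepancies on $\sW$, pull back further to $\sY$ via \autoref{thm:discrep_infinit_qt_stack}. Divisors over $\sW$ correspond to divisors over $\sY$, by normalising $\sY\times_\sW\sW'$; the quotient is a universal homeomorphism. Then
$$a(\mathcal{F};\sW,\widetilde{\mathcal B})=p^{-\epsilon}\big[a(\mathcal D;\sY,\mathcal B_\sY)+(p-1)a(\mathcal D;b^*\sF)\big],$$
where $\mathcal B_\sY$ is chosen so that $(\mathcal B_\sY)_{\sW}=\widetilde{\mathcal B}$. Since $\mathcal E$ is invariant by \autoref{lemma:discrep_along_weighted_blow_up}, $\mathcal B_\sY=-(\bD_\mathbf{d}-p)\mathcal E$, which is effective-negative (nonpositive) when $\bD_\mathbf{d}\geq p$. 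By \autoref{prop:reg_fol_is_can}, $a(\mathcal D;b^*\sF)\geq0$. Moreover $\sY$ is regular, and $\mathcal E$ is a smooth divisor, being a weighted projective bundle over a smooth center.

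In the canonical and terminal cases, $(\sY,c\mathcal E)$ with $c=p-\bD_\mathbf{d}\leq 0$ has all exceptional discrepancies $\geq 1$, and those do not drop under a negative boundary. This gives $a(\mathcal F)\geq p^{-1}\cdot 1>0$ for divisors exceptional over $\sW$; the divisor $\widetilde{\mathcal E}$ itself is handled by the corollary. In the lc case, $c=1$ gives the pair $(\sY,\mathcal E)$ with $\mathcal E$ smooth, which is lc (indeed plt). Hence $a(\mathcal D)\geq-1$ and $a(\mathcal F)\geq p^{-\epsilon}\cdot(-1)\geq-1$.

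The main obstacle is transferring back to $X$ through the coarse space $W$, which has toroidal tame quotient singularities. For lc and canonical, the inequality above with $r\geq1$ suffices. For terminal, I will use $\bD_\mathbf{d}>p$. Then $\mathcal B_\sY$ has coefficient $\leq -1$, so exceptional discrepancies of $(\sY,\mathcal B_\sY)$ centered in $\mathcal E$ are $\geq 1+\mathrm{mult}$. Divisors not centered in $\mathcal E$ lie over the locus where $g$ is an isomorphism and $W$ is, by \autoref{lemma:sing_locus_of_qt}, regular away from $\widetilde E$. Divisors centered in $\widetilde E$ have $a(F;W,B_W)\geq a(F;W)+ (\bD_\mathbf{d}-p)\,\mathrm{mult}_F\widetilde E>a(F;W)\geq$ (klt toroidal, so $>-1$ and in fact I would use that tame quotient singularities give $a(F;W)>-1$, plus a multiplicity $\geq$ something). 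Here I expect to need the explicit local types $\frac1n(\dots)$ of $W$ from \autoref{prop:sing_partial_resol}, to ensure the multiplicity term beats the quotient-singularity deficit. This is the step I would check most carefully.
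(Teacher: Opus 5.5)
Your ``only if'' direction and your lc case are fine and follow the paper's route: lift to $\sW$ by \autoref{prop:birat_model_of_stack}, then to $\sY$ by \autoref{thm:discrep_infinit_qt_stack}, and use regularity of $\sY$, $\sE$ and $b^*\sF$. The canonical and terminal cases, however, have a genuine gap, exactly at the descent from $\sW$ to the coarse space $W$.

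Write $\gamma=\bD_\mathbf{d}-p$ and $r=r_{\sW'}(\widetilde{\sD})$. The formula reads $a_W+1=(a_{\sW}+1)/r$, so $a_{\sW}\geq 0$ only gives $a_W\geq 1/r-1$. Your claim that ``$r(a_W+1)\geq a+1$ forces $a_W\geq a$'' is therefore false for $a=0$ once $r>1$; this is the same objection you raise yourself for the terminal case. Routing through $\sY$ cannot repair it: \autoref{thm:discrep_infinit_qt_stack} only yields bounds like $a_{\sW}\geq 1/p$, which do not survive division by $r$ (and $r$ can be as large as $\max\mathbf{d}$). Your final paragraph does not close either. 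Knowing only $a(F;W)>-1$ and $\mult_F\widetilde E\geq 1/r$, you get $a(F;W,-\gamma\widetilde E)>\gamma/r-1$, which proves nothing.

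The missing idea is to bound the stacky discrepancy directly on $\sW$, which is regular by \autoref{lemma:overall_notations}, and to compare the codimension of the center of $\widetilde{\sD}$ with $r$.

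Suppose $r>1$. Then representability of $h\colon\sW'\to\sW$ forces the generic point of $\widetilde{\sD}$ to map into $\widetilde{\sE}$. More precisely, it maps into the locus of some chart $[C_i^{(\sigma)}/\mu_i]$ with stabilizer $\mu_t$, where $t\mid i$ and $t\geq r$.

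The generator of $\mu_t$ scales $u_j^{(\sigma)}$ (for $j\neq i$) by $\zeta^{-j}$ and $u_i^{(\sigma)}$ by $\zeta$, with $\zeta$ a primitive $t$-th root of unity. So this locus lies in $V(u_1^{(\sigma)},\dots,u_{t-1}^{(\sigma)},u_i^{(\sigma)})$ and has codimension $\geq t\geq r$. This transfers from $\sY$ to $\sW$ because the quotient is a homeomorphism and is stabilizer-preserving (\autoref{prop:qt_is_stab_preserving}).

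Regularity of $\sW$ then gives
$a(\widetilde{\sD};\sW,-\gamma\widetilde{\sE})\geq \codim_{\sW}h(\widetilde{\sD})-1+\gamma\cdot\mult_{\widetilde{\sD}}h^*\widetilde{\sE}\geq r-1+\gamma$,
hence $a(\widetilde D;W,-\gamma\widetilde E)\geq\gamma/r$. This settles canonical ($\gamma\geq0$) and terminal ($\gamma\geq1$) at once. When $r=1$, your $\sY$-bound $a_{\sW}>0$ already suffices.

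For canonicity alone you could instead use that $X$ is $1$-Gorenstein, so discrepancies are integers and $a_W>-1$ forces $a_W\geq 0$. You did not invoke this, and it gives nothing for terminality.
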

\begin{proof}
Write $\gamma=a(\widetilde{E}; X)=\bD_\mathbf{d}-p$. The \emph{only if} direction is immediate, and it remains to prove sufficiency.

Let $m\colon W'\to W$ be a birational proper morphism from a normal variety, and let $\widetilde{D}$ be an $m$-exceptional divisor. We have to bound from below the discrepancy $a(\widetilde{D}; W,-\gamma \widetilde{E})$: we will do so by studying of the stacky pair $(\sW,-\gamma\widetilde{\mathcal{E}})$ and using the tools introduced in \autoref{section:DM_stacks} and \autoref{section:foliations}.

We start by introducing some notations. As in \autoref{prop:birat_model_of_stack}, let $\sW'$ be the normalization of of $\sW\times_W W'$: it is endowed with a birational proper representable morphism $h\colon \sW'\to \sW$, and the natural morphism $\sW'\to W'$ is the coarse moduli morphism. We let $\sY'$ be the normalization of $\sY\times_\sW\sW'$: the natural morphism $h'\colon \sY'\to \sY$ is birational, proper and representable.
Let $\widetilde{\mathcal{D}}$ be the unique $h$-exceptional prime divisor of $\sW'$ mapping to $\widetilde{D}$ in $W'$. Let also $\mathcal{D}$ be the unique prime divisor of $\sY'$ mapping to $\widetilde{\mathcal{D}}$: then $\mathcal{D}$ is $h'$-exceptional. We sum up these notations with the commutative diagram
        $$\begin{tikzcd}
            \sD \arrow[d, hook] & \widetilde{\sD}\arrow[d, hook]
            & \widetilde{D}\arrow[d, hook] \\
            \sY' \arrow[r, "q'"] \arrow[d, "h'"] & \sW' \arrow[d, "h"] \arrow[r] & W' \arrow[d, "m"] \\
            \sY \arrow[r] & \sW \arrow[r] & W.
        \end{tikzcd}$$
By Jacobson's correspondence \cite[Theorem 3.3.4]{Posva_Resolution_1-foliations}, the morphism $q'$ is the quotient by the $1$-foliation $h'^*\sF_\sY$ where $\sF_\sY=b^*\sF$. Moreover, by \autoref{lemma:discrep_along_weighted_blow_up} the divisor $\mathcal{E}$ on $\sY$ is $\sF_\sY$-invariant: so $\mathcal{E}_{\sY/\sF_\sY}=\widetilde{\mathcal{E}}$, with the notations of \autoref{eqn:natural_divisor_on_qt}. By applying \autoref{thm:discrep_infinit_qt_stack} we obtain
    \begin{equation}\label{eqn:computation_stacky_discrep}
        a(\widetilde{\mathcal{D}}; \sW, -\gamma\widetilde{\mathcal{E}})
        =
        p^{-\epsilon_{\sY'}(\mathcal{D})} \cdot 
        \left[
        a(\mathcal{D};\sY,-\gamma\mathcal{E})+
        (p-1)\cdot a(\mathcal{D};\sF_\sY)
        \right].
    \end{equation}  
Since $\sY$ and $\sF_\sY$ are regular, by \autoref{prop:reg_fol_is_can} we have $a(\mathcal{D};\sF_\sY)\geq 0$. An \'{e}tale covering of the sub-pair $(\sY,-\gamma\mathcal{E})$ is given by the sub-pairs $(C_i^{(\sigma)}, -\gamma E^{(\sigma)}_i)$ where $E^{(\sigma)}_i$ is the pullback of $E$ to $C^{(\sigma)}_i$. The support of $E_i^{(\sigma)}$ is the zero locus of the function $u_i^{(\sigma)}$, hence $C^{(\sigma)}_i$ and $\Supp(E^{(\sigma)}_i)$ are regular. So it follows from \cite[Corollary 2.11]{Kollar_Singularities_of_the_minimal_model_program} and \autoref{eqn:discrep_divisor_stacks} that $a(\mathcal{D};\sY,-\gamma \mathcal{E})\geq 0$ when $\gamma\geq -1$, with strict inequality when $\gamma\geq 0$.
So \autoref{eqn:computation_stacky_discrep} implies that 
    \begin{equation}\label{eqn:estimation_stacky_discrep}
        a(\widetilde{\mathcal{D}}; \sW, -\gamma\widetilde{\mathcal{E}}) \geq 0
        \quad \text{when }\gamma\geq -1,
        \text{ with strict inequality when }
        \gamma\geq 0.
    \end{equation}
    
Now we get back to the discrepancies of $(W,-\gamma \widetilde{E})$.
Since $\sW$ is schematic in codimension one (\autoref{lemma:overall_notations}), the divisor $K_W-\gamma\widetilde{E}$ on $W$ pullbacks to $K_\sW-\gamma\widetilde{\sE}$ on $\sW$.
By \autoref{eqn:discrep_of_stacks_and_mod} we obtain
        \begin{equation}\label{eqn:schematic_vs_stacky_discrep}
        a(\widetilde{D};W,-\gamma \widetilde{E})
        =\frac{a(\widetilde{\mathcal{D}}; \sW,-\gamma \widetilde{\mathcal{E}})+1}
        {r_{\sW'}(\widetilde{\mathcal{D}})}-1.
        \end{equation}
Assume that $\gamma\geq -1$: from \autoref{eqn:estimation_stacky_discrep} and the above equality we obtain
        $$a(\widetilde{D};W,-\gamma \widetilde{E})
        \geq\frac{1}
        {r_{\sW'}(\widetilde{\mathcal{D}})}-1.$$
Therefore we obtain $a(\widetilde{D};W,-\gamma \widetilde{E})> -1$ when $\gamma\geq -1$. So when $\gamma=-1$ the variety $X$ is lc. 

This happens to prove the canonical case as well, for $X$ is $1$-Gorenstein \cite[Lemma 4.2]{Tonini_Yasuda_Motivic_McKay_cor_for_alpha_p} and so the discrepancies $a(\widetilde{D};W,-\gamma \widetilde{E})$ are integers. Alternatively, the following argument will prove the canonical case and the terminal case at the same time.

Assume that $\gamma\geq 0$: we will show that $a(\widetilde{D};W,-\gamma \widetilde{E})\geq 0$, with strict inequality when $\gamma \geq 1$. This will complete the proof. This is clear if $r_{\sW'}(\widetilde{\mathcal{D}})=1$ as
        $$a(\widetilde{D};W,-\gamma \widetilde{E})=a(\widetilde{\mathcal{D}};\sW,-\gamma \widetilde{\sE}) >0$$
thanks to \autoref{eqn:estimation_stacky_discrep}. 
So assume that $r_{\sW'}(\widetilde{\mathcal{D}})>1$. We have
        \begin{eqnarray*}
        a(\widetilde{\sD};\sW,-\gamma \widetilde{\sE})
        &=& a(\widetilde{\sD};\sW)
        +\gamma\cdot \mult_{\widetilde{\sD}}h^*\widetilde{\sE} \\
        &\geq & \codim_\sW h(\widetilde{\sD})-1
        +\gamma
        \end{eqnarray*}
where the inequality follows from \autoref{eqn:discrep_divisor_stacks}, \cite[Lemma 2.1.2]{Posva_Pathological_MMP_sing} and the fact that $\sW$ is regular. In view of \autoref{eqn:schematic_vs_stacky_discrep}, we are going to compare the codimension of $h(\widetilde{\sD})$ with $r_{\sW'}(\widetilde{\sD})$.

To do so, observe that 
$\codim_{\sW}h(\widetilde{\sD})=\codim_\sY h'(\sD)$,
and that $r_{\sW'}(\widetilde{\sD})=r_{\sY'}(\sD)$ by \autoref{prop:qt_is_stab_preserving}. In particular $r_{\sY'}(\mathcal{D})>1$. Say that the generic point $\xi$ of $\sD$ is mapped by $h'$ to the open sub-stack $[C^{(\sigma)}_i/\mu_i]$ of $\sY$. As $h'$ is representable, $h'(\xi)$ must belong to the locus $\mathcal{Z}$ with stabilizer $\mu_t<\mu_i$ for some $t>1$ with $i=st$. This implies $r_{\sY'}(\sD)\leq t$.
By \autoref{eqn:action_on_equiv_chart} we see that $\sZ$ is the image in $[C^{(\sigma)}_i/\mu_i]$ of the locus $Z$ of $C^{(\sigma)}_i$ where the automorphism $(\chi_i^{(\sigma)})^{\circ s}$ acts trivially. By looking at the description of $\chi^{(\sigma)}_i$, we see that $u_1^{(\sigma)}, \dots, u_{t-1}^{(\sigma)}$ and $u_i^{(\sigma)}$ must vanish along $Z$. In particular,
        $$\codim_\sY h'(\mathcal{D})
        \geq \codim_{[C^{(\sigma)}_i/\mu_i]}\sZ
        =\codim_{C^{(\sigma)}_i}Z
        \geq t.$$
Putting these estimates together, we get $\codim_\sW h(\widetilde{\sD})\geq r_{\sW'}(\widetilde{\sD})$.
Therefore
        \begin{eqnarray*} 
        a(\widetilde{D};W,-\gamma \widetilde{E})
        &=& 
        \frac{a(\widetilde{\mathcal{D}}; \sW,-\gamma \widetilde{\mathcal{E}})+1}
        {r_{\sW'}(\widetilde{\mathcal{D}})}-1 \\
        &\geq & 
        \frac{\codim_\sW h(\widetilde{\sE})+\gamma}{r_{\sW'}(\widetilde{\sD})}-1 \\
        &\geq & \frac{\gamma}{r_{\sW'}(\widetilde{\sD})}.
        \end{eqnarray*}
So we have obtained $a(\widetilde{D};W,-\gamma \widetilde{E})\geq 0$, with strict inequality when $\gamma\geq 1$. The theorem is thus proved.
\end{proof}

\begin{remark}\label{rmk:qt_not_CM}
It follows from \cite[Lemma 2.5.4]{Posva_Singularities_quotients_by_1-foliations} that $X$ is never $F$-injective, and if the fixed-point scheme of the $\alpha_p$-action on $X$ has codimension $\geq 3$ (which happens as soon as $\bD_\mathbf{d}\geq 3$) then $X$ is not Cohen--Macaulay.
\end{remark}

\subsection{Stratification of the partial resolution}\label{section:singularities_partial_resolution}
In this sub-section we stratify the toroidal singularities of the partial resolution $W$ of $X$. It follows from \autoref{lemma:sing_locus_of_qt} and \cite[Lemma 2.5.10]{Posva_Singularities_quotients_by_1-foliations} that the singular locus of $X$ is the image of the center of the blow-up $f$, in other words $g(\widetilde{E})$. So $W\setminus \widetilde{E}$ is regular, and we will study the singularities of $W$ along $\widetilde{E}$.

We start by describing the exceptional divisor of $f\colon Y\to \bA^d$. Recall that we have commutative diagrams
        $$\begin{tikzcd}
        C_i \arrow[rr, bend left, "r^{(\sigma)}_i" above] \arrow[r] &
        {[}C_i^{(\sigma)}/\mu_i{]} \arrow[d, hook] \arrow[r] & C_i^{(\sigma)}/\mu_i \arrow[d, hook] \\
        &\sY \arrow[r, "c_\sY"] & Y
        \end{tikzcd}$$
where the vertical arrows are open embeddings, and $r^{(\sigma)}_i$ is the quotient by the action of $\mu_i$.
We let 
$\bP(\mathbf{d})$ be the schematic weighted projective space
        $$\bP(\mathbf{d})=
        \Proj_k k\left[ 
            U_i^{(\sigma)} \mid 
            (\sigma,i)\in \mathbf{I}^*
        \right]$$
where the homogeneous coordinate $U_i^{(\sigma)}$ has degree $i$. Notice that we do not include the index $i=0$.

\begin{lemma}
The $f$-exceptional divisor 
$E\cong \bP(\mathbf{d})\times \bA^l$ where the coordinates on $\bA^l$ are the pullbacks of the coordinates $x_0^{(\sigma)}$, $\sigma=1,\dots,l$, from the base $\bA^d$.
\end{lemma}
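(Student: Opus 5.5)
The plan is to use the global description of a weighted blow-up as a relative Proj. The schematic weighted blow-up $f\colon Y\to\bA^d$ is $\Proj_{\bA^d}$ of the Rees algebra $\mathcal{R}=\bigoplus_{n\geq 0}\mathcal{I}_n t^n$, where $\mathcal{I}_n\subseteq k[x]$ is spanned by the monomials of weighted degree $\geq n$. Here $x_i^{(\sigma)}$ has weight $i$ for $(\sigma,i)\in\mathbf{I}^*$, and the $x_0^{(\sigma)}$ have weight $0$. The stack $\sY$ is the corresponding stacky Proj, and $Y$ is its coarse space. The exceptional divisor $E$ is then the Proj of the associated graded algebra (the restriction of $\mathcal{R}$ to the center, modulo the $t^{-1}$-direction). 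Concretely,
$$\operatorname{gr}\mathcal{R}=\bigoplus_n \mathcal{I}_n/\mathcal{I}_{n+1}\cong k[x_0^{(\sigma)}]_{\sigma}\otimes_k k[U_i^{(\sigma)}\mid (\sigma,i)\in\mathbf{I}^*],$$
where $U_i^{(\sigma)}$ is the class of $x_i^{(\sigma)}$ in degree $i$. The isomorphism holds because $\mathcal{I}_n/\mathcal{I}_{n+1}$ is the free $k[x_0^{(\sigma)}]$-module on the monomials of weighted degree exactly $n$ in the variables $x_i^{(\sigma)}$ with $i\geq 1$. Taking Proj gives $E\cong \bP(\mathbf{d})\times\bA^l$, and the $\bA^l$ factor has coordinates $x_0^{(\sigma)}$, which are the pullbacks from the base.

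To avoid subtleties about exactly which scheme structure is put on $E$, I will identify the reduced divisor $E$ through the charts. On $C_i^{(\sigma)}$, the preimage of $E$ is $V(u_i^{(\sigma)})=\Spec k[u_j^{(\pi)}\mid (\pi,j)\neq(\sigma,i)]$, which is $\bA^l$ (the variables $u_0^{(\pi)}=x_0^{(\pi)}$) times an affine space. The group $\mu_i$ acts on the coordinates $u_j^{(\pi)}$, $j\geq 1$, with weights $-j$, by \autoref{eqn:action_on_equiv_chart}. The quotient
$$\Spec k[u_j^{(\pi)}\mid j\geq1,(\pi,j)\neq(\sigma,i)]^{\mu_i}$$
is exactly the standard affine chart $D_+(U_i^{(\sigma)})$ of $\bP(\mathbf{d})$. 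Indeed, the degree-zero part of $k[U][1/U_i^{(\sigma)}]$ is generated by monomials $\prod (U_j^{(\pi)})^{a}/(U_i^{(\sigma)})^{m}$ with $\sum ja=im$. Under $u_j^{(\pi)}\leftrightarrow U_j^{(\pi)}/(U_i^{(\sigma)})^{j/i}$, these correspond to the $\mu_i$-invariant monomials. Since $(u_i^{(\sigma)})$ cuts out $E$ on $C_i^{(\sigma)}$, and quotient by $\mu_i$ commutes with taking the reduced image, the chart $C_i^{(\sigma)}/\mu_i$ of $Y$ meets $E$ in $D_+(U_i^{(\sigma)})\times\bA^l$.

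The remaining step is to check that these identifications glue. The chart transition maps come from \autoref{eqn:structure_map_equiv_charts}: on overlaps, $u_j^{(\pi)}$ on one chart equals $x_j^{(\pi)}/(u_i^{(\sigma)})^j$, up to the $\mu_i$-ambiguity. These transitions are compatible with the Proj transition $U_j/U_i^{j/i}$, and both are restrictions of the single global Rees-algebra description, so the gluing is automatic.

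I expect the main obstacle to be a bookkeeping issue rather than a geometric one: making sure that $E$, taken as a reduced divisor on the coarse space $Y$, agrees with the Proj of the associated graded algebra. The concern is possible non-reducedness or non-normality coming from the stacky structure. Since $\bP(\mathbf{d})\times\bA^l$ is normal and irreducible, and the chart computation already gives the reduced structure, I will argue chartwise as above. I will take the invariant-ring identification with the weighted projective charts as the key verification.
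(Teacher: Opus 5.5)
Your proposal is correct and follows the paper's approach. The paper's proof is the same observation, that $Y$ is the schematic weighted blow-up $\Proj$ of the Rees algebra, so that $E=\Proj\bigl(\bigoplus_n \mathcal{I}_n/\mathcal{I}_{n+1}\bigr)\cong\bP(\mathbf{d})\times\bA^l$, which the paper simply calls clear; your chart-by-chart check via $\mu_i$-invariants is the computation the paper carries out right after the lemma, identifying $E\cap C_i^{(\sigma)}/\mu_i$ with $D_+(U_i^{(\sigma)})\times\bA^l$.
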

\begin{proof}
While $\sY$ is the stack-theoretic weighted blow-up of 
$\sum_\sigma \sum_{i=1}^{d_\sigma}(x_i^{(\sigma)},i)$, its coarse moduli space $Y$ is the scheme-theoretic weighted blow-up of the same Rees algebra. The description of the exceptional divisor is then clear.
\end{proof}

We stratify $E$ as follows. For each homogeneous coordinate $U_i^{(\sigma)}$, let $H_i^{(\sigma)}=\{U_i^{(\sigma)}=0\}\times \bA^l\subset E$. For a subset $\mathscr{S}$ of the set of index pairs $\mathbf{I}^*=\{(\pi,j)\mid \pi=1,\dots,l \text{ and }j=1,\dots,d_\pi\}$, we write
        $$P_{[\mathscr{S}]}=\bigcap_{(\sigma,i)\notin \mathscr{S}}H_i^{(\sigma)}
        \quad \text{and} \quad 
        P_{[\mathscr{S}]}^\circ = P_{[\mathscr{S}]}\setminus \bigcup_{(\pi,j)\in \mathscr{S}}H_j^{(\pi)}.$$
By convention $P_{[\emptyset]}=\emptyset$, so we will assume $\mathscr{S}$ to be non-empty in what follows, and $P_{[\mathbf{I}^*]}=E$. By \autoref{eqn:structure_map_equiv_charts} we have
        $$E\cap C_i^{(\sigma)}/\mu_i
        = r^{(\sigma)}_i\left\{
        u_i^{(\sigma)}=0
        \right\},$$
and this closed subset of $C_i^{(\sigma)}$ is an affine space with coordinates $\{\bar{u}_j^{(\pi)}\mid (\pi,j)\neq (\sigma,i)\}$. It has an action of $\mu_i$ given by the restriction of \autoref{eqn:action_on_equiv_chart}, namely
        $$\bar{\chi}_i^{(\sigma)}\colon \quad \bar{u}_j^{(\pi)}\mapsto \xi_i^{-j}\bar{u}_j^{(\pi)}$$
and the geometric quotient is the open subset $D_+(U_i^{(\sigma)})\times \bA^l$ of $E$, which is the complement of $H^{(\sigma)}_i$. Hence
        $$r^{(\sigma)}_i\left\{
        u_i^{(\sigma)}=0
        \right\}=E\setminus H^{(\sigma)}_i.$$
The hyperplanes $\{u_j^{(\pi)}=0\}$ of $C_i^{(\sigma)}$ are $\mu_i$-invariant, so 
    $$r^{(\sigma),-1}_ir^{(\sigma)}_i\left\{u_j^{(\pi)}=0\right\}
    =\left\{u_j^{(\pi)}=0\right\}$$
and furthermore
    $$r^{(\sigma)}_i\left\{ u_j^{(\pi)}=0 \right\}\cap E=
    H_j^{(\pi)}\setminus H^{(\sigma)}_i.$$
These considerations lead to the following statement:

\begin{lemma}\label{lemma:equiv_description_stratification}
Assume that $\mathscr{S}\subseteq \mathbf{I}^*$ is non-empty, and take $(\sigma,i)\in \mathscr{S}$. Then $P_{[\mathscr{S}]}^\circ$ is contained in the open chart $C_i^{(\sigma)}/\mu_i$, and is equal to the image of the set
        $$\left(
        \bigcap_{(\pi,j)\notin \mathscr{S}}\left\{u^{(\pi)}_j=0\right\}
        \right)
        \setminus 
        \left(
        \bigcup_{(\lambda,e)\in \mathscr{S}} \left\{u_e^{(\lambda)}=0\right\}
        \right)$$
through the quotient morphism $r^{(\sigma)}_i\colon C^{(\sigma)}_i\to C_i^{(\sigma)}/\mu_i$.
\hfill \qedsymbol
\end{lemma}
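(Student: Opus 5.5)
The plan is to work entirely on the exceptional divisor, using the chart descriptions established just before the statement. First, we show that $P_{[\mathscr{S}]}^\circ$ lies in the chart $C_i^{(\sigma)}/\mu_i$. Since $(\sigma,i)\in\mathscr{S}$, every point of $P^\circ_{[\mathscr{S}]}$ lies outside $H_i^{(\sigma)}$. Moreover $P^\circ_{[\mathscr{S}]}\subseteq E$. We have shown that $E\cap C_i^{(\sigma)}/\mu_i=r_i^{(\sigma)}\{u_i^{(\sigma)}=0\}=E\setminus H_i^{(\sigma)}$, so this gives the containment.

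Next we identify the set itself inside that chart. Every $H^{(\pi)}_j$ is closed in $E$, so we can intersect with the open set $E\setminus H_i^{(\sigma)}$. The displayed identity $r_i^{(\sigma)}\{u_j^{(\pi)}=0\}\cap E=H_j^{(\pi)}\setminus H_i^{(\sigma)}$ then converts each condition on $H_j^{(\pi)}$ into a condition on the coordinate hyperplane $\{u_j^{(\pi)}=0\}$ inside $\{u_i^{(\sigma)}=0\}\subset C_i^{(\sigma)}$. The key point for passing intersections and complements through $r_i^{(\sigma)}$ is that each hyperplane $\{u_j^{(\pi)}=0\}$ is $\mu_i$-invariant and saturated, i.e.\ $r_i^{(\sigma),-1}r_i^{(\sigma)}\{u_j^{(\pi)}=0\}=\{u_j^{(\pi)}=0\}$. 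Since $r_i^{(\sigma)}$ is surjective, taking images of saturated sets commutes with finite intersections and with complements. Hence
$$P^\circ_{[\mathscr{S}]}=r_i^{(\sigma)}\Big(\{u_i^{(\sigma)}=0\}\cap\bigcap_{(\pi,j)\notin\mathscr{S}}\{u_j^{(\pi)}=0\}\setminus\bigcup_{(\lambda,e)\in\mathscr{S}}\{u_e^{(\lambda)}=0\}\Big).$$

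Finally we remove the redundant term $\{u_i^{(\sigma)}=0\}$, whose role is only to encode the condition of lying in $E$. Note that $(\sigma,i)\in\mathscr{S}$, so $u_i^{(\sigma)}$ is not among the coordinates being set to zero. On $C_i^{(\sigma)}$ the exceptional locus is $\{u_i^{(\sigma)}=0\}$. We should read the statement as taking place within the exceptional divisor, which is how $P_{[\mathscr{S}]}$ is defined, so that $u_i^{(\sigma)}$ is the vanishing coordinate and the other coordinates $\bar u$ are as in the restricted action $\bar\chi_i^{(\sigma)}$. With this reading, the excluded set $\{u_e^{(\lambda)}=0\}$ for $(\lambda,e)=(\sigma,i)$ should be interpreted via the homogeneous coordinate $U_i^{(\sigma)}$, which is automatically invertible on the chart, rather than via the affine coordinate $u_i^{(\sigma)}$.

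The main obstacle is bookkeeping of this index $(\sigma,i)$. On $C_i^{(\sigma)}$ the affine coordinate $u_i^{(\sigma)}$ cuts out $E$, while the nonvanishing of $U_i^{(\sigma)}$ holds automatically on the chart. I would handle this by stating explicitly that the displayed set is understood inside $E\cap C_i^{(\sigma)}=\{u_i^{(\sigma)}=0\}$, with the $(\sigma,i)$ term in the union read as the nonvanishing of $U_i^{(\sigma)}$, which holds automatically there. Everything else is the formal saturation argument above.
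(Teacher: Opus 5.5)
Your argument is correct and is essentially the paper's own. The paper gives no separate proof: it states the lemma as a consequence of the three facts recorded just before it, namely $E\cap C_i^{(\sigma)}/\mu_i=r_i^{(\sigma)}\{u_i^{(\sigma)}=0\}=E\setminus H_i^{(\sigma)}$, the saturation of the $\mu_i$-invariant coordinate hyperplanes, and $r_i^{(\sigma)}\{u_j^{(\pi)}=0\}\cap E=H_j^{(\pi)}\setminus H_i^{(\sigma)}$. You combine these in the same way.

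Your reading of the index $(\sigma,i)$ is the intended one: taken literally, the displayed set lies in $\{u_i^{(\sigma)}\neq 0\}$, so its image misses $E$. You should state the correction cleanly, however. Work inside $\{u_i^{(\sigma)}=0\}$, apply the conversion identity only for $(\pi,j)\neq(\sigma,i)$ (it fails for $(\pi,j)=(\sigma,i)$), and drop $(\sigma,i)$ from the union. Do not call $\{u_i^{(\sigma)}=0\}$ ``redundant'': as literally displayed in your second paragraph, the set is empty.
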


We introduce the following notations.
\begin{notation}
Fix a non-empty subset of index pairs $\mathscr{S}\subseteq \mathbf{I}^*$.
    \begin{enumerate}
        \item We let $Q_\mathscr{S}^\circ=q(P^\circ_{[\mathscr{S}]})$.

        \item  We write $\gcd(\mathscr{S})=\gcd\{i\mid (\sigma,i)\in \mathscr{S}
\text{ for some }\sigma\}$.
    \end{enumerate}
\end{notation}
\noindent Recall that $q\colon Y\to W$ is a universal homeomorphism. In particular the collection $\{Q_\mathscr{S}^\circ\mid \mathscr{S}\subseteq \mathbf{I}^*\}$ is a decomposition of $\widetilde{E}$ into disjoint locally closed subsets.
We are going to show that the singularity type is constant along each stratum $Q_\mathscr{S}^\circ$.

\begin{proposition}\label{prop:sing_partial_resol}
With the notations as above, let $w\in Q^\circ_\mathscr{S}$ be a closed point. If $\gcd(\mathscr{S})=1$ then $w\in W$ is a regular point. Otherwise the pair $(w\in W,\widetilde{E})$ is a cyclic quotient singularity of type
                $$\frac{1}{\gcd(\mathscr{S})}\left( 
                v_j^{(\pi)}
                \mid \pi=1,\dots,l \text{ and }
                j=0,\dots,d_\pi
                \right)$$
where 
            $$v_j^{(\pi)}=
            \begin{cases}
            p & \text{if }(\pi,j)=(\sigma,i-1),\\
            1 & \text{if }(\pi,j)=(\sigma,i),\\
            -j & \text{otherwise}
            \end{cases}$$
and where we regard $\widetilde{E}$ as corresponding to the index $(\sigma,i)$ in the sum over all $(\pi,j)$
(cf.\ \autoref{def:cyclic_qt}).
\end{proposition}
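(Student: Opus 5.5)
The plan is to work in one equivariant chart. Pick $(\sigma,i)\in\mathscr{S}$. By \autoref{lemma:equiv_description_stratification}, $P^\circ_{[\mathscr{S}]}$ lies in $C_i^{(\sigma)}/\mu_i$. Since $q$ is a universal homeomorphism compatible with the charts (\autoref{lemma:overall_notations}), $w$ lies in $W_i^{(\sigma)}=Z_i^{(\sigma)}/\mu_i$. Choose a closed point $y\in C_i^{(\sigma)}$ over $w$. It satisfies $u_i^{(\sigma)}=0$, $u_e^{(\lambda)}\neq 0$ for $(\lambda,e)\in\mathscr{S}$, and $u_j^{(\pi)}=0$ for $(\pi,j)\notin\mathscr{S}$.

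The first step computes the stabilizer of $y$ in $\mu_i$. From \autoref{eqn:action_on_equiv_chart}, $\xi\in\mu_i$ fixes $y$ if and only if $\xi^{-e}=1$ for every $(\lambda,e)\in\mathscr{S}$ with $(\lambda,e)\ne(\sigma,i)$; the condition $\xi^i=1$ holds automatically. Hence the stabilizer is $\mu_{g}$ with $g=\gcd(\mathscr{S})$. By \autoref{prop:qt_is_stab_preserving_II}, the image $z=q_i^{(\sigma)}(y)\in Z_i^{(\sigma)}$ has the same stabilizer. Since $t_i^{(\sigma)}$ is a tame quotient, étale-locally at $w$ the variety $W$ is $\Spec\widehat{\sO}_{Z,z}/\mu_g$. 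If $g=1$, $W$ is regular at $w$. Otherwise we must find regular parameters of $\widehat{\sO}_{Z,z}$ that are $\mu_g$-eigenfunctions, read off their weights, and check that $\widetilde E$ is cut out by the parameter of weight $1$.

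The second step constructs these eigen-parameters. Regular parameters of $\widehat{\sO}_{C,y}$ are $u_j^{(\pi)}-c_j^{(\pi)}$, and those with $c\ne 0$ have weight $0$ mod $g$ for $\mu_g$. The coordinates with $c=0$ are exactly those indexed outside $\mathscr{S}$, together with $u_i^{(\sigma)}$. Modulo $g$ every coordinate $u_j^{(\pi)}$ therefore has weight $-j$, except $u_i^{(\sigma)}$, which has weight $1$. Then $Z$ is regular (\autoref{lemma:overall_notations}), and $\sO_Z=\sO_C^{\psi}$ with $\psi=\sum_\pi\psi_i^{(\pi)}$, a $\mu_g$-invariant vector field up to a unit. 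This follows from the equivariance lemma, since $u_i^{(\sigma)}\psi$ is invariant, so $\psi$ has weight $-1$. The key computation concerns the coefficient of $\partial_{u_{i-1}^{(\sigma)}}$ in $\psi$: it is $1-\tfrac{i-1}{i}u_{i+1}u_{i-1}$ (or $1$ when $i=\delta$, or $\partial_{u_0}$ when $i=1$). I will first arrange that this coefficient is a unit at $y$, either by choosing $(\sigma,i)$ suitably or by rescaling the chart coordinates.

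Given that, $\psi$ is transverse to $\{u_{i-1}^{(\sigma)}=\text{const}\}$. The $p$-closed regular foliation then admits local parameters of $\widehat\sO_{Z,z}$ of the form $(u_{i-1}^{(\sigma)})^p$ together with $\psi$-invariant lifts $\tilde u_j^{(\pi)}$ of all the other coordinates, including $u_i^{(\sigma)}$ up to a unit. The lifts are obtained by a flow-box / Frobenius normal form argument, as in \cite[Lemma 2.5.10]{Posva_Singularities_quotients_by_1-foliations}. I will carry out this construction $\mu_g$-equivariantly by averaging over $\mu_g$, which is linearly reductive. The lifts then keep the weights $-j$; the parameter $u_{i-1}^p$ has weight $-(i-1)p$, and since $g\mid i$ this equals $p$ mod $g$; and $u_i^{(\sigma)}$ keeps weight $1$. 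This gives exactly the weights $v_j^{(\pi)}$.

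The third step identifies the divisor. $\widetilde{E}$ on $Z$ is the image of $\{u_i^{(\sigma)}=0\}$, and $E$ is invariant by \autoref{lemma:discrep_along_weighted_blow_up}, so $u_i^{(\sigma)}$ up to a unit is a $\psi$-constant parameter. Hence $(u_i^{(\sigma)})\cap\sO^{\mu_g}$ defines $\widetilde E$, matching \autoref{def:cyclic_qt}.

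I expect the main obstacle to be the equivariant normal form. One must show that $u_{i-1}^{(\sigma)}$ is the transverse direction at $y$, with unit coefficient along the stratum, and that the $\psi$-invariant completions can be chosen as $\mu_g$-eigenfunctions of the stated weights. If the unit condition fails at some points of $P^\circ_{[\mathscr S]}$, I will replace $u_{i-1}^{(\sigma)}$ by whichever weight-$(-(i-1))$ coordinate is transverse there. Its $p$-th power has the same weight mod $g$, so the type is unchanged.
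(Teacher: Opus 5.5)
Your proposal follows essentially the paper's proof: work on the chart $C_i^{(\sigma)}$, compute the stabilizer $\mu_{\gcd(\mathscr S)}$ and transport it to $Z_i^{(\sigma)}$ by \autoref{prop:qt_is_stab_preserving_II}, use the Seshadri--Yuan structure in which $(u_{i-1}^{(\sigma)})^p$ replaces $u_{i-1}^{(\sigma)}$ as a parameter, and use the $\psi$-invariance of $\{u_i^{(\sigma)}=0\}$ to identify $\widetilde E$; where the paper reads off the weights from the exact sequence of cotangent spaces, you use explicit $\psi$-invariant eigen-lifts. The step you leave open is automatic for every choice of $(\sigma,i)\in\mathscr S$: since $g=\gcd(\mathscr S)\geq 2$ divides $i$, one has $i\geq 2$ and $g\nmid i-1$, so $(\sigma,i-1)\notin\mathscr S$ and $u_{i-1}^{(\sigma)}(y)=0$, hence the coefficient of $\partial_{u_{i-1}^{(\sigma)}}$ in $\psi$, which is $1-\frac{i-1}{i}u_{i+1}^{(\sigma)}u_{i-1}^{(\sigma)}$ (or $1$ if $i=d_\sigma$), equals $1$ at $y$ and no rescaling, re-choice of $(\sigma,i)$, or substitute transverse coordinate is needed.
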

\begin{proof}
Suppose that $(\sigma,i)\in \mathscr{S}$ and consider the commutative diagram
        $$\begin{tikzcd}
            C_j^{(\pi)} \arrow[r, "q_j^{(\pi)}"] \arrow[d, "r_j^{(\pi)}"] &
            Z_j^{(\pi)} \arrow[d, "t_j^{(\pi)}"] \\
            C_j^{(\pi)}/\mu_j \arrow[r, "q"] & 
            W_j^{(\pi)}
        \end{tikzcd}$$
where we follow \autoref{notation:overall_situation}.
Then, by \autoref{lemma:equiv_description_stratification}, the point $w$ belongs to the image in $W_i^{(\sigma)}$ of the locally closed subset $Z(\mathscr{S})$ of $C_i^{(\sigma)}$ where the coordinate functions $u_j^{(\pi)}$ are invertible if $(\pi,j)\in \mathscr{S}$, and vanish identically otherwise. 

By \autoref{eqn:action_on_equiv_chart} we see that $Z(\mathscr{S})$ is $\mu_i$-invariant. Suppose that the sub-group $\mu_{t}$, with $t>1$, has a fixed point contained in $Z(\mathscr{S})$. Its action is given by \autoref{eqn:action_subgroup}, with $s=i/t$: so in order to have a fixed point, we see that for every $(\pi,j)\in \mathscr{S}$ we must have $\xi_i^{-ij/t}= 1$, as $u_j^{(\pi)}$ is invertible along $Z(\mathscr{S})$. So $t$ must divide each such $j$, and it follows that $t$ divides $\gcd(\mathscr{S})$ (\footnote{
The $\gcd$ has the following property: given a set of positive integers $\mathbf{S}$, if $n$ divides every element of $\mathbf{S}$, then $n$ also divides $\gcd(\mathbf{S})$. This is easily proved by considering decompositions into products of primes.
}). So we obtain that if some $\mu_t$ has a fixed point on $Z(\mathscr{S})$, it is a sub-group of $\mu_{\gcd(\mathscr{S})}$; moreover, the latter acts trivially along $Z(\mathscr{S})$. This shows that the inertia of the $\mu_i$-action on $Z(\mathscr{S})$ is constant and equal to $\mu_{\gcd(\mathscr{S})}$. By \autoref{prop:qt_is_stab_preserving_II} it follows that the inertia of the $\mu_i$-action is constant along $q_i^{(\sigma)}(Z(\mathscr{S}))$ and equal to $\mu_{\gcd(\mathscr{S})}$. It remains to find the weights of this action. 
    
If $\gcd(\mathscr{S})=1$ there is noting to show, so let us assume that $\gcd(\mathscr{S})>1$. Let $s$ be such that $s\cdot \gcd(\mathscr{S})=i$. For simplicity, let $\chi=\chi_i^{(\sigma)}$ be the generator of the $\mu_i$-action \autoref{eqn:action_on_equiv_chart}, and let $\varsigma$ be the automorphism of $Z_i^{(\sigma)}$ satisfying $q_i^{(\sigma)}\circ \chi=\varsigma \circ q_i^{(\sigma)}$ (cf.\ \autoref{cor:action_descends_inf_qt}).

Fix a closed point $z=q_i^{(\sigma)}(x)\in q_i^{(\sigma)}(Z(\mathscr{S}))$ mapping to $w$. We have
        $$\widehat{\sO}_{W,w}\cong \left(
                \widehat{\sO}_{Z_i^{(\sigma)},z}
            \right)^{\langle \varsigma^s \rangle}.$$
Here $\varsigma^s$ gives a $\mu_{\gcd(\mathscr{S})}$-action on the regular local ring $\widehat{\sO}_{Z_i^{(\sigma)}}$. If $\fm$ is its maximal ideal, we need to determine the multi-set of eigenvalues of the induced endomorphism $\overline{\varsigma^s}$ of $\fm/\fm^2$ (that is, the set of eigenvalues counted with multiplicities).

Let us start by determining the eigenvalues of the endomorphism $\overline{\chi^s}$ of $\fn/\fn^2$, where $\fn$ is the maximal ideal of $\sO_{C_i^{(\sigma)},x}$. For each $(\pi,j)$, let $u_j^{(\pi)}(x)\in k$ be the 
representative of the image of $u_j^{(\pi)}$ in the residue field at $x$. Then the 
        $$\widetilde{u}_j^{(\pi)}=u_j^{(\pi)}-u_j^{(\pi)}(x)$$
form a set of regular parameters at $x$. Observe that $u_j^{(\pi)}(x)\neq 0$ if and only if $(\pi,j)\notin \mathscr{S}$, in which case $\chi^s$ acts trivially on $u_j^{(\pi)}$. Let $\xi_i$ be the $i$-th primitive root of unity appearing in \autoref{eqn:action_on_equiv_chart}: then $\zeta=\xi_i^s$ is a primitive $\gcd(\mathscr{S})$-root of unity, and we have
            $$\chi^s\left(\widetilde{u}_j^{(\pi)}\right) =
            \zeta^{\varrho_j^{(\pi)}} \widetilde{u}_j^{(\pi)}
            \quad \text{where} \quad 
            \varrho_j^{(\pi)}=
            \begin{cases}
                1 & \text{if }(\pi,j)=(\sigma,i),\\
                -j & \text{otherwise.}
            \end{cases}$$
Now, as $(\sigma,i)\in \mathscr{S}$ and $\gcd(\mathscr{S})>1$, the function $u_{i-1}^{(\sigma)}$ must vanish along $Z(\mathscr{S})$, and so it belongs to the maximal ideal $\fn$. From the computations in \autoref{prop:pullback_derivation_irred_case} we see that $\psi_i^{(\sigma)}(u_{i-1}^{(\sigma)})$ is invertible in $\sO_{C_i^{(\sigma)},x}$. By a result of Seshadri--Yuan \cite[Lemma 2.4.5]{Posva_Singularities_quotients_by_1-foliations} it follows that $u_{i-1}^{(\sigma),p}$ is a local parameter of $\sO_{Z_i^{(\sigma)},z}$ and that
            \begin{equation*}
                \sO_{C_i^{(\sigma)},x}=
                \sO_{Z_i^{(\sigma)},z}\left[ 
                u_{i-1}^{(\sigma)}
                \right].
            \end{equation*}
It follows that we have an exact sequence of $k$-vector spaces
        \begin{equation}\label{eqn:exact_seq_vct_spaces}
        0\to k\cdot \overline{u_{i-1}^{(\sigma),p}} \to 
        \fm/\fm^2 \overset{\iota}{\longrightarrow} \fn/\fn^2 \to 
        k\cdot \overline{u_{i-1}^{(\sigma)}}
        \to 0.
        \end{equation}
As the inclusion 
$\widehat{\sO}_{Z_i^{(\sigma)},z}\hookrightarrow
\widehat{\sO}_{C_i^{(\sigma)},x}$ is $\mu_{\gcd(\mathscr{S})}$-equivariant (see \autoref{prop:qt_is_stab_preserving_II}), the $k$-linear map $\iota$ appearing in the exact sequence verifies
        $$\overline{\chi}^s\circ \iota=\iota\circ \overline{\varsigma}^s.$$
Using this equality we deduce that the multi-set of eigenvalues of $\overline{\varsigma^s}$ is given by 
        $$\left\{\left\{\zeta^{v_j^{(\pi)}}
        \mid \pi=1,\dots,l \text{ and }j=0,\dots,d_\pi
        \right\}\right\}
        \quad \text{where}\quad 
        v_j^{(\pi)}=
        \begin{cases}
            p & \text{if }(\pi,j)=(\sigma,i-1),\\
            \varrho_j^{(\pi)} & \text{otherwise}.
        \end{cases}$$
To conclude the proof, consider the divisor $t^{(\sigma),-1}_i(\widetilde{E})$. Tts ideal $\mathfrak{a}$ in $\sO_{Z_i^{(\sigma)},z}$ is principal since $Z_i^{(\sigma)}$ is regular. Set-theoretically, its extension $\mathfrak{a}\sO_{C_i{(\sigma)},x}$ cuts outs $\{u_i^{(\sigma)}=0\}$. Since that latter divisor is $b_i^{(\sigma),*}\sF$-invariant (cf.\ the computations in the proof of \autoref{prop:pullback_derivation_irred_case}), it follows from \cite[Lemma 4.2.2]{Posva_Singularities_quotients_by_1-foliations} that $\mathfrak{a}\sO_{C_i{(\sigma)},x}$ is generated by $u_i^{(\sigma)}$. Thus $\mathfrak{a}$ is generated by some $\eta u_i^{(\sigma)}$ where $\eta\in \sO_{Z_i^{(\sigma)},z}^\times$. Let $\eta(z)\in k$ be the representative the image of $\eta$ in the residue field of $z$: then in $\fm/\fm^2$ we have
        $$\overline{\varsigma^s}\left(
        \overline{\eta u_i^{(\sigma)}}\right)
        =\overline{\varsigma^s}\left( \eta_0\cdot  \overline{u_i^{(\sigma)}} \right)
        =
        \eta_0 \cdot \overline{\chi}^s
        \left(\overline{u_i^{(\sigma)}}\right)
        =\zeta^{v_i^{(\sigma)}}\eta_0\cdot
        \overline{u_i^{(\sigma)}}
        =\zeta^{v_i^{(\sigma)}}\cdot
        \overline{\eta u_i^{(\sigma)}}.$$
Hence $\eta u_i^{(\sigma)}$ is an eigenvector of $\overline{\varsigma^s}$ associated to the eigenvalue $v_i^{(\sigma)}$. The proof is therefore complete.
\end{proof}


In the course of the proof, we have shown the following:
\begin{proposition}
The coarse moduli morphism $c_\sW\colon \sW\to W$ is an isomorphism above $W\setminus \widetilde{E}$, and for every $\mathscr{S}$ the base-change $\sW\times_W Q^\circ_\mathscr{S}\to Q^\circ_\mathscr{S}$ is a gerbe with constant inertia $\mu_{\gcd(\mathscr{S})}$.
\hfill \qedsymbol
\end{proposition}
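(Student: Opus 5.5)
The plan is to extract both assertions from the local analysis done in the proof of \autoref{prop:sing_partial_resol}, together with \autoref{prop:qt_is_stab_preserving}. For the first assertion, I would show that every geometric point of $\sW$ lying over $W\setminus\widetilde{E}$ has trivial stabilizer. Since $\sW$ is tame and Deligne--Mumford, its coarse moduli morphism is then an isomorphism over that open set. Concretely, $\sW$ is covered by the open substacks $[Z_i^{(\sigma)}/\mu_i]$, and by \autoref{prop:qt_is_stab_preserving} (or its chart form \autoref{prop:qt_is_stab_preserving_II}) the stabilizer of a point $q_i^{(\sigma)}(x)$ equals $\Stab_{\mu_i}(x)\subseteq C_i^{(\sigma)}$. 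I would therefore reduce to the following claim: every point of $C_i^{(\sigma)}$ with nontrivial stabilizer lies on $\{u_i^{(\sigma)}=0\}$, which is the preimage of $E$.

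To prove the claim, take a subgroup $\mu_t$ with $t>1$, acting as in \autoref{eqn:action_subgroup} with $s=i/t$. It multiplies $u_i^{(\sigma)}$ by $\xi_i^{s}$, and this is a nontrivial root of unity. So $u_i^{(\sigma)}$ must vanish at any fixed point, exactly as already observed in \autoref{lemma:schematic_in_codim_one}. Since $q$ is a universal homeomorphism carrying $E$ onto $\widetilde{E}$, the points of $W\setminus\widetilde{E}$ are exactly the images of points of $C_i^{(\sigma)}$ with $u_i^{(\sigma)}\neq 0$. These have trivial inertia, so $c_\sW$ is an isomorphism there: it is a coarse space map of a tame stack with trivial inertia, hence an algebraic space isomorphism, which one can also check étale-locally via \autoref{eqn:local_presentation_DM_stacks}.

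For the second assertion, I would fix a non-empty $\mathscr{S}$ and some $(\sigma,i)\in\mathscr{S}$. By \autoref{lemma:equiv_description_stratification}, the preimage of $Q^\circ_\mathscr{S}$ in $Z_i^{(\sigma)}$ is $q_i^{(\sigma)}(Z(\mathscr{S}))$. The proof of \autoref{prop:sing_partial_resol} shows that $\mu_{\gcd(\mathscr{S})}$ acts trivially on $Z(\mathscr{S})$ and that no larger subgroup has fixed points there. Transporting this through \autoref{prop:qt_is_stab_preserving_II}, the inertia of $\sW\times_W Q^\circ_\mathscr{S}$ is constant and equal to $\mu_{\gcd(\mathscr{S})}$ at every point. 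I would then note that the reduced base change is $[q_i^{(\sigma)}(Z(\mathscr{S}))/\mu_i]$, with $\mu_{\gcd(\mathscr{S})}$ acting trivially and the quotient $\mu_i/\mu_{\gcd(\mathscr{S})}$ acting freely. A quotient stack of this form is a $\mu_{\gcd(\mathscr{S})}$-gerbe over its coarse space $Q^\circ_\mathscr{S}$.

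The main obstacle is the gerbe claim, because the inertia must be constant as a group scheme and not only pointwise. If the base change is taken non-reduced, one should pass to the reduced structure, or interpret the statement set-theoretically. With the reduced structure, freeness of the quotient action makes $[V/\mu_i]\to V/(\mu_i/\mu_{\gcd})$ a gerbe by a standard argument: étale-locally it becomes $V'\times B\mu_{\gcd}$.
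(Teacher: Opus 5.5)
Your proposal is correct and follows the paper's argument. The statement is read off from the inertia computation on $Z(\mathscr{S})$ in the proof of \autoref{prop:sing_partial_resol} and transported to $\sW$ via \autoref{prop:qt_is_stab_preserving_II}; the first assertion comes the same way, since a nontrivial element of $\mu_i$ rescales $u_i^{(\sigma)}$ by a nontrivial root of unity (equivalently, $\sY\setminus\mathcal{E}$ is a scheme and $q$ preserves stabilizers). Your caveat that the gerbe claim must be read for the reduced base change is correct and is left implicit in the paper: over strata with $\gcd(\mathscr{S})>1$ the scheme-theoretic fibre product is non-reduced, so it cannot literally be a gerbe over the reduced stratum.
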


We conclude with the following remarks.
\begin{remark}[Weighted blow-up of linear $\bZ/p$-actions]\label{rmk:weighted_blow-up_Z/p_action}
We have seen that well-chosen weighted blow-ups of $\bA^d$ resolve linear $\alpha_p$-actions (in the sense of \autoref{cor:pullback_foliation_is_regular}). One might ask if a similar phenomenon happens for linear $\bZ/p$-actions, especially in view of \autoref{prop:degen_of_Z/p_to_alpha_p}. If we select the weights in the same way, then the $\bZ/p$-action lifts to the \emph{schematic} weighted blow-up $Y\to \bA^d$. We prefer to work with equivariant charts, since $Y$ is singular. The action does not lift, however, to the charts $C_i^{(\sigma)}$. This can be arranged, at least in a neighbourhood of the exceptional divisor, by introducing $i$-th roots of $1+u_i^{(\sigma)}u_{i+1}^{(\sigma)}$. In any case one checks that the ramification locus of the $\bZ/p$-action is the exceptional divisor. 

We hoped that the ``$\bZ/p$-$\mu_p$ switch" phenomenon described by Totaro in \cite{Totaro_Terminal_3folds_not_CM} would apply on the locus where the ramification ideal is not principal. This is unfortunately not quite the case: on some charts we are lead to consider the non-linear action
        $$u_1\mapsto u_1+u_2u_d, \quad 
        u_2\mapsto u_2+u_3u_d, \quad \dots \quad 
        u_{d-1}\mapsto u_{d-1}+u_d^2, \quad u_d\mapsto u_d,$$
which does not lend itself to Totaro's method.

Let us also notice that in characteristic $0$, the weighted blow-up that we study was used by Ito and Reid in \cite{Ito_Reid_McKay_correspondence_SL3} to the prove a version of the McKay correspondence. We do not know if the characteristic $0$ and the characteristic $p$ pictures can be related in a precise way.
\end{remark}

\section{Stringy motivic invariants of linear $\alpha_p$-quotients}
In this section we compute the stringy motivic invariant of the quotient $\bA/(\alpha_p,\mathbf{d})$.

\subsection{Batyrev's formula}
The following formula, which is a special case of a result of Batyrev, will be our main tool in conjunction with the construction of a partial resolution in the previous section.

\begin{proposition}\label{prop:Batyrev_formula}
Let $(\Spec(\sO),\sum a_iB_i)$ be a cyclic quotient singularity of type $\frac{1}{n}(w_1,\dots,w_t)$ (cf.\ \autoref{def:cyclic_qt}). Assume that $a_i<1$ for every $i$. Let
        $$\varphi\colon (\bR_{\geq 0})^t\to \bR_{\geq 0}
        \quad \text{be the linear function defined by}\quad 
        \varphi(e_i)=1-a_i \ \forall i$$
where $e_i$ is the $i$-th standard basis vector. Let $N$ be the sub-group of $\bR^t$ generated by the vector $(w_1/n,\dots,w_t/n)^\intercal$ together with $e_1,\dots,e_d$. If $\mathbf{0}$ is the closed point of $\Spec(\sO)$ then
        $$\mst\left(\Spec(\sO),\sum_i a_iB_i\right)_\mathbf{0}=
        (\bL-1)^t\cdot \prod_{i=1}^t\frac{1}{1-\bL^{a_i-1}}
        \cdot \sum_{\mathfrak{p}\in N \cap (0;1]^t}
        \bL^{-\varphi(\mathfrak{p})}.
        $$
\end{proposition}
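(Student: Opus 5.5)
We will deduce this from the change of variables formula for tame quotients, i.e.\ Batyrev's computation for abelian quotient singularities. The invariant is local, so by the discussion in \autoref{section:mst} we may replace $\Spec(\sO)$ by its completion. By \autoref{def:cyclic_qt} this completion is $R^{\mu_n}$ with $R=k\llbracket x_1,\dots,x_t\rrbracket$, where $\mu_n$ acts diagonally with weights $w_i$ and $B_i$ corresponds to $\{x_i=0\}$.

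The first step is to express $\mst$ of the quotient through twisted arcs. Since $n$ is prime to $p$, the tame McKay correspondence for abelian quotients applies: arcs of $R^{\mu_n}$ based at the closed point decompose according to the conjugacy classes, here the elements $\gamma=\zeta^j$ of $\mu_n$. Each piece is identified with a space of $\gamma$-twisted arcs, i.e.\ $\mu_n$-equivariant maps $\Spec k\llbracket t^{1/n}\rrbracket\to \Spec R$. Such an arc is $x_i=t^{\{jw_i/n\}}\cdot(\text{power series in }t)$.

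The second step is to change variables to untwisted coordinates $y_i$ via $x_i=t^{\{jw_i/n\}}y_i$. This contributes the age shift $\bL^{-\sum_i\{jw_i/n\}}$ and the boundary correction $\bL^{\sum_i a_i\{jw_i/n\}}$ from the pulled-back log canonical divisor. Together these give $\bL^{-\varphi(\mathfrak{p}_j)}$ for the fractional vector $\mathfrak{p}_j$. We then integrate over the $y$-arcs of the closed point, stratifying by the orders $m_i=\ord_t y_i\ge 0$. When the fractional part $\{jw_i/n\}$ vanishes, the arc must still pass through the origin, which forces $m_i\ge 1$. These conditions combine into the lattice points of $N\cap(0;1]^t$ shifted by $\bZ_{\ge0}^t$: each point of $N$ with positive coordinates is uniquely a point of $N\cap(0;1]^t$ plus an element of $\bZ_{\ge0}^t$.

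The third step sums the contributions. For a fixed lattice point $\mathfrak{p}+m$, the arcs with prescribed exact orders have measure $(\bL-1)^t\bL^{-\sum m_i}$ up to normalization, together with the weight $\bL^{-\varphi(\mathfrak{p}+m)}$. Summing the geometric series in each $m_i$ yields $\prod_i (1-\bL^{a_i-1})^{-1}$, since $\varphi(e_i)=1-a_i$. The hypothesis $a_i<1$ guarantees that these series converge in the completed ring.

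The main obstacle is bookkeeping, so that the normalizations match $\mst$ as defined with $\ord I$. Two points need checking. The first is the identification of $\omega_X(B)$ on the quotient with the invariant forms. The second is that the quotient is schematic in codimension one with ramification divisors handled by the $B_i$, which matters because pseudo-reflections would alter the weights. I would settle this by citing Batyrev's theorem and Yasuda's tame McKay correspondence, rather than redoing the measure computation in $\Muh$. Working in $\Muh$ causes no harm, since all the classes involved are polynomial in $\bL$.
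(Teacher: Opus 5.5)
Your argument is correct in outline, but it follows a different route from the paper's. The paper never uses twisted arcs. It replaces the singularity by the affine toric pair $\bA^t/\mu_n$, with lattice $N$ and cone $\sigma=\bR_{\geq0}^t$. It discards arcs that are not generically in the torus, since they have measure zero, and indexes the remaining arcs by their valuation vectors in $N\cap\sigma$. The arcs sending the closed point to $\mathbf{0}$ are exactly those indexed by $N\cap\sigma^\circ$. Each piece is then evaluated as $(\bL-1)^t\bL^{-\varphi(\mathfrak{p})}$ through a toric resolution and the change of variables formula on an snc pair, following Batyrev's toric computation.

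In your version, the twisted sectors are the box points $N\cap(0;1]^t$, and the untwisted orders $m$ supply the $\bZ_{\geq0}^t$-translates. So you reach the same sum over $N\cap\sigma^\circ$, already split into box points times geometric series; the paper leaves that reorganization implicit. The toric route is more economical in characteristic $p$, since it needs only toric resolutions and the smooth change of variables formula. Yours relies on the tame McKay correspondence for twisted arcs in positive characteristic, but it gives a conceptual reading of $\varphi(\mathfrak{p})$ as age plus boundary shift.

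Two bookkeeping points in your third step:
\begin{itemize}
\item The per-stratum total should be $(\bL-1)^t\bL^{-\varphi(\mathfrak{p}+m)}$. Its $m$-part comes from the measure $\bL^{-\sum m_i}$ times the boundary factor $\bL^{\sum a_im_i}$. As you phrase it, ``measure $(\bL-1)^t\bL^{-\sum m_i}$ together with the weight $\bL^{-\varphi(\mathfrak{p}+m)}$'' counts the $m$-part twice.
\item The map $j\mapsto\mathfrak{p}_j$ is a bijection onto the box points only when the $\mu_n$-action is faithful.
\end{itemize}

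Your caveat about pseudo-reflections is a real hypothesis that the statement leaves implicit, not just a technicality. One needs each $e_i$ to be primitive in $N$. For example, take $n\geq2$ prime to $p$. The pair $(\Spec k\llbracket s\rrbracket,0)$ is of type $\frac{1}{n}(1)$, with $B_1$ the closed point. There the formula returns $\sum_{k=0}^{n-1}\bL^{k/n}$ instead of $1$. The paper's toric proof uses this primitivity tacitly, since $\varphi$ must take the value $1-a_i$ on the primitive ray generators. The hypothesis does hold where the proposition is applied, in \autoref{prop:local_Mst}. There the weights $1$ and $p$ are both units modulo $g$, so no nontrivial element acts as a pseudo-reflection, and the action is faithful.
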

\begin{proof}
We first consider an affine $\mathbb{Q}$-factorial toric pair $(R,\sum a_i B_i')$ that has the same singularity type at a unique torus-invariant closed point denoted again by $\mathbf{0}$. Then, we claim that $\mst\left(R,\sum_i a_iB_i'\right)_\mathbf{0}$ is expressed by the same formula as in the statement; this will be sufficient to conclude, as the $\mst$ at a closed point is a formal-local invariant. This is a slight variation of  \cite[Theorem 4.5]{Batyrev_NA_integrals_and_stringy_Euler_numbers} and the proof is basically the same. We explain the outline of the proof. To show the claim, we first note that $\mst\left(R,\sum_i a_iB_i'\right)_\mathbf{0}$ is expressed as the motivic volume of the space of arcs that maps the closed point to $\mathbf{0}$. Since $R$ is a toric variety, the space of arcs of $R$ that maps the generic point into the open torus decomposes into countably many pieces indexed by $N\cap \sigma$. The arcs that does not map the generic point into the open torus has motivic measure zero and we can safely ignore them, when computing motivic volumes. Among such arcs, those that sends the closed point to $\mathbf{0}$ corresponds to the pieces indexed by $N\cap \sigma^\circ$. We can explicitly compute the contribution of each piece to the motivic volume, by applying the change of variables formula to a toric resolution and by a direct calculation on a simple normal crossing pair. The contribution is $(\bL-1)^t \bL^{-\varphi(\mathfrak{p})}$. Summing up them over $\mathfrak{p}\in N\cap \sigma^\circ$ we get the claim. 
\end{proof}

\subsection{Computation of the stringy motivic invariant}
We apply the formula just derived to the cyclic quotient singularities described in \autoref{prop:sing_partial_resol}, whose notations we keep. We introduce the following auxiliary function: 
For $y \in (0,1]$, we define 
$$
\theta(y) := 1 -\lfloor y \rfloor + \lfloor py \rfloor -\sum_{(\pi,j)}\lfloor jy\rfloor =
\begin{cases}
    1 + \lfloor py \rfloor -\sum_{(\pi,j)}\lfloor jy\rfloor & (y<1) \\
    p-\bD_{\mathbf{d}} & (y=1)
\end{cases}
$$
where $(\pi,j)$ runs through the elements of the set $\mathbf{I}=\{(\pi,j)\mid \pi=1,\dots,l \text{ and }j=0,\dots,d_\pi\}$. Recall that $\mathscr{S}$ denotes a non-empty subset of $\mathbf{I}^*=\{(\pi,j)\mid \pi=1,\dots,l \text{ and }j=1,\dots,d_\pi\}$.

\begin{proposition}\label{prop:local_Mst}
Assume that $\bD_\mathbf{d}> p-1$. Let $w\in Q^\circ_\mathscr{S}$ be a closed point, and assume that $g_\mathscr{S}=\gcd(\mathscr{S})>1$. Then:
        $$\mst\left(W,-(\bD_\mathbf{d}-p)\widetilde{E}\right)_w
        =(\bL-1)^d 
        \left( \frac{1}{1-\bL^{-1}} \right)^{d-1}
        \frac{\bL^{-d}}{1-\bL^{p-1-\bD_\mathbf{d}}}
        \left(
        \sum_{y\in \frac{1}{g_\mathscr{S}}\bZ\cap (0;1]}
        \bL^{\theta(y)}
        \right)
        $$
where $\theta$ is defined as above.
\end{proposition}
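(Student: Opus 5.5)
We will apply \autoref{prop:Batyrev_formula} directly to the cyclic quotient singularity supplied by \autoref{prop:sing_partial_resol}. Take $(\sigma,i)\in\mathscr{S}$ and set $n=g_\mathscr{S}$. By that proposition, the pair $(w\in W,\widetilde{E})$ is of type $\frac{1}{n}(v_j^{(\pi)})$, indexed by all $(\pi,j)$ with $j=0,\dots,d_\pi$, so there are $t=d$ coordinates. The boundary divisor $\widetilde{E}$ corresponds to the index $(\sigma,i)$. We put $a_{(\sigma,i)}=\bD_\mathbf{d}-p$ and all other $a_{(\pi,j)}=0$. The hypothesis $\bD_\mathbf{d}>p-1$ is exactly $a_{(\sigma,i)}<1$, so Batyrev's formula applies.

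The formula then produces the prefactor
$$(\bL-1)^d\left(\frac{1}{1-\bL^{-1}}\right)^{d-1}\frac{1}{1-\bL^{\bD_\mathbf{d}-p-1}}.$$
Here I will need to check which sign convention is consistent with the claimed factor $1/(1-\bL^{p-1-\bD_\mathbf{d}})$. The formula has $1/(1-\bL^{a_i-1})$, with $a_i-1=\bD_\mathbf{d}-p-1$. The statement's sign corresponds instead to discrepancy coefficient $-a$, i.e.\ the boundary is $-(\bD_\mathbf{d}-p)\widetilde{E}$, so $a=p-\bD_\mathbf{d}$ and $a-1=p-1-\bD_\mathbf{d}$, which matches. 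I will therefore use the coefficient $a=p-\bD_\mathbf{d}<1$ and linear function $\varphi=\sum_{(\pi,j)}x_{(\pi,j)}-(p-\bD_\mathbf{d})x_{(\sigma,i)}$.

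Next I enumerate $N\cap(0;1]^d$. Since $N$ is generated by $\mathbb{Z}^d$ and $v/n$, every element of $N\cap(0;1]^d$ has the form $\mathfrak{p}_y=\big(\langle y v_j^{(\pi)}\rangle\big)$ for $y\in\frac1n\bZ\cap(0;1]$. Here $\langle x\rangle=x-\lceil x\rceil+1\in(0;1]$ denotes the representative of $x$ modulo $\bZ$ in $(0;1]$. The points are distinct as $y$ varies because $v_{(\sigma,i)}=1$. So the sum runs over $y\in\frac{1}{g_\mathscr{S}}\bZ\cap(0;1]$, as claimed.

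It remains to show $-\varphi(\mathfrak{p}_y)=\theta(y)-d$, which gives the factor $\bL^{-d}\bL^{\theta(y)}$. I will compute the coordinates of $\mathfrak{p}_y$ using that $ny\in\bZ$ and that $n$ divides every $j$ with $(\pi,j)\in\mathscr{S}$, although the formula must hold for all $(\pi,j)$. The coordinates are:
\begin{itemize}
\item at $(\sigma,i)$, the coordinate is $y$;
\item at $(\sigma,i-1)$, it is $\langle py\rangle=py-\lceil py\rceil+1$;
\item at every other $(\pi,j)$, it is $\langle -jy\rangle=1-jy+\lfloor jy\rfloor$.
\end{itemize}
For $j=0$ the last value is $1$. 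Summing with weights,
$$\varphi(\mathfrak{p}_y)=(1-p+\bD_\mathbf{d})y+\big(py-\lceil py\rceil+1\big)+\sum_{(\pi,j)\neq(\sigma,i),(\sigma,i-1)}\big(1-jy+\lfloor jy\rfloor\big).$$
I then use $\sum_{(\pi,j)} jy=\bD_\mathbf{d}y$ and the observation that $iy\in\bZ$. The latter holds because $n\mid i$, so $\lfloor iy\rfloor=iy$ and the $(\sigma,i)$ term can be absorbed into the sum. The linear terms in $y$ cancel.

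For $(\sigma,i-1)$ the difference between $1-\lceil py\rceil$ and the generic term $1-(i-1)y+\lfloor (i-1)y\rfloor$ must be reconciled. This is where I expect the main difficulty: the bookkeeping of floors versus ceilings, and the $y=1$ boundary case. For $y<1$, $py\notin\bZ$ (since $n<p$ divides into $y$'s denominator and is prime to $p$), so $\lceil py\rceil=\lfloor py\rfloor+1$. This should yield $\varphi=d-1-\lfloor py\rfloor+\sum\lfloor jy\rfloor=d-\theta(y)$. For $y=1$, both sides reduce to $d-(p-\bD_\mathbf{d})$ directly, since $\lfloor y\rfloor=1$ in $\theta$. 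If the index $(\sigma,i-1)$ causes an off-by-one, I will redo the computation, tracking that $u^{(\sigma)}_{i-1}$ was replaced by its $p$-th power in the exact sequence \autoref{eqn:exact_seq_vct_spaces}.
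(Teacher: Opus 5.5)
Your proposal is correct and follows the paper's own proof. Like the paper, you apply Batyrev's formula to the type $\frac{1}{g_\mathscr{S}}(v_j^{(\pi)})$ from \autoref{prop:sing_partial_resol} with the corrected coefficient $p-\bD_\mathbf{d}$ on $\widetilde{E}$, parametrize $N\cap(0;1]^d$ by $y\in\frac{1}{g_\mathscr{S}}\bZ\cap(0;1]$, and check that $\varphi(\mathfrak{p}_y)=d-\theta(y)$; the $(\sigma,i-1)$ bookkeeping you flagged closes with no off-by-one, because $iy\in\bZ$ and $0<y\leq 1$ give $\lfloor (i-1)y\rfloor=iy-1$, so the generic term at $(\sigma,i-1)$ equals $y$ and altogether $\varphi(\mathfrak{p}_y)=d-\lceil py\rceil+\sum_{(\pi,j)}\lfloor jy\rfloor$, which equals $d-\theta(y)$ both for $y<1$ (where $py\notin\bZ$) and for $y=1$.
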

\begin{proof}
Write $\gamma=\bD_\mathbf{d}-p$ and $g=\gcd(\mathscr{S})$. Fix a pair $(\sigma,i)$ that belongs to $\mathscr{S}$. By \autoref{prop:sing_partial_resol} the point $w\in (W,-\gamma E)$ is of type
        $$\frac{1}{g}\left(v_j^{(\pi)}
        \mid (\pi,j)\in \mathbf{I}
        \right)$$
where 
        $$v_j^{(\pi)}=
            \begin{cases}
            p & \text{if }(\pi,j)=(\sigma,i-1),\\
            1 & \text{if }(\pi,j)=(\sigma,i),\\
            -j & \text{otherwise},
            \end{cases}$$
and the divisor $\widetilde{E}$ corresponds to the couple of indices $(\sigma,i)$. In the notation of \autoref{prop:Batyrev_formula}, the lattice $N$ is generated by the standard basis vectors together with the vector with entries $v_j^{(\pi)}/g$ (having fixed some order on the pairs $(\pi,j)\in \mathbf{I}$ for the rest of the proof). 

To describe $N$, we introduce the following notation: for $x\in \bQ$ we let
        $$\{x\}' =
        \begin{cases}
            \{x\}=x-\lfloor x\rfloor & \text{if }x\notin \bZ, \\
            1 & \text{if }x\in \bZ.
        \end{cases}$$
That being said, we observe that 
        $$N\cap (0;1]^d= \left\{ \mathbf{v}_n=
        \left(
        \left\{n \ \frac{v_j^{(\pi)}}{g}\right\}'
        \right)_{(\pi,j)\in \mathbf{I}}
        \mid n=0,\dots,g-1
        \right\};$$
indeed, all these elements are different since $v_i^{(\sigma)}=1$. Let $\varphi$ be the linear function associated to the cyclic quotient pair $w\in (W,-\gamma \widetilde{E})$ as in \autoref{prop:Batyrev_formula}. Then 
        $$\varphi(\mathbf{v}_0)=\varphi(\mathbf{1})=
        \gamma+\sum_{(\pi,j)}1 
        =\bD_\mathbf{d}-p+d$$ 
and for $g-1>n>0$:
        \begin{eqnarray*} 
        \varphi(\mathbf{v}_n)&=&
        \left\{n \ \frac{v_i^{(\sigma)}}{g}\right\}'(1+\gamma)+
        \sum_{(\pi,j)\neq (\sigma,i)}\left\{n \ \frac{v_j^{(\pi)}}{g}\right\}' \\
        &=& 
        \left\{ \frac{n}{g}\right\}'(1+\gamma)
        +\sum_{(\pi,j)\neq (\sigma,i)} \left\{\frac{-nj}{g}\right\}'
        -\left\{\frac{-n(i-1)}{g}\right\}'
        +\left\{\frac{np}{g}\right\}' \\
        &=&
        \sum_{(\pi,j)}\left\{\frac{-nj}{g}\right\}'
        -\left\{\frac{-ni}{g}\right\}'+\left\{\frac{n}{g}\right\}'(1+\gamma) -\left\{\frac{-n(i-1)}{g}\right\}'
        +\left\{\frac{np}{g}\right\}' \\
        &=&
        \sum_{(\pi,j)}\left\{\frac{-nj}{g}\right\}'
        -1+\left\{\frac{n}{g}\right\}'(1+\gamma) -\left\{\frac{n}{g}\right\}'
        +\left\{\frac{np}{g}\right\}'
        \end{eqnarray*}
where we have used, for the last equality, the fact that $g$ divides $i$.
Let us write $y=n/g$. Note that $\{y\}'=y$ and that $\{py\}'=\{py\}$ (as $n<g$ and $p$ is coprime with $g$). We can re-organize the above expression to get
        $$\varphi(\mathbf{v}_n)=
        \left[
            \sum_{(\pi,j)}\left\{-jy\right\}'+\gamma y
        \right]+
        \{py\}'-1.$$
Observe that by plugging $y=1$ on the right-hand side, we recover $\varphi(\mathbf{v}_0)$. So if we introduce the function
        $$\vartheta(z)=\left[
            \sum_{(\pi,j)\in \mathbf{I}}\left\{-jz\right\}'+\gamma z
        \right]+
        \{pz\}'-1, \quad \text{for }z\in (0;1],$$
by \autoref{prop:Batyrev_formula} we obtain that
    \begin{equation}\label{eqn:mst_intermeditate_expression}
    \mst\left(W,-(\bD_\mathbf{d}-p)\widetilde{E}\right)_w
        =(\bL-1)^d 
        \left( \frac{1}{1-\bL^{-1}} \right)^{d-1}
        \frac{1}{1-\bL^{p-1-\bD_\mathbf{d}}}
        \sum_{y\in \frac{1}{g}\bZ\cap (0;1]}
        \bL^{-\vartheta(y)}.
    \end{equation}
It remains to compare the functions $\theta$ and $\vartheta$. We can simplify the expression in brackets in the definition of $\vartheta$: using that
        $$-\lfloor x\rfloor =\lceil -x\rceil 
        \quad \text{and} \quad 
        \lceil x\rceil =\lfloor x\rfloor +\mathds{1}_{x\notin\bZ}
        \quad \text{for }x\in \bQ,$$
we have for $z\in (0;1]$:
        \begin{eqnarray*}
            \gamma z+\sum_{(\pi,j)}\{-jz\}' &=&
            \gamma z +\sum_\pi \sum_{j=0}^{d_\pi}\left(\{-jz\}+\mathds{1}_{jz\in \bZ}\right) \\
            &=& \gamma z+\sum_\pi\sum_j \left( 
            -jz-\lfloor -jz\rfloor +\mathds{1}_{jz\in \bZ}
            \right) \\
            &=& \gamma z -\sum_\pi \frac{d_\pi(d_\pi+1)}{2}z
            +\sum_\pi\sum_j \left( 
            \lceil jz\rceil +\mathds{1}_{jz\in \bZ}
            \right) \\
            &=& (\gamma -\bD_\mathbf{d})z+
            \sum_\pi\sum_j (\lfloor jz\rfloor +1) \\
            &=&
            -pz + d +\sum_{(\pi,j)}\lfloor jz\rfloor.
        \end{eqnarray*}
Therefore we find
        \begin{eqnarray*} 
        \vartheta(z)&=&
        \sum_{(\pi,j)}\lfloor jz\rfloor+\{pz\}'-pz -1 + d \\
        &=&
        \begin{cases}
        \sum_{(\pi,j)}\lfloor jz\rfloor-\lfloor pz\rfloor -1 + d &
        \text{if }z<1, \\
        \bD_\mathbf{d}-p+d & \text{if }z=1.
        \end{cases}
        \end{eqnarray*}
Hence $-\vartheta(z)=-d+\theta(z)$, and replacing into \autoref{eqn:mst_intermeditate_expression} yields the desired formula.
\end{proof}

\begin{lemma}\label{lemma:classes_strata}
In $\Kzerouh$, we have for a non-empty $\mathscr{S}$,
    $$[Q^\circ_\mathscr{S}]=
    \bL^l(\bL-1)^{|\mathscr{S}|-1}.$$
\end{lemma}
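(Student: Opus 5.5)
First, since $q\colon Y\to W$ is a universal homeomorphism, it restricts to a universal homeomorphism $P^\circ_{[\mathscr{S}]}\to Q^\circ_\mathscr{S}$ (restrictions of universal homeomorphisms to locally closed subsets, with reduced structures, remain universal homeomorphisms). So in $\Kzerouh$ we have $[Q^\circ_\mathscr{S}]=[P^\circ_{[\mathscr{S}]}]$, and the statement becomes a computation on the exceptional divisor $E\cong\bP(\mathbf{d})\times\bA^l$ of the schematic weighted blow-up.

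By construction $P^\circ_{[\mathscr{S}]}=T_\mathscr{S}\times\bA^l$, where $T_\mathscr{S}\subset\bP(\mathbf{d})$ is the locus where the homogeneous coordinates $U_j^{(\pi)}$ vanish for $(\pi,j)\notin\mathscr{S}$ and are nonzero for $(\pi,j)\in\mathscr{S}$. So it suffices to show $[T_\mathscr{S}]=(\bL-1)^{|\mathscr{S}|-1}$. I will use \autoref{lemma:equiv_description_stratification}: fix $(\sigma,i)\in\mathscr{S}$. Then $T_\mathscr{S}$ is the geometric quotient of the torus
$$\bG_m^{\mathscr{S}\setminus\{(\sigma,i)\}}=\{\bar u_j^{(\pi)}\neq 0 \text{ for }(\pi,j)\in\mathscr{S}\setminus\{(\sigma,i)\},\ \bar u_j^{(\pi)}=0 \text{ otherwise}\}$$
by the diagonal $\mu_i$-action $\bar u_j^{(\pi)}\mapsto\xi_i^{-j}\bar u_j^{(\pi)}$ (the $\bA^l$ factor is split off).

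Such a quotient is again a torus: writing $\mu_i$ as the image of a homomorphism into $\bG_m^{m}$ with $m=|\mathscr{S}|-1$, its quotient $\bG_m^m/H$ by the finite subgroup $H$ of $\bG_m^m$ is an algebraic group, and it is a torus of dimension $m$. Concretely, the coordinate ring of the quotient is the group algebra of the sublattice of characters of $\bZ^m$ that are trivial on $H$, a finite-index sublattice, hence free of rank $m$. Therefore $T_\mathscr{S}\cong\bG_m^{m}$ and $[T_\mathscr{S}]=(\bL-1)^{|\mathscr{S}|-1}$. Since $\mu_i$ is tame ($p\nmid i$, because $i\le p-1$), the geometric quotient is the categorical one, and no subtlety arises there; one does not even need the universal homeomorphism relation at this step.

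Multiplying by $[\bA^l]=\bL^l$ gives the claim. The only step needing care is checking that $q$ restricts to a universal homeomorphism on these strata, so that the relation imposed in $\Kzerouh$ applies. This is where the passage to $\Kzerouh$ is used, because $Q^\circ_\mathscr{S}$ is only homeomorphic to $P^\circ_{[\mathscr{S}]}$ via a purely inseparable map and need not be isomorphic to it.
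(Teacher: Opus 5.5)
Your proof is correct and follows the same route as the paper. You pass from $Q^\circ_{\mathscr{S}}$ to $P^\circ_{[\mathscr{S}]}$ via the universal homeomorphism $q$, split off the $\bA^l$ factor, and identify the remaining stratum of $\bP(\mathbf{d})$ with $(\bG_m)^{|\mathscr{S}|-1}$. The only difference is in that last identification. You compute it in the chart $C_i^{(\sigma)}$ as the quotient of a torus by the finite diagonalizable image of $\mu_i$, whose invariant characters form a finite-index, hence free, sublattice. The paper instead reads it off from the torus-orbit decomposition of the toric variety $\bP(\mathbf{d})$, comparing its fan with that of an ordinary projective space.
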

\begin{proof}
The first equality is obvious as $Q_\emptyset^\circ$ is empty. 
We show the second one. 
We have the decomposition of $E = \bP(\mathbf{d})\times \bA^l = \bigsqcup_{\mathscr{S}}P^\circ_{[\mathscr{S}]}$. Removing the factor $\bA^l$, we have the corresponding decompsition 
$\bP(\mathbf{d}) = \bigsqcup_{\mathscr{S}}R^\circ_{[\mathscr{S}]}$ with $P^\circ_{[\mathscr{S}]}= R^\circ_{[\mathscr{S}]}\times \bA^l$. 
This is nothing but the decomposition of the toric variety $\bP(\mathbf{d}) $ into the torus orbits (see also \autoref{lemma:equiv_description_stratification}). From an explicit description \cite[Examples 3.1.17 and 5.1.14]{Cox-Little-Schenck}, we easily see that the fan defining a weighted projective space is transformed to the fan defining a projective space of the same dimension by an $\bR$-linear automorphism. 
This shows that the orbit decomposition of $\bP(\mathbf{b})$ is similar to the one of a projective space. In particular, we see that $R^\circ_{[\mathscr{S}]} \cong (\bG_m)^{|\mathscr{S}|-1}$.  
Since the morphism $P^\circ_{[\mathscr{S}]}\to Q^\circ_{\mathscr{S}} $ is a universal homeomorphism, we have
$$
[Q^\circ_{\mathscr{S}}]=\left[P^\circ_{[\mathscr{S}]}\right]=[\bA^l\times (\bG_m)^{|\mathscr{S}|-1}] = \bL^l(\bL-1)^{|\mathscr{S}-1|}
$$
as claimed.
\end{proof}

For a positive integer $m$, we define  $F_m:=\left(\bigcup_{j=1}^{m} \frac{1}{j}\bZ \right)\cap (0,1]$. Note that the irreducible fractions in this set together with $0/1$, arranged in ascending order,  is called the Farey sequence of order $m$. 

\begin{theorem}\label{thm:mst_alpha_p_qt}
Assume that $\bD_\mathbf{d}>p-1$. 
Let $F:=F_{\max \mathbf{d}}$. For a positive integer $r$, let $\Lambda_r:=\{(\pi,i)\in \mathbf{I}^*\mid \text{$r$ divides $i$}\}$ and let $N_r$ be its cardinality. Then
\begin{align*}
\mst\big(\bA/(\alpha_p,\mathbf{d})\big)
=  \bL^d-\bL^l+  \frac{\bL^{l-1}}{1-\bL^{-1-\bD_\mathbf{d}+p}}\sum_{s/r\in F} (\bL^{N_r}-1) \bL^{\theta(s/r)}
\end{align*}
\end{theorem}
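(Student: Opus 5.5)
The strategy is to compute $\mst(X)$ on the partial resolution $g\colon W\to X$, where it becomes a sum of local contributions. Put $\gamma=\bD_\mathbf{d}-p$. The discrepancy computation $a(\widetilde{E};X)=\gamma$, together with the fact that $g$ is an isomorphism off $\widetilde{E}$, gives $K_W-\gamma\widetilde{E}=g^*K_X$. Hence $g\colon (W,-\gamma\widetilde{E})\to X$ is crepant, and the pair is klt because $\bD_\mathbf{d}>p-1$ means $\gamma>-1$. By the basic properties of stringy invariants,
\[
\mst(X)=\mst\bigl(W,-\gamma\widetilde{E}\bigr)_{W\setminus\widetilde{E}}+\sum_{\mathscr{S}\neq\emptyset}\mst\bigl(W,-\gamma\widetilde{E}\bigr)_{Q^\circ_\mathscr{S}}.
\]
The first term equals $[W\setminus\widetilde{E}]=[X\setminus g(\widetilde{E})]=\bL^d-\bL^l$, since $W\setminus\widetilde{E}$ is regular and the fixed locus is $\bA^l$. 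Here we use the uh-relation, because $\bA^d\to X$ is a universal homeomorphism.

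For each stratum, the singularity type is constant along $Q^\circ_\mathscr{S}$ by \autoref{prop:sing_partial_resol}. I would then argue, as in the standard fibration argument (the arc space over a stratum with constant formal type, e.g. a product of the stratum with a transversal slice, formally locally), that
\[
\mst_{Q^\circ_\mathscr{S}}=[Q^\circ_\mathscr{S}]\cdot \bL^{-\dim Q^\circ_\mathscr{S}}\cdot \mst(\text{point}).
\]
More simply, \autoref{prop:local_Mst} already gives the contribution of the full $d$-dimensional cyclic singularity at a point, with $d-1$ free-direction factors $(\bL-1)/(1-\bL^{-1})=\bL$. The point-contribution formula counts arcs centred at a single point. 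To pass to arcs centred anywhere on the stratum, I replace the $\dim Q^\circ_\mathscr{S}$ directions tangent to the stratum (coordinates that are units on $Z(\mathscr{S})$, carrying trivial weights) by the class of the stratum. Concretely, the contribution becomes $[Q^\circ_\mathscr{S}]\bL^{-(l+|\mathscr{S}|-1)}$ times the point formula. When $\gcd(\mathscr{S})=1$ the point is regular but lies on $\widetilde{E}$ with coefficient $-\gamma$, and the same formula applies with $g_\mathscr{S}=1$, i.e. only the term $y=1$. Using \autoref{lemma:classes_strata}, $[Q^\circ_\mathscr{S}]=\bL^l(\bL-1)^{|\mathscr{S}|-1}$, and simplifying $(\bL-1)^d(1-\bL^{-1})^{-(d-1)}\bL^{-d}=(\bL-1)\bL^{-1}$, each stratum contributes
\[
\frac{(\bL-1)^{|\mathscr{S}|}\,\bL^{\,l-1-(l+|\mathscr{S}|-1)}\bL^{|\mathscr{S}|}\ldots}{1-\bL^{p-1-\bD_\mathbf{d}}}\sum_{y\in\frac{1}{g_\mathscr{S}}\bZ\cap(0;1]}\bL^{\theta(y)}.
\]
I would nail down the exponent bookkeeping carefully. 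The target has the shape $\bL^{l-1}(\bL-1)^{|\mathscr{S}|}$ times the sum over $y$, divided by $1-\bL^{p-1-\bD_\mathbf{d}}$.

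Finally, I interchange the sums. Write $y=s/r$ in lowest terms. Then $y\in\frac{1}{g_\mathscr{S}}\bZ$ if and only if $r\mid g_\mathscr{S}$, which holds if and only if $\mathscr{S}\subseteq\Lambda_r$. Moreover $r\le\max\mathbf{d}$ whenever $\Lambda_r\neq\emptyset$, so $y$ ranges over $F$. Hence
\[
\sum_{\emptyset\ne\mathscr{S}\subseteq\Lambda_r}(\bL-1)^{|\mathscr{S}|}=\bL^{N_r}-1
\]
by the binomial theorem, and this yields the stated formula.

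The main obstacle is the stratum-wise step: justifying rigorously that the stringy invariant along $Q^\circ_\mathscr{S}$ factors as (class of stratum) times (transversal local contribution), with the correct power of $\bL$. I would prove it by an étale/formal local product structure $Q^\circ_\mathscr{S}\times(\text{slice})$ coming from the chart $Z_i^{(\sigma)}$, in which the stratum coordinates have trivial $\mu_g$-weight. Then I would apply Batyrev's toric computation in families, tracking the trivial-weight coordinates separately.
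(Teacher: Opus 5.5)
Your global architecture matches the paper's: crepancy of $g\colon (W,-\gamma\widetilde{E})\to X$, the term $\bL^d-\bL^l$ from $W\setminus\widetilde{E}$ via the uh-relation, the decomposition into the strata $Q^\circ_\mathscr{S}$, and the binomial interchange over $\emptyset\neq\mathscr{S}\subseteq\Lambda_r$. Your remark that the regular strata ($g_\mathscr{S}=1$) are covered by the same formula with only $y=1$ is also correct; the paper uses this implicitly.

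The gap is in the one step with real content, the per-stratum factorization, which you state incorrectly. The quantity in \autoref{prop:local_Mst} is $\mst(W,-\gamma\widetilde{E})_w$, the volume of arcs whose closed point maps to $w$. The correct identity is
\[
\mst(W,-\gamma\widetilde{E})_{Q^\circ_\mathscr{S}}=[Q^\circ_\mathscr{S}]\cdot\mst(W,-\gamma\widetilde{E})_{w},
\]
with no factor $\bL^{-\dim Q^\circ_\mathscr{S}}$. Your extra factor comes from reading the prefactor $(\bL-1)/(1-\bL^{-1})=\bL$ as ``all arcs in a free direction''. But a coordinate of trivial weight and coefficient $0$ has entry $1$ in every $\mathfrak{p}\in N\cap(0;1]^d$. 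So it also contributes $\bL^{-1}$ to $\bL^{-\varphi(\mathfrak{p})}$, for a net factor $1$: these are arcs in that direction centred at a point. In a local product $Q\times S$, arcs centred on $Q$ have volume $[Q]\cdot\mst(S)_0$, and $\mst(S)_0$ is already the point formula.

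Taken literally, your formula makes each stratum contribute
\[
(1-\bL^{-1})^{|\mathscr{S}|}(1-\bL^{-1-\gamma})^{-1}\sum_y\bL^{\theta(y)} .
\]
This is off from the required term by $\bL^{l+|\mathscr{S}|-1}$, a factor that depends on $\mathscr{S}$. The binomial step would then produce $(2-\bL^{-1})^{N_r}-1$ rather than $\bL^{N_r}-1$. Your display with ``$\ldots$'' reaches the target only by matching its shape. With the correct factorization there is no bookkeeping left: $\bL^l(\bL-1)^{|\mathscr{S}|-1}\cdot(\bL-1)\bL^{-1}=\bL^{l-1}(\bL-1)^{|\mathscr{S}|}$, using the simplification you already wrote down.

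You also leave the factorization itself unproved. Constancy of the formal type along $Q^\circ_\mathscr{S}$ only shows that $\mst(W,-\gamma\widetilde{E})_w$ does not depend on $w$. Getting the product with $[Q^\circ_\mathscr{S}]$ in the Grothendieck ring needs more global structure. The paper uses that $(W,\widetilde{E})$ is a \emph{strict} toroidal embedding, so every point has a Zariski neighbourhood $U$ that is \'{e}tale over a toric pair, compatibly with strata and boundary. In the toric model, a torus isomorphic to the orbit acts freely on the arcs centred on the orbit. This gives $\mst(W,-\gamma\widetilde{E})_{Q^\circ_\mathscr{S}\cap U}=[Q^\circ_\mathscr{S}\cap U]\cdot\mst(W,-\gamma\widetilde{E})_w$, and inclusion--exclusion over a finite cover yields the global identity. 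Your idea of working on the charts $Z_i^{(\sigma)}$, where the stratum directions carry trivial $\mu_g$-weight, could plausibly be developed this way. As written, though, it is only a sketch, and it aims at the wrong formula.
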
 
\begin{proof}
Let $\gamma=\bD_\mathbf{d}-p$ and $X=\bA/(\alpha_p,\mathbf{d})$.
For a locally closed subset $C\subset X$, the stringy motivic invariant of a klt pair $(X,B)$ along $C$, $\mst(X,B)_C$, is the volume of $(J_\infty X)_C$ with respect to the Gorenstein motivic measure associated to $(X,B)$, where $(J_\infty X)_C$ is the space of arcs that send the closed point of the formal disk into $C$. 
Thus, if we have a decomposition $X=\bigsqcup_i C_i$ of $X$ into finitely many locally closed subsets $C_i$, then we have $\mst(X,B)=\sum_i \mst(X,B)_{C_i}$. Applying this to the specific decomposition obtained above of $W$, we get
        \begin{eqnarray*}
            \mst(X) = \mst(W, -\gamma \widetilde{E}) 
            =\ \bL^d-\bL^l+\sum_{\emptyset\neq\mathscr{S}\subseteq \mathbf{I}^*}\mst(W,-\gamma \widetilde{E})_{Q^\circ_\mathscr{S}} .
        \end{eqnarray*}
We now claim that for every non-empty $\mathscr{S}$,
        \begin{eqnarray*}
\mst(W,-\gamma \widetilde{E})_{Q^\circ_\mathscr{S}} =
 [Q^\circ_\mathscr{S}]\cdot \mst(W,-\gamma \widetilde{E})_{w_\mathscr{S}}
        \end{eqnarray*}
where $w_\mathscr{S}$ is any closed point of $Q^\circ_\mathscr{S}$.
To see this, we first note that if $(W,-\gamma \widetilde{E})$ were a toric pair and if $Q^\circ_{\mathscr{S}}$ were a torus orbit, then the equality would hold. Indeed, in this hypothetical situation, a torus isomorphic to  $Q^\circ_{\mathscr{S}}$ freely acts on $(J_\infty W)_{Q^\circ_{\mathscr{S}}}$. The computation of stringy motivic invariants is compatible with this action and we can easily deduce the equality. To show the equality in our actual situation, we use the fact that $(W,\widetilde{E})$ (or $(W,W\setminus \widetilde{E})$, depending on the convention) is a strict toroidal embedding. The pair is a toroidal embedding as we have already seen. From a footnote at page 195 of \cite{KKMSD_Toroidal_embeddings}
(cf.\ \cite[Corollary 4.8.3]{Wlodarczyk_functorial_resolution}),
we see that the pair is a strict toroidal embedding. Thus, for each point $w \in Q_{\mathscr{S}}^\circ$, there exists a Zariski neighborhood $w \in U\subset W$, a toric pair $(U',E')$ and an etale morphism $U\to U'$ that is compatible with  the local toric structures and the boundary divisors. Using this, we see that 
        \begin{eqnarray*}
\mst(W,-\gamma \widetilde{E})_{Q^\circ_\mathscr{S}\cap U} =
 [Q^\circ_\mathscr{S}\cap U]\cdot \mst(W,-\gamma \widetilde{E})_{w}. 
=
 [Q^\circ_\mathscr{S}\cap U]\cdot \mst(W,-\gamma \widetilde{E})_{w_{\mathscr{S}}}. 
        \end{eqnarray*}
We can cover $Q^\circ_{\mathscr{S}}$ by finitely many such open subsets $U_i$. From the inclusion-exclusion principle, we see that
\begin{align*}
  &  \mst(W,-\gamma \widetilde{E})_{Q^\circ_\mathscr{S}} \\
  &  = \sum _i \mst(W,-\gamma \widetilde{E})_{Q^\circ_\mathscr{S}\cap U_i} -\sum_{i<j}\mst(W,-\gamma \widetilde{E})_{Q^\circ_\mathscr{S}\cap U_i\cap U_j}+\sum_{i<j<k}\cdots \\
  &  = \mst(W,-\gamma \widetilde{E})_{w_{\mathscr{S}}}
  \left(\sum _i [Q^\circ_\mathscr{S}\cap U_i] -\sum_{i<j}[Q^\circ_\mathscr{S}\cap U_i\cap U_j]+\sum_{i<j<k}\cdots\right)\\
    &  = \mst(W,-\gamma \widetilde{E})_{w_{\mathscr{S}}}
  \cdot [Q^\circ_\mathscr{S}].
\end{align*}

Using \autoref{lemma:classes_strata} we obtain:
\begin{eqnarray}\label{eqn:mst_without_Farey}
\mst(X)&=&
\bL^d-\bL^l+ \sum_{\mathscr{S}\neq \emptyset}\bL^l(\bL-1)^{|\mathscr{S}|-1} \frac{\bL^{-1}(\bL-1)}{1-\bL^{-1-\gamma}}
        \left(\sum_{\frac{s}{r}\in \frac{1}{g_\mathscr{S}}\bZ\cap (0;1]}
        \bL^{\theta(s/r)}
        \right)  \nonumber \\ 
&=&  
\bL^d-\bL^l+ \sum_{\mathscr{S}\neq \emptyset} \frac{\bL^{l-1}(\bL-1)^{|\mathscr{S}|}}{1-\bL^{-1-\gamma}}
        \left(\sum_{\frac{s}{r}\in \frac{1}{g_\mathscr{S}}\bZ\cap (0;1]}
        \bL^{\theta(s/r)}
        \right) \nonumber \\
&=&  
\bL^d-\bL^l+  \frac{\bL^{l-1}}{1-\bL^{-1-\gamma}}\sum_{\mathscr{S}\neq \emptyset}
        (\bL-1)^{|\mathscr{S}|}\left(\sum_{\frac{s}{r}\in \frac{1}{g_\mathscr{S}}\bZ\cap (0;1]}
        \bL^{\theta(s/r)}
        \right).
\end{eqnarray}
An irreducible fraction $s/r$ appears when $\mathscr{S}$ is a subset of $\Lambda_r$. There are $\binom{N_r}{i}$ subsets of cardinality $i$. Thus, the above sum over $\mathscr{S}$ is rewritten as: 
\begin{align*}
&\sum_{\mathscr{S}\neq \emptyset}
        (\bL-1)^{|\mathscr{S}|}\left(\sum_{s/r\in (1/g_\mathscr{S})\bZ}
        \bL^{\theta(s/r)}
        \right) \\
&= \sum_{s/r} \sum_{i=1}^{N_r} \binom{N_r}{i}(\bL-1)^i \bL^{\theta(s/r)}\\
&= \sum_{s/r} (((\bL-1)+1)^{N_r}-1) \bL^{\theta(s/r)}\\
&= \sum_{s/r} (\bL^{N_r}-1) \bL^{\theta(s/r)},
\end{align*}
which concludes.
\end{proof} 

\section{Comparison with linear $\bZ/p$-quotients}\label{section:comparison}

In \cite{Tonini_Yasuda_Motivic_McKay_cor_for_alpha_p}, it was conjectured that the two quotient varieties $\bA/(\alpha_p,\mathbf{d})$ and 
$\bA/(\bZ/p,\mathbf{d})$ associated to the same $\mathbf{d}$ have the same stringy motivic invariant in the version of the complete Grothendieck ring used in \cite{Yasuda_p-cyclic_McKay_correspondence}:
$$
\mst(\bA/(\alpha_p,\mathbf{d}))=\mst(\bA/(\bZ/p,\mathbf{d})). 
$$
An approach to this conjecture using motivic integration was proposed in \cite{Tonini_Yasuda_Motivic_McKay_cor_for_alpha_p} and extended in \cite{YamamotoThesis2024}, but it remains incomplete. In this section, we give partial results on this conjecture via a different approach by comparing the explicit formula for $\mst(\bA/(\alpha_p,\mathbf{d}))$ given in \autoref{thm:mst_alpha_p_qt} with the one for $\mst(\bA/(\bZ/p,\mathbf{d}))$  given in \cite{Yasuda_p-cyclic_McKay_correspondence}. This approach will be completed in \autoref{section:appendix}.

In \cite{Tonini_Yasuda_Motivic_McKay_cor_for_alpha_p}, the equality of stringy motivic invariants along the images $\mathbf{0}$ of the origin is also conjectured:
let us observe that it is a consequence of (in fact, equivalent to) the equality of the full stringy motivic invariants.

\begin{proposition}
We have
    $$\mst(\bA/(\alpha_p,\mathbf{d}))=
\mst(\bA/(\bZ/p,\mathbf{d}))
\quad \Longleftrightarrow 
\quad 
\mst(\bA/(\alpha_p,\mathbf{d}))_\mathbf{0}
=\mst(\bA/(\bZ/p,\mathbf{d}))_\mathbf{0}.$$
\end{proposition}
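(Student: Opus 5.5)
The plan is to show that both stringy invariants split as a contribution over the origin plus a contribution from the complement, and that the complements contribute the same amount. Then the equivalence is immediate, since $\Muh$ is an abelian group and the common term cancels. Write $X_\alpha=\bA/(\alpha_p,\mathbf{d})$ and $X_{\bZ}=\bA/(\bZ/p,\mathbf{d})$. Both are quotients of $\bA^d$ by a linear action, so each carries the scaling $\bG_m$-action induced from homotheties of $\bA^d$; these commute with $\partial_{\mathbf{d}}$ and with $J_{1,\mathbf{d}}$. Decompose $X=\{\mathbf{0}\}\sqcup (X\setminus\{\mathbf{0}\})$ and use additivity of $\mst$ over locally closed decompositions:
$$\mst(X)=\mst(X)_\mathbf{0}+\mst(X)_{X\setminus\{\mathbf{0}\}}.$$
It therefore suffices to prove
$$\mst(X_\alpha)_{X_\alpha\setminus\{\mathbf{0}\}}=\mst(X_{\bZ})_{X_{\bZ}\setminus\{\mathbf{0}\}}.$$

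For this I would argue by induction on $d$, or more precisely on the partial order of arrays $\mathbf{d}$. The two complements are stratified compatibly by the singular loci, namely the images of the fixed loci. By \autoref{lemma:sing_locus_of_qt}, on the $\alpha_p$ side the fixed locus is the linear subspace $L=V(x_i^{(\sigma)}\mid i\neq 0)$. On the $\bZ/p$ side the fixed locus of $J_{1,\mathbf{d}}$ is the subspace where $x_i^{(\sigma)}=0$ for $i<d_\sigma$ in the paper's lower-triangular convention; in either case it is a linear subspace of dimension $l$.

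The next step is a local splitting along these subspaces. At a point of $L\setminus\{0\}$, I would look for an étale-local (or formal) isomorphism of the quotient with $\bA^{1}\times X'$, where $X'$ is a linear quotient of smaller dimension. In the $\bZ/p$ case this should follow from the translation invariance of the action along the fixed vector. In the $\alpha_p$ case the derivation has coefficients independent of the $x_0^{(\sigma)}$, so translating along $L$ preserves $\partial$; using the $\bG_m$-action one reduces to the ``slice''. More concretely, one can use translations by vectors in $L$, which act on $X_\alpha$ and on $X_{\bZ}$ because they commute with both actions. Their orbits through points of $L\setminus\{0\}$ give a trivialization of $X\setminus q(\text{non-}L\text{ directions})$ along $L$. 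Then
$$\mst(X)_{\mathbf{0}\text{-complement}}=[L\setminus 0]\cdot(\text{common local factor})+(\text{contribution of the free locus}),$$
and the free locus is smooth with the same class in both cases.

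The main obstacle is exactly this last step. Translations along $L$ act on the singular locus transitively, so every point of $q(L)$ is equisingular with the origin, not with a smaller-dimensional quotient. The complement contribution over $q(L)\setminus\mathbf{0}$ is then $[\bA^l\setminus 0]$ times the local invariant at $\mathbf{0}$ divided by the $\bA^l$ factor. I would therefore reformulate the splitting as
$$\mst(X)=[\text{free locus}]+[\bA^l]\cdot m(X),$$
where $m(X)$ is the transversal local invariant, and
$$\mst(X)_\mathbf{0}=m(X).$$
The free loci $X\setminus q(L)$ are regular and universally homeomorphic to $\bA^d\setminus L$ in both cases, so they have equal classes in $\Kzerouh$. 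Hence both equalities are equivalent to $m(X_\alpha)=m(X_{\bZ})$. The delicate point is making the product decomposition $\mst(X)_{q(L)}=[\bA^l]\cdot\mst(X)_\mathbf{0}$ rigorous for these non-toroidal singularities. I would do this via the free $\bA^l$-translation action on the arc space $(J_\infty X)_{q(L)}$, as in the toric argument in the proof of \autoref{thm:mst_alpha_p_qt}.
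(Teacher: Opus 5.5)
Your final reformulation is exactly the paper's proof. Translations along $L=V(x_i^{(\sigma)}\mid i\geq 1)$ give a $\bG_a^l$-action on both quotients, transitive and set-theoretically free on $q(L)$. Hence $\bG_a^l\times (J_\infty X)_{\mathbf{0}}\to (J_\infty X)_{q(L)}$ is a universal homeomorphism preserving $\mathrm{ord}\,I$, and $\mst(X)=\mst(X)_{X\setminus q(L)}+\bL^l\cdot\mst(X)_{\mathbf{0}}$ for both groups.

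Drop the first half of the proposal. By this same formula, the contribution over $X\setminus\{\mathbf{0}\}$ contains $[q(L)\setminus\{\mathbf{0}\}]\cdot\mst(X)_{\mathbf{0}}$, so comparing it for the two groups amounts to the equality at $\mathbf{0}$ itself; that route is circular. As you noticed, translations also make every point of $q(L)$ equisingular with $\mathbf{0}$, so there is no reduction to a smaller linear quotient.

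Also correct the $\bZ/p$ fixed locus. In the paper's conventions $J_{1,\mathbf{d}}=\mathrm{id}+J_{0,\mathbf{d}}$, so the generator acts on linear forms as $\mathrm{id}+\partial_{\mathbf{d}}$ and its fixed locus is the same $L$. You need this: a translation by $v$ commutes with a linear automorphism $A$ only if $Av=v$.

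The genuine gap is your comparison of the free loci. For $\alpha_p$, the map $\bA^d\setminus L\to X_\alpha\setminus q(L)$ is purely inseparable, hence a universal homeomorphism. For $\bZ/p$, however, $\bA^d\setminus L\to X_{\bZ}\setminus q(L)$ is a finite \'{e}tale Galois cover of degree $p$, not a universal homeomorphism. So your reason for $[X_{\bZ}\setminus q(L)]=[\bA^d\setminus L]$ is false. What you actually need is the identity $[(\bA^d\setminus L)/(\bZ/p)]=\bL^d-\bL^l$ in $\Kzerouh$. The paper's proof uses this in a single line (``since the $G$-action is free outside $\mathfrak{R}$''), so you must supply an argument.

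One way: since $L\to q(L)$ is a universal homeomorphism, it suffices to show $[V/G]=\bL^{\dim V}$ for every linear representation $V$ of $G=\bZ/p$, by induction on $\dim V$. If the action is nontrivial, take the invariant form $\ell=x^{(\sigma)}_{d_\sigma}$ for some $d_\sigma\geq 1$. Then:
\begin{itemize}
\item $\{\ell=0\}$ is a smaller linear representation, and its quotient maps universally homeomorphically onto its image in $V/G$;
\item scaling gives $\{\ell\neq 0\}/G\cong\bG_m\times(\{\ell=1\}/G)$;
\item on $\{\ell=1\}$ the generator acts by affine maps translating $x^{(\sigma)}_{d_\sigma-1}$ by $1$, so $\{\ell=1\}/G$ is an affine-space bundle over $\bA^1/G\cong\bA^1$. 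It is Zariski-locally trivial because the affine group is special, so its class is $\bL^{\dim V-1}$.
\end{itemize}
This gives $[V/G]=\bL^{\dim V-1}+(\bL-1)\bL^{\dim V-1}=\bL^{\dim V}$. Finally, cancelling the factor $[\bA^l]$ at the end uses that $\bL$ is invertible in $\Muh$.
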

\begin{proof}
More precisely, we claim that for $G$ either $\bZ/p$ or $\alpha_p$, we have
    $$\mst(\bA/(G,\mathbf{d}))=
        \left[\bA^d\setminus V\left(
        x_i^{(\sigma)}\mid (\sigma,i)\in \mathbf{I}^*
        \right)\right]
        +[\bA^l]\cdot \mst(\bA/(G,\mathbf{d}))_\mathbf{0}.$$
The equivalence of the statement follows immediately from these expressions. Let us give a geometric argument that applies to both cases at the same time. 


Let $\mathbf{R}$ be the image of $\mathfrak{R}=V(x_i^{(\sigma)}\mid (\sigma,i)\in \mathbf{I}^*)$ in $X=\bA/(G,\mathbf{d})$. Since the $G$-action is free outside $\mathfrak{R}$, we have
        $$\mst(X)=
        \left[\bA^d\setminus \mathfrak{R}\right]
        +\mst(X)_\mathbf{R}.$$
We let the additive group $\bG_a^l$ act by translation on the coordinates $x_{0}^{(\sigma)}$ for $\sigma=1,\dots,l$. This gives a free action on $\bA^d$ that commutes with the $G$-action, and leaves $\mathfrak{R}$ stable. So the $\bG_a^l$-action descends to the quotient $X$ and leaves $\mathbf{R}$ stable. Hence the action of $\bG_a^l$ on the arcs of $X$, by way of translating the base-points, preserves the space $J_\infty(\bA^d/G)_\mathbf{R}$ of arcs meeting $\mathbf{R}$.

\begin{claim}
The $\bG_a^l$-action on $\mathbf{R}$ is transitive and set-theoretically free.
\end{claim}
\begin{proof}\renewcommand{\qedsymbol}{$\lozenge$}
The morphism $\mathfrak{R}\to \mathbf{R}$ is bijective and $\bG_a^l$-equivariant. As the action of $\bG_a^l$ on $\mathfrak{R}$ is transitive and free by design, the $\bG_a^l$-action on $\mathbf{R}$ is transitive and set-theoretically free (i.e.\ its stabilizers are infinitesimal group schemes).
\end{proof}

It follows from the claim that the action map
        $$\bG_a^l \times J_\infty(X)_\mathbf{0}
        \longrightarrow J_\infty(X)_\mathbf{R}$$
is a universal homeomorphism. Let $I$ be the ideal sheaf on $X$ defined as in \autoref{section:mst}. The function $\ord I$ is invariant on $\bG_a^l$-orbits. So in $\Muh$ we obtain:
    \begin{eqnarray*}
        \mst(X)_\mathbf{R} &=& 
        \int_{J_\infty(X)_\mathbf{R}}\mathbb{L}^{\ord I}\ d\mu_X \\
        &=& \int_{\bG_a^l \times J_\infty(X)_\mathbf{0}}
        \mathbb{L}^{\ord I} \ d\mu_X \\
        &=& [\bG_a^l] \cdot \int_{J_\infty(X)_\mathbf{0}}
        \mathbb{L}^{\ord I} \ d\mu_X \\
        &=& [\bG_a^l] \cdot \mst(X)_\mathbf{0}
    \end{eqnarray*}
and the equality claimed at the beginning follows.
\end{proof}

Hence, for the rest of the section, we concentrate on the comparison of full stringy motivic invariants.

There is a slight difference in notations between the present article and \cite{Yasuda_p-cyclic_McKay_correspondence}, which we address now. Take $\mathbf{d}=(d_1,\dots,d_l)$ subject to the conditions of \autoref{section:notations}, and write
        $$\mathbf{d}^+=(d_1+1,\dots,d_l+1)=(d_1^+,\dots,d_l^+).$$
Observe that $d=\sum_\lambda (1+d_\lambda)=\sum_\lambda d_\lambda^+$. The linear action of $\bZ/p$ on $\bA^d$ corresponding to $\mathbf{d}$ in our terminology, is the $\bZ/p$-linear action on $\bA^d$ associated to $\mathbf{d}^+$ in \cite{Yasuda_p-cyclic_McKay_correspondence} (up to permutation of coordinates). In \cite{Yasuda_p-cyclic_McKay_correspondence} the author introduces the quantity
        $$D_{\mathbf{d}^+}=\sum_\lambda \frac{d_\lambda^+(d_\lambda^+-1)}{2}
        =\sum_\lambda \frac{d_\lambda(d_\lambda+1)}{2}
        =\bD_\mathbf{d}$$
and the function
$$
\mathrm{sht}_{\mathbf{d}^+}(j) = \mathrm{sht}(j) :=
\sum_{\lambda=1}^l \sum_{i=1}^{d_\lambda^+-1} \left\lfloor \frac{ij}{p} \right\rfloor
=\sum_{\lambda=1}^l \sum_{i=1}^{d_\lambda} \left\lfloor \frac{ij}{p} \right\rfloor.
$$
If $X$ is normal $\bQ$-Gorenstein but not klt, by convention $\mst(X)$ is undefined. Moreover, if $\mathbb{D}_\mathbf{d}=1$ then both $\bA/(\bZ/p,\mathbf{d})$ and $\bA/(\alpha_p,\mathbf{d})$ are regular, and in this case their stringy motivic invariant in $\Muh$ is equal to $[\bA^d]$.
So in view of 
\autoref{thm:MMP_sing_alpha_p_qt} and \cite[Corollary 6.19]{Yasuda_p-cyclic_McKay_correspondence}, taking in account that $\bD_\mathbf{d}=D_{\mathbf{d}^+}$, we may assume for the rest of the section that $\bD_\mathbf{d}\geq p$. 

Then, we have
$$
\mst(\bA/(\bZ/p,\mathbf{d})) =
\bL^d +\frac{\bL^{l-1}(\bL-1)\sum_{j=1}^{p-1}\bL^{j-\mathrm{sht}(j)}}{1-\bL^{p-1-D_{\mathbf{d}}}}.
$$
Thus, both of $
\mst(\bA/(\alpha_p,\mathbf{d}))$ and $\mst(\bA/(\bZ/p,\mathbf{d}))$  are rational functions in $\bL$, and the above conjecture is equivalent to saying that the two rational functions are identical. We show that the conjecture is reduced to the following combinatorial statement:

\begin{conjecture}\label{conj:equality_of_multisets}
Assume that $\gamma=\bD_\mathbf{d}-p\geq 0$. Define $F=F_{\max\mathbf{d}}$ and $N_r$ as in \autoref{thm:mst_alpha_p_qt}. Then we have the following equality of multi-sets:
\begin{align*}
&\left \{ \left\{j -\mathrm{sht}(j)\mid 1\le j\le p-1 \right\}\right\} \uplus
\left \{ \left\{0,-1,\dots,-\gamma \right\}\right\}\\
&=
   \left \{ \left\{\theta\left(\frac{s}{r}\right)+i \mid \frac{s}{r} \in F,\, 0\le i \le N_r-1\right\}\right\}
\end{align*}
\end{conjecture}

\begin{proposition}\label{prop:equivalence_of_conj}
The above conjecture is equivalent to $
\mst(\bA/(\alpha_p,\mathbf{d}))=\mst(\bA/(\bZ/p,\mathbf{d}))
$.
\end{proposition}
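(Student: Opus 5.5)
We compare the two closed formulas, both rational functions in $\bL$ with the common factor $\bL^{l-1}$. By \autoref{thm:mst_alpha_p_qt} (applicable since we reduced to $\bD_\mathbf{d}\geq p>p-1$) and the formula for $\mst(\bA/(\bZ/p,\mathbf{d}))$ recalled above, with $\gamma=\bD_\mathbf{d}-p$, the equality $\mst(\bA/(\alpha_p,\mathbf{d}))=\mst(\bA/(\bZ/p,\mathbf{d}))$ reads
\[
-\bL^l+\frac{\bL^{l-1}}{1-\bL^{-1-\gamma}}\sum_{s/r\in F}(\bL^{N_r}-1)\bL^{\theta(s/r)}
=\frac{\bL^{l-1}(\bL-1)}{1-\bL^{-1-\gamma}}\sum_{j=1}^{p-1}\bL^{j-\mathrm{sht}(j)} .
\]
We multiply through by $(1-\bL^{-1-\gamma})/\bL^{l-1}$. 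This is legitimate because it is invertible in the completed ring, and the identity can be read in $\bZ[\bL,\bL^{-1}]$, where $\bL$ behaves as a transcendental variable. The equality becomes
\[
\sum_{s/r\in F}(\bL^{N_r}-1)\bL^{\theta(s/r)}
=(\bL-1)\Big[\sum_{j=1}^{p-1}\bL^{j-\mathrm{sht}(j)}+\bL\cdot\frac{1-\bL^{-1-\gamma}}{\bL-1}\Big].
\]

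Next we use the telescoping identity $\bL^{N}-1=(\bL-1)\sum_{i=0}^{N-1}\bL^i$. It turns the left-hand side into
\[
(\bL-1)\sum_{s/r\in F}\sum_{i=0}^{N_r-1}\bL^{\theta(s/r)+i}.
\]
We also expand $\bL(1-\bL^{-1-\gamma})/(\bL-1)=\sum_{m=0}^{\gamma}\bL^{-m}$, which is valid since $\gamma\geq 0$. Cancelling the nonzero factor $\bL-1$ (the ring $\bZ[\bL^{\pm1}]$ is a domain), the identity becomes
\[
\sum_{s/r\in F}\sum_{i=0}^{N_r-1}\bL^{\theta(s/r)+i}
=\sum_{j=1}^{p-1}\bL^{j-\mathrm{sht}(j)}+\sum_{m=0}^{\gamma}\bL^{-m}.
\]
Two Laurent polynomials with nonnegative integer coefficients, each written as a sum of monomials $\bL^{a}$, agree if and only if the multisets of exponents agree. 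This is exactly \autoref{conj:equality_of_multisets}, and every step above is reversible, so the equivalence follows.

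The main technical point is the justification that an identity in $\Muh$ between such expressions is equivalent to the corresponding identity of Laurent polynomials in a formal variable. The implication from the conjecture to equality of invariants is clear, since it is a formal identity. For the converse, we should either note that the conjecture is intended as the statement that the two rational functions in $\bL$ coincide (as the text phrases it), or specialize through a motivic realization sending $\bL$ to a transcendental quantity, for example the Poincaré or Hodge–Deligne polynomial. A Laurent polynomial identity holding in such a realization forces the multiset equality. Apart from this, the only care needed is to check that $F$ ranges over irreducible fractions $s/r$ in $(0;1]$, so that $N_r$ is computed with the reduced denominator, exactly as in the proof of \autoref{thm:mst_alpha_p_qt}.
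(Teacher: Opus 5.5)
Your proposal is correct and follows the same route as the paper: cancel $\bL^d$, multiply by $(1-\bL^{-1-\gamma})\bL^{1-l}$, divide by $\bL-1$, expand both geometric sums, and read off exponents to obtain the multisets of \autoref{conj:equality_of_multisets}. Your extra care about passing from $\Muh$ to genuine Laurent polynomials goes slightly beyond the paper, which handles this point only by regarding both invariants as rational functions in $\bL$; in characteristic $p$ the realization to use is the $\ell$-adic weight (virtual Poincar\'e) polynomial rather than the Hodge--Deligne polynomial.
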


\begin{proof}
Let $L$ and $R$ denote the multi-sets on the left hand side and the right hand side of the conjectured equality, respectively.
The proposition is proved by transforming 
the desired equality as follows:
\begin{align*}
&\mst(\bA/(\alpha_p,\mathbf{d}))=\mst(\bA/(\bZ/p,\mathbf{d})) \\
&\Leftrightarrow \bL^d +\frac{\bL^{l-1}(\bL-1)\sum_{j=1}^{p-1}\bL^{j-\mathrm{sht}(j)}}{1-\bL^{-1-\gamma}}=\bL^d-\bL^l+  \frac{\bL^{l-1}}{1-\bL^{-1-\gamma}}\sum_{s/r\in F} (\bL^{N_r}-1) \bL^{\theta(s/r)} \\
&\Leftrightarrow (\bL-1)\sum_{j=1}^{p-1}\bL^{j-\mathrm{sht}(j)}=
-\bL(1-\bL^{-1-\gamma})+
        \sum_{s/r\in F} (\bL^{N_r}-1) \bL^{\theta(s/r)} \\
&\Leftrightarrow \sum_{j=1}^{p-1}\bL^{j-\mathrm{sht}(j)}
+ \frac{\bL(1-\bL^{-1-\gamma})}{\bL-1}
=\sum_{s/r\in F} \frac{\bL^{N_r}-1}{\bL-1}\bL^{\theta(s/r)} \\
&\Leftrightarrow \sum_{j=1}^{p-1}\bL^{j-\mathrm{sht}(j)}
+ (1+\bL^{-1} +\cdots \bL^{-\gamma})
=\sum_{s/r\in F} (\bL^{N_r-1}+\bL^{N_r-2}+\cdots +1) \bL^{\theta(s/r)}\\
&\Leftrightarrow \sum_{i\in L}\bL^i
=\sum_{i\in R}\bL^i.
\end{align*}
This proves the claimed equivalence.
\end{proof}



\begin{example}\label{ex:multi-sets}
Consider the case where $p=7$ and $\mathbf{d}^+=(6)$. 
Then, a direct computation shows that $\gamma = 8$
and
$$
i-\mathrm{sht}(i)= 2-i.
$$
Thus, the multi-set on the left hand side is
\begin{align*}
    &\{\{1,0,-1,-2,-3,-4\}\}\uplus \{\{0,-1,-2,\dots,-8\}\}\\
&= \{\{1,0,0,-1,-1,-2,-2,-3,-3,-4,-4,-5,-6,-7,-8\}\}.
\end{align*}
Computation of  the multiset on the right hand side is summarized in the following table:
\begin{center}
\begin{tabular}{|c|c|c|c| } 
 \hline
$s/r$ & $\theta(s/r)$ & $N_r$ & $\theta(s/r)+i$ $(0\le i\le N_r-1)$\\ 
 \hline
1/5& 1 & 1 & 1 \\ 
 \hline
1/4& 0 & 1 & 0 \\ 
 \hline
1/3& 0 & 1 & 0 \\ 
 \hline
2/5& $-1$ & 1 & $-1$ \\ 
 \hline
1/2& $-2$ & 2 & $-2$, $-1$ \\ 
 \hline
3/5& $-2$ & 1 & $-2$ \\ 
 \hline
2/3& $-3$ & 1 & $-3$ \\ 
 \hline
3/4& $-3$ & 1 & $-3$\\ 
 \hline
4/5& $-4$ & 1 & $-4$ \\ 
 \hline
1/1& $-8$ & 5 & $-8$, $-7$, $-6$, $-5$, $-4$,  \\ 
 \hline
\end{tabular}
\end{center}
In particular, we see that the above conjecture on multi-sets holds in this case. 
\end{example}

The \autoref{conj:equality_of_multisets}, and thus the unconditional equality of stringy motivic invariants, will be proven in \autoref{section:appendix}: the argument is due to Linus Rösler, who contacted the first two authors after the first version of this work was made available. Below are some supporting evidence of this conjecture. First, by a computer-assisted proof, we can show the following proposition. 

\begin{proposition}\label{prop:comparison_mst}
Assume that one of the following holds:
\begin{enumerate}
    \item $p\le 131$ and $|\mathbf{d}^+|=|d_1^++\cdots+d_l^+| \le 32$.
    \item $p\le 173$ and $\mathbf{d}^+=(d_1^+)$.
\end{enumerate}
Assume also $D_{\mathbf{d}^+}\ge p$.  Then, Conjecture \ref{conj:equality_of_multisets} holds and hence $\mst(\bA/(\alpha_p,\mathbf{d}) )=\mst(\bA/(\bZ/p,\mathbf{d}) ).$
\end{proposition}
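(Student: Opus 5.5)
By \autoref{prop:equivalence_of_conj}, it suffices to verify \autoref{conj:equality_of_multisets} for every pair $(p,\mathbf{d})$ in the two stated ranges with $\bD_\mathbf{d}\geq p$. Since the statement is a computer-assisted one, the plan is to turn this into a finite, exact enumeration. Both sides are finite multi-sets of integers determined by explicit integer data: $p$, $\mathbf{d}$, $\gamma=\bD_\mathbf{d}-p$, the functions $\mathrm{sht}$ and $\theta$, the Farey set $F=F_{\max\mathbf{d}}$, and the counts $N_r$.

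For fixed $p$, I would enumerate the following configurations.
\begin{enumerate}
\item In case (1), all multi-sets (partitions) $\mathbf{d}^+$ with parts in $\{1,\dots,p\}$ and total size at most $32$. Parts equal to $1$ correspond to $d_\lambda=0$. They do not affect $\mathrm{sht}$, $\theta$ or $N_r$, since $j=0$ contributes $\lfloor 0\rfloor=0$ and $0\notin\mathbf{I}^*$. So the multi-set identity does not depend on these parts, and it suffices to treat partitions with all parts at least $2$.
\item In case (2), the single blocks $d_1^+\le p$ with $d_1(d_1+1)/2\ge p$.
\end{enumerate}

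For each configuration, the computation proceeds as follows.
\begin{enumerate}
\item Compute $L$, which consists of the values $j-\sum_\lambda\sum_{i=1}^{d_\lambda}\lfloor ij/p\rfloor$ for $1\le j\le p-1$, together with $0,-1,\dots,-\gamma$.
\item Compute $R$ by listing the reduced fractions $s/r\in(0,1]$ with $r\le\max\mathbf{d}$. For each one, evaluate $\theta(s/r)=1-\lfloor s/r\rfloor+\lfloor ps/r\rfloor-\sum_{(\sigma,i)}\lfloor is/r\rfloor$ with exact rational arithmetic, and let $N_r$ be the number of $i$ divisible by $r$ among the block indices.
\item Compare the sorted lists.
\end{enumerate}
A useful first sanity check is that the cardinalities agree. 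The left side has $p-1+\gamma+1=\bD_\mathbf{d}$ elements, and the right side has $\sum_{s/r}N_r=\sum_{(\sigma,i)}\#\{s/r\in F: r\mid i\}=\sum i=\bD_\mathbf{d}$ elements. Comparing the totals also matches the Euler number check.

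The main obstacle is the size of the search rather than any conceptual step. The number of partitions of integers up to $32$ into parts at most $p$ is in the tens of thousands, multiplied by roughly $30$ primes, and each check costs $O(p\cdot d+|F|\cdot d)$. This is entirely feasible, but the implementation must use integer floor arithmetic, with no floating point, to avoid off-by-one errors at $y=1$ and at fractions where $jy\in\bZ$. I would validate the code on \autoref{ex:multi-sets} ($p=7$, $\mathbf{d}^+=(6)$) before running the full enumeration.
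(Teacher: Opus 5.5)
Your proposal is correct and follows essentially the same route as the paper. Both reduce to \autoref{conj:equality_of_multisets} via \autoref{prop:equivalence_of_conj}, discard blocks with $d_\lambda^+=1$, and verify the multi-set equality by an exact finite enumeration: the paper does this in Mathematica over partitions with parts in $\{2,\dots,p\}$ of total size at most $32$ for the $32$ primes up to $131$, and over single blocks of size $3$ to $p$ for primes up to $173$. Beyond your plan, the substance of the proof is actually running the computation, which the paper reports returned \texttt{True} in both ranges.
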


\begin{proof}
Our proof is computer-assisted. We simply verified Conjecture \ref{conj:equality_of_multisets} in all the cases satisfying the assumption by using  the software Mathematica (Version 14.2)  \cite{Mathematica}. We will explain  the actual Mathematica code that we used for this computation.  We first create a function \verb|multi1| to compute the multi-set on the left hand side as follows:
\begin{Verbatim}[frame=single, breaklines=true, breakafter={,}]
In[1]:= sht[j_,dim_Integer,prime_] := Sum[Floor[i*j/prime],{i,1,dim-1}]
In[2]:= sht[j_,dPlus_List,prime_]:=Sum[sht[j,dPlus[[i]],prime], {i,1,Length[dPlus]}]
In[3]:= boldD[dPlus_]:=Sum[dPlus[[i]](dPlus[[i]]-1)/2,{i,1,Length[dPlus]}]
In[4]:= multi1[dPlus_,prime_]:=Join[Table[j-sht[j,dPlus,prime],{j,1,prime-1}],Range[-boldD[dPlus]+prime,0]]//Sort
\end{Verbatim}
Note that we compute a multi-set as a list of integers arranged in ascending order. Next we  create a function \verb|multi2| to compute the multi-set on the right hand side:
\begin{Verbatim}[frame=single, breaklines=true, breakafter={,}]
In[5]:= numDiv[r_,d_Integer]:=Floor[d/r]
In[6]:= numDiv[r_,d_List]:=Sum[numDiv[r,d[[i]]],{i,1,Length[d]}]
In[7]:= theta[y_,d_List,prime_]:=1-Floor[y]+Floor[prime*y]-Sum[Floor[j *y],{i,1,Length[d]},{j,1,d[[i]]}]
In[8]:= multi2[d_List,prime_Integer]:=Module[{x,farey,len,myList={}},
farey=FareySequence[Max[d]];len=Length[farey];
For[s=2,s<=len,s++,y=farey[[s]];
myList=Append[myList,Table[theta[y,d,prime]+i,{i,0,numDiv[Denominator[y],d]-1}]]];
myList//Flatten//Sort]
In[9]:= multi1[{6},7]
Out[9]= {-8,-7,-6,-5,-4,-4,-3,-3,-2,-2,-1,-1,0,0,1}
In[10]:= multi2[{5},7]
Out[10]= {-8,-7,-6,-5,-4,-4,-3,-3,-2,-2,-1,-1,0,0,1}
\end{Verbatim}
Here \verb|numDiv| is a function to compute $N_r$. The last two inputs and outputs compute multi-sets in the situation of Example \ref{ex:multi-sets}. We then check Conjecture \ref{conj:equality_of_multisets} for the cases where $p$ is at most $131$, which is the 32nd prime number, and $|\mathbf{d}^+| \le 32 $ as follows:
\begin{Verbatim}[frame=single, breaklines=true, breakafter={,}]
In[11]:= validDimPartitions[dim_,prime_]:=Select[IntegerPartitions[dim,All,Range[2,prime]], (boldD[#]>=prime)&]
In[12]:= checkConj[dPlus_List,prime_Integer]:=multi1[dPlus,prime]==multi2[dPlus-1,prime]
In[13]:= checkConj2[totalDim_Integer,prime_Integer]:=AllTrue[validDimPartitions[totalDim,prime],checkConj[#,prime]&]
In[14]:= checkConj3[dimMax_Integer,primeOrderMax_Integer]:=AllTrue[Flatten[Table[{totalDim,Prime[primeOrder]},{totalDim,1,dimMax},{primeOrder,1,primeOrderMax}],1],checkConj2[#[[1]],#[[2]]]&,1]
In[15]:= checkConj3[32,32]
Out[15]= True
\end{Verbatim}
The function \verb|validDimPartitions| produces all the possible partitions $d_1^+\ge \cdots \ge d_\lambda^+$ of a given positive integer into integers from $2$ to $p$ such that $D_{\mathbf{d}^+}\ge p$. Note that we omit 1, since it corresponds to the trivial indecomposable representation and does not contribute to the computation of stringy motivic invariants except multipying them with $\bL$. The last input took about 9 minutes to process on a MacBook Pro (2020) with an Intel Core i7 processor and 32 GB of memory.
Lastly we check the indecomposable cases for $p\le 173$, where $173$ is the 40th prime number, as follows:
\begin{Verbatim}[frame=single, breaklines=true, breakafter={,}]
In[16]:= checkConjIndecomp[primeOrderMax_Integer]:=AllTrue[Flatten[Table[{dim,Prime[primeOrder]},{primeOrder,1,primeOrderMax},{dim,3,Prime[primeOrder]}],1],(#[[1]](#[[1]]-1)/2<#[[2]]||checkConj[{#[[1]]},#[[2]]])&]
In[17]:= checkConjIndecomp[40]
Out[17]= True
\end{Verbatim}
In the above computation, we only check indecomposable representations of dimension between $3$ and $p$. This is because the indecomposable representations of dimensions 1 and 2 do not satisfy the condition  $\bD_{\mathbf{d}} \ge p$. 
The last input took about 8 minutes to process.
\end{proof}

The second supporting evidence of \autoref{conj:Tonini_Yasuda} is the coincidence of stringy Euler numbers. The stringy Euler number $\est(X)$ of a variety $X$ with only klt singularities is defined to be the limit $\lim_{u,v\to 1} \Est(X;u,v)$. When $X$ is either $\bA/(\alpha_p,\mathbf{d})$ or  $\bA/(\bZ/p,\mathbf{d})$ as above, then $\mst(X)$ is a rational function in $\bL$ which does not have a pole at $\bL=1$. In this case, the stringy Euler number is nothing but the value of this rational function at $\bL=1$.

\begin{proposition}\label{prop:comparison_Euler_numbers}
Assume that $\bD_\mathbf{d}\geq p$. Then we have 
$$\est(\bA/(\alpha_p,\mathbf{d}) )=\frac{\bD_{\mathbf{d}}}{\bD_\mathbf{d}-p+1}=\est(\bA/(\bZ/p,\mathbf{d}) ).$$
\end{proposition}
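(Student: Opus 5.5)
The plan is to evaluate the two closed formulas for $\mst$ at $\bL=1$. By the remark preceding the statement, each stringy Euler number is the value at $\bL=1$ of the corresponding rational function in $\bL$. Both formulas have the shape $(\text{polynomial}) + \frac{\text{numerator}}{1-\bL^{-1-\gamma}}$ with $\gamma=\bD_\mathbf{d}-p\geq 0$. The denominator vanishes to first order at $\bL=1$, since $1-\bL^{-m}=m(\bL-1)+O((\bL-1)^2)$ with $m=1+\gamma>0$. So in each case it suffices to check that the numerator vanishes at $\bL=1$ and to read off its derivative there; the limit is then that derivative divided by $1+\gamma=\bD_\mathbf{d}-p+1$.

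For $\bA/(\bZ/p,\mathbf{d})$ we use the formula recalled above from \cite{Yasuda_p-cyclic_McKay_correspondence}. The term $\bL^d$ contributes $1$. In the other term, $\bL^{l-1}(\bL-1)\sum_{j=1}^{p-1}\bL^{j-\mathrm{sht}(j)}$ behaves like $(p-1)(\bL-1)$ near $\bL=1$, since each of the $p-1$ monomials tends to $1$ whatever its exponent. Hence
$$\est(\bA/(\bZ/p,\mathbf{d}))=1+\frac{p-1}{\bD_\mathbf{d}-p+1}=\frac{\bD_\mathbf{d}}{\bD_\mathbf{d}-p+1},$$
using $D_{\mathbf{d}^+}=\bD_\mathbf{d}$.

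For $\bA/(\alpha_p,\mathbf{d})$ we use \autoref{thm:mst_alpha_p_qt}. The terms $\bL^d-\bL^l$ cancel at $\bL=1$. In the remaining term, $(\bL^{N_r}-1)\bL^{\theta(s/r)}\sim N_r(\bL-1)$, so the limit equals $\frac{1}{1+\gamma}\sum_{s/r\in F}N_r$. The step that needs an actual argument is the identity $\sum_{s/r\in F}N_r=\bD_\mathbf{d}$.

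To prove that identity, we group the reduced fractions $s/r\in F$ by their denominator $r$, which ranges over $1\le r\le\max\mathbf{d}$. Each such $r$ occurs exactly $\varphi(r)$ times, where $\varphi$ is Euler's totient. The sum therefore becomes $\sum_r\varphi(r)N_r$. Substituting the definition of $N_r$ and exchanging the order of summation gives
$$\sum_{(\pi,i)\in\mathbf{I}^*}\ \sum_{r\mid i}\varphi(r)=\sum_{(\pi,i)\in\mathbf{I}^*}i=\sum_\pi\frac{d_\pi(d_\pi+1)}{2}=\bD_\mathbf{d},$$
by Gauss's identity $\sum_{r\mid i}\varphi(r)=i$. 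The divisors $r$ of $i\le d_\pi$ automatically satisfy $r\le\max\mathbf{d}$, so no terms are lost in the exchange.

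Thus both Euler numbers equal $\bD_\mathbf{d}/(\bD_\mathbf{d}-p+1)$. The only mildly delicate point is justifying evaluation at $\bL=1$ via this first-order expansion, which is routine because all exponents $\theta(s/r)$ and $j-\mathrm{sht}(j)$ are finite integers.
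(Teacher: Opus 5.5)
Your proof is correct. The $\bZ/p$ side is exactly the paper's computation. On the $\alpha_p$ side you take a slightly different route.

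The paper does not evaluate the final Farey-sequence formula of \autoref{thm:mst_alpha_p_qt}. It evaluates the intermediate expression \autoref{eqn:mst_without_Farey}, a sum over non-empty $\mathscr{S}\subseteq\mathbf{I}^*$. There the factor $(\bL-1)^{|\mathscr{S}|}/(1-\bL^{-1-\gamma})$ vanishes at $\bL=1$ whenever $|\mathscr{S}|\geq 2$, so only singletons $\mathscr{S}=\{(\pi,i)\}$ survive. Each singleton contributes $\#\big(\tfrac{1}{i}\bZ\cap(0;1]\big)/(\gamma+1)=i/(\gamma+1)$, and summing over $\mathbf{I}^*$ gives $\bD_\mathbf{d}/(\gamma+1)$ with no arithmetic input.

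Starting from the Farey form instead, you need the identity $\sum_{s/r\in F}N_r=\bD_\mathbf{d}$, which you obtain from Gauss's identity $\sum_{r\mid i}\varphi(r)=i$ for Euler's totient $\varphi$. That identity is precisely the regrouping of $\tfrac{1}{i}\bZ\cap(0;1]$ by reduced denominator. In other words, it undoes the binomial regrouping the paper uses to pass from \autoref{eqn:mst_without_Farey} to the theorem. The paper's route is therefore a bit more direct, while yours has the advantage of relying only on the stated closed formula of \autoref{thm:mst_alpha_p_qt}.
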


\begin{proof}
We first compute $\est(\bA/(\alpha_p,\mathbf{d}) )$. As usual, let $\gamma=\bD_\mathbf{d}-p$. From \autoref{eqn:mst_without_Farey} we obtain
\begin{align*}
  \est(\bA/(\alpha_p,\mathbf{d}) )  
  = \left( \bL^d-\bL^l+ \sum_{\emptyset\neq \mathscr{S}\subseteq \mathbf{I}^*} \frac{\bL^{l-1}(\bL-1)^{|\mathscr{S}|}}{1-\bL^{-1-\gamma}}
        \left(\sum_{\frac{s}{r}\in \frac{1}{g_\mathscr{S}}\bZ\cap (0;1]}
        \bL^{\theta(s/r)}
        \right)\right)|_{\bL=1}. 
\end{align*}
Here we note that the fraction with $|\mathscr{S}|>1$ reduces to zero when substituting $\bL$ with $1$.
Thus, we can continue as:
\begin{align*}
\est(\bA/(\alpha_p,\mathbf{d}) ) & =  \sum_{(\pi,i)\in \mathbf{I}^*} \frac{1}{\gamma+1}
        \left(\sum_{s/r\in (1/i)\bZ\cap(0,1]}
        1
        \right) \\
& = \frac{\sum_{(\pi,i)}i}{\gamma+1} \\
&= \frac{\bD_{\mathbf{d}}}{\gamma+1}. 
\end{align*}
Next we compute $\est(\bA/(\bZ/p,\mathbf{d}) )$ as follows (see also \cite[Corollary 6.12]{Yasuda_p-cyclic_McKay_correspondence}):
\begin{eqnarray*}
  \est(\bA/(\bZ/p,\mathbf{d}) )  
 & = &
 \left( \bL^d +\frac{\bL^{l-1}(\bL-1)\sum_{j=1}^{p-1}\bL^{j-\mathrm{sht}(j)}}{1-\bL^{-1-\gamma}}\right)|_{\bL=1}\\
 & =& 
 \left(1+  \frac{\sum_{j=1}^{p-1}\bL^{j-\mathrm{sht}(j)}}{\bL^{\gamma}+\bL^{\gamma-1}+\cdots+1}\right)|_{\bL=1}\\
 & = &
 1+  \frac{p-1}{\gamma+1}\\
 & = &
 \frac{\bD_{\mathbf{d}}}{\gamma+1}.
\end{eqnarray*}
This proves the statement.
\end{proof}

The proof of the equality 
$
\mst(\bA/(\alpha_p,\mathbf{d}))=\mst(\bA/(\bZ/p,\mathbf{d}))
$ 
is completed in the appendix below. This suggests that the family $\mathcal{X}=(\bA^d \times \bA^1)/\mathbf{G}\to \bA^1$ constructed in Section \ref{section:degeneration}, which degenerates the quotient varieties $\bA/(\alpha_p,\mathbf{d}))$ and $\bA/(\bZ/p,\mathbf{d})$ (at least up to universal homeomorphism), is ``equisingular'' in some sense. Note that weighted blow-up of $\bA^d\times \bA^1$ with the weights indicated in \autoref{section:weighted_blow-up} does not simplify the $\mathbf{G}$-action sufficiently to induce a simultaneous stacky resolution of $\sX\to \bA^1$, see \autoref{rmk:weighted_blow-up_Z/p_action}. 
With this in mind, it seems natural to  ask the following question, even though whether $\bA/(\bZ/p,\mathbf{d})$ has a resolution is, to the best of our knowledge, an open question:

\begin{question}\label{question:simultaneous_resolution}
Does the family $\mathcal{X}\to \bA^1$ admit a simultaneous resolution?
\end{question}

\appendix
\section{A proof of 
\autoref{conj:equality_of_multisets} --- 
by Linus Rösler}\label{section:appendix}

\subsection{Notations}
    We keep the main notations from the text: $\mathbf{d}=\{d_\lambda\}$ and $\mathbf{I}^*$ are as in \autoref{section:notations}, $\Lambda_r$ and $N_r=|\Lambda_r|$ are as in \autoref{thm:mst_alpha_p_qt}. We write $\gamma =\mathbb{D}_\mathbf{d}-p$.
    Recall that the functions $\theta$ and $\sht$ are defined by
        $$\theta(y)= 1-\lfloor y\rfloor+\lfloor py\rfloor-\sum_{(\lambda,i)\in\bfI^*}\lfloor iy\rfloor 
        \quad \text{for }y\in (0;1]$$
    and 
        $$\sht(j)=\sum_{(\lambda,i)\in\bfI^*}\left\lfloor\frac{ij}{p}\right\rfloor
        \quad \text{for }j\in \bZ_{>0}.$$
    Let $M=\max_\lambda d_\lambda$. Recall the definition of the Farey sequence of order $M$,
        $$F=\left(\bigcup_{1\leq j\leq M}\frac{1}{j}\bZ\right)\cap (0,1]$$
    which we partition as follows:
        $$F^{(j)}=F\cap\left[\frac{j}{p},\frac{j+1}{p}\right)=\{y_{j1}<\cdots<y_{j,k_j}\}
        \quad \text{for }
        j=0,\dots,p.$$   
    Finally, we denote by $\sL$ resp. $\sR$ the multisets
    \begin{align*}
        \sL&\coloneqq\{\{j-\sht(j)\mid 1\leq j<p\}\}\uplus\llbracket -\gamma,0\rrbracket\\
        \sR&\coloneqq\left\{\left\{\left.\theta\left(\frac{s}{r}\right)+i\ \right|\ \frac{s}{r}\in F,\ \gcd(s,r)=1,\ 0\leq i<N_r\right\}\right\}
    \end{align*}
   appearing in \autoref{conj:equality_of_multisets}, where for integers $m\leq n$ we write
   $\llbracket m,n\rrbracket =[m;n]\cap \bZ$.

    \subsection{Proof of 
\autoref{conj:equality_of_multisets}}
    Let $a$ be the minimal non--negative integer such that $F^{(a)}\neq\emptyset$. We start by ruling out a trivial edge case.
    \begin{lemma}\label{lem:trivial_case}
        If $a=p$ (or, equivalently, $M=1$), then \autoref{conj:equality_of_multisets} is true.
    \end{lemma}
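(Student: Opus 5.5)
The plan is to verify both the equivalence $a=p\Leftrightarrow M=1$ and the equality $\sL=\sR$ by direct computation, since every quantity in \autoref{conj:equality_of_multisets} becomes explicit when $M=1$.

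\textbf{The equivalence.} Every $y\in F$ satisfies $0<y\leq 1$, so $y\in F^{(j)}$ for $j=\lfloor py\rfloor\in\llbracket 0,p\rrbracket$. Thus $a=p$ means precisely that $F\cap(0;1)=\emptyset$.
\begin{itemize}
\item If $M=1$, then $F=\bZ\cap(0;1]=\{1\}$, so $F\cap(0;1)=\emptyset$ and $a=p$.
\item If $M\geq 2$, then $1/2\in F$ lies in $F^{(\lfloor p/2\rfloor)}$ with $\lfloor p/2\rfloor<p$, so $a<p$.
\end{itemize}

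\textbf{Computing both sides when $M=1$.} Let $n$ be the number of indices $\lambda$ with $d_\lambda=1$. Then $\mathbf{I}^*=\{(\lambda,1)\mid d_\lambda=1\}$, so $\bD_\mathbf{d}=n$ and $\gamma=n-p$, and the standing assumption $\gamma\geq 0$ gives $n\geq p$.

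For the left side, every $(\lambda,i)\in\mathbf{I}^*$ has $i=1$. Hence for $1\leq j<p$ we get $\sht(j)=n\lfloor j/p\rfloor=0$. Therefore
$$\sL=\llbracket 1,p-1\rrbracket\uplus\llbracket -\gamma,0\rrbracket=\llbracket -\gamma,p-1\rrbracket,$$
where each value occurs with multiplicity one.

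For the right side, $F=\{1/1\}$. The set $\Lambda_1=\mathbf{I}^*$ has $N_1=n$ elements, and
$$\theta(1)=1-1+p-n=-\gamma.$$
So the only contributions are $\theta(1)+i=-\gamma+i$ for $0\leq i\leq n-1$. Since $-\gamma+n-1=p-1$, this gives $\sR=\llbracket -\gamma,p-1\rrbracket$, again with multiplicity one, and so $\sL=\sR$.

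There is no real obstacle here. The only points requiring care are that $1\in F^{(p)}$, because $[p/p,(p+1)/p)$ contains $1$, and that we use the standing hypothesis $\gamma\geq 0$, which ensures $\llbracket -\gamma,0\rrbracket$ is a genuine interval and makes the two ranges match exactly.
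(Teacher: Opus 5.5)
Your proof is correct and follows essentially the same route as the paper: when $M=1$ you get $\mathrm{sht}(j)=0$ for $1\leq j<p$, so the left multiset is the interval $\{-\gamma,\dots,p-1\}$, while $F=\{1\}$, $\theta(1)=-\gamma$ and $N_1=\bD_\mathbf{d}$ give the same interval on the right. The only difference is that you also justify the equivalence $a=p\Leftrightarrow M=1$, using $1/2\in F$ when $M\geq 2$, whereas the paper simply asserts it.
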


    \begin{proof}
        In this case we must have $M=1$, which means that $\bfd$ consists of $1$'s and $0$'s (with at least $p$ entries equal to $1$). We then obtain that
        \begin{align*}
            \sht(j)=\bD_{\bfd}\left\lfloor \frac{j}{p}\right\rfloor
        \end{align*}
        which is equal to $0$ for all $1\leq j<p$. Hence we obtain that $\sL=\llbracket -\gamma,p-1\rrbracket$.
        On the other hand, as $M=1$ we obtain $F=\{1\}$, so that $\sR=\llbracket\theta(1),\theta(1)+N_1-1\rrbracket$.
        Observe that $\theta(1)=-\gamma$ and $N_1=\bD_{\bfd}$, so that
        \begin{align*}
            \theta(1)+N_1-1=-(\bD_{\bfd}-p)+\bD_{\bfd}-1=p-1,
        \end{align*}
        which concludes the proof.
    \end{proof}

    So from now on we assume $a<p$ and $M>1$. In particular, we also have $p>2$ (as otherwise $M=1$), which implies that $a< p/2$. Note also that $a\geq 1$, as $1/M>1/p$. Observe that $y\mapsto 1-y$ is a symmetry of $F\setminus\{1\}$, and as $M<p$, we have $F^{(j)}=F\cap(j/p,(j+1)/p)$ for all $1\leq j<p$. In particular, $y\mapsto 1-y$ induces a bijection between $F^{(j)}$ and $F^{(p-j-1)}$ for all $1\leq j<p$.

    We will need to denote precisely which $F^{(j)}$'s are non--empty and which $F^{(j)}$'s are empty. Therefore, we introduce the intertwined sequences
    \begin{align*}
        a=a_0\leq b_0<a_1\leq b_1<\cdots < a_n\leq b_n=p-a-1,
    \end{align*}
    such that $F^{(j)}$ is non--empty if and only if $j\in\llbracket a_m,b_m\rrbracket$, and such that $b_m+1\leq a_{m+1}-1$ for all $0\leq m<n$ (i.e., the intervals $\llbracket a_m,b_m\rrbracket$ are maximal). Because of the bijection between $F^{(j)}$ and $F^{(p-j-1)}$, it is straightforward to see that
    \begin{align*}
        b_{n-m}=p-a_m-1
    \end{align*}
    for all $0\leq m\leq n$. To summarize, $F^{(j)}$ is non-empty precisely when $a_m\leq j\leq b_m$ for some $0\leq m\leq n$ or $j=p$ (in which case $F^{(p)}=\{1\}$).

    We now describe the function $\sht$ through the lens of the sequences $\{a_m\}$ and $\{b_m\}$, and thereby we also obtain a description of $\sL$.

    \begin{lemma}\label{lem:sht}
        The function $\sht$ has the following properties.
        \begin{enumerate}
            \item\label{lem:sht_reflection} For all $1\leq j<p$ we have
            \begin{align*}
                \sht(p-j)=\bD_{\bfd}-N_1-\sht(j).
            \end{align*}
            \item\label{lem:sht_border} We have
            \begin{align*}
                \forall j\in\llbracket1,a\rrbracket:&\quad\sht(j)=0\\
                \forall j\in\llbracket p-a,p-1\rrbracket:&\quad\sht(j)=\bD_{\bfd}-N_1
            \end{align*}

            \item\label{lem:sht_constant} For all $0\leq m<n$, the function $\sht$ is constant on the interval $\llbracket b_m+1,a_{m+1}\rrbracket$.
            \item\label{lem:sht_non_increasing} For all $0\leq m\leq n$, the function
            \begin{align*}
                j\in \llbracket a_m,b_m+1\rrbracket\mapsto j-\sht(j)
            \end{align*}
            is non--increasing.
        \end{enumerate}
    \end{lemma}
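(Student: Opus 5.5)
The plan is to treat $\sht$ termwise, $\sht(j)=\sum_{(\lambda,i)\in\bfI^*}\lfloor ij/p\rfloor$, and to translate each property into a statement about where the points $j/p$ lie relative to the fractions in $F$. The dictionary is this: a term $\lfloor ij/p\rfloor$ with $1\le i\le M<p$ jumps between $j$ and $j+1$ exactly when some integer $s$ satisfies $ij<sp\le i(j+1)$. Since $p\nmid i$ for $i\le M$, this means $ij/p<s\le i(j+1)/p$ with the inequality strict on the right, i.e.\ $s/i\in(j/p,(j+1)/p)$. Because $s/i\in F$, such an $s/i$ exists only when $F^{(j)}\neq\emptyset$. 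So $\sht(j+1)-\sht(j)=\sum_{(\lambda,i)}\#\{s:\ s/i\in F^{(j)}\}$, and $\sht(j+1)=\sht(j)$ whenever $F^{(j)}=\emptyset$.

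For (1), I will use $\lfloor i(p-j)/p\rfloor = i-1-\lfloor ij/p\rfloor$, which holds because $p\nmid ij$ for $1\le i,j<p$. Summing over $\bfI^*$ gives $\sht(p-j)=\sum i-|\bfI^*|-\sht(j)=\bD_\bfd-N_1-\sht(j)$, since $N_1=|\bfI^*|$.

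For (2), note that $\sht(1)=\sum\lfloor i/p\rfloor=0$. By the jump rule, $\sht$ is constant on $\llbracket 1,a\rrbracket$, since $F^{(j)}=\emptyset$ for $1\le j<a$ by minimality of $a$. The second line then follows from (1): $j\in\llbracket p-a,p-1\rrbracket$ corresponds to $p-j\in\llbracket 1,a\rrbracket$.

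For (3), on $\llbracket b_m+1,a_{m+1}\rrbracket$ every consecutive step $j\to j+1$ with $b_m+1\le j<a_{m+1}$ has $F^{(j)}=\emptyset$ by maximality of the intervals, so there are no jumps and $\sht$ is constant.

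For (4), the step where some counting care is needed, I must show $\sht(j+1)-\sht(j)\ge 1$ for $a_m\le j\le b_m$. By the jump rule this difference counts pairs $((\lambda,i),s)$ with $s/i\in F^{(j)}$. The set $F^{(j)}$ is nonempty, so pick $y=s/r\in F^{(j)}$ in lowest terms with $r\le M$. Some block has $d_\lambda=M\ge r$, so $(\lambda,r)\in\bfI^*$ contributes at least one jump. Hence the increment is at least $1$, and $j\mapsto j-\sht(j)$ is non-increasing on $\llbracket a_m,b_m+1\rrbracket$. The only subtlety I expect is the boundary bookkeeping: checking that the strictness of $ij/p<s$ is automatic because $p\nmid i$ and $j<p$, and that $j=b_m$ is included while $F^{(b_m+1)}=\emptyset$ is never used.
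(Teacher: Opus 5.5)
Your proposal is correct and is essentially the paper's proof. For (1) you use the same identity $\lfloor i(p-j)/p\rfloor=i-1-\lfloor ij/p\rfloor$. For (3)--(4) you use the same observation that $\mathrm{sht}(j+1)-\mathrm{sht}(j)$ counts the fractions $s/i$ lying in $F^{(j)}$; the paper phrases this through the discontinuities of $t(y)=\sum\lfloor iy\rfloor$. The only cosmetic difference is in (2), where the paper bounds $ij/p<i/M\le 1$ directly instead of propagating $\mathrm{sht}(1)=0$.

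One small fix: strictness in $s\le i(j+1)/p$ needs $p\nmid i(j+1)$, i.e.\ $j\le p-2$, not just $p\nmid i$. The jump rule genuinely fails at $j=p-1$: there $F^{(p-1)}=\emptyset$ but $\mathrm{sht}(p)=\bD_\mathbf{d}\neq\mathrm{sht}(p-1)$. Every $j$ you actually use satisfies $j\le p-a-1<p-1$, so nothing breaks.
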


    \begin{proof}
        From the definition of $\sht$ we immediately obtain
        \begin{align*}
            \sht(p-j)&=\sum_{(\lambda,i)\in\bfI^*}\left(i-\left\lceil\frac{ij}{p}\right\rceil\right)\\
            &=\bD_{\bfd}-\sum_{(\lambda,i)\in\bfI^*}\left\lceil\frac{ij}{p}\right\rceil\\
            &=\bD_{\bfd}-\sum_{(\lambda,i)\in\bfI^*}\left(\left\lfloor\frac{ij}{p}\right\rfloor+1\right)\\
            &=\bD_{\bfd}-N_1-\sht(j),
        \end{align*}
        where for the third equality we used that $i,j<p$. This proves point \autoref{lem:sht_reflection}.

        For point \autoref{lem:sht_border}, note that as $1/M\in F^{(a)}$, we obtain $a/p<1/M$. Thus, for $1\leq j\leq a$ and $(\lambda,i)\in \bfI^{*}$, it follows that $ij/p<i/M\leq 1$, from which we deduce that $\sht(j)=0$. By applying \autoref{lem:sht_reflection}, we conclude \autoref{lem:sht_border}.

        In order to prove \autoref{lem:sht_constant}, observe the following: for $1\leq j\leq p-1$, if $F^{(j)}$ is empty, then $\sht(j)=\sht(j+1)$. Indeed, consider the function
        $$t\colon (0,1] \to\bZ_{\geq 0}, \quad 
        y \mapsto \sum_{(\lambda,i)\in\bfI^*}\left\lfloor i y\right\rfloor,$$
        so that $\sht(j)=t(j/p)$ for all $j$. Notice that $t$ is right--continuous, and it is discontinuous precisely at the points in $F$. Also, between its discontinuities, $t$ is constant. As $p>M$, we have $F\cap (1/p)\bZ=\{1\}$, so $t$ is continuous at $j/p$ for $1\leq j\leq p-1$. Furthermore, if $F^{(j)}$ is empty, this precisely means that no element of $F$ lies between $j/p$ and $(j+1)/p$. Hence in this case $t$ is continuous on $[j/p,(j+1)/p]$, and thus constant. Therefore we obtain
        \begin{align*}
            \sht(j)=t(j/p)=t((j+1)/p)=\sht(j+1).
        \end{align*}
        By induction, using that $F^{(j)}$ is empty for $j\in\llbracket b_m+1, a_{m+1}-1\rrbracket$, point \autoref{lem:sht_constant} follows.
        
        Finally we come to point \autoref{lem:sht_non_increasing}. We start by showing that if $F^{(j)}$ is non--empty, then
        \begin{align*}
            j-\sht(j)\geq j+1-\sht(j+1),
        \end{align*}
        or equivalently that $\sht(j+1)-\sht(j)\geq 1$. Indeed, if $F^{(j)}$ is non--empty, there exist positive integers $i'<i\leq M$ such that
        \begin{align*}
            \frac{j}{p}< \frac{i'}{i}<\frac{j+1}{p}.
        \end{align*}
        In particular, we have
        \begin{align*}
            \left\lfloor\frac{ij}{p}\right\rfloor<i'\leq \left\lfloor\frac{i(j+1)}{p}\right\rfloor,
        \end{align*}
        from which we immediately deduce that $\sht(j+1)-\sht(j)\geq 1$. Then, point \autoref{lem:sht_non_increasing} follows by induction, noting that $F^{(j)}$ is non--empty for all $j\in\llbracket a_m,b_m\rrbracket$.
    \end{proof}

    With this precise description of $\sht$, we can describe $\sL$ more concretely.

    \begin{lemma}\label{lem:LHS}
        We have
        \begin{align*}
            \sL=\{\{j-\sht(j)\mid 0\leq m\leq n,\  a_m<j\leq b_m\}\}\uplus\llbracket -\gamma-a+N_1,-\gamma+N_1-1\rrbracket\uplus \llbracket -\gamma,a\rrbracket\\
            \uplus\biguplus_{0\leq m<n}\llbracket b_m+1-\sht(a_{m+1}),a_{m+1}-\sht(a_{m+1})\rrbracket.
        \end{align*}
    \end{lemma}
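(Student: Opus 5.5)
The plan is to split the index range $\llbracket 1,p-1\rrbracket$ of the first part of $\sL$ into consecutive pieces adapted to the intertwined sequences $a=a_0\le b_0<a_1\le\cdots\le b_n=p-a-1$, and to evaluate $j-\sht(j)$ piece by piece using \autoref{lem:sht}. Concretely, $\llbracket 1,p-1\rrbracket$ is the disjoint union of
\[
\llbracket 1,a\rrbracket,\qquad \llbracket a_m+1,b_m\rrbracket\ (0\le m\le n),\qquad \llbracket b_m+1,a_{m+1}\rrbracket\ (0\le m<n),\qquad \llbracket p-a,p-1\rrbracket .
\]
This is a partition because $b_n+1=p-a$, and the blocks of the last two families alternate with the $\llbracket a_m+1,b_m\rrbracket$.

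First, on $\llbracket 1,a\rrbracket$, point \autoref{lem:sht_border} of \autoref{lem:sht} gives $\sht(j)=0$. The values are therefore $\llbracket 1,a\rrbracket$, and merging them with the summand $\llbracket -\gamma,0\rrbracket$ of $\sL$ yields $\llbracket -\gamma,a\rrbracket$. Second, on $\llbracket p-a,p-1\rrbracket$, the same point gives $\sht(j)=\bD_\bfd-N_1$. Hence $j-\sht(j)=j-\bD_\bfd+N_1$, and as $j$ runs over this block the values run over $\llbracket p-a-\bD_\bfd+N_1,\,p-1-\bD_\bfd+N_1\rrbracket$. Substituting $\gamma=\bD_\bfd-p$ rewrites this as $\llbracket -\gamma-a+N_1,-\gamma+N_1-1\rrbracket$, as in the statement.

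Third, on each $\llbracket b_m+1,a_{m+1}\rrbracket$ with $0\le m<n$, point \autoref{lem:sht_constant} says $\sht$ is constant, equal to $\sht(a_{m+1})$. So $j-\sht(j)$ sweeps exactly the interval $\llbracket b_m+1-\sht(a_{m+1}),a_{m+1}-\sht(a_{m+1})\rrbracket$. Finally, the blocks $\llbracket a_m+1,b_m\rrbracket$ (empty when $a_m=b_m$) are kept as they are and contribute the first multiset $\{\{j-\sht(j)\mid a_m<j\le b_m\}\}$. Taking the disjoint union of these four contributions gives the claimed expression.

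There is no genuine obstacle; the only care needed is the bookkeeping of endpoints. In particular, one must check that the left endpoint $a_0=a$ of the first non-empty block is already counted in $\llbracket 1,a\rrbracket$, which is why the $\llbracket a_m+1,b_m\rrbracket$ pieces start at $a_m+1$. One must also check that $b_n=p-a-1$, so the last piece abuts $\llbracket p-a,p-1\rrbracket$ without overlap. The inequalities $1\le a<p/2$ ensure that $\llbracket 1,a\rrbracket$ and $\llbracket p-a,p-1\rrbracket$ are disjoint and non-empty.
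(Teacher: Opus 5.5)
Your proposal is correct and follows essentially the same route as the paper: partition the range $1\le j\le p-1$ into blocks adapted to the sequences $a_m,b_m$, then evaluate $j-\mathrm{sht}(j)$ on each block using the border and constancy properties of $\mathrm{sht}$ from the preceding lemma. Your endpoint bookkeeping is slightly cleaner than the paper's, which writes the last range as $p-a\le j\le p$ where $p-a\le j\le p-1$ is meant.
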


    \begin{proof}
        The interval $\llbracket -\gamma,a\rrbracket$ comes from combining $\llbracket -\gamma,0\rrbracket$ with 
        \begin{align*}
            \{\{j-\sht(j)\mid 1\leq j\leq a\}\}\expl{=}{\autoref{lem:sht}.\autoref{lem:sht_border}} \llbracket 1,a\rrbracket.
        \end{align*}
        Similarly,
        \begin{align*}
            \{\{j-\sht(j)\mid p-a\leq j\leq p\}\}\expl{=}{\autoref{lem:sht}.\autoref{lem:sht_border}} \llbracket -\gamma-a+N_1,-\gamma+N_1-1\rrbracket,
        \end{align*}
        explaining the appearence of the latter in the statement of \autoref{lem:LHS}. In the same vein, \autoref{lem:sht}.\autoref{lem:sht_constant} gives that
        \begin{align*}
            \{\{j-\sht(j)\mid b_m+1\leq j\leq a_{m+1}\}\}=\llbracket b_m+1-\sht(a_{m+1}),a_{m+1}-\sht(a_{m+1})\rrbracket
        \end{align*}
        for all $0\leq m<n$. Piecing everything together, the statement follows.
    \end{proof}

    Now we turn to $\sR$; the idea is to partition it according to the partition of $F$ into the $F^{(j)}$'s. Note that by the above discussion, we have
    \begin{align*}
        F=\bigsqcup_{\substack{0\leq m\leq n\\ a_m\leq j\leq b_m}} F^{(j)}\sqcup \underbrace{F^{(p)}}_{=\{1\}},
    \end{align*}
    and all the $F^{(j)}$'s appearing here are non--empty. Hence for $j\in\llbracket a_m,b_m\rrbracket$ or $j=p$ we define
    \begin{align*}
        \sR_j\coloneqq\left\{\left\{\left.\theta\left(\frac{s}{r}\right)+i\ \right|\ \frac{s}{r}\in F^{(j)},\ \gcd(s,r)=1,\ 0\leq i<N_r\right\}\right\},
    \end{align*}
    which thus yields a multiset--partition of $\sR$. We start with the following description.
    
    \begin{lemma}\label{lem:partition}
       For $0\leq m\leq n$ and $j\in\llbracket a_m,b_m\rrbracket$, we have
        \begin{align*}
            \sR_j=\llbracket \theta(y_{j,k_j}),\theta(y_{j,1})+N_{r_{j,1}}-1\rrbracket.
        \end{align*}
        Also, we have
        \begin{align*}
            \sR_p=\llbracket-\gamma,-\gamma+N_1-1\rrbracket.
        \end{align*}
    \end{lemma}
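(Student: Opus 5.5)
The plan is to show that, inside a fixed $F^{(j)}$, the blocks $\llbracket \theta(y),\theta(y)+N_r-1\rrbracket$ attached to consecutive Farey fractions $y=s/r$ fit together end to end, with no gaps and no overlaps. The tool is the function $t(y)=\sum_{(\lambda,i)\in\bfI^*}\lfloor iy\rfloor$ introduced in the proof of \autoref{lem:sht}. We already know that $t$ is right-continuous, constant between consecutive points of $F$, and discontinuous exactly at the points of $F$.

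\emph{Step 1: $\theta$ in terms of $t$.} For $j\in\llbracket a_m,b_m\rrbracket$ we have $1\le j\le p-a-1<p$. Since $M<p$, all $y\in F^{(j)}$ lie in the open interval $(j/p,(j+1)/p)$, so $\lfloor py\rfloor=j$ and $\lfloor y\rfloor=0$. Hence
\[
\theta(y)=1+j-t(y)\qquad\text{for all } y\in F^{(j)}.
\]
So on $F^{(j)}$, the function $\theta$ differs from $-t$ by a constant.

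\emph{Step 2: size of the jumps of $t$.} Let $y=s/r$ with $\gcd(s,r)=1$. The summand $\lfloor iy\rfloor$ jumps at $y$ exactly when $iy\in\bZ$, i.e.\ when $r\mid i$, and then it jumps by exactly $1$. Therefore $t(y)-\lim_{x\to y^-}t(x)=N_r$.

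Combining this with Step 1 and the constancy of $t$ between consecutive elements of $F$, we get, for consecutive elements $y_{j,k}<y_{j,k+1}$ of $F^{(j)}$ (consecutive in $F$ as well),
\[
\theta(y_{j,k+1})=\theta(y_{j,k})-N_{r_{j,k+1}},
\]
where $r_{j,k}$ denotes the reduced denominator of $y_{j,k}$. Note also that $N_r\ge 1$ for every such $r$. Indeed $r\le M=\max d_\lambda$, so some $(\lambda,r)$ lies in $\bfI^*$. Hence every block is non-empty.

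\emph{Step 3: telescoping.} By Step 2, the block attached to $y_{j,k+1}$ is
\[
\llbracket \theta(y_{j,k+1}),\theta(y_{j,k+1})+N_{r_{j,k+1}}-1\rrbracket=\llbracket \theta(y_{j,k+1}),\theta(y_{j,k})-1\rrbracket .
\]
Its top element is exactly one less than the bottom element of the block attached to $y_{j,k}$. Taking the multiset union over $k=1,\dots,k_j$, the blocks telescope into the single interval
\[
\llbracket \theta(y_{j,k_j}),\theta(y_{j,1})+N_{r_{j,1}}-1\rrbracket,
\]
with each integer appearing exactly once. This is the first claim.

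\emph{Step 4: the case $j=p$.} Here $F^{(p)}=\{1\}$ and $r=1$, so $\sR_p=\llbracket \theta(1),\theta(1)+N_1-1\rrbracket$. Directly, $\theta(1)=1-1+p-\bD_\bfd=-\gamma$, which gives $\sR_p=\llbracket-\gamma,-\gamma+N_1-1\rrbracket$.

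The only delicate point is Step 2: checking that the jump of $t$ at $s/r$ is exactly $N_r$. This needs the fraction to be in lowest terms. It also needs Step 1 to ensure that $\lfloor py\rfloor$ contributes no jump inside $F^{(j)}$, which is where $M<p$ and $j<p$ are used.
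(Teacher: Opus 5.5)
Your proposal is correct and is essentially the paper's own proof. Both write $\theta(y)=1+j-t(y)$ on $F^{(j)}$, use that $t$ jumps by exactly $N_r$ at a reduced fraction $s/r$ (so $\theta(y_{j,k+1})=\theta(y_{j,k})-N_{r_{j,k+1}}$), telescope the blocks into a single interval, and settle the case $j=p$ directly from $\theta(1)=-\gamma$. Your extra remarks, that $N_r\geq 1$ and that $\lfloor py\rfloor=j$ and $\lfloor y\rfloor=0$ on $F^{(j)}$ because $M<p$, only make explicit what the paper leaves implicit.
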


    \begin{proof}
        The description of $\sR_p$ is straightforward from $F^{(p)}=\{1\}$ and $\theta(1)=-\gamma$. So take $j\in\llbracket a_m,b_m\rrbracket$. Notice that the restriction $\theta_j\coloneqq\theta|_{[j/p,(j+1)/p)}$ is given by the formula $\theta_j(y)=1+j-t(y)$, where
        \begin{align*}
            t(y)\coloneqq\sum_{(\lambda,i)\in\bfI^*}\lfloor iy\rfloor.
        \end{align*}
        Note that $t$ is right--continuous and the discontinuities of $t$ on $[j/p,(j+1)/p)$ occur precisely at the elements of $F^{(j)}$. For $1<k\leq k_j$, we then observe that
        \begin{align*}
            t(y_{j,k-1})=\lim_{y\underset{<}{\to} y_{j,k}}t(y)=\sum_{(\lambda,i)\in\bfI^*}\lim_{y\underset{<}{\to} y_{j,k}}\lfloor iy\rfloor=\sum_{(\lambda,i)\in\bfI^*}\lfloor i\cdot y_{j,k}\rfloor-\bone_{r_{j,k}|i}=t(y_{j,k})-N_{r_{j,k}},
        \end{align*}
        where $\bone_{r_{j,k}|i}$ is $1$ if $r_{j,k}$ divides $i$ and $0$ otherwise. Hence we find that
        \begin{align*}
            \theta(y_{j,k})=\theta(y_{j,k-1})-N_{r_{j,k}},
        \end{align*}
        and in particular $\theta(y_{j,k})<\theta(y_{j,k-1})$ for all $k$. Thus it follows that
        \begin{align*}
            \sR_j&=\biguplus_{1\leq k \leq k_j}\llbracket \theta(y_{j,k}),\theta(y_{j,k})+N_{r_{j,k}}-1\rrbracket \\
            &=\llbracket \theta(y_{j,1}),\theta(y_{j,1})+N_{r_{j,1}}-1\rrbracket \uplus\biguplus_{2\leq k \leq k_j}\llbracket \theta(y_{j,k}),\theta(y_{j,k-1})-1\rrbracket \\
            &=\llbracket \theta(y_{j,1}),\theta(y_{j,1})+N_{r_{j,1}}-1\rrbracket \uplus \llbracket \theta(y_{j,k_j}),\theta(y_{j,1})-1\rrbracket \\
            &=\llbracket \theta(y_{j,k_j}),\theta(y_{j,1})+N_{r_{j,1}}-1\rrbracket 
        \end{align*}
        which is what we wanted to prove.
    \end{proof}

    Now we relate the description in \autoref{lem:partition} to $\sht$.

    \begin{lemma}\label{lem:middle_interval}
        For all $j\in\llbracket a_m,b_m\rrbracket$, we have
        \begin{align*}
            \sR_j=\llbracket j+1-\sht(j+1),j-\sht(j)\rrbracket. 
        \end{align*}
    \end{lemma}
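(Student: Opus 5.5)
By \autoref{lem:partition} we already know $\sR_j=\llbracket \theta(y_{j,k_j}),\theta(y_{j,1})+N_{r_{j,1}}-1\rrbracket$, so the plan is to identify the two endpoints separately:
\begin{align*}
\theta(y_{j,k_j})=j+1-\sht(j+1)\quad\text{and}\quad \theta(y_{j,1})+N_{r_{j,1}}-1=j-\sht(j).
\end{align*}
The tool is the formula from the proof of \autoref{lem:partition}: on $[j/p,(j+1)/p)$ we have $\theta(y)=1+j-t(y)$, where $t(y)=\sum_{(\lambda,i)\in\bfI^*}\lfloor iy\rfloor$ is right-continuous, piecewise constant, and jumps exactly at the points of $F$. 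We also use $\sht(j)=t(j/p)$. (For $j\in\llbracket a_m,b_m\rrbracket$ we have $j<p$, so $[j/p,(j+1)/p)\subseteq(0,1)$ and $\lfloor py\rfloor=j$ there.)

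For the lower endpoint, $y_{j,k_j}$ is the largest element of $F$ in $[j/p,(j+1)/p)$. Hence $t$ has no discontinuity on $(y_{j,k_j},(j+1)/p)$. At $(j+1)/p$ itself there is no jump either, since $(j+1)/p\notin F$: for $j+1<p$ this follows from $F\cap\frac1p\bZ=\{1\}$. Therefore $t(y_{j,k_j})=t((j+1)/p)=\sht(j+1)$, which gives $\theta(y_{j,k_j})=1+j-\sht(j+1)$.

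For the upper endpoint, $t$ is constant on $[j/p,y_{j,1})$, because $j/p\notin F$ when $j\ge1$, and $j\ge a\ge1$. So the left limit of $t$ at $y_{j,1}$ equals $t(j/p)=\sht(j)$. The jump of $t$ at $y_{j,1}$ is $N_{r_{j,1}}$, by the same computation used in \autoref{lem:partition} with $\lim\lfloor iy\rfloor=\lfloor iy_{j,1}\rfloor-\bone_{r_{j,1}\mid i}$. Hence $t(y_{j,1})=\sht(j)+N_{r_{j,1}}$, and so $\theta(y_{j,1})+N_{r_{j,1}}-1=1+j-\sht(j)-N_{r_{j,1}}+N_{r_{j,1}}-1=j-\sht(j)$.

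Substituting both endpoints into \autoref{lem:partition} gives the claim. The only delicate point is the boundary case $j=b_n=p-a-1$, where $j+1=p-a\le p-1$ still holds, so $(j+1)/p\notin F$ and the argument goes through unchanged. It is also worth checking that the interval is consistent, i.e. nonempty with the right length. This follows from \autoref{lem:sht}.\autoref{lem:sht_non_increasing}, but the equality itself does not need it.
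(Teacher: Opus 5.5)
Your proposal is correct and follows essentially the same route as the paper: both read off the two endpoints from \autoref{lem:partition} via left limits at $(j+1)/p$ and at $y_{j,1}$, using that there is no jump between $y_{j,k_j}$ and $(j+1)/p$ and that the jump at $y_{j,1}$ is $N_{r_{j,1}}$. The only difference is that you phrase the argument through $t$ rather than $\theta$. You also make explicit the checks $j/p\notin F$ and $(j+1)/p\notin F$, which the paper leaves implicit.
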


    \begin{proof}
        Note that $\theta$ is right--continuous and the discontinuities of $\theta$ occur precisely at elements of $F\cup (1/p)\bZ$. As $y_{j,k_j}$ is the largest element of $F\cup(1/p)\bZ$ which is strictly smaller than $(j+1)/p$, we then see that
        \begin{align*}
            \theta(y_{j,k_j})&=\lim_{y\underset{<}{\to} \frac{j+1}{p}}\theta(y)=1-\lim_{y\underset{<}{\to} \frac{j+1}{p}}\lfloor y\rfloor+\lim_{y\underset{<}{\to} \frac{j+1}{p}}\lfloor py\rfloor-\sum_{(\lambda,i)\in\bfI^{*}}\lim_{y\underset{<}{\to} \frac{j+1}{p}}\lfloor iy\rfloor\\
            &=1+j-\sum_{(\lambda,i)\in\bfI^*}\left\lfloor i\frac{j+1}{p}\right\rfloor=1+j-\sht(j+1).
        \end{align*}
        On the other hand, as $j/p$ is the largest element of $F\cup(1/p)\bZ$ which is strictly smaller than $y_{j,1}$, we have
        \begin{align*}
            \theta(j/p)=\lim_{y\underset{<}{\to} y_{1,j}}\theta(y)=\theta(y_{1,j})+N_{r_{1,j}}.
        \end{align*}
        Finally, it follows from the definition of $\theta$ that $\theta(j/p)=1+j-\sht(j)$.
        Hence we conclude that
        $\theta(y_{1,j})+N_{r_{1,j}}-1=j-\sht(j)$.
    \end{proof}

    Piecing everything together, we are now ready to prove \autoref{conj:equality_of_multisets}.

    \begin{proof}[\protect{Proof of \autoref{conj:equality_of_multisets}}]
        We have
        \begin{align*}
            \sR=\sR_p\uplus\biguplus_{\substack{0\leq m\leq n\\ a_m\leq j\leq b_m}} \sR_j.
        \end{align*}
        Now as $\{j-\sht(j)\}_{a_m\leq j\leq b_m+1}$ is non--increasing by \autoref{lem:sht}.\autoref{lem:sht_non_increasing}, it follows from \autoref{lem:middle_interval} that
        \begin{align*}
            \biguplus_{a_m\leq j\leq b_m} \sR_j&=\llbracket b_m+1-\sht(b_m+1),a_m-\sht(a_m)\rrbracket\uplus\{\{j-\sht(j)\mid a_m<j\leq b_m\}\}.
        \end{align*}
        Hence it follows that
        \begin{align*}
            \sR=\{\{j-\sht(j)\mid 0\leq m\leq n,\  a_m<j\leq b_m\}\}\uplus\llbracket -\gamma,-\gamma+N_1-1\rrbracket\\
            \uplus\biguplus_{0\leq m\leq n}\llbracket b_m+1-\sht(b_m+1),a_m-\sht(a_m)\rrbracket.
        \end{align*}
        To conclude that $\sL=\sR$, in view of \autoref{lem:LHS}, we are left to show that
        \begin{equation}\label{eq:remaining}
        \begin{split}
            \left(\biguplus_{0\leq m<n}\llbracket b_m+1-\sht(a_{m+1}),a_{m+1}-\sht(a_{m+1})\rrbracket\right)\uplus \llbracket -\gamma-a+N_1,-\gamma+N_1-1\rrbracket\uplus\llbracket -\gamma,a\rrbracket\\
            =\left(\biguplus_{0\leq m\leq n}\llbracket b_m+1-\sht(b_m+1),a_m-\sht(a_m)\rrbracket\right)\uplus \llbracket -\gamma,-\gamma+N_1-1\rrbracket,
        \end{split}
        \end{equation}
        as we can cancel $\{\{j-\sht(j)\mid 0\leq m\leq n,\  a_m<j\leq b_m\}\}$ from both sides. To see more clearly what is going on, we introduce the following notation for $0\leq m\leq n$:
        \begin{align*}
            A_m\coloneqq a_m-\sht(a_m),\quad B_m\coloneqq b_m+1&-\sht(b_m+1)\expl{=}{\autoref{lem:sht}.\autoref{lem:sht_constant}}b_m+1-\sht(a_{m+1}),\\
            A_{n+1}\coloneqq -\gamma+N_1-1,\quad B_{n+1}&\coloneqq -\gamma,\quad A_{n+2}\coloneqq A_0=a.
        \end{align*}
        Note that by \autoref{lem:sht}, every interval appearing in \autoref{eq:remaining} is non--empty, i.e., $B_m\leq A_m$ and $B_m\leq A_{m+1}$ for all $0\leq m\leq n+1$. With this notation, \autoref{eq:remaining} translates to
        \begin{align*}
            \biguplus_{0\leq m\leq n+1}\llbracket B_m,A_{m+1}\rrbracket =\biguplus_{0\leq m\leq n+1}\llbracket B_m,A_m\rrbracket,
        \end{align*}
        where we used that $B_n= -\gamma-a+N_1$ by \autoref{lem:sht}. This equality of multisets is true whenever all intervals appearing are non--empty, as is shown in \autoref{lem:union_intervals}.
    \end{proof}

    \begin{lemma}\label{lem:union_intervals}
        Let $A_0,\ldots,A_N$ and $B_0,\ldots,B_N$ be integers and write $A_{N+1}=A_0$. Suppose that $B_m\leq\min\{A_m,A_{m+1}\}$ for all $0\leq m\leq N$. Then
        \begin{align*}
            \biguplus_{0\leq m\leq N}\llbracket B_m,A_m\rrbracket=\biguplus_{0\leq m\leq N}\llbracket B_m,A_{m+1}\rrbracket
        \end{align*}
    \end{lemma}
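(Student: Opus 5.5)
The natural approach is to compare indicator functions, or equivalently generating polynomials in a formal variable $T$. A multiset of integers is determined by its multiplicity function. The multiplicity of an integer $x$ in $\llbracket B,A\rrbracket$ (with $B\le A$, which holds here by hypothesis) equals $\bone_{x\ge B}-\bone_{x\ge A+1}$. We therefore plan to write each side of the claimed equality as a telescoping sum of such step functions and compare.

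Concretely, for the left-hand side the multiplicity of $x$ is
$$\sum_{m=0}^N\big(\bone_{x\geq B_m}-\bone_{x\geq A_m+1}\big),$$
and for the right-hand side it is
$$\sum_{m=0}^N\big(\bone_{x\geq B_m}-\bone_{x\geq A_{m+1}+1}\big).$$
The terms involving the $B_m$ agree on both sides. For the remaining terms, the map $m\mapsto m+1$ permutes $\{0,\dots,N\}$ cyclically because of the convention $A_{N+1}=A_0$. Hence
$$\sum_m \bone_{x\ge A_{m+1}+1}=\sum_m \bone_{x\ge A_m+1},$$
and the two multiplicity functions coincide.

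The only point requiring care is the hypothesis $B_m\le\min\{A_m,A_{m+1}\}$. It guarantees that every interval is a genuine (nonempty) interval, so that the formula $\bone_{x\ge B}-\bone_{x\ge A+1}$ really computes the multiplicity in $\llbracket B,A\rrbracket$; for an empty interval with $B>A+1$ the formula would give negative values. Once this hypothesis is used, no step is a real obstacle.

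An equivalent formulation uses generating functions: the identity
$$(1-T)\sum_{x\in\llbracket B,A\rrbracket}T^x=T^B-T^{A+1}$$
holds in $\bZ[T,T^{-1}]$. Summing over $m$ gives the same telescoping comparison, and cancelling the factor $(1-T)$ is allowed because $\bZ[T,T^{-1}]$ is a domain.
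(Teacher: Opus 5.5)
Your proof is correct and is essentially the paper's argument. Both compare multiplicity functions written as half-line indicators, and both use the hypothesis $B_m\le\min\{A_m,A_{m+1}\}$ only to guarantee that these formulas compute the interval multiplicities. The only cosmetic difference is in the bookkeeping: you cancel the $B_m$-terms and reindex the $A$-terms cyclically. The paper instead rewrites each difference of the indicators of $\llbracket B_m,A_m\rrbracket$ and $\llbracket B_m,A_{m+1}\rrbracket$ as the difference of the indicators of the half-lines $\{x\le A_m\}$ and $\{x\le A_{m+1}\}$, then telescopes using $A_{N+1}=A_0$, which is the same computation.
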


    \begin{proof}
        We have to show that
        \begin{align*}
            f\coloneqq \sum_{0\leq m\leq N}\bone_{\llbracket B_m,A_m\rrbracket}-\bone_{\llbracket B_m,A_{m+1}\rrbracket}\equiv 0.
        \end{align*}
        Note that
        \begin{align*}
            \bone_{\llbracket B_m,A_m\rrbracket}-\bone_{\llbracket B_m,A_{m+1}\rrbracket}=\bone_{\rrbracket -\infty,A_m\rrbracket}-\bone_{\rrbracket -\infty,A_{m+1}\rrbracket},
        \end{align*}
        as both are equal to $\sgn(A_m-A_{m+1})$ on $\rrbracket\min\{A_m,A_{m+1}\},\max\{A_m,A_{m+1}\}\rrbracket$ and equal to $0$ everywhere else (here we use that $B_m\leq \min\{A_m,A_{m+1}\}$). Hence it follows that
        \begin{align*}
            f&=\sum_{0\leq m\leq N}\bone_{\rrbracket -\infty,A_m\rrbracket}-\bone_{\rrbracket -\infty,A_{m+1}\rrbracket}\\
            &=\bone_{\rrbracket -\infty,A_0\rrbracket}-\bone_{\rrbracket -\infty,A_{N+1}\rrbracket}\\
            &=0,
        \end{align*}
        as $A_{N+1}=A_0$.
    \end{proof}

\bibliographystyle{alpha}
\bibliography{Bibliography}

\newcommand{\etalchar}[1]{$^{#1}$}
\begin{thebibliography}{KKMSD73}

\bibitem[Bat99]{Batyrev_NA_integrals_and_stringy_Euler_numbers}
Victor~V. Batyrev.
\newblock Non-archimedean integrals and stringy {Euler} numbers of log-terminal
  pairs.
\newblock {\em J. Eur. Math. Soc. (JEMS)}, 1(1):5--33, 1999.

\bibitem[Ber17]{Bergh_Functorial_destackification}
Daniel Bergh.
\newblock Functorial destackification of tame stacks with abelian stabilisers.
\newblock {\em Compos. Math.}, 153(6):1257--1315, 2017.

\bibitem[CLNS18]{Chambert-Loir_Nicaise_Sebag_Motivic_integration}
Antoine Chambert-Loir, Johannes Nicaise, and Julien Sebag.
\newblock {\em Motivic integration}, volume 325 of {\em Prog. Math.}
\newblock New York, NY: Birkh{\"a}user, 2018.

\bibitem[CLS11]{Cox-Little-Schenck}
David~A. Cox, John~B. Little, and Henry~K. Schenck.
\newblock {\em Toric varieties}, volume 124 of {\em Grad. Stud. Math.}
\newblock Providence, RI: American Mathematical Society (AMS), 2011.

\bibitem[DGA{\etalchar{+}}11]{SGA3_I}
Michel Demazure, Alexander Grothendieck, M.~Artin, J.-E. Bertin, P.~Gabriel,
  M.~Raynaud, and J.-P. Serre, editors.
\newblock {\em S{\'e}minaire de g{\'e}om{\'e}trie alg{\'e}brique du {Bois}
  {Marie} 1962-64. {Sch{\'e}mas} en groupes ({SGA} 3). {Tome} {I}:
  {Propri{\'e}t{\'e}s} g{\'e}n{\'e}rales des sch{\'e}mas en groupes.}, volume~7
  of {\em Doc. Math. (SMF)}.
\newblock Paris: Soci{\'e}t{\'e} Math{\'e}matique de France, new annotated
  edition of the 1970 original published bei {Springer} edition, 2011.

\bibitem[GNT19]{Gongyo-Nakamura-Tanaka}
Yoshinori Gongyo, Yusuke Nakamura, and Hiromu Tanaka.
\newblock Rational points on log {Fano} threefolds over a finite field.
\newblock {\em J. Eur. Math. Soc. (JEMS)}, 21(12):3759--3795, 2019.

\bibitem[Gro66]{EGA_IV.3}
A.~Grothendieck.
\newblock \'{E}l\'{e}ments de g\'{e}om\'{e}trie alg\'{e}brique. {IV}. \'{E}tude
  locale des sch\'{e}mas et des morphismes de sch\'{e}mas. {III}.
\newblock {\em Inst. Hautes \'{E}tudes Sci. Publ. Math.}, (28):255, 1966.

\bibitem[IR96]{Ito_Reid_McKay_correspondence_SL3}
Yukari Ito and Miles Reid.
\newblock The {McKay} correspondence for finite subgroups of
  {{\(SL(3,\mathbb{C})\)}}.
\newblock In {\em Higher dimensional complex varieties. Proceedings of the
  international conference, Trento, Italy, June 15--24, 1994}, pages 221--240.
  Berlin: Walter de Gruyter, 1996.

\bibitem[KKMSD73]{KKMSD_Toroidal_embeddings}
G.~Kempf, F.~Knudsen, D.~Mumford, and Bernard Saint-Donat.
\newblock {\em Toroidal embeddings. {I}}, volume 339 of {\em Lect. Notes Math.}
\newblock Springer, Cham, 1973.

\bibitem[Kol13]{Kollar_Singularities_of_the_minimal_model_program}
J\'{a}nos Koll\'{a}r.
\newblock {\em Singularities of the minimal model program}, volume 200 of {\em
  Cambridge Tracts in Mathematics}.
\newblock Cambridge University Press, Cambridge, 2013.
\newblock With a collaboration of S\'{a}ndor Kov\'{a}cs.

\bibitem[KT23]{Kresch_Tschinkel_Birat_geom_DM_stacks}
Andrew Kresch and Yuri Tschinkel.
\newblock Birational geometry of {Deligne}-{Mumford} stacks.
\newblock Preprint, {arXiv}:2312.14061 [math.{AG}] (2023), 2023.

\bibitem[LMM21]{Liedtke_Martin_Matsumoto_Lrq_sing}
Christian Liedtke, Gebhard Martin, and Yuya Matsumoto.
\newblock Linearly {R}eductive {Q}uotient {S}ingularities.
\newblock {\em ArXiv e-print, arXiv:2102.01067v2. To be published in
  Ast\'{e}risque.}, 2021.

\bibitem[MI21]{Miyanishi_Ito_Algebraic_surfaces_in_pos_char}
Masayoshi Miyanishi and Hiroyuki Ito.
\newblock {\em Algebraic surfaces in positive characteristics. {Purely}
  inseparable phenomena in curves and surfaces}.
\newblock Hackensack, NJ: World Scientific, 2021.

\bibitem[Ols06]{Olsson_Hom_stacks}
Martin~C. Olsson.
\newblock {{\(\underline{\mathrm{Hom}}\)}}-stacks and restriction of scalars.
\newblock {\em Duke Math. J.}, 134(1):139--164, 2006.

\bibitem[Pos25a]{Posva_Pathological_MMP_sing}
Quentin Posva.
\newblock Pathological {MMP} singularities as {{\(\alpha_p\)}}-quotients.
\newblock {\em Forum Math. Sigma}, 13:29, 2025.
\newblock Id/No e185.

\bibitem[Pos25b]{Posva_Resolution_1-foliations}
Quentin Posva.
\newblock Resolution of $1$-foliations singularities on surfaces and
  threefolds.
\newblock Preprint, {arXiv}:2405.05735 [math.{AG}] (2025). To appear in Algebra
  and Number theory, 2025.

\bibitem[Pos26]{Posva_Singularities_quotients_by_1-foliations}
Quentin Posva.
\newblock On the singularities of quotients by 1-foliations.
\newblock {\em Nagoya Math. J.}, 261:41, 2026.
\newblock Id/No e6.

\bibitem[QR22]{Quek_Rydh_Weighted_blowups}
Ming~Hao Quek and David Rydh.
\newblock Weighted blow-ups.
\newblock {\em available on the second author's webpage}, 2022.

\bibitem[Rei19]{Reid_Tate-Oort_group}
Miles Reid.
\newblock The {Tate}-{Oort} group scheme {{\(\mathbb{TO}_p\)}}.
\newblock {\em Proc. Steklov Inst. Math.}, 307:245--266, 2019.

\bibitem[Tot19]{Totaro_Failure_Kodaira_vanishing}
Burt Totaro.
\newblock The failure of {Kodaira} vanishing for {Fano} varieties, and terminal
  singularities that are not {Cohen}-{Macaulay}.
\newblock {\em J. Algebr. Geom.}, 28(4):751--771, 2019.

\bibitem[Tot24]{Totaro_Terminal_3folds_not_CM}
Burt Totaro.
\newblock Terminal 3-folds that are not {Cohen}-{Macaulay}.
\newblock Preprint, {arXiv}:2407.02608 [math.{AG}] (2024), 2024.

\bibitem[TY20]{Tonini_Yasuda_Motivic_McKay_cor_for_alpha_p}
Fabio Tonini and Takehiko Yasuda.
\newblock Notes on the motivic {M}c{K}ay correspondence for the group scheme
  {$\alpha_p$}.
\newblock In {\em Singularities---{K}agoshima 2017}, pages 215--233. World Sci.
  Publ., Hackensack, NJ, [2020] \copyright 2020.

\bibitem[W{\l}o22]{Wlodarczyk_functorial_resolution}
Jaros{\l}aw W{\l}odarczyk.
\newblock Functorial resolution except for toroidal locus. {Toroidal}
  compactification.
\newblock {\em Adv. Math.}, 407:103, 2022.
\newblock Id/No 108551.

\bibitem[{Wol}]{Mathematica}
{Wolfram Research, Inc.}
\newblock Mathematica, {V}ersion 14.2.
\newblock Champaign, IL, 2024.

\bibitem[Yam24]{YamamotoThesis2024}
Yudai Yamamoto.
\newblock The space of twisted arcs for the group scheme \(\alpha_p\)
  ({J}apanese).
\newblock Master's thesis, Osaka University, 2024.

\bibitem[Yas06]{Yasuda_Motivic_integration_over_DM_stacks}
Takehiko Yasuda.
\newblock Motivic integration over {Deligne}-{Mumford} stacks.
\newblock {\em Adv. Math.}, 207(2):707--761, 2006.

\bibitem[Yas14]{Yasuda_p-cyclic_McKay_correspondence}
Takehiko Yasuda.
\newblock The {{\(p\)}}-cyclic {McKay} correspondence via motivic integration.
\newblock {\em Compos. Math.}, 150(7):1125--1168, 2014.

\bibitem[Yas19]{Yasuda_Discrepancies_of_p_cyclic_quotient}
Takehiko Yasuda.
\newblock Discrepancies of {{\(p\)}}-cyclic quotient varieties.
\newblock {\em J. Math. Sci., Tokyo}, 26(1):1--14, 2019.

\end{thebibliography}

\end{document}